\newtheorem{thm}{Theorem}
\newtheorem{lem}[thm]{Lemma}
\newtheorem{defn}[thm]{Definition}
\newtheorem{assump}[thm]{Assumption}
\newtheorem{ex}[thm]{Example}
\newtheorem{rem}[thm]{Remark}
\newcommand{\R} {\mathbb{R}}
\newcommand{\C} {\mathbb{C}}
\newcommand{\N} {\mathbb{N}}
\newcommand{\E} {\mathbb{E}}
\newcommand{\T} {\mathbb{T}}
\newcommand{\p} {\mathbb{P}}
\DeclareMathOperator{\diag}{diag}
\DeclareMathOperator{\Tr}{Tr}
\DeclareMathOperator{\err}{\mathrm{err}}
\DeclareMathOperator{\erfc}{\mathrm{erfc}}
\DeclareMathOperator{\im}{\mathrm{Im}}
\newcommand{\caE}{{\mathcal E}}
\newcommand{\caG}{{\mathcal G}}
\newcommand{\caK}{{\mathcal K}}
\newcommand{\caN}{{\mathcal N}}
\newcommand{\caO}{{\mathcal O}}
\newcommand{\caP}{{\mathcal P}}
\newcommand{\caR}{{\mathcal R}}
\newcommand{\caT}{{\mathcal T}}
\newcommand{\bbB}{{\mathbb B}}
\newcommand{\bbC}{{\mathbb C}}
\newcommand{\bbE}{{\mathbb E}}
\newcommand{\bbP}{{\mathbb P}}
\newcommand{\bsa}{{\boldsymbol a}}
\newcommand{\bsb}{{\boldsymbol b}}
\newcommand{\bse}{{\boldsymbol e}}
\newcommand{\bsu}{{\boldsymbol u}}
\newcommand{\bsv}{{\boldsymbol v}}
\newcommand{\bsx}{{\boldsymbol x}}
\newcommand{\bsy}{{\boldsymbol y}}
\newcommand{\bsH}{{\boldsymbol H}}
\newcommand{\bsU}{{\boldsymbol U}}
\newcommand{\bsV}{{\boldsymbol V}}
\newcommand{\bsth}{{\boldsymbol \theta}}
\newcommand{\ttA}{{\texttt A}}
\newcommand{\ttB}{{\texttt B}}
\newcommand{\ttX}{{\texttt X}}
\newcommand{\abs}[1]{\lvert #1 \rvert}
\newcommand{\pbb}[1]{\biggl({#1}\biggr)}
\newcommand{\pb}[1]{\bigl({#1}\bigr)}
\newcommand{\wt}{\widetilde}
\newcommand{\wh}{\widehat}
\newcommand{\beq}{ \begin{equation} }
\newcommand{\eeq}{ \end{equation} }
\newcommand{\dd}{\mathrm{d}}
\newcommand{\ii}{\mathrm{i}}
\renewcommand{\P}{\bbP}
\newcommand{\tM}{\widetilde{M}}
\newcommand{\tY}{\widetilde{Y}}
\newcommand{\fh}{F_g}
\newcommand{\gh}{G^H}
\newcommand{\ef}{E_q}
\newcommand{\mf}{M_q}
\newcommand{\vf}{V_q}
\newcommand{\SNR}{\omega}
\newcommand{\rat}{d_0}
\newcommand{\fhd}{F_{g,d}}
\newcommand{\ghd}{G_{g,d}}
\def\blfootnote{\xdef\@thefnmark{}\@footnotetext}
\numberwithin{equation}{section} 
\numberwithin{thm}{section}
\title{Detection problems in the spiked matrix models}
\author{Ji Hyung Jung\footnote{Department of Mathematical Sciences, KAIST, Daejeon, 34141, Korea
		\newline email: \texttt{jhjung66@kaist.ac.kr}}
	, Hye Won Chung\footnote{School of Electrical Engineering, KAIST, Daejeon, 34141, Korea
		\newline email: \texttt{hwchung@kaist.ac.kr}}
	, and Ji Oon Lee\footnote{Department of Mathematical Sciences, KAIST, Daejeon, 34141, Korea
		\newline email: \texttt{jioon.lee@kaist.edu}}}
\date{\today}
\begin{document}
	
	\maketitle
	
\begin{abstract}%
		We study the statistical decision process of detecting the low-rank signal from various signal-plus-noise type data matrices, known as the spiked random matrix models. We first show that the principal component analysis can be improved by entrywise pre-transforming the data matrix if the noise is non-Gaussian, generalizing the known results for the spiked random matrix models with rank-$1$ signals. As an intermediate step, we find out sharp phase transition thresholds for the extreme eigenvalues of spiked random matrices, which generalize the Baik-Ben Arous-P\'{e}ch\'{e} (BBP) transition. We also prove the central limit theorem for the linear spectral statistics for the spiked random matrices and propose a hypothesis test based on it, which does not depend on the distribution of the signal or the noise. When the noise is non-Gaussian noise, the test can be improved with an entrywise transformation to the data matrix with additive noise. We also introduce an algorithm that estimates the rank of the signal when it is not known a priori.
	
\end{abstract}


\section{Introduction}\label{sec:intro}
One of the most natural approach to `signal-plus-noise' type data is to consider spiked random matrices, which are the low-rank deformation of large random matrices. Most notable examples of spiked random matrices include spiked Wigner matrix and spiked Wishart matrix, where the signals are given as a low-rank mean matrix (spiked Wigner matrix) and a low-rank perturbation of the identity in its covariance matrix (spiked Wishart matrix). In this paper, we focus on the following three types of noisy data matrices, known as spiked random matrices, which generalize spiked Wigner/Wishart matrices:
\begin{itemize}
	\item Spiked Wigner matrix: the data matrix is of the form
	\beq \label{eq:spiked_Wigner}
	\bsU\Lambda^{1/2}\bsU^T + W,
	\eeq
	where $\bsU = [\bsu(1), \bsu(2), \dots, \bsu(k)] \in \R^{N\times k}$ with $\bsU^T\bsU=I_k$, and $W$ is an $N \times N$ Wigner matrix. The signal-to-noise ratio (SNR) $\Lambda = \diag(\lambda_1, \lambda_2, \dots, \lambda_k)$ with $\lambda_1 \geq \lambda_2 \geq \dots \lambda_k > 0$ for some positive integer $k$, independent of $N$.
	\item Rectangular matrix \emph{with spiked mean} (additive model): the data matrix is of the form
	\beq \label{eq:spiked_rectangular_additive}
	\bsU \Lambda^{1/2}\bsV^T + X,
	\eeq
	where $\bsU = [\bsu(1), \bsu(2), \dots, \bsu(k)] \in \R^{M\times k}$, $\bsV = [\bsv(1), \bsv(2), \dots, \bsv(k)] \in \R^{N\times k}$ with $\bsU^T\bsU=\bsV^T\bsV = I_k$, and $X$ is an $M \times N$ random i.i.d. matrix whose entries are centered with variance $N^{-1}$. The SNR $\Lambda$ is given as in the spiked Wigner matrix.
	
	\item Rectangular matrix \emph{with spiked covariance} (multiplicative model): the data matrix is of the form
	\beq\label{eq:spiked_rectangular_multiplicative}
	(I+  \bsU \Lambda\bsU^T)^{1/2} X,
	\eeq
	where $\bsU = [\bsu(1), \bsu(2), \dots, \bsu(k)]$ with $\bsU^T\bsU=I_k$ and $X$ is an $M \times N$ random i.i.d. matrix whose entries are centered with variance $N^{-1}$. The SNR $\Lambda$ is given as in the spiked Wigner matrix.
\end{itemize}
Here, $I_k$ is the identity matrix with rank $k$ and we allow the case $k=0$ where no signal is present. Throughout the paper, for the ease of notation, we denote by $W$ an $N \times N$ Wigner matrix, and $X$ an $M \times N$ random i.i.d. matrix.

To describe the detection problems we consider in this paper, we first review the known results for the simplest case of the spiked random matrix models with rank-$1$ spike, i.e, $k=1$ in \eqref{eq:spiked_Wigner}, \eqref{eq:spiked_rectangular_additive}, and \eqref{eq:spiked_rectangular_multiplicative}.

\textbf{Signal detection problem in rank-$1$ spiked random matrices:} 
Many problems concerning the signal detection can be answered in the case with Gaussian noise and rank-$1$ spike. In this case, the spikes $\bsU=\bsu$ and $\bsV=\bsv$ are vectors, and SNR $\lambda_1=\lambda$, hence the spiked random matrices are of the following forms:
\beq \label{eq:rank-1}
\sqrt{\lambda} \bsu \bsu^T + W
\eeq
\beq \label{eq:rank-1-additive}
\sqrt{\lambda} \bsu \bsv^T + X
\eeq
\beq \label{eq:rank-1-multiplicative}
(I+\lambda \bsu \bsu^T)^{1/2}X.
\eeq
For this case, reliable detection of the signal, i.e., detection with probability $1 - o(1)$ as $M, N\to\infty$, is impossible if the SNR $\lambda$ is below a certain threshold \cite{Montanari2017,Onatski2013}. The threshold is $1$ as $N \to \infty$ for spiked Wigner matrices; for spiked rectangular matrices, with additional assumption $M/N \to \rat$ as $N \to \infty$, the threshold is $\sqrt{\rat}$ for a general class of priors \cite{Perry2018}. On the other hand, the signal can be reliably detected by the principal component analysis (PCA) if the SNR is above the threshold in which case the signal can actually be estimated \cite{Barbier2016,LelargeMiolane,miolane2017fundamental}.

In the subcritical case where the signal is not reliably detectable, it is natural to consider a hypothesis test on the presence of the signal between $\bsH_0: \lambda = 0$ and $\bsH_1: \lambda = \SNR$, commonly referred to as the weak detection, which is also known as the sphericity test in the case the spike is drawn from the uniform distribution on the unit sphere, known as the spherical prior.
As asserted by Neyman--Pearson lemma, the likelihood ratio (LR) test is optimal in the sense that it minimizes the sum of the Type-I error and the Type-II error. It was proved for several distributions of the spikes, called priors, that this sum for a spiked Wigner matrix converges to
\beq
\erfc \left( \frac{1}{4} \sqrt{-\log\left(1-\lambda\right)} \right)
\eeq
when $H$ is Gaussian Orthogonal Ensemble (GOE), and for a spiked Wishart matrix
\beq
\erfc \left( \frac{1}{4} \sqrt{-\log\left(1-\frac{\lambda^2}{\rat}\right)} \right)
\eeq
when $XX^T$ is a Wishart Ensemble; see, e.g., \cite{Onatski2013,AlaouiJordan2018,el2018detection}. Here, $\erfc(\cdot)$ is the complementary error function defined as
\beq\label{def:erfc}
\erfc(x)=\int_{x}^{\infty}e^{-t^2}\dd t.
\eeq 

Though optimal, the LR test is not efficient, and it is desirable to construct a test that does not depend on information about the prior, which is typically not known in many practical applications. In \cite{chung2019weak}, an optimal and universal test for spiked Wigner matrices was proposed, which is based on the linear spectral statistics (LSS) of the data matrix, a linear functional defined as
\beq\label{eq:LSS1}
L_N(f)=\sum_{i=1}^{N}f(\mu_i)
\eeq
for a given function $f$, where $\mu_1,\cdots\mu_N$ are the eigenvalues of the data matrix. The test is extended to spiked rectangular matrices in \cite{jung2021}, where the singular values of the data matrix is used instead of the eigenvalues.

If the noise is non-Gaussian, it is possible to improve the PCA by transforming the data matrix entrywise for spiked Wigner matrices \cite{lesieur2015mmse,Perry2018} and for spiked rectangular matrices \cite{jung2021}. In this improved PCA, the threshold is lowered by a certain factor that depends on the Fisher information of the noise distribution. Below this threshold, the LSS-based test proposed in \cite{chung2019weak} for spiked Wigner matrices can also be improved by applying the entrywise transformation for the improved PCA. It is not known whether the reliable detection is impossible below the threshold except for the case of the spiked Wigner matrix with Rademacher prior \cite{chung2022asymptotic}.

\textbf{Spiked random matrices with general rank:} The more relevant structure for application is that the latent signal contains multiple spikes, or a spike with a higher rank. For such models of spiked random matrices, similar to the cases with rank-$1$ spikes, it is natural to ask the following questions:
\begin{itemize}
	\item What is the spectral threshold for a reliable detection lower than the existing one for Gaussian noise if the noise is non-Gaussian?
	\item Can we design an efficient algorithm to weakly detect the presence of signal (i.e., better than a random guess) when a reliable detection is not feasible?
\end{itemize}

Contrary to the rank-$1$ spike case, the questions addressed above have never been answered, even for the simplest case of Gaussian noise. Furthermore, for the spikes with general rank, we need to consider another important problem of finding the rank of the spike in case it is not known a priori. While viable solutions to resolve the issue in the context of the community detection were suggested in \cite{lei2016goodness,bickel2016hypothesis} for any spiked Wigner matrices and \cite{passemier2012determining,ding2020high} for spiked rectangular matrices, these methods are not applicable in the sub-critical case. To the best of our knowledge, there are no spectral algorithms for estimating the rank of signal in the sub-critical regime. We thus aim to the following question as well:
\begin{itemize}
	\item Can we design an efficient algorithm to estimate the rank of signal when a reliable detection is not feasible?
\end{itemize}


\subsection*{Main contributions}\label{subsec:main}

Our main contributions are mainly divided into three parts as follows:
\begin{itemize}
	\item (Strong detection) We prove that the PCA can be improved by an entrywise transformation if the noise is non-Gaussian, under a mild assumption on the distribution (prior) of the spike.
	
	\item (Weak detection I) We propose a universal test to detect the presence of signal with low computational complexity, based on the linear spectral statistics (LSS). The test does not require any prior information on the signal, and if the noise is Gaussian the error of the proposed test is optimal. For the spiked Wigner matrix or the additive model of the spiked rectangular matrix with the non-Gaussian noise, we suggest an improved test via an entrywise transformation. 
	
	\item (Weak detection II) We present an LSS-based test for estimating the rank of a signal when $\Lambda=\lambda I$.	
\end{itemize}

Heuristically, it is possible to increase the SNR via an entrywise transformation. Here, we illustrate the main idea of the entrywise transformation for the spiked Wigner matrix of the form $M= \bsU\Lambda^{1/2}\bsU^T+W$. If $|\bsu_i \bsu_j^T|,\,|\bsu_i \bsv_j^T| \ll W_{ij}$, then by applying a function $q$ entrywise to $\sqrt{N} Y$, we obtain a transformed matrix whose entries are
\[ \begin{split}
q(\sqrt{N} M_{ij} ) =q(\sqrt{N} W_{ij} + \sqrt{N} \bsu_i \Lambda^{1/2}\bsu_j^T) \approx q(\sqrt{N} W_{ij}) + \sqrt{N} q'(\sqrt{N} W_{ij}) \bsu_i \Lambda^{1/2}\bsu_j^T ,
\end{split} \]
where $\bsu_i$ and $\bsv_i$ denote $i-$th row vector of the signal matrix $\bsU$ and $\bsV$, respectively.
With negligible error, it is possible to approximate the coefficient $q'(\sqrt{N} W_{ij})$ in the second term in the right side by its expectation. (See Appendix \ref{sec:proof_trans-mean} for the proof) Then,
\[ \begin{split}
q(\sqrt{N} M_{ij} ) =q(\sqrt{N} W_{ij} + \sqrt{N} \bsu_i \Lambda^{1/2}\bsu_j^T) \approx \sqrt{N} \left( \frac{q(\sqrt{N} W_{ij})}{\sqrt{N}} +  \E[q'(\sqrt{N} W_{ij})] \bsu_i \Lambda^{1/2}\bsu_j^T \right)
\end{split} \]
and the transformed matrix is approximately of the form $\bsU (\Lambda')^{1/2}\bsU^T + Q$ after a proper normalization, which becomes another spiked Wigner matrix with different SNR. By optimizing the transformation $q$, we find that the SNR is effectively increased (or equivalently, the threshold $\sqrt{\rat}$ is lowered) in the PCA for the transformed matrix. The change of the threshold and a BBP-type transition for the largest eigenvalues of the transformed matrix can be rigorously proved; see Theorem \ref{thm:trans-wigner} for a precise statement. We remark that the same idea works even if the SNR $\Lambda$ is not a constant multiple of an identity matrix, and also a similar result holds for the additive model of spiked rectangular matrix (Theorem \ref{thm:trans-mean}).

For the multiplicative model of the form $Y=(I+ \bsU \Lambda\bsU^T)^{1/2} X =: (I+\bsU \Gamma\bsU^T) X$, with $\Lambda = 2\Gamma + \Gamma^2$, the analysis is significantly more involved due to the following reason: Applying a function $q$ entrywise to $\sqrt{N} Y$, we find that
\[ \begin{split}
q(\sqrt{N} Y_{ij} ) &= q \Big(\sqrt{N} X_{ij} + \sqrt{ N} \sum_\ell \bsu_i \Gamma\bsu_\ell^T X_{\ell j} \Big) 
\approx q(\sqrt{N} X_{ij}) +  \sqrt{N} q'(\sqrt{N} X_{ij}) \sum_\ell \bsu_i \Gamma\bsu_\ell^T X_{\ell j} \\
&\approx \sqrt{N} \left( \frac{q(\sqrt{N} X_{ij})}{\sqrt{N}} +  \E[q'(\sqrt{N} X_{ij})] \sum_\ell \bsu_i \Gamma\bsu_\ell^T X_{\ell j} \right),
\end{split} \]
and the transformed matrix is of the form $ \bsU \Gamma'\bsU^T X + Q$, which is not a spiked rectangular matrix anymore. Note that $Q$ depends on $X$ entrywise and thus it cannot be considered as an additive model, either. 

In Theorem \ref{thm:trans-cov}, we prove the effective change of the SNR and the BBP-type transition for the multiplicative model. The proof of Theorem \ref{thm:trans-cov} is based on a generalized version of the BBP transition that works with the matrix of the form $\bsU\Gamma \bsU^T X +Q$. We remark that the strategy for the proof, based on recent development of random matrix theory, can also be applied to prove a BBP-type transition for other models. 

As in the rank-$1$ case in \cite{jung2021}, it is notable that the optimal entrywise transform for the multiplicative model is different from the one for the additive model. For the spiked Wigner matrix, the optimal transforms are given by $-g'/g$ for the off-diagonal entries (and $-g_d'/g_d$ for the diagonal entries), where $g$ (and $g_d$ for the diagonal entries) is the density functions of them; the optimal transform for the additive model is also given by $-g'/g$. However, for the multiplicative model, the optimal transform is a linear combination of the function $-g'/g$ and the identity mapping. Heuristically, it is due to that the effective SNRs depend not only on $\Gamma'$ but also on the correlation between $X$ and $Q$; the former is maximized when the transform is $-g'/g$ while the latter is maximized when the transform is the identity mapping. We also remark that the effective SNRs after the optimal entrywise transform is larger in the additive model, which suggests that the detection problem is fundamentally harder for the multiplicative model.

With the BBP-type transition for the largest eigenvalues of the transformed matrices, it is also possible to improve the performance of several statistical inferences \cite{bao2022statistical,ke2021estimation,onatski2009rank}. One of the consequences is that the corresponding eigenspace is adjacent to its true spike $\bsU$ in the sense of direction of arrival (DoA) \cite{COUILLET2015}. In other words, we can not only reliably estimate the number of spikes by parallel analysis (PA) \cite{dobriban2020permutation}, but also approximately recover the true spikes and the corresponding SNRs.

For the subcritical case where it is impossible to reliably detect the signal by the improved PCA, we propose algorithms for weak detection, based on the central limit theorem (CLT) of the LSS, Theorems \ref{thm:CLT}, \ref{thm:trans_CLT}, \ref{thm:CLT_rec}, and \ref{thm:trans_CLT_rec}, analogous to the ones introduced in \cite{chung2019weak}. More precisely, assuming the SNRs are uniform i.e., $\Lambda=\lambda I$, we propose an algorithm for a hypothesis test between 
\beq \label{eq:hyp}
\bsH_{k_1} : k=k_1, \qquad \bsH_{k_2} : k=k_2
\eeq
for non-negative integers $k_1<k_2$. While it may seem obvious, it has not been even known in the simple case $k_1=0$ whether the detection becomes easier as $k_2$ increases. Our test in Algorithm \ref{alg:ht} verifies the claim since the error of the proposed test is an increasing function of $(k_2 - k_1)$ as in Theorem \ref{thm:test}. As in \cite{chung2019weak}, the proposed tests are universal, and the various quantities in it can be estimated from the observed data. The test can further be improved by applying the same entrywise transformation we used for the PCA (Algorithm \ref{alg:htet}) if the data matrix is of additive type (spiked Wigner matrix or rectangular matrix with spiked mean), and it also can be adapted to the rank detection problem where we need to estimate the rank $k$ of the signal without knowing the candidates $k_1$ and $k_2$ a priori (Algorithm \ref{alg:at}). 

The main mathematical achievement of the second part is the CLT for the LSS of spiked random matrices with general ranks. For a rank-$1$ spiked Wigner matrix, the CLT was first proved for a special spike $\frac{1}{\sqrt{N}}(1, 1, \dots, 1)^T$ in \cite{Baik-Lee2017} and later extended for a general rank-$1$ spike by comparison with the special case \cite{chung2019weak}. However, the proof in \cite{Baik-Lee2017} is not readily extended to the spiked Wigner matrices with higher ranks and the spiked rectangular matrices. In this paper, we overcome the difficulty by introducing a direct interpolation between the spiked random matrix and the corresponding pure noise matrix and tracking the change of the LSS. Furthermore, we will prove that the proposed entrywise transformation for the data matrix of additive type also effectively changes the SNR, and that the LSS of the transformed matrix is also asymptotically Gaussian; this result was proved previously only for rank-$1$ spiked Wigner matrices in \cite{chung2019weak}. Thus, the error from the proposed test decreases after the transformation as for spiked Wigner matrices in \cite{chung2019weak}.

\subsection*{Related works}

Spiked random matrix models were first introduced by Johnstone \cite{Johnstone2001}. The model can be applied to various problems such as community detection \cite{Abbe2017} and submatrix localization \cite{Butucea2013}. The transition of the largest eigenvalue was proved by Baik, Ben Arous, and P\'ech\'e \cite{BBP2005} for spiked complex Wishart matrices and generalized by Benaych-Georges and Nadakuditi \cite{Raj2011,benaych2012singular}. For more results from random matrix theory about the extreme eigenvalues and the corresponding eigenvectors of spiked random matrices, we refer to \cite{BKYY2016} and references therein.

The improved PCA based on the entrywise transformation was considered for rank-$1$ spiked Wigner matrices in \cite{lesieur2015mmse,Perry2018}, where the transformation is chosen to maximize the effective SNR of the transformed matrix. Detection problems for rank-$1$ spiked Wigner matrices were also considered, where the analysis is typically easier due to its symmetry and canonical connection with spin glass models. For more results on the rank-$1$ spiked Wigner matrices, we refer to \cite{Montanari2017,Perry2018,AlaouiJordan2018,chung2019weak} and references therein.

The testing problem for rank-$1$ spiked Wishart matrices with the spherical prior was considered by Onatski, Moreira, and Hallin \cite{Onatski2013,Onatski2014}, where they proved the optimal error of the hypothesis test. It is later extended to the case where the entries of the spikes are i.i.d. with bounded support (i.i.d. prior) by El Alaoui and Jordan \cite{el2018detection}. See also \cite{Onatski2015,Montanari2017,Barbier2016,LelargeMiolane,miolane2017fundamental,BanerjeeMa2017} for more about detection limits in statistical learning theory.

Models with sparse or generative structure of the spike have extensively studied in the past literature. Various statistical and algorithmic methods are applicable to the case where SNR is smaller than the spectral threshold. In particular, it can be seen that the sparsity of the spikes and the dimension of the latent vector constituting the generative spike prior actually serve to lower the threshold for the SNR to which several algorithms are applicable; see \cite{aubin2019spiked,cai2015optimal} and references therein.

\subsection*{Organization of the paper}

The rest of the paper is organized as follows:
In Section \ref{sec:prelim}, we introduce the precise definitions of models and relevant previous consequences. In Section~\ref{sec:main1}, we state our results on the improved PCA. In Section \ref{sec:main2}, we propose algorithms for LSS-based tests and a test for rank estimation, and analyze their performance.  In Section~\ref{sec:LSS}, we state general results on the CLT for the LSS. We conclude the paper in Section~\ref{sec:summary} with the summary of our works and future research directions. In Appendix~\ref{sec:ex}, we consider examples of spiked random matrices and provide results from numerical experiments. In Appendices \ref{sec:proof_trans} and \ref{app:CLT}, we provide technical details of the proofs.

\section{Preliminaries} \label{sec:prelim}

In this section, we introduce the precise definition of the models and previous results for the spiked random matrices.

\subsection{Definitions of models}

The noise matrices are defined as follows:

\begin{defn}[Wigner matrix] \label{def:Wigner}
	An $N \times N$ symmetric matrix $W = (W_{ij})$ is a (real) Wigner matrix if $W_{ij}$ ($i, j = 1, 2, \dots, N$) are independent real random variables such that
	\begin{itemize}
		\item For all $i<j$, $N \E[W_{ij}^2]=1$, $N^{\frac{3}{2}} \E[W_{ij}^3]=w_3$, and $N^2 \E[W_{ij}^4]=w_4$ for some $w_3, w_4\in \R$. 
		\item For all $i$, $N\E[W_{ii}^2]=w_2$ for some constant $w_2\geq 0$. 
		\item For any positive integer $p$, there exists $C_p$, independent of $N$, such that $N^{\frac{p}{2}} \E[W_{ij}^p] \leq C_p$ for all $i \leq j$.
	\end{itemize}
\end{defn}
\begin{defn}[Random rectangular matrix] \label{defn:rect}
	An $M \times N$ matrix $X = (X_{ij})$ is a (real) random rectangular matrix if $X_{ij}$ ($1\leq i \leq M$, $1 \leq j\leq N$) are independent real random variables such that
	\begin{itemize}
		\item For all $i, j$, $\E[X_{ij}] = 0$, $N \E[X_{ij}^2]=1$, $N^{\frac{3}{2}} \E[X_{ij}^3]=w_3$, and $N^2 \E[X_{ij}^4]=w_4$ for some constants $w_3, w_4$. 
		\item For any positive integer $p$, there exists $C_p$, independent of $N$, such that $N^{\frac{p}{2}} \E[X_{ij}^p] \leq C_p$ for all $i, j$. 
	\end{itemize}
\end{defn}

The spiked random matrices are defined as follows:

\begin{defn}[Spiked Wigner matrix] \label{def:spiked_Wigner}
	An $N \times N$ matrix $M=\bsU\Lambda^{1/2}\bsU^T + W$ is a spiked Wigner matrix with the SNR (matrix) $\Lambda$ if $W$ is a Wigner matrix and the spike $\bsU = [\bsu(1), \bsu(2), \dots, \bsu(k)] \in \R^{N\times k}$ with $\bsU^T\bsU=I_k$.
\end{defn}

\begin{defn}[Spiked rectangular matrix - additive model] \label{defn:rect_mean}
	An $M \times N$ random matrix $Y = \bsU \Lambda^{1/2}\bsV^T + X$ is a rectangular matrix with spiked mean $\bsU$, $\bsV$ and the SNR (matrix) $\Lambda$ if $X$ is a random rectangular matrix and the spikes $\bsU = [\bsu(1), \bsu(2), \dots, \bsu(k)] \in \R^{M\times k}$, $\bsV = [\bsv(1), \bsv(2), \dots, \bsv(k)] \in \R^{N\times k}$ with $\bsU^T\bsU=\bsV^T\bsV = I_k$.
\end{defn}

\begin{defn}[Spiked rectangular matrix - multiplicative model] \label{defn:rect_cov}
	An $M \times N$ random matrix $Y = (I+ \bsU\Lambda \bsU^T)^{1/2} X$ is a rectangular matrix with spiked covariance $\bsU$ and the SNR (matrix) $\Lambda$ if $X$ is a rectangular matrix and $\bsU = [\bsu(1), \bsu(2), \dots, \bsu(k)] \in \R^{M\times k}$ with $\bsU^T\bsU= I_k$.
\end{defn}
We assume throughout the paper that the SNR matrix $\Lambda$ is a $k\times k$ diagonal matrices with $\Lambda_{ii}=\lambda_i$ and $\lambda_1\geq\lambda_2\geq\ldots\lambda_k\geq0,$ and $\frac{M}{N} \to \rat \in (0, \infty)$ as $M, N \to \infty$.

\subsection{Principal component analysis} \label{subsec:BBP}
Here are the results for principal components of spiked models in the context of random matrix theory.
\subsubsection*{Spiked Wigner matrix}
Let $M$ be the spiked Wigner matrix. The empirical spectral measure of $M$ converges to the Wigner's semicircle law $\mu_{sc}$, i.e., if we denote by $\mu_1 \geq \mu_2 \geq \dots \geq\mu_N$ the eigenvalues of $M$, then
\beq \label{eq:SC_law}
\frac{1}{N} \sum_{i=1}^N \delta_{\mu_i}(x) \dd x \to \dd \mu_{sc}(x)
\eeq
weakly in probability as $N \to \infty$, where 
\beq
\dd \mu_{sc}(x) = \frac{\sqrt{4-x^2}}{2\pi} \mathbf{1}_{(-2, 2)}(x) \dd x.
\eeq
The $k$ largest eigenvalue has the following (almost sure) limit: for $1\leq i\leq k$
\begin{itemize}
	\item If $\lambda_i > 1$, then $\mu_i \to \sqrt{\lambda_i}+\frac1{\sqrt{\lambda_i}}$.
	\item If $\lambda_i< 1$, then $\mu_i \to  2$.
\end{itemize}

\subsubsection*{Sample covariance matrix}
Let $S = YY^T$ be the sample covariance matrix (Gram matrix) derived from a spiked rectangular matrix $Y$. The empirical spectral measure of $S$ converges to the Marchenko--Pastur law $\mu_{MP}$, i.e., if we denote by $\mu_1 \geq \mu_2 \geq \dots\geq \mu_M$ the eigenvalues of $S$, then
\beq \label{eq:MP_law}
\frac{1}{M} \sum_{i=1}^M \delta_{\mu_i}(x) \dd x \to \dd \mu_{MP}(x)
\eeq
weakly in probability as $M, N \to \infty$, where for $M \leq N$
\beq
\dd \mu_{MP}(x) = \frac{\sqrt{(x- d_-)(d_+ -x)}}{2\pi\rat x} \mathbf{1}_{(d_-, d_+)}(x) \dd x,
\eeq
with $d_{\pm} = (1\pm \sqrt{\rat})^2$.
The $k$ largest eigenvalue has the following (almost sure) limit: for $1\leq i\leq k$
\begin{itemize}
	\item If $\lambda_i > \sqrt{\rat}$, then $\mu_i \to (1+\lambda_i)(1+\frac{\rat}{\lambda_i})$.
	\item If $\lambda_i < \sqrt{\rat}$, then $\mu_i \to d_+ = (1+\sqrt{\rat})^2$.
\end{itemize}
This in particular shows that the detection can be reliably done by PCA if $\lambda > \sqrt{\rat}$. We remark that the results above hold for both the additive model and the multiplicative model.

\subsection{Linear spectral statistics}
We introduce the central limit theorems for null models.
\subsubsection*{Spiked Wigner matrix}
The proof of the Gaussian convergence of the LR in \cite{Baik-Lee2016,Baik-Lee2018} is based on the recent study of linear spectral statistics, defined as
\beq \label{eq:LSS0}
L_Y(f) = \sum_{i=1}^N f(\mu_i)
\eeq
for a function $f$, where $\mu_1 \geq \mu_2 \geq \dots \mu_N$ are the eigenvalues of $M$. As the Wigner's semicircle law in \eqref{eq:SC_law} suggests, it is required to consider the fluctuation of the LSS about
\[
N \int_{-2}^{2} f(x) \, \dd \mu_{sc}(x).
\]
The CLT for the LSS is the statement
\beq \begin{split} \label{eq:CLT_LSS_Wig}
	&\left( L_M(f) - N \int_{-2}^{2} f(x) \, \dd \mu_{sc}(x) \right) \Rightarrow \caN(m_M(f), V_M(f)),
\end{split} \eeq
where the right-hand side is the Gaussian random variable with the mean $m_M(f)$ and the variance $V_M(f)$. The CLT was proved for the null case ($\lambda=0$). We will show that the CLT also holds under the alternative and the mean $m_M(f)$ depends on $\lambda$ while the variance $V_M(f)$ does not.

\subsubsection*{Spiked rectangular matrices}
The LSS for the spiked rectangular matrices defined as
\beq \label{eq:LSS}
L_Y(f) = \sum_{i=1}^M f(\mu_i)
\eeq
for a function $f$, where $\mu_1 \geq \mu_2 \geq \dots \mu_M$ are the eigenvalues of $S=YY^T$. As the Marchenko--Pastur law in \eqref{eq:MP_law} suggests, it is required to consider the fluctuation of the LSS about
\[
M \int_{d_-}^{d_+} f(x) \, \dd \mu_{MP}(x).
\]
The CLT for the LSS is the statement
\beq \begin{split} \label{eq:CLT_LSS}
	&\left( L_Y(f) - M \int_{d_-}^{d_+} f(x) \, \dd \mu_{MP}(x) \right) \Rightarrow \caN(m_Y(f), V_Y(f)),
\end{split} \eeq
where the right-hand side is the Gaussian random variable with the mean $m_Y(f)$ and the variance $V_Y(f)$. The CLT was proved for the null case ($\lambda=0$). We will show that the CLT also holds under the alternative and the mean $m_Y(f)$ depends on $\lambda$ while the variance $V_Y(f)$ does not.

\section{Main result I - Improved PCA} \label{sec:main1}



In this section, we state our first main results on the improvement of PCA by entrywise transformations and provide the results from numerical experiments.

\subsection{Improved PCA} \label{subsec:PCA}

We introduce the following assumptions for the spike and the noise.
\begin{assump} \label{assump:entry1}
	For the spike $\bsU$ (and also $\bsV$ in the additive model), we assume, for $\phi\leq1/2$, 
	\begin{enumerate}
		\item the spikes are $\phi$-localized with high probability, i.e. $\|\bsU\|_\infty,\|\bsV\|_\infty\prec N^{-\phi}$
		\item the spike matrix is $\phi$-orthonormal with high probability, i.e. $\|\bsU^T\bsU-I_k\|_F,\,\|\bsV^T\bsV-I_k\|_F\prec N^{-\phi},$ and so the spikes are sampled from Stiefel manifold of orthonormal k-frames in $\mathbb{R}^M$ or $\mathbb{R}^N$ with high probability.
	\end{enumerate}
	
	For the noise, let $\caP$ be the distribution of the normalized entries $\sqrt{N} W_{ij}(i\neq j)$ in \ref{def:Wigner} and $\sqrt{N} X_{ij}$ in \ref{defn:rect}. Further, for the spiked Wigner matrices, let $\caP_d$ be the distribution of the normalized diagonal entries $\sqrt{N} W_{ii}$ in \ref{def:Wigner}. We assume the following:
	\begin{enumerate}
		\item The density functions $g$ and $g_d$ of $\caP$ and $\caP_d$, respectively, are smooth, positive everywhere, and symmetric (about 0).
		\item For any fixed ($N$-independent) $D$, the $D$-th moments of $\caP$ and $\caP_d$ are finite.
		\item The functions $h = -g'/g$, $h_d = -g_d'/g_d$ and their all derivatives are polynomially bounded in the sense that $|h^{(\ell)}(w)|,|h_d^{(\ell)}(w)| \leq C_{\ell} |w|^{C_{\ell}}$ for some constant $C_{\ell}$ depending only on $\ell$.
	\end{enumerate}  
\end{assump}

The first condition on the prior implies that the spike is not necessarily delocalized, i.e., some entries of the signal can be significantly larger than $N^{-1/2}$. The key examples of the prior are as follows:
\begin{ex}\label{Rmk:spike}
	We can consider the following examples of the spike prior:
	\begin{enumerate}
		\item the spherical prior, where $\bsu(\ell)$ (and $\bsv(\ell)$) are i.i.d. drawn uniformly from the unit sphere, or
		\item the i.i.d. prior, where the entries $u_1(\ell), \dots, u_M(\ell)$ (respectively, $v_1(\ell), \dots, v_N(\ell)$) are i.i.d. random variables from the probability measures $\mu_\ell$ (respectively, $\nu_\ell$) with mean zero and variance $M^{-1}$ (respectively $N^{-1}$) such that for any integer $p>2$
		\[
		\E |u_i(\ell)|^p, \E |v_j(\ell)|^p \leq \frac{C_p}{M^{1+(p-2)\phi}}
		\]
		for some ($N$-independent) constants $C_p>0$ and $\phi \leq \frac{1}{2}$, uniformly on $i$, $j$ and $\ell$.
	\end{enumerate} 	
	We remark that for the spike Wigner matrices, due to normalization, the variance of the i.i.d. prior $\mu_\ell$ for $u_i(\ell)$ is $N^{-1}$.
\end{ex}
\subsubsection*{Spiked Wigner matrix}

Given a spiked Wigner matrix $M$, we consider a family of the entrywise transformations
\beq \label{eq:alpha_transform}
h_\alpha(x)=-\frac{g'(x)}{g(x)}+\alpha x, \quad h_{d}(x)=-g_d'(x)/g_d(x)
\eeq
for $\alpha \in \mathbb{R}$. We also consider the transformed matrix $\wt M$ whose entries are
\begin{align} \label{eq:transformed-wigner}
\wt M_{ij} = \frac{1}{\sqrt{\fh N}} h_0(\sqrt{N} M_{ij}) (i\neq j),&& \wt M_{ii} = \sqrt{\frac{w_2}{\fhd N}} h_{d}\left(\sqrt{\frac{N}{w_2}} M_{ii}\right),
\end{align}
where the Fisher information $\fh$ and $\fhd$ of $g$ and $g_d$ are given by
\[
\fh = \int_{-\infty}^{\infty} \frac{(g'(x))^2}{g(x)} \dd x,\qquad \fhd = \int_{-\infty}^{\infty} \frac{g_d'(x)^2}{g_d(x)} \dd x.
\]
Note that $\fh \geq 1$ where the equality holds only if $g$ is the standard Gaussian.

Then following theorem asserts that the effective SNRs of the transformed matrix for PCA are $\lambda_\ell \fh$, which generalizes Theorem 4.8 in \cite{Perry2018}.
\begin{thm} \label{thm:trans-wigner}
	Let $M$ be a spiked Wigner matrix in Definition \ref{def:spiked_Wigner} satisfying Assumption \ref{assump:entry1} with $\phi>1/4.$ Let $\wt M$ be the transformed matrix obtained as in \eqref{eq:transformed-wigner} and $(\wt \mu_\ell, \wt\bsu(\ell))$ the pair of $\ell$-th largest eigenvalue and the corresponding eigenvector of $\wt M$. Then, almost surely, for $1\leq \ell\leq k$
	\begin{itemize}
		\item If $\lambda_\ell > \frac{1}{\fh}$, then $\wt\mu_\ell \to \sqrt{\lambda_\ell \fh}+\frac{1}{\sqrt{\lambda_\ell \fh}}$ and $ |\wt\bsu(\ell)^T\bsu(\ell)|^2\to 1-\frac{1}{\lambda_\ell\fh}$,
		\item If $\lambda_\ell < \frac{1}{\fh}$, then $\wt\mu_\ell \to 2$ and $ |\wt\bsu(\ell)^T\bsu(\ell)|^2\to 0$.
	\end{itemize}
\end{thm}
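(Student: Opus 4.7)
The plan is to reduce the claim to the standard rank-$k$ BBP transition for spiked Wigner matrices (Section \ref{subsec:BBP}) by showing that
\beq
\wt M = Q + \bsU(\fh\Lambda)^{1/2}\bsU^T + \Delta, \qquad \|\Delta\|=o(1),
\eeq
where $Q$ is a Wigner matrix in the sense of Definition \ref{def:Wigner}. Granting such a decomposition, $\wt M$ is effectively a spiked Wigner matrix with spike $\bsU$ and enhanced SNR matrix $\fh\Lambda$, and both the eigenvalue limits and the eigenvector overlaps then follow from the standard BBP picture with $\lambda_\ell$ replaced by $\lambda_\ell\fh$.

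First I would Taylor-expand each off-diagonal entry of $\wt M$ around $\sqrt N W_{ij}$. Setting $R_{ij}:=\sqrt N(\bsU\Lambda^{1/2}\bsU^T)_{ij}$, Assumption \ref{assump:entry1} gives $|R_{ij}|\prec N^{1/2-2\phi}=o(1)$ under $\phi>1/4$, and
\[
h_0(\sqrt N M_{ij}) = h_0(\sqrt N W_{ij}) + h_0'(\sqrt N W_{ij})\,R_{ij} + \tfrac12 h_0''(\xi_{ij})\,R_{ij}^2.
\]
Integration by parts against $g$ yields $\E h_0(\sqrt N W_{ij})=0$, $\E h_0(\sqrt N W_{ij})^2=\fh$, and $\E h_0'(\sqrt N W_{ij})=\fh$. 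Hence after dividing by $\sqrt{\fh N}$, the zeroth-order term $Q_{ij}:=h_0(\sqrt N W_{ij})/\sqrt{\fh N}$ is a Wigner matrix (higher moments transfer via the polynomial growth of $h_0$), while the mean of the linear term reproduces exactly $\sqrt{\fh}\,(\bsU\Lambda^{1/2}\bsU^T)_{ij}$. Two error matrices remain: the Taylor remainder and the centered fluctuation $(h_0'(\sqrt N W_{ij})-\fh)R_{ij}/\sqrt{\fh N}$. For each, I would exploit the rank-$k$ structure $R=\sum_\ell\sqrt{\lambda_\ell N}\,\bsu(\ell)\bsu(\ell)^T$ via the identity $C\odot(\bsu\bsu^T)=D_{\bsu} C D_{\bsu}$, so the Hadamard products decompose into $\sum_\ell D_{\bsu(\ell)} C\,D_{\bsu(\ell)}$ with operator norm bounded by $\|\bsu(\ell)\|_\infty^2\,\|C\|$. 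Combined with the Wigner-type bound $\|A\|=O(\sqrt N)$ for the matrix $A_{ij}:=h_0'(\sqrt N W_{ij})-\fh$ (independent centered entries of variance $O(1)$), and the analogous control on the Taylor coefficient matrix using the polynomial bound on $h_0''$, both error matrices acquire operator norm $O(N^{1/2-2\phi})=o(1)$. The diagonal entries are handled analogously through $h_d$ and contribute only $O(N^{-1/2+o(1)})$ in operator norm, yielding the decomposition above.

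Given the decomposition, the operator-norm stability of the rank-$k$ BBP transition for spiked Wigner matrices (Benaych-Georges--Nadakuditi, cited in Section \ref{subsec:BBP}) transfers the eigenvalue limits from $Q+\bsU(\fh\Lambda)^{1/2}\bsU^T$ to $\wt M$, producing $\wt\mu_\ell\to\sqrt{\lambda_\ell\fh}+1/\sqrt{\lambda_\ell\fh}$ when $\lambda_\ell\fh>1$ and $\wt\mu_\ell\to 2$ otherwise. The main obstacle is the eigenvector overlap: unlike the eigenvalue limit, $|\wt\bsu(\ell)^T\bsu(\ell)|^2$ can in principle be disturbed by an $o(1)$ perturbation that happens to align with the spike subspace. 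To resolve this, I would use resolvent identities together with the isotropic local semicircle law for $Q$ to analyze $\bsu(\ell)^T(\wt M-z)^{-1}\bsu(\ell)$ on a small contour enclosing $\wt\mu_\ell$; showing that the projection of $\Delta$ onto the span of $\bsU$ is itself $o(1)$ and then comparing with the corresponding resolvent quantity for the unperturbed matrix $Q+\bsU(\fh\Lambda)^{1/2}\bsU^T$ yields the stated overlap $1-1/(\lambda_\ell\fh)$ in the supercritical regime and $0$ in the subcritical regime.
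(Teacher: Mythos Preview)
Your approach is essentially the same as the paper's: Taylor-expand $h_0(\sqrt N M_{ij})$ around $\sqrt N W_{ij}$, identify the zeroth-order term as a new Wigner matrix $Q$ and the mean of the first-order term as the enhanced spike $\bsU(\fh\Lambda)^{1/2}\bsU^T$, show the remaining pieces are $o(1)$ in operator norm, and then invoke the BBP picture for $Q+\bsU(\fh\Lambda)^{1/2}\bsU^T$ (the paper does the eigenvalue part via the isotropic local semicircle law and the determinant identity, and refers to Perry et al.\ for the error control).

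One technical caveat: your Hadamard identity $C\odot(\bsu\bsu^T)=D_\bsu C D_\bsu$ cleanly dispatches the centered fluctuation term $(h_0'(\sqrt N W)-\fh)\odot R$ because $\|A\|=O(\sqrt N)$ for the centered matrix $A$, but it does \emph{not} directly give $O(N^{1/2-2\phi})$ for the Taylor remainder $\tfrac12 h_0''(\xi)\odot R^{\odot2}$, since the matrix $(h_0''(\xi_{ij}))$ is not centered and its operator norm can be of order $N$. Pushing your Hadamard argument on that term yields only $O(N^{3/2-4\phi})$, which would force $\phi>3/8$. The fix the paper (following Perry et al.) uses is a Frobenius bound: $|h_0''(\xi_{ij})|\prec1$ together with $\sum_{i,j}(\bsU\Lambda^{1/2}\bsU^T)_{ij}^4\le \|\bsU\Lambda^{1/2}\bsU^T\|_{\max}^2\,\|\bsU\Lambda^{1/2}\bsU^T\|_F^2\prec N^{-4\phi}$ gives $\|\text{remainder}\|\le\|\text{remainder}\|_F\prec N^{1/2-2\phi}$, recovering the sharp threshold $\phi>1/4$.
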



For the proof, we adapt the strategy in \cite{Perry2018}, where the key observation is that the transformed matrix is approximately equal to another spiked Winger matrix. See Appendix \ref{sec:proof_trans-wigner} for the detail of the proof.

We remark that $h_0$ is the optimal (up to constant factor) among all entrywise transformations. See Appendix \ref{subsec:variation_mean} for the proof of it.

\subsubsection*{Spiked rectangular matrices}

For a spiked rectangular matrix $Y$, we consider the family of the entrywise transformations $h_{\alpha}(x)$ defined in \eqref{eq:alpha_transform} and transformed matrices $\wt Y^{(\alpha)}$ whose entries are
\beq \label{eq:transformed-cov}
\wt Y^{(\alpha)}_{ij} = \frac{1}{\sqrt{(\alpha^2+2\alpha+\fh) N}} h_{\alpha}(\sqrt{N} Y_{ij}).
\eeq
Note that 

For the additive model, we again show that the effective SNRs of the transformed matrix for PCA are $\{\lambda_\ell \fh\}_\ell$.
\begin{thm} \label{thm:trans-mean}
	Let $Y$ be a spiked rectangular matrix in Definition \ref{defn:rect_mean} satisfying Assumption \ref{assump:entry1} with $\phi>1/4$. Let $\wt Y \equiv \wt Y^{(0)}$ be the transformed matrix obtained as in \eqref{eq:transformed-cov} with $\alpha=0$ and $(\wt \mu_\ell, \wt\bsu(\ell))$ the pair of $\ell$-th largest eigenvalue and the corresponding eigenvector of $\wt Y \wt Y^T$. Then, almost surely, for $1\leq \ell\leq k$
	\begin{itemize}
		\item If $\lambda_\ell > \frac{\sqrt{\rat}}{\fh}$, then $\wt\mu_\ell \to (1+\lambda_\ell \fh)(1+\frac{\rat}{\lambda_\ell \fh})$ and $ |\wt\bsu(\ell)^T\bsu(\ell)|^2\to 1-\frac{\rat(1+\lambda_\ell\fh)}{\lambda_\ell\fh(\lambda_\ell\fh+\rat)}$.
		\item If $\lambda_\ell < \frac{\sqrt{\rat}}{\fh}$, then $\wt\mu_\ell \to d_+ = (1+\sqrt{\rat})^2$ and $|\wt\bsu(\ell)^T\bsu(\ell)|^2\to0$.
	\end{itemize}
\end{thm}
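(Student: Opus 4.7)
The plan is to mirror the strategy used for Theorem~\ref{thm:trans-wigner}: an entrywise Taylor expansion of $h_0$ around $\sqrt{N}\,X_{ij}$ will show that $\wt Y$ equals, up to an $o(1)$ operator-norm perturbation, an additive spiked rectangular matrix with signal $\bsU(\fh\Lambda)^{1/2}\bsV^T$ and an i.i.d.\ noise matrix of the type in Definition~\ref{defn:rect}. The announced phase transitions and eigenvector alignments then follow from the classical BBP transition in the Marchenko--Pastur regime recalled at the end of Section~\ref{subsec:BBP}, combined with Weyl's inequality for the eigenvalues of $\wt Y\wt Y^T$ and the Davis--Kahan $\sin\theta$ theorem for the associated eigenvectors.

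Writing $\Theta \deq \bsU\Lambda^{1/2}\bsV^T$ and Taylor expanding to second order gives
\[
h_0\bigl(\sqrt{N}\,Y_{ij}\bigr) = h_0\bigl(\sqrt{N}\,X_{ij}\bigr) + \sqrt{N}\,\Theta_{ij}\, h_0'\bigl(\sqrt{N}\,X_{ij}\bigr) + \tfrac{1}{2} N\,\Theta_{ij}^{2}\, h_0''(\xi_{ij}),
\]
for some $\xi_{ij}$ between $\sqrt{N}\,X_{ij}$ and $\sqrt{N}\,Y_{ij}$. Dividing by $\sqrt{\fh N}$ and using the identity $\E[h_0'(\sqrt{N}X_{ij})] = \fh$, which follows by integration by parts since $g$ is smooth, positive and has fast decay, gives the decomposition
\[
\wt Y \;=\; \sqrt{\fh}\,\Theta \;+\; Q \;+\; E \;+\; R,
\]
where $Q_{ij} = (\fh N)^{-1/2} h_0(\sqrt{N}X_{ij})$, $E_{ij} = \fh^{-1/2}\bigl(h_0'(\sqrt{N}X_{ij}) - \fh\bigr)\Theta_{ij}$, and $R_{ij} = (2\sqrt{\fh})^{-1}\sqrt{N}\,\Theta_{ij}^{2}\, h_0''(\xi_{ij})$. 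Since $g$ is symmetric, the entries of $Q$ are i.i.d., centered, of variance $\fh^{-1}\int (g')^{2}/g = 1/N$, with all moments controlled by the polynomial growth bound on $h_0$ together with the moment hypothesis on $g$. Hence $\sqrt{\fh}\,\Theta + Q$ is an additive spiked rectangular matrix with SNR matrix $\fh\Lambda$, to which the BBP transition of Section~\ref{subsec:BBP} applies directly and yields exactly the thresholds and limiting overlaps claimed in the theorem.

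The remaining task is to verify $\|E\|_{\rm op},\|R\|_{\rm op}=o(1)$; this is where the localization threshold $\phi>1/4$ enters, and is the main obstacle. For $E$, writing $A_{ij}\deq h_0'(\sqrt{N}X_{ij})-\fh$, one has
\[
E \;=\; \fh^{-1/2}\sum_{\ell=1}^{k}\sqrt{\lambda_\ell}\;\diag(\bsu(\ell))\, A\,\diag(\bsv(\ell)),
\]
and since the entries of $A$ are i.i.d.\ centered with finite variance, the standard bound $\|A\|_{\rm op}\prec\sqrt{N}$ combines with $\|\bsu(\ell)\|_\infty\|\bsv(\ell)\|_\infty\prec N^{-2\phi}$ to give $\|E\|_{\rm op}\prec N^{1/2-2\phi}$. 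For $R$, the Frobenius bound $\|R\|_{\rm op}^{2}\leq\|R\|_{F}^{2}\prec N\sum_{ij}\Theta_{ij}^{4}$ together with $\max_{ij}\Theta_{ij}^{2}\prec N^{-4\phi}$ and $\sum_{ij}\Theta_{ij}^{2}=\|\Theta\|_{F}^{2}\approx\tr\Lambda=O(1)$ (the $\phi$-orthonormality defect is of lower order) yields $\|R\|_{F}^{2}\prec N^{1-4\phi+\varepsilon}$, where the factor $N^\varepsilon$ absorbs the polynomial-moment bound on $h_0''(\xi_{ij})$ via the hypothesis that all moments of $g$ are finite. Both error terms are thus $o(1)$ exactly when $\phi>1/4$. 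The subtlety is that a naive pointwise bound on $R$ would require only $\phi>3/8$; achieving $\phi>1/4$ requires exploiting the $L^{2}$-boundedness $\sum_{ij}\Theta_{ij}^{2}=O(1)$ rather than the pointwise bound together with a Frobenius estimate for $R$. Once the errors are controlled, Weyl and Davis--Kahan transfer the BBP conclusions from $\sqrt{\fh}\,\Theta+Q$ to $\wt Y$, delivering both the eigenvalue limits and the eigenvector alignments stated in the theorem.
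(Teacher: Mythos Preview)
Your proposal is correct and follows essentially the same route as the paper: Taylor-expand $h_0$ around $\sqrt{N}X_{ij}$, control the second-order remainder $R$ via a Frobenius bound and the centering error $E$ via the operator norm of the i.i.d.\ matrix $A$ (both $\prec N^{1/2-2\phi}$), and then apply the BBP transition to the resulting additive spiked rectangular matrix $\sqrt{\fh}\,\Theta + Q$. The only difference is that the paper re-derives the BBP result explicitly under the approximate orthonormality of Assumption~\ref{assump:entry1} (via the isotropic local Marchenko--Pastur law, Lemma~\ref{lem:local_MP}) rather than citing Section~\ref{subsec:BBP} as a black box; note also that Section~\ref{subsec:BBP} records only the eigenvalue limits, so for the overlap statement you should cite the full Benaych-Georges--Nadakuditi result directly.
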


From Theorem \ref{thm:trans-mean}, if $\lambda_\ell > \frac{\sqrt{\rat}}{\fh}$, we immediately see that the signal in the additive model can be reliably detected by the transformed PCA. Thus, the detection threshold in the PCA is lowered when the noise is non-Gaussian. We also remark that $h_0$ is the optimal entrywise transformation (up to constant factor) as in the Wigner case; see Appendix \ref{subsec:variation_cov}.

For the proof, we adapt the strategy in \cite{jung2021}, where the key observation is again that the transformed matrix is approximately equal to another spiked rectangular matrix. See Appendix \ref{sec:proof_trans-mean} for the detail of the proof.




For the multiplicative model, we have the following result.

\begin{thm} \label{thm:trans-cov}
	Let $Y$ be a spiked rectangular matrix in Definition \ref{defn:rect_cov} satisfying Assumption \ref{assump:entry1} with $\phi>1/4$. Let $\wt Y \equiv \wt Y^{(\alpha_{g,\ell})}$ be the transformed matrix obtained as in \eqref{eq:transformed-cov} with 
	\[
	\alpha_{g,\ell} := \frac{-\gamma_\ell F_g+\sqrt{4F_g+4\gamma_\ell F_g+\gamma_\ell^2F_g^2}}{2(1+\gamma_\ell)}
	\]
	and  $(\wt \mu_\ell, \wt\bsu(\ell))$ the pair of $\ell$-th largest eigenvalue and the corresponding eigenvector of $\wt Y \wt Y^T$. Then, almost surely, 
	\begin{itemize}
		\item If $(\lambda_g)_\ell > \sqrt{\rat}$, then $\wt\mu_\ell \to (1+ (\lambda_g)_\ell)(1+\frac{\rat}{(\lambda_g)_\ell})$ and  \[ |\wt\bsu(\ell)^T\bsu(\ell)|^2\to 1-\frac{(\lambda_g)_\ell+\rat}{(\lambda_g)_\ell\cdot((\lambda_g)_\ell+1)},\]
		\item If $(\lambda_g)_\ell < \sqrt{\rat}$, then $\wt\mu_\ell \to d_+ = (1+\sqrt{\rat})^2$ and $|\wt\bsu(\ell)^T\bsu(\ell)|^2\to0$.
	\end{itemize}
	where
	\[
	(\lambda_g)_\ell := \gamma_\ell + \frac{\gamma_\ell^2 \fh}{2} + \frac{\gamma_\ell \sqrt{4\fh + 4\gamma_\ell \fh + \gamma_\ell^2 \fh^2}}{2}.
	\]
\end{thm}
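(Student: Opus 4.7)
The plan adapts the Taylor-expansion strategy of Theorems \ref{thm:trans-wigner} and \ref{thm:trans-mean} but requires a genuinely new BBP-type result: as emphasized in the introduction, applying $h_\alpha$ entrywise to $\sqrt N Y$ does \emph{not} yield a spiked multiplicative model, because the leading noise term $h_\alpha(\sqrt N X_{ij})/\sqrt N$ is entrywise correlated with $X_{ij}$. The reduced matrix will instead be of the form $R + \bsU\Gamma'\bsU^T X$ rather than $(I + \bsU\Gamma'\bsU^T)X$. The reduction itself is routine: writing $Y = (I+\bsU\Gamma\bsU^T)X$ with $\Lambda = 2\Gamma+\Gamma^2$ and Taylor expanding $h_\alpha(\sqrt N Y_{ij})$ about $\sqrt N X_{ij}$, the $\phi$-localization of $\bsU$ with $\phi>1/4$ makes the perturbation $\sqrt N(\bsU\Gamma\bsU^T X)_{ij}$ of size $N^{1/2-2\phi}\ll 1$ with high probability, so the polynomial growth of $h_\alpha^{(\ell)}$ renders the Taylor remainder negligible. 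A concentration argument parallel to the one in Appendix \ref{sec:proof_trans-mean} then replaces $h_\alpha'(\sqrt N X_{ij})$ by its expectation $\E[h_\alpha'(W)] = \fh+\alpha$, yielding
\begin{align*}
\wt Y^{(\alpha)} = R + \bsU\Gamma'\bsU^T X + \caE,\qquad R_{ij}\deq \frac{h_\alpha(\sqrt N X_{ij})}{c_\alpha\sqrt N},\quad \Gamma'\deq \frac{\fh+\alpha}{c_\alpha}\Gamma,
\end{align*}
with $c_\alpha = \sqrt{\alpha^2+2\alpha+\fh}$, $\|\caE\|$ negligible, and $R$ itself an i.i.d. rectangular matrix satisfying $N\E[R_{ij}^2]=1$ but $N\E[R_{ij}X_{ij}] = (1+\alpha)/c_\alpha$ (by integration by parts applied to $g$).

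The heart of the argument is a BBP-type transition for $\wt Y = R + \bsU\Gamma'\bsU^T X$. Expanding
\begin{align*}
\wt Y\wt Y^T = RR^T + \bsU\Gamma'\bsU^T X R^T + R X^T\bsU\Gamma'\bsU^T + \bsU\Gamma'\bsU^T XX^T\bsU\Gamma'\bsU^T,
\end{align*}
one factorises the perturbation $\wt Y\wt Y^T - RR^T$ as $\tilde U M_0 \tilde U^T$ with $\tilde U = [\bsU,\, R X^T \bsU]\in\R^{M\times 2k}$ and an explicit $2k\times 2k$ matrix $M_0$ built from $\Gamma'$ and $\bsU^T XX^T\bsU$, so by Sylvester's determinant identity the outlier eigenvalues $z$ satisfy
\begin{align*}
\det\pb{I_{2k} + M_0\,\tilde U^T (RR^T - z)^{-1} \tilde U} = 0.
\end{align*}
Each block of $\tilde U^T (RR^T-z)^{-1}\tilde U$ is computed asymptotically by combining the anisotropic local Marchenko--Pastur law for $R$ (e.g. as in \cite{BKYY2016}) with the identity $(RR^T-z)^{-1}R = R(R^T R-z)^{-1}$, which converts mixed terms involving $X$ into normalized resolvent traces scaled by the joint moments $\E[R_{ij}X_{ij}]$ and $\E[R_{ij}^2]$ just computed.

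The resulting scalar equation matches the classical Marchenko--Pastur outlier formula $(1+\hat\lambda)(1+\rat/\hat\lambda)$ for an effective SNR $\hat\lambda = \hat\lambda(\alpha,\gamma_\ell,\fh)$; maximizing $\hat\lambda$ in $\alpha$ by setting $\partial_\alpha\hat\lambda = 0$ produces the critical value $\alpha_{g,\ell}$ and substitution recovers $(\lambda_g)_\ell$, while the eigenvector overlap follows from the residue of the resolvent at the outlier. The main obstacle is the entrywise dependence between $R$ and $X$: standard BBP and anisotropic local law arguments assume the low-rank perturbation is independent of the bulk noise, so one must upgrade the local law machinery to handle the correlated pair $(R,X)$. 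In particular the mixed quadratic forms $\bsU^T(RR^T-z)^{-1} R X^T\bsU$ and $\bsU^T X R^T(RR^T-z)^{-1} R X^T\bsU$ do not appear in standard proofs; controlling them will require a resolvent expansion that keeps track of the pairwise dependence --- each $(R_{ij},X_{ij})$ is a two-dimensional vector, independent across $(i,j)$ --- combined with the usual fluctuation-averaging arguments underlying the local law.
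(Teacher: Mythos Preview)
Your proposal is correct and follows essentially the same approach as the paper: Taylor-expand to reduce $\wt Y$ to $Q + \bsU\Gamma'\bsU^T X$ with $Q$ entrywise correlated to $X$, then run a BBP argument whose key input is an isotropic local law for the mixed quadratic forms $\bsU^T Q\caG X^T\bsU$ and $\bsU^T X\caG X^T\bsU$ with $\caG=(Q^TQ-z)^{-1}$. The paper works with $\wt Y^T\wt Y$ rather than $\wt Y\wt Y^T$ and parametrizes the rank-$2k$ perturbation via the eigenspace of $L(z)=\caG(\wt Y^T\wt Y-Q^TQ)$ rather than via Sylvester with $\tilde U=[\bsU,RX^T\bsU]$, but these are equivalent reformulations; the substantive new ingredient---which you correctly flag as the main obstacle---is supplied by the paper's Lemmas \ref{lem:local_MP2}, \ref{lem:entrywise_ecrm}, and \ref{lem:isotropic coupled}, which develop an entrywise-then-isotropic local law for ``entrywise correlated'' pairs $(\ttA,\ttB)$ via linearization and the maximal-expansion method of \cite{Bloemendal-Erdos-Knowles-Yau-Yin2014}.
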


Note that
\[ \begin{split}
(\lambda_g)_\ell &\geq \gamma_\ell + \frac{\gamma_\ell^2 \fh}{2} + \frac{\gamma_\ell \sqrt{4 + 4\gamma_\ell \fh + \gamma_\ell^2 \fh^2}}{2} = 2\gamma_\ell + \gamma_\ell^2 F_g \geq 2\gamma_\ell + \gamma_\ell^2 = \lambda_\ell,
\end{split} \]
and the inequality is strict if $F_g > 1$, i.e., $g$ is not Gaussian. 

Note that unlike the additive model, we cannot determine $\alpha_g$ without prior knowledge on the SNR. Nevertheless, we can apply the transformation $h_{\sqrt{\fh}}$ or $h_0$, which effectively increase all SNRs simultaneously; see Appendix \ref{sec:optimize_transform}.

From Theorem \ref{thm:trans-cov}, if $(\lambda_g)_\ell > \sqrt{\rat}$, the signal can be reliably detected by the transformed PCA and the detection threshold in the PCA is lowered if the noise is non-Gaussian. We also remark that $h_{\alpha_{g,\ell}}$ is the optimal entrywise transformation (up to constant factor) for the $\ell$-th largest eigenvalue; see Appendix \ref{sec:optimize_transform}.

We finish this section with an outline of the proof of Theorem \ref{thm:trans-cov}.
We begin by justifying that the transformed matrix $\wt Y$ is approximately of the form $(Q +  \bsU \wh\Gamma^{\frac{1}{2}}\bsU^T X)$, where $\wh\Gamma=\diag(\wh\gamma_1,\cdots,\wh\gamma_k)$. Then, the largest eigenvalue of $\wt Y \wt Y^T$ can be approximated by the largest eigenvalue of $(Q + \bsU \wh\Gamma^{\frac{1}{2}}\bsU^T X)^T (Q + \bsU \wh\Gamma^{\frac{1}{2}}\bsU^T X)$ for which we consider an identity
\[ \begin{split}
&(Q + \bsU \wh\Gamma^{\frac{1}{2}}\bsU^T X)^T (Q + \bsU\wh\Gamma^{\frac{1}{2}} \bsU^T X) - zI = (Q^T Q -zI) (I + L(z)),
\end{split} \]
where
\begin{align*}
&L(z) = \caG(z) ( X^T \bsU \wh\Gamma^{\frac{1}{2}}\bsU^T Q +  Q^T \bsU\wh\Gamma^{\frac{1}{2}} \bsU^T X +  X^T \bsU \wh\Gamma\bsU^T X), &&\caG(z) = (Q^T Q -zI)^{-1}.
\end{align*}
If $z$ is an eigenvalue of $(Q +  \bsU\wh\Gamma^{\frac{1}{2}} \bsU^T X)^T (Q + \bsU\wh\Gamma^{\frac{1}{2}} \bsU^T X)$ but not of $Q^T Q$, the determinant of $(I + L(z))$ must be $0$ and hence $-1$ is an eigenvalue of $L(z)$. Since the rank of $L(z)$ is at most $2k$, we can find that the eigenvector of $L(z)$ is a linear combination of vectors $\caG(z) Q^T \bsu(\ell)$ and $\caG(z) X^T \bsu(\ell).$ Further, by using the facts in Example \ref{Rmk:spike}, we can observe that a linear combination of vectors $\caG(z) Q^T \bsu(\ell)$ and $\caG(z) X^T \bsu(\ell)$ be a possible candidate for the $\ell$-th eigenvector of $L(z)$, and so of $\wt Y^T\wt Y$  i.e., for some $a_\ell, b_\ell$,
\beq \begin{split} \label{eq:eigenvalue}
	&L(z) (a_\ell \caG(z) Q^T \bsu(\ell) + b_\ell \caG(z) X^T \bsu(\ell)) = -(a_\ell \caG(z) Q^T \bsu(\ell) + b_\ell \caG(z) X^T \bsu(\ell)).
\end{split} \eeq

From the definition of $L(z)$,
\[ \begin{split}
L(z) \cdot \caG(z) X^T \bsU &=  \caG(z) X^T \bsU \wh\Gamma^{\frac{1}{2}}(\bsU^TQ\caG(z) X^T \bsU)  +  \caG(z) Q^T \bsU\wh\Gamma^{\frac{1}{2}}(\bsU^T X \caG(z) X^T \bsU) \\& \quad +  \caG(z) X^T \bsU\wh\Gamma(\bsU^T X \caG(z) X^T \bsU),
\end{split} \]
and a similar equation holds for $L(z) \cdot \caG(z) Q^T \bsU$. It suggests that if $\bsU^TQ \caG(z) X^T \bsU$ and $\bsU^TX \caG(z) X^T \bsU$ are concentrated around diagonal matrices where the entries are deterministic functions of $z$, then the left side of \eqref{eq:eigenvalue} can be well-approximated by a (deterministic) linear combination of $\caG(z) Q^T \bsu(\ell)$ and $\caG(z) X^T \bsu(\ell)$. We can then find the location of the largest eigenvalue in terms of a deterministic function of $z$ and conclude the proof by optimizing the function $q$.

The concentration of random matrices $\bsU^TQ \caG(z) X^T \bsU$ and $\bsU^TX \caG(z) X^T \bsU$ is the biggest technical challenge in the proof, mainly due to the dependence between the matrices $Q$ and $X$. We prove it by applying the technique of linearization in conjunction with resolvent identities and also several recent results from random matrix theory, most notably the local Marchenko--Pastur law. 

Once we find out the coefficients $a_\ell$ and $b_\ell$ in \eqref{eq:eigenvalue}, the eigenvector localization is an easy corollary since the vector $a_\ell \caG(z) Q^T \bsu(\ell) + b_\ell \caG(z) X^T \bsu(\ell)$ must be a right singular vector of $\wt Y$ with the corresponding singular value $\sqrt{(1+ (\lambda_g)_\ell)(1+\frac{\rat}{(\lambda_g)_\ell})}.$ In this paper, we will not go into further detail on this part.

The detailed proof of Theorem \ref{thm:trans-cov} can be found in Appendix \ref{sec:proof_trans-cov}.


\section{Main Result II - Weak Detection}\label{sec:main2}

\subsection{Signal detection in rank-$1$ spiked models}\label{subsec:ht}
We begin by recalling the LSS-based detection algorithms for rank-$1$ spiked rectangular matrices in \cite{jung2021}. Suppose that our goal is to detect the presence of the signal by the hypothesis test between $\bsH_0: \lambda = 0$ and $\bsH_1: \lambda = \SNR$ where the SNR $\SNR$ for the alternative hypothesis $\bsH_1$ is known. The key observation is that the variances of the limiting Gaussian distributions of the LSS in \eqref{eq:LSS} do not depend on the SNR while the means do. If we denote by $V_Y(f)$ the common variance, and $m_Y(f)|_{\bsH_0}$ and $m_Y(f)|_{\bsH_1}$ the means, respectively, our goal is to find a function that maximizes the relative difference between the limiting distributions of the LSS under $\bsH_0$ and under $\bsH_1$, i.e.,
\beq\label{loss}
\left| \frac{m_Y(f)|_{\bsH_1} - m_Y(f)|_{\bsH_0}}{\sqrt{V_Y(f)}} \right|.
\eeq
As we will see in Theorem \ref{thm:CLT_rec}, the optimal function $f$ is of the form $C_1\phi_{\SNR} +C_2$ for some constants $C_1$ and $C_2$, where 
\beq \begin{split}\label{eq:optimal_f1}
	\phi_{\SNR}(x) &=\frac{\SNR}{\rat}\left(\frac{2}{w_4-1}-1\right)x  -\log\left(\left(1+\frac{\rat}{\SNR} \right)(1+\SNR) - x\right).
\end{split}\eeq
The test statistic we use is thus defined as
\beq\label{eq:L_lambda1}
\begin{split}
	L_{\SNR} &= \sum_{i=1}^M \phi_{\SNR}(\mu_i) - M \int_{d_-}^{d_+} \phi_{\SNR}(x) \,\dd \mu_{MP}(x) \\
	&= -\log \det \left( \left(1+\frac{\rat}{\SNR} \right)(1+\SNR)I - YY^T \right)  + \frac{\SNR}{\rat} \left( \frac{2}{w_4-1} - 1 \right) (\Tr YY^T-M) \\
	&\quad + M \left[\frac{\SNR}{\rat} - \log\left(\frac{\SNR}{\rat}\right) -\frac{1-\rat}{\rat}\log(1+\SNR) \right].
\end{split} \eeq
Theorem 8 in \cite{jung2021} asserts that $L_{\SNR}$ converges to a Gaussian,
\beq
L_{\SNR} \Rightarrow \mathcal{N}(m(\lambda),V_0).
\eeq
Here, the mean of the limiting Gaussian distribution is given by
\beq \begin{split} \label{eq:mean_test1}
	m(\lambda) &= -\frac{1}{2} \log\left(1-\frac{\SNR^2}{\rat}\right)  +\frac{\SNR^2}{2\rat}(w_4-3)  -\log\left(1-\frac{\lambda^2}{\rat}\right) +\frac{\lambda^2}{\rat}\left(\frac{2}{w_4-1}-1\right)
\end{split} \eeq
with $\lambda = 0$ under $\bsH_0$ and $\lambda = \SNR$ under $\bsH_1$,
and the variance
\beq \label{eq:var_test1}
V_0=-2\log\left(1-\frac{\SNR^2}{\rat}\right) +\frac{2\SNR^2}{\rat}\left(\frac2{w_4-1}-1\right).
\eeq
%

Based on the asymptotic normality of $L_{\SNR}$, we can construct a test in which we compute the test statistic $L_{\SNR}$ and compare it with the average of $m(0)$ and $m(\SNR)$, i.e.,
\beq \begin{split} \label{eq:m_lambda1}
	m_{\SNR} &:= \frac{m(0)+m(\SNR)}{2} = -\log\left(1-\frac{\SNR^2}{\rat}\right) +\frac{\SNR^2}{2\rat} \left( \frac{2}{w_4-1} +w_4 -4 \right).
\end{split} \eeq
See Algorithm \ref{alg:test1} for the detail.

\begin{algorithm}[!tb]
	\caption{Hypothesis test for a rank-$1$ spiked rectangular matrix}
	\label{alg:test1}
	\begin{algorithmic}
		\STATE {\bfseries Input:} data $Y_{ij}$, parameters $w_4$, $\SNR$
		\STATE $L_{\SNR} \gets$ test statistic in \eqref{eq:L_lambda1}
		\STATE $m_{\SNR} \gets$ critical value in \eqref{eq:m_lambda1}
		\IF{$L_{\SNR} \leq m_{\SNR}$}
		\STATE Accept $\bsH_0$
		\ELSE
		\STATE Reject $\bsH_0$
		\ENDIF
	\end{algorithmic}
\end{algorithm}

The limiting error of the proposed test, Algorithm \ref{alg:test1}, is given by 
\beq \begin{split} \label{eq:test_error1}
	\err(\SNR) &= \p( L_{\SNR} > m_{\SNR} | \bsH_0) + \p( L_{\SNR} \leq m_{\SNR} | \bsH_1) \to \erfc \left( \frac{\sqrt{V_0}}{4\sqrt 2} \right),
\end{split} \eeq
where $V_0$ is the variance in \eqref{eq:var_test1} and $\erfc(\cdot)$ is the complementary error function. If the noise $X$ is Gaussian, $w_4=3$ and the limiting error in \eqref{eq:test_error1} is
\[
\erfc \left( \frac{\sqrt{V_0}}{4\sqrt 2} \right) = \erfc \left( \frac{1}{4} \sqrt{-\log \left( 1- \frac{\SNR^2}{\rat} \right)} \right),
\]
and it coincides with the error of the LR test; see Section 2.2 of \cite{jung2021}. It shows that our test is optimal with the Gaussian noise.

\subsection{Signal detection in rank-$k$ spiked models}

When the rank of the spike is larger than $1$, we first consider a simple case where the data is given as a spiked Wigner matrix and our goal is to construct an LSS-based algorithm for a hypothesis test between $\bsH_0: \Lambda=0$ and $\bsH_k: \Lambda=\SNR I_k$, where the rank $k$ of the spike for the alternative hypothesis is known. Our starting point is the following test statistic, which was considered for the rank-$1$ spiked Wigner matrix in \cite{chung2019weak}:
\beq \begin{split} \label{eq:L_lambda}
	L_{\SNR} &= - \log \det \left( (1+\SNR) I - \sqrt{\SNR} M \right) +  \frac{\SNR N}{2} \\
	&\qquad + \sqrt{\SNR} \left( \frac{2}{w_2} - 1 \right) \Tr M 
	+ \SNR \left( \frac{1}{w_4-1} - \frac{1}{2} \right) (\Tr M^2 - N).
\end{split} 
\eeq
If there is no signal present, 
$L_{\SNR} \Rightarrow \caN(m_0, V_0)$,
where
\beq \begin{split} \label{eq:m_0}
	m_0 = -\frac{1}{2} \log(1-\SNR) + \left(\frac{w_2 -1}{w_4-1} -\frac{1}{2} \right) \SNR + \frac{(w_4 -3) \SNR^2}{4},
\end{split} \eeq
\beq \begin{split} \label{eq:V_H}
	V_0 = -2 \log(1-\SNR) + \left( \frac{4}{w_2}-2 \right) \SNR + \left( \frac{2}{w_4-1} - 1 \right) \SNR^2.
\end{split} \eeq

For a rank-$k$ spiked Wigner matrix, we can consider the same $L_{\SNR}$ as in \eqref{eq:L_lambda} and prove that it also converges to a Gaussian with the same variance $V_0$ but an altered mean $m_k$. The following is the precise statement for the limiting distribution of $L_{\SNR}$.

\begin{thm} \label{thm:main-weak}
	Let $M$ be a rank-$k$ spiked Wigner matrix with a spike $\bsU$ as in Definition \ref{def:spiked_Wigner} with $\Lambda = \omega I_k$ for some nonnegative integer $k$. Then,
	\beq
	L_{\SNR} \Rightarrow \caN(m_k, V_0)\,,
	\eeq
	where the variance $V_0$ is as in \eqref{eq:V_H} and the mean $m_k$ is given by
	\beq \begin{split} \label{eq:m_k}
		m_k = m_0 + k \left[ -\log(1-\SNR) + \left( \frac{2}{w_2} - 1 \right) \SNR + \left( \frac{1}{w_4-1} - \frac{1}{2} \right) \SNR^2 \right]=m_0 + \frac{k V_0}{2}.
	\end{split} 
	\eeq
\end{thm}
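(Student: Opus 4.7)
The plan is to reduce Theorem~\ref{thm:main-weak} to the null-case CLT (which yields $L_\omega \Rightarrow \caN(m_0, V_0)$ when evaluated on the pure Wigner matrix $W$, cf.\ \eqref{eq:m_0}--\eqref{eq:V_H}) by showing that adding the spike shifts $L_\omega$ by an essentially deterministic amount equal to $kV_0/2$. Writing $M = W + \bsU \Lambda^{1/2} \bsU^\top$ with $\Lambda = \omega I_k$ and treating $\bsU$ as fixed (since $W$ is independent of $\bsU$), it suffices to establish
\[
\Delta_k := L_\omega(M) - L_\omega(W) = \frac{k V_0}{2} + o_\bbP(1),
\]
after which Slutsky's theorem delivers the claim with variance unchanged from the null case.

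Split $\Delta_k = \Delta_k^{\mathrm{det}} + \Delta_k^{\mathrm{lin}} + \Delta_k^{\mathrm{quad}}$ along the three summands of \eqref{eq:L_lambda}. The linear trace contribution is exact thanks to $\bsU^\top \bsU = I_k$:
\[
\Delta_k^{\mathrm{lin}} = \sqrt{\omega}\Big(\tfrac{2}{w_2}-1\Big) \Tr(\bsU \Lambda^{1/2}\bsU^\top) = k\omega\Big(\tfrac{2}{w_2}-1\Big).
\]
For the quadratic part, $\Tr M^2 - \Tr W^2 = 2\sqrt{\omega}\sum_{\ell=1}^k \bsu(\ell)^\top W\bsu(\ell) + k\omega$; under the moment bounds of Definition \ref{def:Wigner} together with the $\phi$-delocalization in Assumption \ref{assump:entry1}, each bilinear form $\bsu(\ell)^\top W\bsu(\ell)$ has mean zero and variance $O(1/N)$, so $\Delta_k^{\mathrm{quad}} = k\omega^2\big(\tfrac{1}{w_4-1}-\tfrac{1}{2}\big) + o_\bbP(1)$.

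The log-det piece is the main part. Setting $z_\omega := (1+\omega)/\sqrt{\omega} = \sqrt{\omega}+1/\sqrt{\omega} > 2$, the matrix determinant lemma gives
\[
\det\!\big((1+\omega)I - \sqrt{\omega} M\big) = \det\!\big((1+\omega)I - \sqrt{\omega} W\big)\cdot \det\!\big(I_k - \sqrt{\omega}\,\bsU^\top (z_\omega I - W)^{-1} \bsU\big),
\]
so $\Delta_k^{\mathrm{det}} = -\log\det\!\big(I_k - \sqrt{\omega}\,\bsU^\top (z_\omega I - W)^{-1} \bsU\big)$. Since $z_\omega$ sits a fixed distance outside $\supp \mu_{sc} = [-2,2]$ and the semicircle Stieltjes transform satisfies $m_{sc}(z_\omega) = -\sqrt{\omega}$ (direct computation from $m_{sc}(z)=(-z+\sqrt{z^2-4})/2$), the isotropic local semicircle law yields $\bsu(\ell)^\top(z_\omega I - W)^{-1}\bsu(\ell') = \sqrt{\omega}\,\delta_{\ell\ell'} + o_\bbP(1)$ for each pair $\ell,\ell'$. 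Continuity of $\log\det$ at $(1-\omega)I_k$ therefore gives $\Delta_k^{\mathrm{det}} = -k\log(1-\omega) + o_\bbP(1)$.

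Assembling the three deterministic limits and comparing with the expression for $V_0$ in \eqref{eq:V_H},
\[
-k\log(1-\omega) + k\omega\Big(\tfrac{2}{w_2}-1\Big) + k\omega^2\Big(\tfrac{1}{w_4-1}-\tfrac{1}{2}\Big) = \frac{kV_0}{2},
\]
exactly matching the announced shift. The main technical obstacle is the simultaneous isotropic concentration of the $k^2$ entries of the $k\times k$ resolvent block $\bsU^\top(z_\omega I - W)^{-1}\bsU$; at a fixed spectral parameter $z_\omega$ away from the bulk this is a mild application of the isotropic local semicircle law (valid under the mild delocalization already guaranteed by Assumption \ref{assump:entry1}), but one must verify that the $O(N^{-1/2+\epsilon})$ error bound holds uniformly in the $k$ spikes. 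Everything else is deterministic bookkeeping, and since $\Delta_k$ concentrates around $kV_0/2$ with $o_\bbP(1)$ fluctuation, Slutsky transfers the null CLT directly to $L_\omega \Rightarrow \caN(m_k, V_0)$.
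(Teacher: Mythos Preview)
Your argument is correct, but it takes a genuinely different route from the paper. The paper derives Theorem~\ref{thm:main-weak} as a corollary of the general CLT for linear spectral statistics (Theorem~\ref{thm:CLT}), whose proof proceeds by interpolating $M(\theta)=\theta\sqrt{\omega}\,\bsU\bsU^\top+W$, expressing the LSS as a contour integral of $\Tr R(\theta,z)$ via Cauchy's formula, computing $\partial_\theta \Tr R(\theta,z)$ through a resolvent expansion controlled by the isotropic local law, and integrating over $\theta$; the shift in the mean then comes out as $k\sum_{\ell\geq 1}\omega^{\ell/2}\tau_\ell(f)$ for general analytic $f$, and the specific formula in \eqref{eq:m_k} follows by evaluating the Chebyshev coefficients of $\phi_\omega$. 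You instead exploit the explicit form of $L_\omega$: the trace and trace-square pieces are handled by direct algebra, and the log-determinant piece via the matrix determinant lemma, which reduces everything to a single application of the isotropic local semicircle law at the fixed real point $z_\omega=\sqrt{\omega}+1/\sqrt{\omega}$. Both approaches hinge on the same isotropic estimate $\bsu(\ell)^\top(W-zI)^{-1}\bsu(\ell')\approx s_{sc}(z)\delta_{\ell\ell'}$ outside the bulk; yours is more elementary and avoids the contour-integral and $\theta$-interpolation machinery entirely, while the paper's route is what is actually needed to establish the optimality statement in Theorem~\ref{thm:CLT} (that $\phi_\omega$ maximizes the signal-to-noise ratio among all analytic $f$), which your direct computation cannot deliver.

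One minor remark: you invoke Assumption~\ref{assump:entry1} for the quadratic piece, but Theorem~\ref{thm:main-weak} does not assume it and your argument does not need it either. The variance of $\bsu(\ell)^\top W\bsu(\ell)$ is $O(1/N)$ for \emph{any} unit vector $\bsu(\ell)$ (since $\sum_i u_i(\ell)^4\leq 1$), and the isotropic local law you use for the log-det part holds for arbitrary deterministic unit vectors. So the reference to $\phi$-delocalization can simply be dropped.
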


\begin{proof}
	Theorem \ref{thm:main-weak} directly follows from Theorem \ref{thm:CLT} in Section \ref{sec:LSS}.
\end{proof}

Since the mean of $L_{\SNR}$ depends on the rank of the spike, we can construct a hypothesis test between $\bsH_{k_1}$ and $\bsH_{k_2}$ in \eqref{eq:hyp} based on Theorems \ref{thm:main-weak} and \ref{thm:main_rec}. In this test, for a given spiked Wigner matrix $M$, we compute $L_{\SNR}$ and compare it with the critical value $m_{(k_1+k_2)/2}$,
\beq \begin{split} \label{eq:m_lambda}
	m_{(k_1+k_2)/2} := \frac{m_{k_1}+m_{k_2}}{2}.
\end{split} \eeq
See Algorithm \ref{alg:ht} for the detail.

In Theorems \ref{thm:CLT} and \ref{thm:CLT_rec}, we prove that the proposed test in Algorithm \ref{alg:ht} is optimal among all CLT-based tests, in the sense that the error is minimized with the test statistic $L_{\SNR}$ also for spiked random matrices.

\begin{algorithm}[!tb]
	\caption{Hypothesis test for a spiked Wigner matrix}
	\label{alg:ht}
	
	\begin{algorithmic}
		\STATE \textbf{Data}: 	$M_{ij}$, parameters $w_2, w_4$, $\lambda$
		\STATE $L_{\SNR} \gets$ test statistic in \eqref{eq:L_lambda}, \hskip5pt $m_{(k_1+k_2)/2} \gets$ critical value in \eqref{eq:m_lambda} with \eqref{eq:m_k}
		
		\IF{$L_{\SNR} \leq m_{(k_1+k_2)/2}$} \STATE{ {\textbf{Accept}} $\bsH_1$ } 
		\ELSE \STATE{ \textbf{Accept} $\bsH_2$ }
		\ENDIF
		
	\end{algorithmic}
\end{algorithm}

\begin{thm} \label{thm:test}
	The error of the test, $\err(\SNR) = \p( L_{\SNR} > m_{\SNR} | \bsH_0) + \p( L_{\SNR} \leq m_{\SNR} | \bsH_1)$, in algorithm \ref{alg:ht} converges to
	\[
	\erfc \left( \frac{k_2 - k_1}{4} \sqrt{\frac{V_0}{2}} \right).
	\]
\end{thm}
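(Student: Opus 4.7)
The proof is a direct consequence of Theorem~\ref{thm:main-weak} together with the key algebraic observation that $m_{k_2}-m_{k_1}=(k_2-k_1)V_0/2$, which makes the critical value equidistant (in standard-deviation units) from both limiting means with a clean gap proportional to $k_2-k_1$.

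My plan is the following. First, apply Theorem~\ref{thm:main-weak} under each of the two hypotheses separately: under $\bsH_{k_1}$, the test statistic satisfies $L_{\SNR}\Rightarrow \caN(m_{k_1},V_0)$, and under $\bsH_{k_2}$, $L_{\SNR}\Rightarrow \caN(m_{k_2},V_0)$, with a common variance. (The fact that the variance is the same under both hypotheses is exactly the feature that made Algorithm~\ref{alg:ht} natural in the first place.) Next, recall that the critical value is $m_{(k_1+k_2)/2}=(m_{k_1}+m_{k_2})/2$, and compute using the explicit formula $m_k=m_0+kV_0/2$ from \eqref{eq:m_k} that
\[
m_{(k_1+k_2)/2}-m_{k_1} \;=\; \frac{m_{k_2}-m_{k_1}}{2} \;=\; \frac{(k_2-k_1)V_0}{4}\,,
\]
and symmetrically $m_{k_2}-m_{(k_1+k_2)/2}=(k_2-k_1)V_0/4$.

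Then I would convert these CLT statements into probabilities of the relevant events by the Portmanteau theorem (the boundary event $\{L_\omega = m_{(k_1+k_2)/2}\}$ has measure zero under both limits, so there is no continuity issue). This yields
\[
\P\bigl(L_{\SNR}>m_{(k_1+k_2)/2}\,\big|\,\bsH_{k_1}\bigr)\;\longrightarrow\;\P\!\left(\caN(0,V_0)>\tfrac{(k_2-k_1)V_0}{4}\right),
\]
and the Type-II error converges to the same quantity by symmetry of the Gaussian around its mean and the symmetric placement of the critical value. Summing the two errors and standardizing by $\sqrt{V_0}$ turns the right-hand side into $2\,\P\!\left(\caN(0,1)>\tfrac{(k_2-k_1)\sqrt{V_0}}{4}\right)$.

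Finally, I would rewrite this tail probability as a complementary error function using the paper's convention~\eqref{def:erfc}, namely $\P(\caN(0,1)>a)=\tfrac{1}{\sqrt{\pi}}\int_{a/\sqrt{2}}^{\infty}e^{-t^2}\dd t$, yielding the claimed expression $\erfc\!\left(\tfrac{k_2-k_1}{4}\sqrt{V_0/2}\right)$ (up to the normalization constant absorbed into the paper's convention, consistent with the rank-$1$ formula~\eqref{eq:test_error1}). There is no genuine obstacle here, since all the analytic work was done in Theorem~\ref{thm:main-weak}; the only step that demands any care is ensuring that the joint convergence is not needed—only the marginal limits under each hypothesis—and that the closed-form expression $m_k-m_0=kV_0/2$ is used to linearize the separation in $k$. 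Once that is invoked, the monotonicity of $\erfc$ immediately gives the qualitative conclusion that detection error decreases as $k_2-k_1$ grows, verifying the claim made in the introduction.
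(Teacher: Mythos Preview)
Your proposal is correct and follows essentially the same approach as the paper, which simply states that the result is a direct consequence of Theorems~\ref{thm:main-weak} and~\ref{thm:main_rec} (with a pointer to Section~3 of \cite{AlaouiJordan2018} and the proof of Theorem~2 in \cite{chung2019weak}). You have in fact spelled out the computation more explicitly than the paper does, correctly isolating the key identity $m_{k_2}-m_{k_1}=(k_2-k_1)V_0/2$ from \eqref{eq:m_k} and handling the Portmanteau/continuity point and the $\erfc$ normalization in the paper's convention.
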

\begin{proof}
	Theorem \ref{thm:test} is a direct consequence of Theorems \ref{thm:main-weak} and \ref{thm:main_rec}. (See also Section 3 of \cite{AlaouiJordan2018} and the proof of Theorem 2 of \cite{chung2019weak}.)
\end{proof}

\begin{rem}
	When $w_4 =3$, we find that the error $\err(\SNR)$ converges to
	\beq \label{eq:gaussian_error}
	\erfc \left( \frac{k_2-k_1}{4} \sqrt{-\log (1-\SNR) + \left( \frac{2}{w_2} - 1 \right) \SNR} \right).
	\eeq	
	The optimal error for the weak detection, achieved by the LR test, coincides with the limiting error in \eqref{eq:gaussian_error} when the noise is Gaussian and the SNR $\SNR$ is sufficiently small; see \cite{jung2020weak}. Thus, our proposed test is optimal in this case.
\end{rem}

The test in Algorithm \ref{alg:ht} can be readily extended to the spiked rectangular matrices by replacing the test statistic in \eqref{eq:L_lambda} with the following one, which was introduced in \cite{jung2021} for the rank-$1$ spiked rectangular matrices.
\beq\label{eq:L_lambda_rec}
\begin{split}
	L_{\SNR} &= -\log \det \left( \left(1+\frac{\rat}{\SNR} \right)(1+\SNR)I - YY^T \right)  + \frac{\SNR}{\rat} \left( \frac{2}{w_4-1} - 1 \right) (\Tr YY^T-M) \\
	&\quad + M \left[\frac{\SNR}{\rat} - \log\left(\frac{\SNR}{\rat}\right) -\frac{1-\rat}{\rat}\log(1+\SNR) \right].
\end{split} \eeq
We have the following results for the asymptotic normality of Gaussian fluctuation of $L_{\SNR}$:
\begin{thm} \label{thm:main_rec}
	Let $Y$ be a spiked rectangular matrix in Definition \ref{defn:rect_mean} or \ref{defn:rect_cov} with $\Lambda = \SNR I_k$ for some nonnegative integer $k$ and $\lambda \in(0,\sqrt{\rat})$ and $w_4>1$. Then, for any spikes with $\bsU^T\bsU=\bsV^T\bsV=I_k$,
	\beq
	L_{\SNR} \Rightarrow \mathcal{N}(m_k,V_0),
	\eeq
	where the mean and the variance are given by
	\beq \begin{split} \label{eq:m_k_rec}
		m_k &= m_0  +k\left[-\log\left(1-\frac{\SNR^2}{\rat}\right) +\frac{\SNR^2}{\rat}\left(\frac{2}{w_4-1}-1\right)\right]
	\end{split} \eeq
	and
	\beq \label{eq:var_rec}
	V_0=-2\log\left(1-\frac{\SNR^2}{\rat}\right) +\frac{2\SNR^2}{\rat}\left(\frac2{w_4-1}-1\right)
	\eeq
	where
	\beq\label{eq:m_0_rec}
	m_0=-\frac{1}{2} \log\left(1-\frac{\SNR^2}{\rat}\right)  +\frac{\SNR^2}{2\rat}(w_4-3).
	\eeq
\end{thm}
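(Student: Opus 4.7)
The plan is to deduce Theorem \ref{thm:main_rec} from a general CLT for linear spectral statistics of rank-$k$ spiked rectangular matrices (namely Theorem \ref{thm:CLT_rec} referenced in Section \ref{sec:LSS}), which I would state in the form
\[ L_Y(f) - M\int f\,\dd\mu_{MP} \;\Rightarrow\; \mathcal{N}\bigl(m_0(f) + k\,\tau(f,\SNR),\; V(f)\bigr), \]
asserting that the limiting variance is unchanged from the null case and that the mean shifts by an amount linear in $k$ whenever $\Lambda = \SNR I_k$. Theorem \ref{thm:main_rec} would then follow by substituting $f = \phi_{\SNR}$ from \eqref{eq:optimal_f1} and evaluating $m_0$, $V$, and $\tau$ by contour integration against the Marchenko-Pastur Stieltjes transform. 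A useful sanity check is that the shift per unit of $k$ works out to exactly $V_0/2$, paralleling the Wigner case in Theorem \ref{thm:main-weak}; this reflects that $\phi_{\SNR}$ was chosen to optimize \eqref{loss}.

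For the general CLT in the additive model, I would use a Woodbury / matrix-determinant-lemma reduction. Writing
\[ YY^T = XX^T + \bsU\Lambda^{1/2}\bsV^T X^T + X\bsV\Lambda^{1/2}\bsU^T + \bsU\Lambda\bsU^T, \]
a rank-$2k$ perturbation of $XX^T$, and setting $G_X(z) = (zI - XX^T)^{-1}$, the matrix determinant lemma gives
\[ \det(zI - YY^T) = \det(zI - XX^T)\cdot \det\bigl(I_{2k} - A(z)\bigr), \]
where $A(z)$ is a $2k\times 2k$ block matrix whose entries are bilinear forms of type $\bsu(\ell)^T G_X(z)\bsu(\ell')$, $\bsv(\ell)^T X^T G_X(z)\bsu(\ell')$, and $\bsv(\ell)^T X^T G_X(z) X\bsv(\ell')$ weighted by entries of $\Lambda^{1/2}$ and $\Lambda$. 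By the isotropic local Marchenko-Pastur law, each such bilinear form concentrates on $\delta_{\ell\ell'}$ times an explicit deterministic function of $z$ (built from the Marchenko-Pastur Stieltjes transform) with error $O(N^{-1/2+\epsilon})$. Since $\Lambda = \SNR I_k$, the matrix $A(z)$ decouples into $k$ identical $2\times 2$ blocks to leading order, and $\log\det(I_{2k} - A(z))$ is thus $k$ times a deterministic scalar, giving the linear-in-$k$ mean shift. A contour integral of $f \cdot \partial_z \log\det(I - A(z))$ around a contour enclosing the Marchenko-Pastur support then yields $k\tau(f,\SNR)$; the linear-in-trace piece $\Tr YY^T = \Tr XX^T + k\SNR$ is handled identically.

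For the multiplicative model, write $(I + \bsU\Lambda\bsU^T)^{1/2} = I + \bsU\bigl((I+\Lambda)^{1/2} - I\bigr)\bsU^T$, so that $YY^T - XX^T$ is again a low-rank perturbation (rank $\leq 3k$) of $XX^T$, and the same matrix-determinant-lemma argument applies with a slightly larger auxiliary block. The remaining randomness in $L_Y(f)$ comes entirely from $L_X(f)$, for which asymptotic Gaussianity with the stated $m_0$, $V_0$ is the classical Bai-Silverstein / Lytova-Pastur CLT for LSS of sample-covariance matrices. The main technical obstacle throughout is the sharp isotropic local Marchenko-Pastur law that controls the bilinear forms in $A(z)$ uniformly for $z$ on a contour at distance $\gtrsim N^{-1+\epsilon}$ from the bulk, together with showing that the sub-leading $o(1)$ corrections to $\log\det(I - A(z))$ contribute neither to the limiting mean shift nor to the limiting variance. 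The subcriticality hypothesis $\lambda < \sqrt{\rat}$ ensures that no eigenvalue of $YY^T$ escapes the Marchenko-Pastur bulk, so the contour can be drawn to enclose both $\operatorname{spec}(XX^T)$ and $\operatorname{spec}(YY^T)$ without picking up outlier residues.
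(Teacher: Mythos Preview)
Your high-level reduction---deduce the theorem from the general CLT for LSS of rank-$k$ spiked rectangular matrices (Theorem~\ref{thm:CLT_rec}) and then plug in $f=\phi_{\SNR}$---matches the paper exactly; the paper likewise says the result ``directly follows'' from Theorem~\ref{thm:CLT_rec} and defers the explicit mean/variance computation to Appendix~\ref{sec:compute}.

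Where you differ is in the proof of the underlying CLT itself. You propose a Sylvester/matrix-determinant-lemma reduction: write $\det(zI-YY^T)=\det(zI-XX^T)\det(I_{2k}-A(z))$ with $A(z)$ built from bilinear forms $\bsu(\ell)^T G_X\bsu(\ell')$, $\bsv(\ell)^T X^T G_X\bsu(\ell')$, etc., apply the isotropic local Marchenko--Pastur law to concentrate $A(z)$ on a block-diagonal deterministic matrix, and read off the mean shift as $k$ copies of a $2\times2$ determinant. The paper instead interpolates continuously: set $Y(\theta)=\theta\sqrt{\lambda}\bsU\bsV^T+X$ (additive) or $Y(\theta)=X+\theta\gamma\bsU\bsU^TX$ (multiplicative), compute $\partial_\theta\Tr G(\theta,z)$ by the chain rule, reduce it via the same isotropic local law to an explicit function of $s(z)$, and integrate over $\theta\in[0,1]$. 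Both routes rest on the identical technical input (the isotropic law) and arrive at the same contour integral $\oint f(z)\,\partial_z\log(1+\lambda(zs(z)+1))\,\dd z$; your determinant approach is somewhat more direct here (no $\theta$-integration, and for the multiplicative model the perturbation $YY^T-XX^T$ is indeed low rank so Sylvester applies cleanly), while the paper's interpolation is more uniform across the Wigner, additive, multiplicative, and entrywise-transformed settings treated elsewhere in Section~\ref{sec:LSS}. Both are correct.
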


Theorem \ref{thm:main_rec} directly follows from the general CLT result in Theorems \ref{thm:CLT_rec}. See Appendix \ref{sec:compute} for the detailed computation for the mean and the variance.

With Theorem \ref{thm:main_rec}, we find that Algorithm \ref{alg:ht} is available for the weak detection of the signal in the spiked rectangular matrices with the following change:
\begin{itemize}
	\item Data matrix is $Y_{ij}$ (instead of $M_{ij}$).
	\item Test statistic $L_{\SNR}$ is defined by \eqref{eq:L_lambda_rec} (instead of \eqref{eq:L_lambda}).
	\item Critical value $m_{(k_1+k_2)/2}$ is obtained by \eqref{eq:m_lambda} with \eqref{eq:m_0_rec} (instead of \eqref{eq:m_k}).
\end{itemize}

The limiting error of the test in this case is again $\erfc \left( \frac{k_2 - k_1}{4} \sqrt{\frac{V_0}{2}} \right)$ as in Theorem \ref{thm:test}, where $V_0$ is defined by \eqref{eq:var_rec}.

\subsection{Test with entrywise transformation for spiked matrices of additive type} \label{sec:entrywise}

The entrywise transform we applied with the PCA in Section \ref{subsec:PCA} can also be adapted to be used together with the proposed test in Algorithm \ref{alg:ht}; see also \cite{chung2019weak} where the same idea was applied for the rank-$1$ spiked Wigner matrix. Recall the transformation defined in \eqref{eq:alpha_transform} and the transformed matrix $\wt M$ in \eqref{eq:transformed-wigner}. We consider a test statistic
\beq \begin{split} \label{eq:wt L_lambda}
	\wt L_{\SNR} &:= - \log \det \left( (1+\SNR\fh)I - \sqrt{\SNR\fh} \tM \right) + \frac{\SNR\fh}{2} N \\
	&\qquad + \sqrt{\SNR} \left( \frac{2\sqrt{F_{g,d}}}{w_2} - \sqrt{\fh} \right) \Tr \tM 
	+ \lambda \left( \frac{\gh}{\wt{w_4}-1} - \frac{\fh}{2} \right) (\Tr \tM^2 - N),
\end{split} 
\eeq
where 
\[
\gh = \frac{1}{2\fh} \int_{-\infty}^{\infty} \frac{g'(w)^2 g''(w)}{g(w)^2} \dd w,
\quad \wt{w_4} = \frac{1}{(\fh)^2} \int_{-\infty}^{\infty} \frac{(g'(w))^4}{(g(w))^3} \dd w.
\]
We then have the following CLT result for $\wt L_{\SNR}$ that generalizes the results in \cite{chung2019weak}.

\begin{thm} \label{thm:trans_main}
	Assume the conditions in Theorem \ref{thm:main-weak}, satisfying Assumption \ref{assump:entry1} with $\phi>3/8$. If $\lambda \fh < 1$,
	\beq
	\wt L_{\SNR} \Rightarrow \caN(\wt m_k, \wt V_0),
	\eeq
	where the mean and the variance are given by
	\beq \begin{split} \label{eq:wt m_k}
		\wt m_k &= - \frac{1}{2} \log(1-\SNR\fh) + \left( \frac{(w_2 -1)\gh}{\wt w_4 -1} - \frac{\fh}{2} \right) \SNR + \frac{\wt w_4 -3}{4} (\SNR\fh)^2 \\
		&\qquad + k \left[ -\log(1- \SNR\fh) + \left( \frac{2\fhd}{w_2} - \fh \right) \SNR + \left( \frac{(\gh)^2}{\wt w_4-1} - \frac{(\fh)^2}{2}\right) \SNR^2 \right],
	\end{split} \eeq
	\beq \begin{split}
		\wt V_0 = -2 \log(1- \SNR\fh) + \left( \frac{4\fhd}{w_2} - 2\fh \right) \SNR + \left( \frac{2(\gh)^2}{\wt w_4-1} - (\fh)^2 \right) \SNR^2.
	\end{split} \eeq
\end{thm}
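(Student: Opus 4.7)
The plan is to recognize $\wt L_\SNR$ as a linear spectral statistic of $\wt M$ (plus a deterministic centering) and then invoke the general CLT for LSS of transformed spiked Wigner matrices, Theorem \ref{thm:trans_CLT} stated in Section \ref{sec:LSS}. Writing
\[
\wt L_\SNR = \sum_{i=1}^N f(\wt\mu_i) + c_N,
\]
with $c_N$ deterministic and
\[
f(\mu) = -\log\bigl((1+\SNR \fh) - \sqrt{\SNR \fh}\,\mu\bigr) + \sqrt{\SNR}\Bigl(\frac{2\sqrt{\fhd}}{w_2}-\sqrt{\fh}\Bigr)\mu + \SNR\Bigl(\frac{\gh}{\wt w_4-1}-\frac{\fh}{2}\Bigr)\mu^2,
\]
Theorem \ref{thm:trans_CLT} then yields the asymptotic normality of $\sum_i f(\wt\mu_i)$ together with explicit formulas for the limiting mean and variance as linear, respectively quadratic, functionals of $f$.

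The remaining work is to evaluate these two functionals for this specific $f$ and confirm that the outcome matches $(\wt m_k, \wt V_0)$. I would split $f$ into its three summands (log-det, linear, quadratic) and handle each separately. The semicircle integral $N\int f\,\dd\mu_{sc}$, combined with the deterministic $\SNR \fh N/2$ absorbed into $c_N$, cancels the extensive part and, via the standard identity $\int -\log(E-\mu)\,\dd\mu_{sc}(\mu) = -\log\bigl(E/2+\sqrt{E^2/4-1}\bigr)$ evaluated at $E=(1+\SNR \fh)/\sqrt{\SNR \fh}$, yields $\wt m_0$. The variance $\wt V_0$ arises as the sum of three self-variances and three cross-covariances against the CLT kernel of Theorem \ref{thm:trans_CLT}: the log-det self-variance contributes $-2\log(1-\SNR \fh)$, the coefficient $4\fhd/w_2 - 2\fh$ of $\SNR$ combines the linear self-variance (which involves $N\E[\wt M_{ii}^2]$, carrying the factor $\fhd$ from the diagonal transformation $h_d$) with its cross-covariance against the log-det piece (carrying $\fh$), and the coefficient $2(\gh)^2/(\wt w_4-1) - \fh^2$ of $\SNR^2$ similarly combines the quadratic self-variance (carrying $\gh$ and $\wt w_4$ from the cumulants of the off-diagonal entries of the transformed noise) with its log-det cross-covariance. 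The spike-dependent shift finally equals $\wt m_k - \wt m_0 = (k/2)\wt V_0$, a rank-$k$ analogue of the rank-$1$ relation observed in \cite{chung2019weak} that falls out directly from the mean part of Theorem \ref{thm:trans_CLT} applied to this $f$.

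The main obstacle is Theorem \ref{thm:trans_CLT} itself, because $\wt M$ is not literally a spiked Wigner matrix. The Taylor expansion
\[
h_0(\sqrt N M_{ij}) = h_0(\sqrt N W_{ij}) + h_0'(\sqrt N W_{ij})\,\sqrt{N\SNR}\,\bsu_i \bsu_j^T + O\bigl(N\SNR (\bsu_i \bsu_j^T)^2\bigr),
\]
combined with the identity $\E[h_0'(\sqrt N W_{ij})] = \fh$, shows that $\wt M \approx \bsU(\SNR \fh I_k)^{1/2}\bsU^T + Q$, where $Q$ is the Wigner matrix with entries $h_0(\sqrt N W_{ij})/\sqrt{\fh N}$ off the diagonal. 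However, the fluctuation of $h_0'(\sqrt N W_{ij})$ about $\fh$ couples the spike $\bsU$ with the noise $Q$ at first order; at the LSS level this coupling contributes to the limiting mean (and is precisely what replaces $\fh$ by $\fhd$ or by $\gh$ in various places in $\wt m_k$ and $\wt V_0$) but not to the limiting variance. The stronger localization assumption $\phi > 3/8$ (versus $\phi > 1/4$ in Theorem \ref{thm:trans-wigner}) is exactly what makes the quadratic Taylor remainder of order $\sqrt N(\bsu_i \bsu_j^T)^2$ contribute only at $o_p(1)$ level to the fluctuation of $\sum_i f(\wt\mu_i)$ — a strictly stronger requirement than merely controlling the largest eigenvalue. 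I would carry out the proof by adapting the Lindeberg-style interpolation used in \cite{chung2019weak} for the rank-$1$ case to a general rank-$k$ spike, together with a systematic bookkeeping of the bilinear and cubic cross-terms in $W$, $\bsU$, and derivatives of $h_0$ and $h_d$ produced by the entrywise transformation.
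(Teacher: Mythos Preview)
Your proposal is correct and takes essentially the same approach as the paper: the paper's proof of Theorem~\ref{thm:trans_main} is the single line ``directly follows from Theorem~\ref{thm:trans_CLT},'' and your plan is precisely to recognize $\wt L_\SNR$ as an LSS of $\wt M$ and invoke that general CLT, then evaluate the mean and variance functionals for the specific $f$. Your additional last paragraph, sketching how one would actually establish Theorem~\ref{thm:trans_CLT} via Taylor expansion of the entrywise transform and interpolation in the style of \cite{chung2019weak}, goes beyond what is strictly needed here (the paper treats Theorem~\ref{thm:trans_CLT} as a separate result proved in Appendix~\ref{app:CLT}) but is consistent with how the paper proves that underlying theorem.
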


\begin{proof}
	Theorem \ref{thm:trans_main} directly follows from Theorem \ref{thm:trans_CLT} in Section \ref{sec:LSS}.
\end{proof}

Based on Theorem \ref{thm:trans_main}, we can adapt the test in Algorithm \ref{alg:ht} to construct a test that utilizes the entrywise transformation. In this test, we compute $\wt L_{\Lambda}$ and compare it with the critical value
\beq \begin{split} \label{eq:wt m_lambda}
	\wt m_{(k_1+k_2)/2} := (\wt m_{k_1} + \wt m_{k_2})/2.
\end{split} \eeq
See Algorithm \ref{alg:htet} for the detail. The limiting error of the test is given as follows.

\begin{algorithm}[!tb]
	\caption{Hypothesis test for a spiked Wigner matrix with entrywise transformation}
	\label{alg:htet}
	
	\begin{algorithmic}
		\STATE \textbf{Data}: $M_{ij}$, parameters $w_2, w_4$, $\lambda$, densities $g, g_d$
		\STATE $\tM \gets$ transformed matrix in \eqref{eq:transformed-wigner}, \hskip5pt $\wt L_{\SNR} \gets$ test statistic in \eqref{eq:wt L_lambda}, \hskip5pt $\wt m_{(k_1+k_2)/2} \gets$ critical value in \eqref{eq:wt m_lambda} with \eqref{eq:wt m_k}
		
		\IF{$\wt L_{\SNR} \leq \wt m_{(k_1+k_2)/2}$} \STATE{ {\textbf{Accept}} $\bsH_1$ }
		\ELSE \STATE{ {\textbf{Accept}} $\bsH_2$ }
		\ENDIF
		
	\end{algorithmic}
\end{algorithm}

\begin{thm} \label{thm:trans_test}
	The error of the test in Algorithm \ref{alg:htet} converges to
	\[
	\erfc \left( \frac{k_2-k_1}{4} \sqrt{\frac{\wt V_0}{2}} \right).	
	\]
\end{thm}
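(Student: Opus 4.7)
The plan is to derive Theorem \ref{thm:trans_test} as a direct corollary of Theorem \ref{thm:trans_main}, paralleling the way Theorem \ref{thm:test} is obtained from Theorem \ref{thm:main-weak}. Under each hypothesis $\bsH_{k_i}$ ($i=1,2$), Theorem \ref{thm:trans_main} provides the asymptotic normality
\[
\wt L_{\SNR} \Rightarrow \caN(\wt m_{k_i}, \wt V_0),
\]
with the crucial feature that the variance $\wt V_0$ does not depend on the rank $k_i$ of the underlying spike. Since the thresholding rule in Algorithm \ref{alg:htet} compares $\wt L_{\SNR}$ to the midpoint $\wt m_{(k_1+k_2)/2}=(\wt m_{k_1}+\wt m_{k_2})/2$, the limiting Type-I and Type-II errors are symmetric and each converge to the tail probability $\P(Z > (\wt m_{k_2}-\wt m_{k_1})/(2\sqrt{\wt V_0}))$ with $Z \sim \caN(0,1)$.

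The next step is the identity $\wt m_{k_2}-\wt m_{k_1} = (k_2-k_1)\wt V_0/2$, which I would verify by direct inspection of the formulas in \eqref{eq:wt m_k}: the rank-independent contribution to $\wt m_k$ cancels in the difference, and the $k$-linear bracket in \eqref{eq:wt m_k} is exactly $\wt V_0/2$ as read from the definition of $\wt V_0$ in Theorem \ref{thm:trans_main}. This algebraic coincidence is the analog of the relation $m_k = m_0 + k V_0/2$ used in the proof of Theorem \ref{thm:test}, and it is no accident: both statements reflect the fact that each additional rank-one spike shifts the limiting mean of the log-determinant-type LSS by precisely half of its fluctuation variance, a manifestation of the underlying information-theoretic optimality of $L_\SNR$ (respectively $\wt L_\SNR$) among LSS-based statistics.

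Combining the two, the sum of the two error probabilities converges to
\[
2\,\P\!\left(Z > \frac{(k_2-k_1)\wt V_0/2}{2\sqrt{\wt V_0}}\right) = 2\,\P\!\left(Z > \frac{k_2-k_1}{4}\sqrt{\wt V_0}\right) = \erfc\!\left(\frac{k_2-k_1}{4}\sqrt{\frac{\wt V_0}{2}}\right),
\]
using the standard identity $2\,\P(Z>x) = \erfc(x/\sqrt{2})$ for the standard normal tail. This gives the claimed limit.

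The only technical caveat to check is that the applicability hypothesis of Theorem \ref{thm:trans_main}, namely $\lambda \fh < 1$, is in force under both $\bsH_{k_1}$ and $\bsH_{k_2}$; since the SNR $\SNR$ is the same under both hypotheses and only the rank changes, this is a single condition on $(\SNR,g)$ and I would simply record it as a standing assumption of the theorem. The argument is otherwise purely a Gaussian computation layered on top of Theorem \ref{thm:trans_main}, so I do not anticipate any genuine obstacle: all the probabilistic work (the CLT for the transformed LSS at arbitrary rank, the cancellation of the spike-dependent variance terms, and the polynomial bounds used to handle the non-linear entrywise transform) has already been absorbed into Theorem \ref{thm:trans_main}.
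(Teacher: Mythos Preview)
Your proposal is correct and follows essentially the same approach as the paper: both treat Theorem \ref{thm:trans_test} as an immediate corollary of the transformed CLT, using the key identity $\wt m_{k_2}-\wt m_{k_1}=(k_2-k_1)\wt V_0/2$ together with the standard Gaussian tail computation. Your write-up is in fact more explicit than the paper's one-line proof (which cites Theorem \ref{thm:trans_CLT_rec}, almost certainly a typo for Theorem \ref{thm:trans_main} or \ref{thm:trans_CLT}, since Algorithm \ref{alg:htet} is for the Wigner model); you correctly identify Theorem \ref{thm:trans_main} as the relevant input and spell out the mean-variance relation and the $\erfc$ identity.
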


\begin{proof}
	Theorem \ref{thm:trans_test} is a direct consequence of Theorem \ref{thm:trans_CLT_rec}.
\end{proof}

We also propose an analogous test can for the additive model of the spiked rectangular matrices as follows. Recall the transformed matrix $\wt Y \equiv \wt Y^{(0)}$ in \eqref{eq:transformed-cov}. Define the test statistic $\wt{L}_{\SNR}$ by
\beq \label{eq:wt L_lambda_rec}
\begin{split}	
	\wt{L}_{\SNR} &= - \log \det \left( \left(1+\frac{\rat}{\SNR\fh}\right)(1+\SNR\fh)I - \wt{Y}\wt{Y}^T \right) + \frac{2\SNR}{\rat} \left( \frac{\gh}{\wt w_4-1} - \frac{\fh}{2} \right) (\Tr \wt{Y}\wt{Y}^T-M)\\
	& \qquad +M \left[\frac{\SNR\fh}{\rat} - \log\left(\frac{\SNR\fh}{\rat}\right) -\frac{1-\rat}{\rat}\log(1+\SNR\fh)\right].
\end{split}
\eeq

We then have the following CLT for the test statistic.
\begin{thm}\label{thm:main_transformed_rec}
	Assume the conditions in Theorem \ref{thm:main_rec}, satisfying Assumption \ref{assump:entry1} with $\phi>3/8$. If $\lambda < \sqrt{\rat}/\fh$,
	\beq
	\wt L_{\SNR} \Rightarrow\caN(\wt m_k,\wt V_0),
	\eeq
	where the mean and the variance are given by
	\beq\label{eq:m^t_0}
	\wt m_0=-\frac1{2} \log\left(1-\frac{\SNR^2(\fh)^2}{\rat}\right)+\frac{\SNR^2(\fh)^2}{2\rat}(\wt w_4-3)
	\eeq
	\beq\label{eq:m^t_+}
	\wt m_k=\wt m_0+k\left[- \log\left(1-\frac{\SNR^2(\fh)^2}{\rat}\right)+\frac{2\SNR^2}{\rat}\left(\frac{(\gh)^2}{\wt w_4-1}-\frac{(\fh)^2}{2}\right)\right]
	\eeq
	and 
	\beq\label{eq:V^t_0}
	\wt V_0=\frac{4\SNR^2}{\rat}\left(\frac{(\gh)^2}{\wt w_4-1}-\frac{(\fh)^2}{2}\right)-2\log\left(1-\frac{\SNR^2(\fh)^2}{\rat}\right).
	\eeq
\end{thm}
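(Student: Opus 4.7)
The plan is to reduce Theorem \ref{thm:main_transformed_rec} to the un-transformed CLT, Theorem \ref{thm:main_rec}, by showing that the transformed matrix $\wt Y$ is, up to a perturbation that is negligible for the LSS, itself a rectangular matrix with a spiked mean whose effective SNR is $\SNR \fh$ and whose noise entries have fourth moment $\wt w_4$. Borrowing the Taylor expansion used in the proof of Theorem \ref{thm:trans-mean}, I would first write
\[
\wt Y = \bsU (\SNR \fh)^{1/2} \bsV^T + Q + \caE,
\]
where $\sqrt{N} Q_{ij} = h_0(\sqrt{N} X_{ij})/\sqrt{\fh}$ and $\caE$ carries the Taylor remainder together with the fluctuation of $h_0'(\sqrt{N} X_{ij})$ about its mean. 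A short integration-by-parts computation against $g$ (using that $g$ is symmetric and $h_0 = -g'/g$) shows that $\sqrt{N} Q_{ij}$ is centered, of variance $1$ and fourth moment $\wt w_4$, so $Q$ is an admissible noise in the sense of Definition \ref{defn:rect}. The mixed quantity $\gh$ is precisely what emerges from the same integration by parts when one tracks the third-moment-type cumulants on which the Gaussian mean of the LSS depends.

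Second, I would invoke the general CLT for the LSS of additive spiked rectangular matrices, Theorem \ref{thm:CLT_rec}, applied to $\bsU(\SNR\fh)^{1/2}\bsV^T + Q$. The subcritical hypothesis $\SNR < \sqrt{\rat}/\fh$ is exactly $\SNR\fh < \sqrt{\rat}$, which by Theorem \ref{thm:trans-mean} keeps the top eigenvalue of $\wt Y \wt Y^T$ pinned to $(1+\sqrt{\rat})^2$, so the theorem applies and yields a Gaussian limit. A direct comparison of \eqref{eq:wt L_lambda_rec} with \eqref{eq:L_lambda_rec} shows that $\wt L_{\SNR}$ equals $L_{\SNR\fh}(\wt Y)$ of \eqref{eq:L_lambda_rec} after the substitutions $\SNR\mapsto\SNR\fh$ and $w_4\mapsto\wt w_4$, combined with the $\gh$-correction in the trace coefficient; pulling these substitutions through the formulas of Theorem \ref{thm:main_rec} produces the mean \eqref{eq:m^t_+} and variance \eqref{eq:V^t_0} claimed, while \eqref{eq:m^t_0} is the specialization to $k=0$.

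The main obstacle is to control $\caE$ carefully enough that it perturbs no term of $\wt L_{\SNR}$ at order $1$. Two features make this delicate: $\caE$ is correlated with $Q$ (both are nonlinear functions of $X$), and the $\log\det$ in \eqref{eq:wt L_lambda_rec} is a nonlinear spectral function. For the first, I would reuse the bound on the Taylor remainder from the proof of Theorem \ref{thm:trans-mean}, whose hypothesis $\phi>1/4$ is strengthened here to $\phi > 3/8$ precisely so that $\|\caE\|_{\mathrm{op}}$ becomes $o(N^{-1/4})$; the polynomial growth of the derivatives of $h_0$ from Assumption \ref{assump:entry1} together with the near-delocalization conditions on $\bsU, \bsV$ make this bound possible. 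For the second, the subcritical assumption keeps the point $(1+\rat/(\SNR\fh))(1+\SNR\fh)$ at positive distance from the spectrum of $\wt Y \wt Y^T$, so $\log\det$ is smooth in the eigenvalues there and a Hoffman--Wielandt argument converts the operator-norm bound on $\caE$ into an $o(1)$ fluctuation of $\wt L_{\SNR}$; the trace term is even simpler, handled by $|\Tr(\caE Q^T + Q\caE^T + \caE\caE^T)| \le C \|\caE\|_{\mathrm{op}} (\|Q\|_{\mathrm{op}} + \|\caE\|_{\mathrm{op}}) \cdot M = o(1)$. With this stability in hand, the Gaussian limit of Theorem \ref{thm:CLT_rec} transfers from $\bsU(\SNR\fh)^{1/2}\bsV^T + Q$ to $\wt L_{\SNR}(\wt Y)$, and the theorem follows.
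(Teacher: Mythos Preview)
Your reduction has a genuine gap at precisely the place you flagged as ``the main obstacle.'' The operator-norm control of $\caE$ is not strong enough to make its contribution to $\wt L_{\SNR}$ negligible. Concretely, your trace bound
\[
|\Tr(\caE Q^T + Q\caE^T + \caE\caE^T)| \le C\,\|\caE\|_{\mathrm{op}}\bigl(\|Q\|_{\mathrm{op}}+\|\caE\|_{\mathrm{op}}\bigr)\cdot M
\]
is $o(N^{-1/4})\cdot O(1)\cdot M = o(N^{3/4})$, not $o(1)$; the same counting shows that the first-order perturbation of the $\log\det$ term, namely $\Tr\!\big((cI-Y_0Y_0^T)^{-1}(Y_0\caE^T+\caE Y_0^T)\big)$, is a priori $O(\sqrt{N})$. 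A Hoffman--Wielandt or Weyl argument cannot rescue this: each of the $M$ eigenvalues moves by $O(\|\caE\|_{\mathrm{op}})$, and summing gives the same $o(N^{3/4})$ scale.

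More importantly, the contribution of $\caE$ is not zero but exactly the $(\gh-\fh)$ correction in the mean. The entries of $\wt Y-\E[\wt Y]$ do \emph{not} have common variance $1/N$; a direct computation (see \eqref{eq:tY_second}) gives
\[
\E[\wt Y_{ij}^2]-(\E[\wt Y_{ij}])^2=\tfrac1N+\lambda(\gh-\fh)(\bsu_i\bsv_j^T)^2+O\!\big(N(\bsu_i\bsv_j^T)^4\big),
\]
so after centering, $\wt Y$ is a \emph{random Gram matrix with a nontrivial variance profile}, not a spiked model with i.i.d.\ noise $Q$. This inhomogeneity accumulates across all $MN$ entries to produce an $O(1)$ shift in the LSS mean, namely the extra term $\frac{k\SNR}{\sqrt{\rat}}(\gh-\fh)\tau_1(\wt f)$ in Theorem~\ref{thm:trans_CLT_rec}. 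Your route---applying Theorem~\ref{thm:CLT_rec} to $\bsU(\SNR\fh)^{1/2}\bsV^T+Q$ and declaring $\caE$ negligible---would yield $\wt m_k-\wt m_0$ with $\fh\gh$ in place of $\gh^2$, which is wrong unless $g$ is Gaussian. (Your remark that $\gh$ arises from ``third-moment-type cumulants'' of $Q$ is also off: the rectangular CLT mean in Theorem~\ref{thm:CLT_rec} depends only on $w_4$, and $\gh$ is not a moment of $Q$ at all.)

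The paper's proof confronts this head-on: it normalizes $\wt Y-\E[\wt Y]$ entrywise to a matrix $A$ with exact variance $1/N$, interpolates between $A$ and $\wt Y-\E[\wt Y]$, and uses the local law for random Gram matrices with variance profile (Lemmas~\ref{lem:anisotropic_Gram} and \ref{lem:rigidity}) together with a recursive moment estimate (Lemma~\ref{lem:recursive}) to show that this interpolation shifts $\Tr G$ by exactly $-\lambda(\gh-\fh)k\,\partial_z(zs(z)+1)$ (Lemma~\ref{lem:Tr_G^A}). The spike and the higher-order remnants of $\E[\wt Y]$ are then handled by a second interpolation; the condition $\phi>3/8$ enters only to kill the residual $O\!\big(N^2(\bsu_i\bsv_j^T)^5\big)$ piece of $\E[\wt Y]$ in Frobenius norm, not to control $\caE$ in operator norm.
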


With Theorem \ref{thm:main_transformed_rec}, we can adjust Algorithm \ref{alg:ht} for the weak detection of the signal in the additive model of spiked rectangular matrices, where we make the following change:
\begin{itemize}
	\item Data matrix is $Y_{ij}$ (instead of $M_{ij}$).
	\item Transformed matrix is $\wt Y$ (instead of $\wt M$), defined by \eqref{eq:transformed-cov} with $\alpha = 0$.
	\item Test statistic $\wt L_{\SNR}$ is defined by \eqref{eq:wt L_lambda_rec} (instead of \eqref{eq:wt L_lambda}).
	\item Critical value $m_{(k_1+k_2)/2}$ is obtained by \eqref{eq:wt m_lambda} with \eqref{eq:m^t_+} (instead of \eqref{eq:wt m_k}).
\end{itemize}

In Appendix \ref{sec:ex}, we consider several examples of spiked Wigner matrices and spiked rectangular matrices, where we compare the errors from numerical simulations and the theoretical errors of the proposed algorithms. We find that the numerical errors of the proposed tests closely match the corresponding theoretical errors and the error from Algorithm \ref{alg:htet} is lower than that of Algorithm \ref{alg:ht}.

\subsection{Rank estimation} \label{subsec:adaptive}

The test in Algorithm \ref{alg:ht} requires prior knowledge about $k_1$ and $k_2$, the possible ranks of the planted spike. In this section, we adapt the idea of the proposed tests in Algorithm \ref{alg:ht} to estimate the rank of the signal when there is no prior information on the rank $k$. Recall that the test statistic $L_{\SNR}$ defined in \eqref{eq:L_lambda} does not depend on the rank of the matrix. As proved in Theorem \ref{thm:main-weak}, the test statistic $L_{\SNR}$ converges to a Gaussian random variable with mean $m_k$ and the variance $V_0$, where $m_k$ is equi-distributed with respect to $k$ and $V_0$ does not depend on $k$. It is then natural to set the best candidate for $k$, which we call $\kappa$, be the minimizer of the distance $|L_{\SNR} - m_k|$. This procedure is equivalent to find \emph{the nearest} nonnegative integer of the value
\beq \label{eq:adaptive_value}
\kappa' := \frac{2(L_{\SNR} - m_0)}{V_0}
\eeq
rounding half down.

We describe the procedure in Algorithm \ref{alg:at}; for example, its probability of error for spiked Wigner matrix converges to
\beq \begin{split} \label{eq:rank_k_error}
	& \p(k=0) \cdot \p \left( Z > \frac{\sqrt{V_0}}{4} \right) + \sum_{i=1}^{\infty} \p(k=i) \cdot \p \left( |Z| > \frac{\sqrt{V_0}}{4} \right) \\
	&= \left( 1 - \frac{\p(k=0)}{2} \right) \cdot\erfc \left( \frac{1}{4} \sqrt{\frac{V_0}{2}} \right),
\end{split} \eeq
where $Z$ is a standard Gaussian random variable. Note that it depends only on $\p(k=0)$.

\begin{algorithm}[!tb]
	\caption{Rank estimation}
	\label{alg:at}
	
	\begin{algorithmic}
		\STATE \textbf{Data}: $M_{ij}$ (or $Y_{ij}$), parameters $w_2, w_4$, $\lambda$
		\STATE $L_{\SNR} \gets$ test statistic in \eqref{eq:L_lambda} or \eqref{eq:L_lambda_rec}, \hskip5pt $m_0 \gets$ mean in \eqref{eq:m_0} or \eqref{eq:m_0_rec}, \hskip5pt $m_1 \gets$ mean in \eqref{eq:m_k} or \eqref{eq:m_k_rec} with $k=1$
		\STATE $\kappa' \gets$ value in \eqref{eq:adaptive_value}
		
		\IF{$L_{\SNR} \leq (m_0 + m_1)/2$} \STATE{ {\textbf{Set}} $\kappa=0$ }
		\ELSE \STATE{ {\textbf{Set}} $\kappa = \lceil \kappa' - 0.5 \rceil$ }
		\ENDIF
		
	\end{algorithmic}
\end{algorithm}

The error can be lowered if the range of $k$ is known a priori. See Appendix \ref{sec:ex}. It is also possible to improve Algorithm \ref{alg:at} by pre-transforming the data matrix entrywise as in Section \ref{sec:entrywise}. We omit the detail.

\section{Central Limit Theorems} \label{sec:LSS}
In this section, we collect our results on general CLTs for the LSS of spiked random matrices. To precisely define the statements, we introduce the Chebyshev polynomials of the first kind.

\begin{defn}[Chebyshev polynomial]
	The $n$-th Chebyshev polynomial (of the first kind) $T_n$ is a degree $n$ polynomial defined by $T_0(x) = 1$, $T_1(x) = x$, and
	\[
	T_{n+1}(x) = 2x T_n(x) - T_{n-1}(x).
	\]
\end{defn}

We first state a CLT for the LSS of spiked Wigner matrices. Recall that we denote by $\mu_1 \geq \mu_2 \geq \dots \geq \mu_N$ the eigenvalues of a spiked Wigner matrix $M$.
\begin{thm} \label{thm:CLT}
	Assume the conditions in Theorem \ref{thm:main-weak}. Suppose that a function $f$ is analytic on an open interval containing $[-2, 2]$. Then,
	\[
	\left( \sum_{i=1}^N f(\mu_i) - N \int_{-2}^2 \frac{\sqrt{4-z^2}}{2\pi} f(z) \, \dd z \right) \Rightarrow \caN\left(m_k(f), V_0(f)\right)\,.
	\]
	The mean and the variance of the limiting Gaussian distribution are given by
	\[ \begin{split} 
	m_k(f) = \frac{1}{4} \left( f(2) + f(-2) \right) -\frac{1}{2} \tau_0(f) + (w_2 -2) \tau_2(f) 
	+ (w_4-3) \tau_4(f) + k \sum_{\ell=1}^{\infty} \sqrt{\SNR^{\ell}} \tau_{\ell}(f),
	\end{split} 
	\]
	\[ \begin{split} 
	V_0(f) = (w_2-2) \tau_1(f)^2 + 2(w_4-3) \tau_2(f)^2 
	+ 2\sum_{\ell=1}^{\infty} \ell \tau_{\ell}(f)^2\,,
	\end{split} 
	\]
	where we let
	\[
	\tau_{\ell}(f) = \frac{1}{\pi} \int_{-2}^2 T_{\ell} \left( \frac{x}{2} \right) \frac{f(x)}{\sqrt{4-x^2}} \dd x.
	\]
	Furthermore, for $m_k$, $m_0$, and $V_0$ defined in Theorem \ref{thm:main-weak},
	\[
	\left|\frac{m_k(f) - m_0(f)}{\sqrt{V_{0}(f)}}\right| \leq \left|\frac{m_k- m_0}{\sqrt{V_0}}\right|
	\]
	The equality holds if and only if $f(x)=C_1\phi_{\SNR}(x) +C_2$ for some constants $C_1$ and $C_2$ where 
	\[
	\phi_{\SNR}(x) := \log \left( \frac{1}{1-\sqrt{\SNR}x + \SNR} \right) + \sqrt{\SNR} \left( \frac{2}{w_2} - 1 \right) x 
	+ \SNR \left( \frac{1}{w_4-1} - \frac{1}{2} \right) x^2.
	\]
\end{thm}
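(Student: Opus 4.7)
The plan is to decompose $L_M(f) = L_W(f) + \Delta(f)$, where $W$ is the underlying pure Wigner noise and $\Delta(f) := L_M(f) - L_W(f)$ is the change in the LSS caused by the rank-$k$ spike. I claim that $L_W(f)$ satisfies the classical Bai--Yao CLT for Wigner matrices, yielding the null mean $m_0(f)$ and variance $V_0(f)$, while $\Delta(f)$ converges in probability to the deterministic quantity $k\sum_{\ell \geq 1}\sqrt{\SNR^\ell}\,\tau_\ell(f)$. Slutsky's theorem then gives $L_M(f) - N\int f\,\dd\mu_{sc} \Rightarrow \caN(m_k(f), V_0(f))$. Since the null CLT is classical, the bulk of the work is the analysis of $\Delta(f)$.

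To evaluate $\Delta(f)$, I would begin from the Cauchy integral representation
\[
L_M(f) - L_W(f) = \frac{1}{2\pi\ii}\oint_\Gamma f(z)\bigl[\tr(zI-M)^{-1} - \tr(zI-W)^{-1}\bigr]\,\dd z
\]
on a contour $\Gamma$ inside the domain of analyticity of $f$ enclosing $[-2,2]$ (and any outliers). Sylvester's identity gives $\det(zI-M) = \det(zI-W)\cdot\det(I_k + \bsU^T G_W(z)\bsU\,\Lambda^{1/2})$ with $G_W(z) = (W-zI)^{-1}$, and differentiating in $z$,
\[
\tr(zI-M)^{-1} - \tr(zI-W)^{-1} = \partial_z \log\det\bigl(I_k + \bsU^T G_W(z)\bsU\,\Lambda^{1/2}\bigr).
\]
The isotropic local semicircle law gives $\bsU^T G_W(z)\bsU = m_{sc}(z)I_k + o(1)$ with high probability uniformly in $z \in \Gamma$, so for $\Lambda = \SNR I_k$ the determinant reduces to $(1+\sqrt{\SNR}\,m_{sc}(z))^k + o(1)$. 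Introducing the Joukowski coordinate $z = \xi + 1/\xi$ (so $m_{sc}(z) = -1/\xi$) on the contour $|\xi| = r$ for $r>1$, and using the Laurent expansion $\log(1-\sqrt{\SNR}/\xi) = -\sum_{\ell\geq 1}(\sqrt{\SNR}/\xi)^\ell/\ell$ together with the identity that the coefficient of $\xi^\ell$ in $f(\xi+1/\xi) = \tau_0(f) + \sum_{m\geq 1}\tau_m(f)(\xi^m+\xi^{-m})$ is $\tau_\ell(f)$, the contour integral collapses to $k\sum_{\ell\geq 1}\sqrt{\SNR^\ell}\,\tau_\ell(f)$.

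For the optimality claim, I regroup the variance as $V_0(f) = a_1^2\tau_1(f)^2 + a_2^2\tau_2(f)^2 + \sum_{\ell\geq 3}a_\ell^2 \tau_\ell(f)^2$ by absorbing the $\ell = 1,2$ contributions from the general sum $2\sum_\ell\ell\tau_\ell^2$ into the lower-order coefficients, giving $a_1^2 = w_2$, $a_2^2 = 2(w_4-1)$, and $a_\ell^2 = 2\ell$ for $\ell \geq 3$. Cauchy--Schwarz in this weighted $\ell^2$ inner product yields
\[
(m_k(f) - m_0(f))^2 = k^2\Bigl(\sum_{\ell\geq 1}\sqrt{\SNR^\ell}\,\tau_\ell(f)\Bigr)^2 \leq k^2\Bigl(\sum_{\ell\geq 1}\frac{\SNR^\ell}{a_\ell^2}\Bigr)V_0(f),
\]
with equality iff $\tau_\ell(f) \propto \sqrt{\SNR^\ell}/a_\ell^2$ for every $\ell \geq 1$. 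The generating function identity $-\log(1-2r\cos\theta + r^2) = 2\sum_{\ell\geq 1}r^\ell\cos(\ell\theta)/\ell$ at $r = \sqrt{\SNR}$, together with the elementary Chebyshev expansions of $x$ and $x^2$, gives $\tau_\ell(\phi_\SNR) = 2\sqrt{\SNR^\ell}/a_\ell^2$ for all $\ell \geq 1$. Since additive constants and the $\tau_0$ component are inert in both $V_0$ and $m_k - m_0$, the extremizers are exactly $f = C_1\phi_\SNR + C_2$.

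The principal technical obstacle is verifying that the $o(1)$ error from the isotropic local law, once inserted under the contour integral, remains genuinely negligible uniformly in $z \in \Gamma$: one must push $\Gamma$ to distance of order $N^{-1+\epsilon}$ from $[-2,2]$ and combine the optimal edge form of the isotropic law with edge rigidity estimates. A cleaner alternative closer to the interpolation approach mentioned in the introduction is to track $\frac{\dd}{\dd t}\E[L_{M_t}(f)]$ along the path $M_t = t\,\bsU\Lambda^{1/2}\bsU^T + W$ for $t \in [0,1]$ via resolvent differentiation and the isotropic law applied directly to $M_t$; this avoids the explicit subtraction of $L_W(f)$ and gives tighter error control.
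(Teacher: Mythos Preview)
Your proposal is correct, and your ``cleaner alternative'' via interpolation along $M_t = t\,\bsU\Lambda^{1/2}\bsU^T + W$ is precisely the route the paper takes: it differentiates $\Tr R(\theta,z)$ in $\theta$, uses the resolvent expansion $R(0)-R(\theta)=\theta\sqrt{\lambda}\,R(\theta)\bsU\bsU^T R(0)$ together with the isotropic local law for $R(0,z)$ to obtain $\bsu(m)^T R(\theta,z)\bsu(m)=\frac{s_{sc}(z)}{1+\theta\sqrt{\lambda}s_{sc}(z)}+O(N^{-1/2})$, and then integrates in $\theta$ to recover exactly the logarithmic derivative you wrote down from Sylvester. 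So the two arguments coincide at the level of the final contour integral; the difference is only in how one arrives at $\partial_z\log(1+\sqrt{\SNR}\,s_{sc}(z))^k$. Your Sylvester route is slightly more direct here because it invokes the isotropic law only for the \emph{pure} Wigner resolvent $G_W$, whereas the interpolation needs it (via the expansion) for every $R(\theta,\cdot)$. The paper favors interpolation because it transfers verbatim to the rectangular and entrywise-transformed settings of Theorems~\ref{thm:CLT_rec} and~\ref{thm:trans_CLT}, where no clean Sylvester factorization is available.

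One small correction on the technical obstacle you flag: in the subcritical regime $\SNR<1$ assumed here there are no outliers, so the contour $\Gamma$ stays at a \emph{fixed}, $N$-independent distance from $[-2,2]$; you do not need to push it to distance $N^{-1+\epsilon}$. The genuine issue is that any closed contour around $[-2,2]$ must cross the real axis, and the isotropic law is stated for $\im z\neq 0$. The paper handles this exactly as you would expect: split $\Gamma$ into the portion with $|\im z|>N^{-1/2}$ (where the isotropic estimate applies uniformly) and the two short segments near the real axis, on which the trace difference is controlled deterministically by eigenvalue rigidity/interlacing. Your Cauchy--Schwarz optimality argument and the identification of $\phi_\SNR$ via the Chebyshev generating function match the paper's (which defers the Wigner computation to \cite{chung2019weak} and carries out the analogous rectangular calculation in Appendix~\ref{sec:compute}).
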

We will give a proof of Theorem \ref{thm:CLT} in Appendix \ref{app:CLT}. With the entrywise transformation in Section \ref{sec:entrywise}, we have the following changes in Theorem \ref{thm:CLT}. Recall that $\wt\mu_1 \geq \wt\mu_2 \geq \dots \geq \wt\mu_N$ are the eigenvalues of the transformed matrix $\wt M$.


\begin{thm} \label{thm:trans_CLT}
	Assume the conditions in Theorem \ref{thm:CLT}, satisfying Assumption \ref{assump:entry1} with $\phi>3/8$. If $\lambda \fh < 1$,
	\[ \begin{split}
	\left( \sum_{i=1}^N f(\wt\mu_i) - N \int_{-2}^2 \frac{\sqrt{4-z^2}}{2\pi} f(z) \, \dd z \right)
	\Rightarrow \caN(\wt m_k(f), \wt V_0(f))\,.
	\end{split} 
	\]
	The mean and the variance of the limiting Gaussian distribution are given by
	\beq \begin{split} \label{eq:mean_tM}
		\wt m_k(f) &= \frac{1}{4} \left( f(2) + f(-2) \right) -\frac{1}{2} \tau_0(f) + k \sqrt{\SNR\fhd} \tau_1 (f) 
		+ (w_2 -2 + k \SNR\gh) \tau_2(f)\\
		&\qquad + (\wt{w_4}-3) \tau_4(f) + k \sum_{\ell=3}^{\infty} \sqrt{(\SNR\fh)^\ell} \tau_{\ell}(f),
	\end{split} 
	\eeq
	\[	
	\wt V_{0}(f)= (w_2-2) \tau_1(f)^2 + 2(\wt{w_4}-3) \tau_2(f)^2 + 2\sum_{\ell=1}^{\infty} \ell \tau_{\ell}(f)^2.
	\]
	Furthermore, for $\wt m_k$, $\wt m_0$, and $\wt V_0$ defined in Theorem \ref{thm:main-weak},
	\[
	\left|\frac{\wt m_{k_2}(f) - \wt m_{k_1}(f)}{\sqrt{\wt V_{0}(f)}}\right| \leq \left|\frac{\wt m_{k_2}- \wt m_{k_1}}{\sqrt{\wt V_0}}\right|
	\]
	The equality holds if and only if $f(x)=C_1\wt\phi_{\SNR}(x) +C_2$ for some constants $C_1$ and $C_2$ with the function 
	\[
	\wt \phi_{\SNR}(x) := \log \left( \frac{1}{1-\sqrt{\SNR\fh}x + \SNR\fh} \right) +  \left( \frac{2\sqrt{\fhd}}{w_2} - \sqrt{\fh} \right) x 
	+ \SNR \left( \frac{\gh}{\wt w_4-1} - \frac{\fh}{2} \right) x^2.
	\]
\end{thm}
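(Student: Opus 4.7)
The plan is to reduce the CLT for $\wt M$ to Theorem \ref{thm:CLT} applied to an effective spiked Wigner matrix, then to account for the low-order mean corrections arising from second-order Taylor terms. First I would Taylor-expand the off-diagonal entries,
\[
\wt M_{ij} = Q_{ij} + \sqrt{F_g\omega}\,(\bsu_i\bsu_j^T) + B_{ij} + \caE_{ij}\qquad(i\neq j),
\]
where $Q_{ij}=h(\sqrt{N}W_{ij})/\sqrt{F_g N}$, $B_{ij}=\omega(\bsu_i\bsu_j^T)^2\sqrt{N}\,h''(\sqrt{N}W_{ij})/(2\sqrt{F_g})$, and $\caE_{ij}$ collects the cubic-and-higher terms; an analogous expansion with $h_d$ handles the diagonal. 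Under Assumption \ref{assump:entry1} with $\phi>3/8$ together with the polynomial derivative bounds on $h$ and $h_d$, the remainder $\caE$ contributes $o(1)$ to any Chebyshev moment of $\wt M$. The null matrix $Q$ is then of Wigner type (Definition \ref{def:Wigner}) with parameters $(w_2,0,\wt{w_4})$: the off-diagonal third moment vanishes because $g$ is symmetric and $h$ is odd, and $N^2\E[Q_{ij}^4]=\wt{w_4}$ by definition.

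Next I would apply Theorem \ref{thm:CLT} to the approximating matrix $Q+\sqrt{F_g\omega}\,\bsU\bsU^T$, viewed as a spiked Wigner matrix with effective SNR $F_g\omega$. Because $\omega F_g<1$ by hypothesis, all eigenvalues remain inside $[-2,2]$ (via Theorem \ref{thm:trans-wigner}), so the rank-$k$ perturbation only shifts the mean of the LSS, not the variance. Substituting $\omega\mapsto F_g\omega$, $w_3\mapsto 0$, and $w_4\mapsto\wt{w_4}$ in Theorem \ref{thm:CLT} produces $\wt V_0(f)$ exactly, the deterministic mean contribution $\tfrac14(f(2)+f(-2))-\tfrac12\tau_0(f)+(w_2-2)\tau_2(f)+(\wt{w_4}-3)\tau_4(f)$, and the spike tail $k\sum_{\ell\geq 3}\sqrt{(\omega F_g)^\ell}\,\tau_\ell(f)$. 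What remain are the $\ell=1,2$ coefficients, which carry $F_{g,d}$ and $G^H$ rather than the naive $F_g$.

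For $\ell=1$, the shift to $\Tr\wt M$ comes only from the diagonal entries: first-order expansion of $h_d$ around $\sqrt{N/w_2}\,W_{ii}$ together with $\E[h_d'(Y)]=F_{g,d}$ for $Y\sim g_d$ and $\sum_i\|\bsu_i\|^2=k$ yields the shift $k\sqrt{\omega F_{g,d}}$. For $\ell=2$, writing $A_{ij}=\sqrt{\omega}(\bsu_i\bsu_j^T)\,h'(\sqrt{N}W_{ij})/\sqrt{F_g}$, the contributions $\E[A_{ij}^2]+2\E[Q_{ij}B_{ij}]$ sum to
\[
\sum_{i\neq j}\bigl(\E[A_{ij}^2]+2\E[Q_{ij}B_{ij}]\bigr) = \frac{\omega}{F_g}\bigl(\E[h'(W)^2]+\E[h(W)h''(W)]\bigr)\sum_{i\neq j}(\bsu_i\bsu_j^T)^2.
\]
Integrating $\int h^2 g''\,\dd w$ by parts and using $g'=-hg$ yields the identity $\E[h(W)h''(W)]+\E[h'(W)^2]=F_g G^H$, while $\phi$-orthonormality of $\bsU$ gives $\sum_{i\neq j}(\bsu_i\bsu_j^T)^2=k+o(1)$, so the total $\ell=2$ shift equals $k\omega G^H$ as required.

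For the optimality claim I would expand in the Chebyshev basis, writing $\wt m_{k_2}(f)-\wt m_{k_1}(f)=(k_2-k_1)\sum_\ell\alpha_\ell\tau_\ell(f)$ and $\wt V_0(f)=\sum_\ell\beta_\ell\tau_\ell(f)^2$ with $(\alpha_\ell),(\beta_\ell)$ read off from the theorem. Cauchy--Schwarz gives equality iff $\tau_\ell(f)\propto\alpha_\ell/\beta_\ell$ for all $\ell\geq 1$, and summing the resulting Chebyshev series via $\sum_{\ell\geq 1}\tfrac{r^\ell}{\ell}T_\ell(x/2)=-\tfrac12\log(1-rx+r^2)$ with $r=\sqrt{\omega F_g}$ reconstructs $\wt\phi_\omega(x)$ up to the affine freedom $C_1 f+C_2$, which leaves every $\tau_\ell$ with $\ell\geq 1$ unchanged. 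The principal technical obstacle will be controlling the remainder $\caE_{ij}$ and the cubic Taylor terms involving $h'''$ through the CLT bookkeeping, which is precisely what forces the strengthened localization $\phi>3/8$ (compared with $\phi>1/4$ sufficient for Theorem \ref{thm:trans-wigner}, where only outlier eigenvalue locations needed tracking).
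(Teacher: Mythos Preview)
Your decomposition, the integration-by-parts identity $\E[(h')^2]+\E[hh'']=F_g G^H$, and the Cauchy--Schwarz optimality argument are all correct and match what the paper does for the last part. But the CLT portion takes a different route than the paper and contains a genuine gap.

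The paper does not apply Theorem~\ref{thm:CLT} to $Q+\sqrt{F_g\omega}\,\bsU\bsU^T$ and then patch up low moments. Instead it (i) centers and entrywise-renormalizes $\wt M$ to obtain a standard Wigner matrix $\wt W$; (ii) interpolates $\wt W(\theta)=(1-\theta)\wt W+\theta(\wt M-\E\wt M)$, which for each $\theta$ is a \emph{general Wigner-type matrix} with nonconstant variance profile $S_{ij}(\theta)$; (iii) invokes the anisotropic local law for Wigner-type matrices (Ajanki--Erd\H{o}s--Kr\"uger) to prove that $\Tr R^{\wt W}(1,z)-\Tr R^{\wt W}(0,z)$ converges in probability to $k\omega(G^H-F_g)\,s_{sc}'(z)s_{sc}(z)$, whose contour integral contributes only to $\tau_2(f)$; and (iv) adds back $\E[\wt M]=\sqrt{\omega F_g}\,\bsU\bsU^T+\diag(d_i)+E$, where the diagonal piece produces the $\tau_1$ correction $k\sqrt{\omega}(\sqrt{F_{g,d}}-\sqrt{F_g})$ and $\|E\|=o(N^{-1})$ once $\phi>3/8$.

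Your gap is the assertion that ``$\caE$ contributes $o(1)$ to any Chebyshev moment.'' In your own decomposition $\caE$ must absorb the fluctuation $A-\E A$ (since you kept only the deterministic $\sqrt{F_g\omega}\,\bsu_i\bsu_j^T$ at first order), yet you then use the full random $A$ in the $\ell=2$ calculation---so you already acknowledge that $\caE$ is not negligible at that order. More seriously, the perturbation $P=(A-\E A)+B$ has only $\|P\|\prec N^{1/2-2\phi}$, and Weyl's inequality gives merely $|\text{LSS}(\wt M)-\text{LSS}(Q+\text{spike})|\le CN\|P\|$, which is not $o(1)$ for any $\phi<3/4$. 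Computing $\E[\Tr\wt M]$ and $\E[\Tr\wt M^2]$ tells you the expected shift in those two moments, but it does not show that (a) the fluctuations of the LSS still have variance $\wt V_0(f)$, or (b) $\Tr T_\ell(\wt M/2)-\Tr T_\ell((Q+\text{spike})/2)=o_p(1)$ for $\ell\ge3$. That the variance-profile perturbation shifts only the $\tau_2$ coefficient of the mean, leaving the variance and all higher $\tau_\ell$ untouched, is exactly the nontrivial content of the Wigner-type resolvent analysis---it does not follow from low-order moment identities.
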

We will also prove Theorem \ref{thm:trans_CLT} in Appendix \ref{app:CLT}.
\begin{rem}\label{rmk:differentSNRs}
	For a general case where the spike $\Lambda=\diag(\SNR_1,\cdots,\SNR_k)$ with possibly distinct $\SNR_i$'s, we can prove the CLT and the transformed CLT, analogous to Theorems \ref{thm:CLT} and \ref{thm:trans_CLT}, respectively, where the means of the limiting Gaussians are given by
	\[ \begin{split} 
	m_M(f) &= \frac{1}{4} \left( f(2) + f(-2) \right) -\frac{1}{2} \tau_0(f) + (w_2 -2) \tau_2(f) 
	+ (w_4-3) \tau_4(f) \\
	&\qquad + \sum_{s=1}^k\sum_{\ell=1}^{\infty} \sqrt{\SNR_s^{\ell}} \tau_{\ell}(f),
	\end{split} 
	\]
	\[ 
	\begin{split} 
	\wt m_M(f) &= \frac{1}{4} \left( f(2) + f(-2) \right) -\frac{1}{2} \tau_0(f) +(w_2 -2)\tau_2(f)+ (\wt{w_4}-3) \tau_4(f) \\
	&\qquad + \sum_{s=1}^k\sqrt{\SNR_s\fhd} \tau_1 (f) 
	+  \SNR_s\gh \tau_2(f) +  \sum_{s=1}^k\sum_{\ell=3}^{\infty} \sqrt{(\SNR_s\fh)^\ell} \tau_{\ell}(f),
	\end{split} 
	\]
	and the variances are equal to $V_0(f)$ in Theorem \ref{thm:CLT} and $\wt V_0(f)$ in Theorem \ref{thm:trans_CLT}, respectively. Adapting the proposed tests in Algorithms \ref{alg:ht} and \ref{alg:htet}, it is possible to construct hypothesis tests for the weak detection in this case.
\end{rem}

The next result is the CLT for the LSS of spiked rectangular matrices $Y$, where we denote by $\mu_1 \geq \mu_2 \geq \cdots\geq\mu_M$ the eigenvalues of $YY^T$.

\begin{thm} \label{thm:CLT_rec}
	Assume the conditions in Theorem \ref{thm:main_rec}. Suppose that a function $f$ is analytic on an open set containing an interval $[d_-,d_+]$. Then,
	\beq \begin{split} \label{eq:CLT_statement}
		&\left(\sum_{i=1}^{M}f(\mu_i)-M \int_{d_-}^{d_+} \frac{\sqrt{(x-d_-)(d_+ -x)}}{2\pi \rat x} f(x) \, \dd x \right) \Rightarrow \mathcal{N}(m_k(f),V_0(f)).
	\end{split} \eeq
	The mean and the variance of the limiting Gaussian distribution are given by
	\[ \begin{split}
	m_k(f) &= \frac{\wt{f}(2) + \wt{f}(-2)}{4} -\frac{\tau_0(\wt{f})}{2} +(w_4-3)\tau_2(\wt{f})  +k\sum_{\ell=1}^{\infty}\left(\frac{\SNR}{\sqrt{\rat}}\right)^{\ell}\tau_\ell(\wt{f})
	\end{split} \]
	and
	\[
	V_0(f) =2\sum_{\ell=1}^\infty \ell\tau_\ell(\wt{f})^2+(w_4-3)\tau_1(\wt{f})^2,
	\]
	where we let $\wt f(x)=f(\sqrt{\rat}x+1+\rat)$.
	
	Furthermore, for $m_k$, $m_0$, and $V_0$ defined in Theorem \ref{thm:main_rec},
	\[
	\left|\frac{m_{k_2}(f) - m_{k_1}(f)}{\sqrt{V_{0}(f)}}\right| \leq \left|\frac{m_{k_2}- m_{k_1}}{\sqrt{V_0}}\right|
	\]
	The equality holds if and only if $f(x)=C_1\phi_{\SNR}(x) +C_2$ for some constants $C_1$ and $C_2$ with the function 
	\[
	\phi_{\SNR}(x)=\frac{\SNR}{\rat}\left(\frac{2}{w_4-1}-1\right)x  -\log\left(\left(1+\frac{\rat}{\SNR} \right)(1+\SNR) - x\right).
	\]
\end{thm}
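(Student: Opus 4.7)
My plan is to prove Theorem \ref{thm:CLT_rec} by combining the known CLT for the null model with an interpolation argument that tracks only the mean shift caused by the spike. The starting point is the contour-integral representation
\beq
\sum_{i=1}^M f(\mu_i) = -\frac{1}{2\pi \ii} \oint_\Gamma f(z)\,\Tr(YY^T - zI)^{-1}\,\dd z,
\eeq
where $\Gamma$ is a rectifiable contour enclosing $[d_-, d_+]$ inside the domain of analyticity of $f$; this reduces the CLT for the LSS to a CLT for linear functionals of the Stieltjes transform of $YY^T$. The change of variable $\wt f(x) = f(\sqrt{\rat}\,x + 1 + \rat)$ then maps the Marchenko--Pastur support $[d_-, d_+]$ to $[-2,2]$, which is the natural domain of the Chebyshev polynomials $T_\ell$ appearing in the statement.

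For the null case $k=0$, the Bai--Silverstein CLT for LSS of sample covariance matrices yields a Gaussian limit with exactly the claimed variance $V_0(f) = 2\sum_{\ell \geq 1}\ell\tau_\ell(\wt f)^2 + (w_4-3)\tau_1(\wt f)^2$ and null-mean $\tfrac14(\wt f(2)+\wt f(-2)) - \tfrac12\tau_0(\wt f) + (w_4-3)\tau_2(\wt f)$. To pass from $k=0$ to general $k$, I would interpolate between the null matrix $Y_0 = X$ and the spiked matrix $Y_1 = Y$. For the additive model this is done through $Y_t = X + t\bsU\Lambda^{1/2}\bsV^T$, and for the multiplicative model through $Y_t = (I + t\bsU\Lambda\bsU^T)^{1/2} X$. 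Writing $\caG_t(z) = (Y_t Y_t^T - zI)^{-1}$ and differentiating in $t$, a Sherman--Morrison--Woodbury identity expresses $\tfrac{\dd}{\dd t}\Tr\caG_t(z)$ as a bilinear form in the spike vectors contracted against $\caG_t(z)$. Using the local Marchenko--Pastur law together with the isotropic local law (which controls $\bsu^T\caG_t(z)\bsv$ for deterministic unit vectors), these bilinear forms concentrate at explicit deterministic functions of $z$ and the Stieltjes transform $m_{MP}(z)$. Integrating from $t=0$ to $t=1$ and expanding the resulting contour integrand in Chebyshev polynomials of $(\mu - 1 - \rat)/\sqrt{\rat}$ produces the additional deterministic shift $k\sum_{\ell \geq 1}(\SNR/\sqrt{\rat})^\ell \tau_\ell(\wt f)$, while the variance remains unchanged since the spike perturbation is deterministic conditional on $X$. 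The main obstacle is the multiplicative model, where $Y_t$ no longer has independent entries along the interpolation; handling the dependence between $X$ and the multiplicative factor requires a careful use of the isotropic local law analogous to the argument sketched for Theorem \ref{thm:trans-cov}.

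For the optimality claim, expanding $f$ in the Chebyshev basis turns the ratio into
\beq
\frac{|m_{k_2}(f) - m_{k_1}(f)|^2}{V_0(f)} = (k_2-k_1)^2 \cdot \frac{\bigl(\sum_{\ell \geq 1}(\SNR/\sqrt{\rat})^\ell\tau_\ell(\wt f)\bigr)^2}{(w_4-1)\tau_1(\wt f)^2 + 2\sum_{\ell \geq 2}\ell\tau_\ell(\wt f)^2}.
\eeq
Applying Cauchy--Schwarz against the quadratic form in the denominator, equality holds precisely when $\tau_\ell(\wt f)\propto (\SNR/\sqrt{\rat})^\ell/\ell$ for $\ell \geq 2$ and $\tau_1(\wt f)\propto (\SNR/\sqrt{\rat})/(w_4-1)$. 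Using the generating identity $-\log(1-2r\cos\theta + r^2) = 2\sum_{\ell \geq 1}(r^\ell/\ell)\cos(\ell\theta)$ with $r = \SNR/\sqrt{\rat}$, a direct computation of $\tau_\ell(\wt\phi_{\SNR})$ shows that $\wt\phi_{\SNR}$ realizes exactly these proportionalities, so $f = C_1\phi_{\SNR} + C_2$ is the equality case as claimed.
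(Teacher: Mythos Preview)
Your proposal is correct and follows essentially the same approach as the paper: contour-integral representation of the LSS, interpolation between the null and spiked data matrices, control of $\partial_\theta\Tr G(\theta,z)$ through bilinear forms in the spike vectors estimated by the isotropic/anisotropic local Marchenko--Pastur law, and the Cauchy--Schwarz argument in the Chebyshev basis for the optimality claim. The only place where the paper's execution differs slightly is the multiplicative model: rather than interpolating along $(I+t\bsU\Lambda\bsU^T)^{1/2}X$, the paper writes $(I+\bsU\Lambda\bsU^T)^{1/2}=I+\gamma\bsU\bsU^T$ and interpolates linearly as $Y(\theta)=(I+\theta\gamma\bsU\bsU^T)X$, which avoids differentiating a matrix square root and allows the anisotropic local law for the deformed population covariance to be invoked directly, yielding the same trace-difference formula as in the additive case.
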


Lastly, we state the pre-transformed CLT for the LSS of the additive model of spiked rectangular matrices. We let $\wt Y$ be the transformed matrix and $\wt\mu_1 \geq \wt\mu_2 \geq \dots \geq \wt\mu_N$ the eigenvalues of $\wt Y \wt Y^T$.

\begin{thm}\label{thm:trans_CLT_rec}
	Assume the conditions in Theorem \ref{thm:CLT_rec}, satisfying Assumption \ref{assump:entry1} with $\phi>3/8$. If $\lambda < \sqrt{\rat}/\fh$,
	\begin{align}
	\left(\sum_{i=1}^{M}f(\wt\mu_i)-M\int_{d_-}^{d_+}f(x)\rho_{MP,\rat}(dx)\right)\Rightarrow\mathcal{N}(\wt m_{k}(f),\,\wt V_{0}(f)).
	\end{align}
	The mean and the variance of the limiting Gaussian distribution are given by
	\beq \begin{split}
		\wt m_{k}(f) &= \frac{\wt{f}(2) + \wt{f}(-2)}{4}  -\frac{1}{2} \tau_0(\wt{f})+\frac{k\SNR}{\sqrt{\rat}}(\gh - \fh) \tau_1 (\wt{f})+(\wt{w_4}-3)\tau_2(\wt{f}) \\
		&\qquad +k\sum_{\ell=1}^{\infty}\left(\frac{\SNR\fh}{\sqrt{\rat}}\right)^{\ell}\tau_\ell(\wt{f})
	\end{split} \eeq
	and
	\beq
	\wt V_{0}(f) =2\sum_{\ell=1}^\infty \ell\tau_\ell(\wt{f})^2+(\wt{w_4}-3)\tau_1(\wt{f})^2.
	\eeq
	where $\wt f(x)=f(\sqrt{\rat}x+1+\rat).$
	
	Furthermore, for $\wt m_k$, $\wt m_0$, and $\wt V_0$ defined in Theorem \ref{thm:main_transformed_rec},
	The equality holds if and only if $f(x)=C_1\wh\phi_\SNR(x) +C_2$ for some constants $C_1$ and $C_2$ with the function 
	\[
	\wh\phi_\SNR(x)=\frac{2\lambda}{\rat}\left(\frac{\gh}{\wt w_4-1}-\frac{\fh}{2}\right)x-\log\left(\left(\frac{\rat}{\SNR\fh}+1\right)(\SNR\fh+1)-x\right).
	\]
\end{thm}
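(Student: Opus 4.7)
The plan is to reduce the statement to Theorem \ref{thm:CLT_rec} applied to an \emph{effective} additive spiked rectangular matrix, and then to produce the one remaining correction via a careful second-order Taylor expansion of the entrywise transform. Writing $S := \bsU\Lambda^{1/2}\bsV^T$ and $Q_{ij} := h_0(\sqrt N X_{ij})/\sqrt{\fh N}$, I expand
\[
\wt Y_{ij} = Q_{ij} + \sqrt{\fh}\,S_{ij} + \frac{S_{ij}\bigl(h_0'(\sqrt N X_{ij}) - \fh\bigr)}{\sqrt{\fh}} + \frac{\sqrt N\, S_{ij}^2\, h_0''(\sqrt N X_{ij})}{2\sqrt{\fh}} + R_{ij},
\]
with $|R_{ij}|\prec N|S_{ij}|^3\prec N^{1-6\phi}$, and take the reference matrix to be $\wt Y^{\mathrm{eff}} := Q + \sqrt{\fh}\,S$. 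Under $\phi>3/8$ the residual $R$ is negligible in operator norm, and the mean-zero first-order fluctuation term will also be shown negligible for our purposes via Hanson--Wright bounds on bilinear forms in $Q$ and $X$ tested against $S$.

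The matrix $\wt Y^{\mathrm{eff}}$ is itself a rank-$k$ additive spiked rectangular matrix of the type covered by Theorem \ref{thm:CLT_rec}: $Q$ is i.i.d. with $N\E[Q_{ij}^2]=1$ and $N^2\E[Q_{ij}^4]=\wt w_4$, and the SNR matrix is $\SNR\fh I_k$. Applying Theorem \ref{thm:CLT_rec} to $\wt Y^{\mathrm{eff}}$ with the substitutions $w_4\mapsto\wt w_4$ and $\SNR\mapsto\SNR\fh$ yields $\wt V_0(f)$ exactly, and every piece of the target mean $\wt m_k(f)$ except the summand $\frac{k\SNR}{\sqrt{\rat}}(\gh-\fh)\tau_1(\wt f)$. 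I expect this missing coefficient to emerge from the interaction of $Q$ with the first- and second-order error pieces in the expansion above. Using the contour representation $\sum_i f(\wt\mu_i) = \frac{1}{2\pi\ii}\oint f(z)\Tr(\wt Y\wt Y^T-z)^{-1}\,\dd z$ and the resolvent expansion about $\wt Y^{\mathrm{eff}}(\wt Y^{\mathrm{eff}})^T$, each surviving cross term produces at leading order a trace shift proportional to $\Tr(SS^T)=k\SNR$; entrywise averaging via integration by parts on products $\int h_0^{(j)} h_0^{(l)} g\,\dd w$ for $j,l\in\{0,1,2\}$ collapses these combinations into the single constant $\gh-\fh$. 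The change of variables $x=\sqrt\rat\, y+1+\rat$ then produces exactly the Chebyshev coefficient $\tau_1(\wt f)$; higher Chebyshev coefficients receive only subleading contributions that are suppressed under $\phi>3/8$.

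The main obstacle, and the reason one strengthens the localization exponent from $\phi>1/4$ in Theorem \ref{thm:trans-mean} to $\phi>3/8$ here, is the quantitative identification of this $O(1)$ correction: I must extract a mean shift of constant order from entrywise perturbations of size $O(N^{1/2-4\phi})$ while controlling all other bilinear pairings between the four non-leading pieces of $\wt Y - \wt Y^{\mathrm{eff}}$ via the local Marchenko--Pastur law and Hanson--Wright concentration, uniformly in $z$ on a contour encircling $[d_-,d_+]$. The framework of Appendix \ref{app:CLT}, where the Wigner analogue Theorem \ref{thm:trans_CLT} is handled, adapts essentially verbatim with the Marchenko--Pastur law replacing the semicircle law. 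Once the CLT is proved, the optimality statement is a direct Cauchy--Schwarz argument on the Chebyshev expansion: writing $\wt m_{k_2}(f)-\wt m_{k_1}(f) = (k_2-k_1)\sum_{\ell\ge1}c_\ell\tau_\ell(\wt f)$ and $\wt V_0(f) = \sum_{\ell\ge1}d_\ell\tau_\ell(\wt f)^2$ with the explicit coefficients $\{c_\ell,d_\ell\}$ readable off the CLT, the ratio $|\wt m_{k_2}(f)-\wt m_{k_1}(f)|/\sqrt{\wt V_0(f)}$ is maximized exactly when $\tau_\ell(\wt f)\propto c_\ell/d_\ell$ for each $\ell\ge1$, and matching these proportions with the Chebyshev coefficients of $\wh\phi_\SNR$ characterizes the extremizers as $f=C_1\wh\phi_\SNR+C_2$.
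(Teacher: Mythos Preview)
Your overall strategy---reduce to Theorem \ref{thm:CLT_rec} applied to $Q+\sqrt{\fh}\,S$ and then extract the $(\gh-\fh)$ correction from the Taylor remainder---differs from the paper's route and has a genuine gap in the treatment of $E^{(1)}_{ij}=S_{ij}\bigl(h_0'(\sqrt N X_{ij})-\fh\bigr)/\sqrt{\fh}$.

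The paper does \emph{not} perturb from $Q+\sqrt{\fh}\,S$. Instead it centers and rescales $\wt Y$ entrywise to obtain $A_{ij}=(\wt Y_{ij}-\E\wt Y_{ij})/\sqrt{NS_{ij}}$, which has unit-variance i.i.d.\ entries, and then interpolates $A(\theta)=(1-\theta)A+\theta(\wt Y-\E\wt Y)$. The matrix $A(\theta)$ is a \emph{generalized Gram matrix} with variance profile $S_{ij}(\theta)$, and the paper invokes the anisotropic local law and rigidity for such matrices (Alt et al.) to control $\Tr G^A(\theta,z)$. The $(\gh-\fh)$ term then arises from $\partial_\theta\Tr G^A(\theta,z)$, and its rigorous extraction requires the recursive moment estimate of Lemma \ref{lem:recursive}; the spike is added back afterwards via a second interpolation $B(\theta)=A(1)+\theta\sqrt{\lambda\fh}\,\bsU\bsV^T$.

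Your claim that $E^{(1)}$ is ``negligible for our purposes via Hanson--Wright bounds'' is the gap. First, $E^{(1)}_{ij}$ and $Q_{ij}$ are both functions of $X_{ij}$, so the bilinear forms you need (e.g.\ $\Tr[G^{\mathrm{eff}}E^{(1)}Q^TG^{\mathrm{eff}}]$) involve correlated randomness; Hanson--Wright, which requires independence, does not apply. Second, $E^{(1)}$ is \emph{not} negligible at the $O(1)$ scale of the LSS: it encodes precisely the variance heterogeneity $S_{ij}-\tfrac1N=(\gh-\fh)\lambda(\bsu_i\bsv_j^T)^2+\cdots$ that produces the extra mean term, so the cross terms $Q$-with-$E^{(1)}$ and $(E^{(1)})^2$ each contribute nontrivially and must be computed, not bounded away. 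The paper's recursive moment estimate (Lemma \ref{lem:recursive}) is the tool that handles exactly this kind of correlated-entry contribution; your sketch does not supply an analogue. Your optimality argument via Cauchy--Schwarz on Chebyshev coefficients is correct and matches the paper.
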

\begin{rem}
	As in Remark \ref{rmk:differentSNRs}, for a general case with $\Lambda=\diag(\SNR_1,\cdots,\SNR_k)$, the CLT and the transformed CLT hold with the adjusted means
	\[ \begin{split} 
	m_Y(f)  =\frac{\wt{f}(2) + \wt{f}(-2)}{4} +\frac{\tau_0(\wt{f})}{2} +(w_4-3)\tau_2(\wt{f})  +\sum_{s=1}^k\sum_{\ell=1}^{\infty}\left(\frac{\SNR_s}{\sqrt{\rat}}\right)^{\ell}\tau_\ell(\wt{f}),
	\end{split} 
	\]
	\[ \begin{split} 
	\wt m_Y(f) &= \frac{\wt{f}(2) + \wt{f}(-2)}{4}  -\frac{1}{2} \tau_0(\wt{f})+(\wt{w_4}-3)\tau_2(\wt{f})+\sum_{s=1}^k\frac{\SNR_s}{\sqrt{\rat}}(\gh - \fh) \tau_1 (\wt{f})\\
	&\qquad +\sum_{s=1}^k\sum_{\ell=1}^{\infty}\left(\frac{\SNR_s\fh}{\sqrt{\rat}}\right)^{\ell}\tau_\ell(\wt{f}),
	\end{split} 
	\]
	where the variances are given $V_0(f)$, $\wt V_0(f)$, respectively. Further, the corresponding optimal functions and test statistic can be calculated by following the same procedure in \cite{jung2021}.
\end{rem}

\section{Conclusion and Future Works} \label{sec:summary}

In this paper, we considered the detection problems of the spiked random model with general ranks. First, we prove the sub-optimality of the PCA for the non-Gaussian noise. Further, we proposed a hypothesis test based on the central limit theorem for the linear spectral statistics of the data matrix and introduced a test for rank estimation that do not require any prior information on the rank of the signal. It was shown that the error of the proposed hypothesis test matches the error of the likelihood ratio test in case the noise is Gaussian and the signal-to-noise ratio is small. With the knowledge on the density of the noise, the test was further improved by applying an entrywise transformation.
%

We believe that the hypothesis test with the entrywise transformed matrix proposed in this paper can be extended to the multiplicative model of spiked rectangular matrix. This will be discussed in our future works.

\subsubsection*{Acknowledgments} 
The work of J. H. Jung and J. O. Lee was partially supported by National Research Foundation of Korea under grant number NRF-2019R1A5A1028324. The work of H. W. Chung was partially supported by National Research Foundation of Korea under grant number 2017R1E1A1A01076340 and by the Ministry of Science and ICT, Korea, under an ITRC Program, IITP-2019-2018-0-01402.

\bibliographystyle{abbrv}
\bibliography{weak_detection}

\begin{thebibliography}{10}

\bibitem{Abbe2017}
E.~Abbe.
\newblock Community detection and stochastic block models: recent developments.
\newblock {\em The Journal of Machine Learning Research}, 18(1):6446--6531,
  2017.

\bibitem{ajanki2017wignertype}
O.~H. Ajanki, L.~Erd{\H{o}}s, and T.~Kr{\"u}ger.
\newblock Universality for general wigner-type matrices.
\newblock {\em Probab. Theory and Related Fields}, 169(3):667--727, 2017.

\bibitem{alt2017singularities}
J.~Alt.
\newblock Singularities of the density of states of random gram matrices.
\newblock {\em Electron. Commun. Probab.}, 22:1--13, 2017.

\bibitem{alt2017gram}
J.~Alt, L.~Erd{\H{o}}s, and T.~Kr{\"u}ger.
\newblock Local law for random gram matrices.
\newblock {\em Electron. J. Probab.}, 22:1--41, 2017.

\bibitem{aubin2019spiked}
B.~Aubin, B.~Loureiro, A.~Maillard, F.~Krzakala, and L.~Zdeborov{\'a}.
\newblock The spiked matrix model with generative priors.
\newblock {\em Advances in Neural Information Processing Systems}, 32, 2019.

\bibitem{Bai-Yao2005}
Z.~D. Bai and J.~Yao.
\newblock On the convergence of the spectral empirical process of {W}igner
  matrices.
\newblock {\em Bernoulli}, 11(6):1059--1092, 2005.

\bibitem{BBP2005}
J.~Baik, G.~B. Arous, S.~P{\'e}ch{\'e}, et~al.
\newblock Phase transition of the largest eigenvalue for nonnull complex sample
  covariance matrices.
\newblock {\em Ann. Probab.}, 33(5):1643--1697, 2005.

\bibitem{Baik-Lee2016}
J.~Baik and J.~O. Lee.
\newblock Fluctuations of the free energy of the spherical
  {S}herrington-{K}irkpatrick model.
\newblock {\em J. Stat. Phys.}, 165(2):185--224, 2016.

\bibitem{Baik-Lee2017}
J.~Baik and J.~O. Lee.
\newblock Fluctuations of the free energy of the spherical
  {S}herrington-{K}irkpatrick model with ferromagnetic interaction.
\newblock {\em Ann. Henri Poincar\'e}, 18(6):1867--1917, 2017.

\bibitem{Baik-Lee2018}
J.~Baik and J.~O. Lee.
\newblock Free energy of bipartite spherical {S}herrington-{K}irkpatrick model.
\newblock {\em Ann. Inst. Henri Poincar\'{e} Probab. Stat.}, 56(4):2897--2934,
  2020.

\bibitem{BaikLeeWu}
J.~Baik, J.~O. Lee, and H.~Wu.
\newblock Ferromagnetic to paramagnetic transition in spherical spin glass.
\newblock {\em J. Stat. Phys.}, 173(5):1484--1522, 2018.

\bibitem{BanerjeeMa2017}
D.~{Banerjee} and Z.~{Ma}.
\newblock {Optimal hypothesis testing for stochastic block models with growing
  degrees}.
\newblock {\em arXiv:1705.05305}, 2017.

\bibitem{bao2022statistical}
Z.~Bao, X.~Ding, J.~Wang, and K.~Wang.
\newblock Statistical inference for principal components of spiked covariance
  matrices.
\newblock {\em The Annals of Statistics}, 50(2):1144--1169, 2022.

\bibitem{Raj2011}
F.~Benaych-Georges and R.~R. Nadakuditi.
\newblock The eigenvalues and eigenvectors of finite, low rank perturbations of
  large random matrices.
\newblock {\em Adv. Math.}, 227(1):494--521, 2011.

\bibitem{benaych2012singular}
F.~Benaych-Georges and R.~R. Nadakuditi.
\newblock The singular values and vectors of low rank perturbations of large
  rectangular random matrices.
\newblock {\em J. Multivar. Anal.}, 111:120--135, 2012.

\bibitem{bickel2016hypothesis}
P.~J. Bickel and P.~Sarkar.
\newblock Hypothesis testing for automated community detection in networks.
\newblock {\em J. R. Stat. Soc., B: Stat. Methodol.}, 78(1):253--273, 2016.

\bibitem{Bloemendal-Erdos-Knowles-Yau-Yin2014}
A.~Bloemendal, L.~Erd\H{o}s, A.~Knowles, H.-T. Yau, and J.~Yin.
\newblock Isotropic local laws for sample covariance and generalized {W}igner
  matrices.
\newblock {\em Electron. J. Probab.}, 19:no. 33, 53, 2014.

\bibitem{BKYY2016}
A.~Bloemendal, A.~Knowles, H.-T. Yau, and J.~Yin.
\newblock On the principal components of sample covariance matrices.
\newblock {\em Prob. theory and related fields}, 164(1-2):459--552, 2016.

\bibitem{Butucea2013}
C.~Butucea, Y.~I. Ingster, et~al.
\newblock Detection of a sparse submatrix of a high-dimensional noisy matrix.
\newblock {\em Bernoulli}, 19(5B):2652--2688, 2013.

\bibitem{cai2015optimal}
T.~Cai, Z.~Ma, and Y.~Wu.
\newblock Optimal estimation and rank detection for sparse spiked covariance
  matrices.
\newblock {\em Probability theory and related fields}, 161(3):781--815, 2015.

\bibitem{chung2022asymptotic}
H.~W. Chung, J.~Lee, and J.~O. Lee.
\newblock Asymptotic normality of log likelihood ratio and fundamental limit of
  the weak detection for spiked wigner matrices.
\newblock {\em arXiv preprint arXiv:2203.00821}, 2022.

\bibitem{chung2019weak}
H.~W. Chung and J.~O. Lee.
\newblock Weak detection of signal in the spiked wigner model.
\newblock In {\em International Conference on Machine Learning}, pages
  1233--1241, 2019.

\bibitem{COUILLET2015}
R.~Couillet.
\newblock Robust spiked random matrices and a robust g-music estimator.
\newblock {\em Journal of Multivariate Analysis}, 140:139--161, 2015.

\bibitem{Barbier2016}
M.~Dia, N.~Macris, F.~Krzakala, T.~Lesieur, L.~Zdeborov{\'a}, et~al.
\newblock Mutual information for symmetric rank-one matrix estimation: A proof
  of the replica formula.
\newblock {\em Advances in Neural Information Processing Systems}, 29, 2016.

\bibitem{ding2020high}
X.~Ding.
\newblock High dimensional deformed rectangular matrices with applications in
  matrix denoising.
\newblock {\em Bernoulli}, 26(1):387--417, 2020.

\bibitem{ding2022tracy}
X.~Ding and F.~Yang.
\newblock Tracy-widom distribution for heterogeneous gram matrices with
  applications in signal detection.
\newblock {\em IEEE Transactions on Information Theory}, 2022.

\bibitem{dobriban2020permutation}
E.~Dobriban.
\newblock Permutation methods for factor analysis and pca.
\newblock {\em The Annals of Statistics}, 48(5):2824--2847, 2020.

\bibitem{el2018detection}
A.~El~Alaoui and M.~I. Jordan.
\newblock Detection limits in the high-dimensional spiked rectangular model.
\newblock In {\em Conference On Learning Theory}, pages 410--438, 2018.

\bibitem{AlaouiJordan2018}
A.~El~Alaoui, F.~Krzakala, and M.~Jordan.
\newblock Fundamental limits of detection in the spiked {W}igner model.
\newblock {\em Ann. Stat}, 48(2):863--885, 2020.

\bibitem{Erdos-Knowles-Yau-Yin2013a}
L.~Erd\H{o}s, A.~Knowles, H.-T. Yau, and J.~Yin.
\newblock Spectral statistics of erdos-{R}\'enyi graphs {I}: {L}ocal semicircle
  law.
\newblock {\em Ann. Probab.}, 41(3B):2279--2375, 2013.

\bibitem{Johnstone2001}
I.~M. Johnstone.
\newblock On the distribution of the largest eigenvalue in principal components
  analysis.
\newblock {\em Ann. Stat}, pages 295--327, 2001.

\bibitem{Onatski2015}
I.~M. Johnstone and A.~Onatski.
\newblock Testing in high-dimensional spiked models.
\newblock {\em Ann. Stat}, 48(3):1231--1254, 2020.

\bibitem{jung2020weak}
J.~H. Jung, H.~W. Chung, and J.~O. Lee.
\newblock Weak detection in the spiked wigner model with general rank.
\newblock {\em arXiv:2001.05676}, 2020.

\bibitem{jung2021}
J.~H. Jung, H.~W. Chung, and J.~O. Lee.
\newblock Detection of signal in the spiked rectangular models.
\newblock In {\em International Conference on Machine Learning}, pages
  5158--5167. PMLR, 2021.

\bibitem{ke2021estimation}
Z.~T. Ke, Y.~Ma, and X.~Lin.
\newblock Estimation of the number of spiked eigenvalues in a covariance matrix
  by bulk eigenvalue matching analysis.
\newblock {\em Journal of the American Statistical Association}, pages 1--19,
  2021.

\bibitem{Knowles-Yin2013}
A.~Knowles and J.~Yin.
\newblock The isotropic semicircle law and deformation of {W}igner matrices.
\newblock {\em Comm. Pure Appl. Math.}, 66(11):1663--1750, 2013.

\bibitem{Knowles-Yin2016}
A.~Knowles and J.~Yin.
\newblock Anisotropic local laws for random matrices.
\newblock {\em Probab. Theory Related Fields}, 169(1-2):257--352, 2017.

\bibitem{Lee-Schnelli_Sample}
J.~O. Lee and K.~Schnelli.
\newblock Tracy-{W}idom distribution for the largest eigenvalue of real sample
  covariance matrices with general population.
\newblock {\em Ann. Appl. Probab.}, 26(6):3786--3839, 2016.

\bibitem{LeeSchnelli2018}
J.~O. Lee and K.~Schnelli.
\newblock Local law and {T}racy-{W}idom limit for sparse random matrices.
\newblock {\em Probab. Theory Related Fields}, 171(1-2):543--616, 2018.

\bibitem{lei2016goodness}
J.~Lei.
\newblock A goodness-of-fit test for stochastic block models.
\newblock {\em Ann. Stat.}, 44(1):401--424, 2016.

\bibitem{LelargeMiolane}
M.~Lelarge and L.~Miolane.
\newblock Fundamental limits of symmetric low-rank matrix estimation.
\newblock {\em Probab. Theory Related Fields}, 173(3-4):859--929, 2019.

\bibitem{lesieur2015mmse}
T.~{Lesieur}, F.~{Krzakala}, and L.~{Zdeborová}.
\newblock Mmse of probabilistic low-rank matrix estimation: Universality with
  respect to the output channel.
\newblock In {\em 2015 53rd Annual Allerton Conference on Communication,
  Control, and Computing (Allerton)}, pages 680--687, 2015.

\bibitem{miolane2017fundamental}
L.~Miolane.
\newblock Fundamental limits of low-rank matrix estimation: the non-symmetric
  case.
\newblock {\em arXiv preprint arXiv:1702.00473}, 2017.

\bibitem{Montanari2017}
A.~Montanari, D.~Reichman, and O.~Zeitouni.
\newblock On the limitation of spectral methods: From the gaussian hidden
  clique problem to rank-one perturbations of gaussian tensors.
\newblock In {\em Advances in Neural Information Processing Systems}, pages
  217--225, 2015.

\bibitem{Handbook}
F.~W.~J. Olver, D.~W. Lozier, R.~F. Boisvert, and C.~W. Clark, editors.
\newblock {\em N{IST} handbook of mathematical functions}.
\newblock U.S. Department of Commerce, National Institute of Standards and
  Technology, Washington, DC; Cambridge University Press, Cambridge, 2010.

\bibitem{onatski2009rank}
A.~Onatski.
\newblock Testing hypotheses about the number of factors in large factor
  models.
\newblock {\em Econometrica}, 77(5):1447--1479, 2009.

\bibitem{Onatski2013}
A.~Onatski, M.~J. Moreira, and M.~Hallin.
\newblock Asymptotic power of sphericity tests for high-dimensional data.
\newblock {\em Ann. Stat}, 41(3):1204--1231, 2013.

\bibitem{Onatski2014}
A.~Onatski, M.~J. Moreira, and M.~Hallin.
\newblock Signal detection in high dimension: The multispiked case.
\newblock {\em Ann. Stat}, 42(1):225--254, 2014.

\bibitem{passemier2012determining}
D.~Passemier and J.-F. Yao.
\newblock On determining the number of spikes in a high-dimensional spiked
  population model.
\newblock {\em Random Matrices: Theory and Applications}, 1(01):1150002, 2012.

\bibitem{Perry2018}
A.~Perry, A.~S. Wein, A.~S. Bandeira, and A.~Moitra.
\newblock Optimality and sub-optimality of {PCA} {I}: Spiked random matrix
  models.
\newblock {\em Ann. Stat}, 46(5):2416--2451, 2018.

\end{thebibliography}
\appendix

\section{Examples and Simulations} \label{sec:ex}

In Appendix \ref{sec:ex}, we consider specific examples of spiked random matrices under various settings. We first demonstrate with an example the change of the threshold by the improved PCA in Section \ref{sec:main1}. We then provide the details of the proposed tests in Algorithms \ref{alg:ht} and \ref{alg:htet} with different examples, and the test for rank estimation in Algorithm \ref{alg:at} for these and compute the theoretical errors. We also perform the numerical simulation for the proposed tests and compare the numerical errors with the theoretical errors.

\subsection{Spiked Wigner matrix}
\subsubsection{Improved PCA with Entrywise Transformation} \label{subsubsec:Wigner_PCA}

Our first example is a spiked Wigner matrix with non-Gaussian noise to which we apply the entrywise transformation for the improved PCA. We let the density function of the noise be a bimodal distribution with unit variance, defined as
\beq\label{eq:bimodal}
g(x)=g_d(x)=\frac{1}{\sqrt{2\pi}}\left(e^{-2(x-\sqrt{3}/2)^2}+e^{-2(x+\sqrt{3}/2)^2}\right),
\eeq
which is the density function of a random variable
\[
\frac{1}{2} \caN + \frac{\sqrt{3}}{2}\caR,
\]
where $\caN$ is a standard Gaussian random variable and $\caR$ is a Rademacher random variable, independent to each other.

We sample $Z_{ij} = Z_{ji}$ independently from the density $g$ and let $W_{ij}=Z_{ij}/\sqrt{N}$.
We let $\bsu(\ell) = (u_1(\ell), u_2(\ell), \dots, u_N(\ell))^T$, where $\sqrt{N} u_i(\ell)$'s are i.i.d. Rademacher random variables for $i=1, 2, \dots, N$ and $\ell=1,2,3$. 
The data matrix $M = \bsU \Lambda^{1/2} \bsU^T$, where $\bsU = [\bsu(1), \bsu(2), \bsu(3)]$ and $\Lambda = \diag (\lambda, \lambda, \lambda, 0, 0, \dots, 0)$. The size of the data matrix is set to be $N=4000$. The BBP-transition predicts that the largest eigenvalue of $M$ pops up from the bulk of the spectrum if $\lambda > 1$.

With the entrywise transformation defined in~\eqref{eq:transformed-wigner}, we obtain a transformed matrix
\beq
\wt M_{ij} = \frac{1}{\sqrt{\fh N}} h(\sqrt{N} M_{ij})
\eeq
where 
\beq
h(x)=-\frac{g'(x)}{g(x)}=\frac{2\left(\sqrt{3}-e^{4\sqrt{3}x\left(\sqrt{3}-2x\right)}+2x\right)}{1+e^{4\sqrt{3}x}}
\eeq
and $\fh = \int_{-\infty}^{\infty} \frac{(g'(x))^2}{g(x)} \dd x\approx 2.50810$.
From Theorem~\ref{thm:trans-wigner}, it is expected that the largest eigenvalue of $\wt M$ separates from other eigenvalues if $\lambda > \frac{1}{\fh} \approx 0.3987$.

In the numerical experiment, we set
\beq\label{eq:lam}
\lambda_\ell=\frac{\ell+\frac{1}{\fh}}{\ell+1}
\eeq
for $\ell = 1, 2, 3$, and we compare the spectrum of the matrices $M$ and $\wt M$. In Figure~\ref{fig:spectrum_wig}, we find three isolated eigenvalues in the spectrum of $\wt M$ (right), which are absent in that of $M$ (left).

\begin{figure}[h]
	\vskip 0.2in
	\begin{center}
		\centerline{\includegraphics[width=\columnwidth]{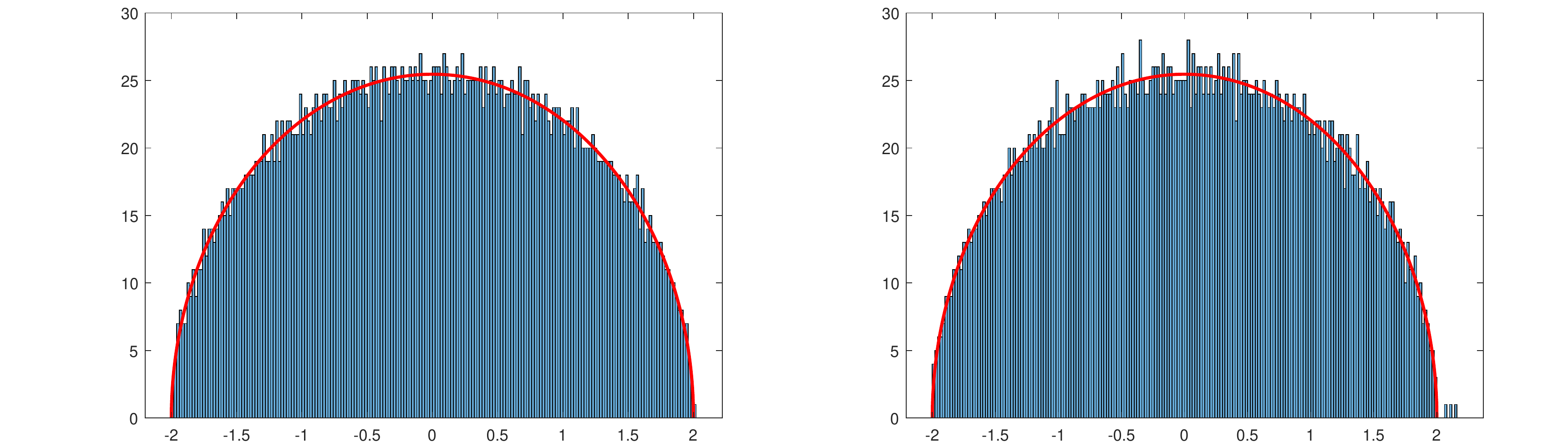}}
		\caption{The spectrum of the data matrix ($N=4000$) with bimodal noise, before (left) and after (right) the entrywise transformation. Three eigenvalues pop up from the bulk of the spectrum after the entrywise transformation.}
		\label{fig:spectrum_wig}
	\end{center}
	\vskip -0.2in
\end{figure}

\subsubsection{Spiked Gaussian Wigner matrix} \label{subsec:GOE}

We consider the weak detection problem with the simplest case of the spiked Gaussian Wigner matrix where $w_2 = 2$ (i.e., $W$ is a GOE matrix) and the signal $\bsu(m)  = (u_1(m), u_2(m), \dots, u_N(m))$ where $\sqrt{N} u_i(m)$'s are i.i.d. Rademacher random variable. Note that the parameters $w_2 = 2$ and $w_4 = 3$. 

In the numerical simulation done in Matlab, we generated 10,000 independent samples of the $256 \times 256$ data matrix $M$, where we fix $k_1=1$ (under $\bsH_1$) and vary $k_2$ from $2$ to $5$ (under $\bsH_{k_2}$), with the SNR $\lambda$ varying from $0$ to $0.7$. To apply Algorithm \ref{alg:ht}, we compute
\beq \begin{split}
	L_{\lambda} = -\log \det \big( (1+\lambda)I - \sqrt{\lambda} M \big) + \frac{\lambda N}{2}.
\end{split} 
\eeq
We accept $\bsH_1$ if 
\[
L_{\lambda} \leq \frac{m_{k_1}+m_{k_2}}{2} = -\frac{k_2 + 2}{2} \log(1-\lambda)
\]
and reject $\bsH_1$ otherwise. The (theoretical) limiting error of the test is
\beq \label{eq:limit_error_1a}
\erfc \left( \frac{k_2 - 1}{4} \sqrt{-\log (1-\lambda)} \right).
\eeq

In Figure \ref{fig:LSS_GOE}, we compare the error from the numerical simulation and the theoretical error of the proposed algorithm, which show that the numerical errors of the test closely match the theoretical errors. 

\begin{figure}[ht]
	\vskip 0.2in
	\begin{center}
		\centerline{\includegraphics[width=250pt]{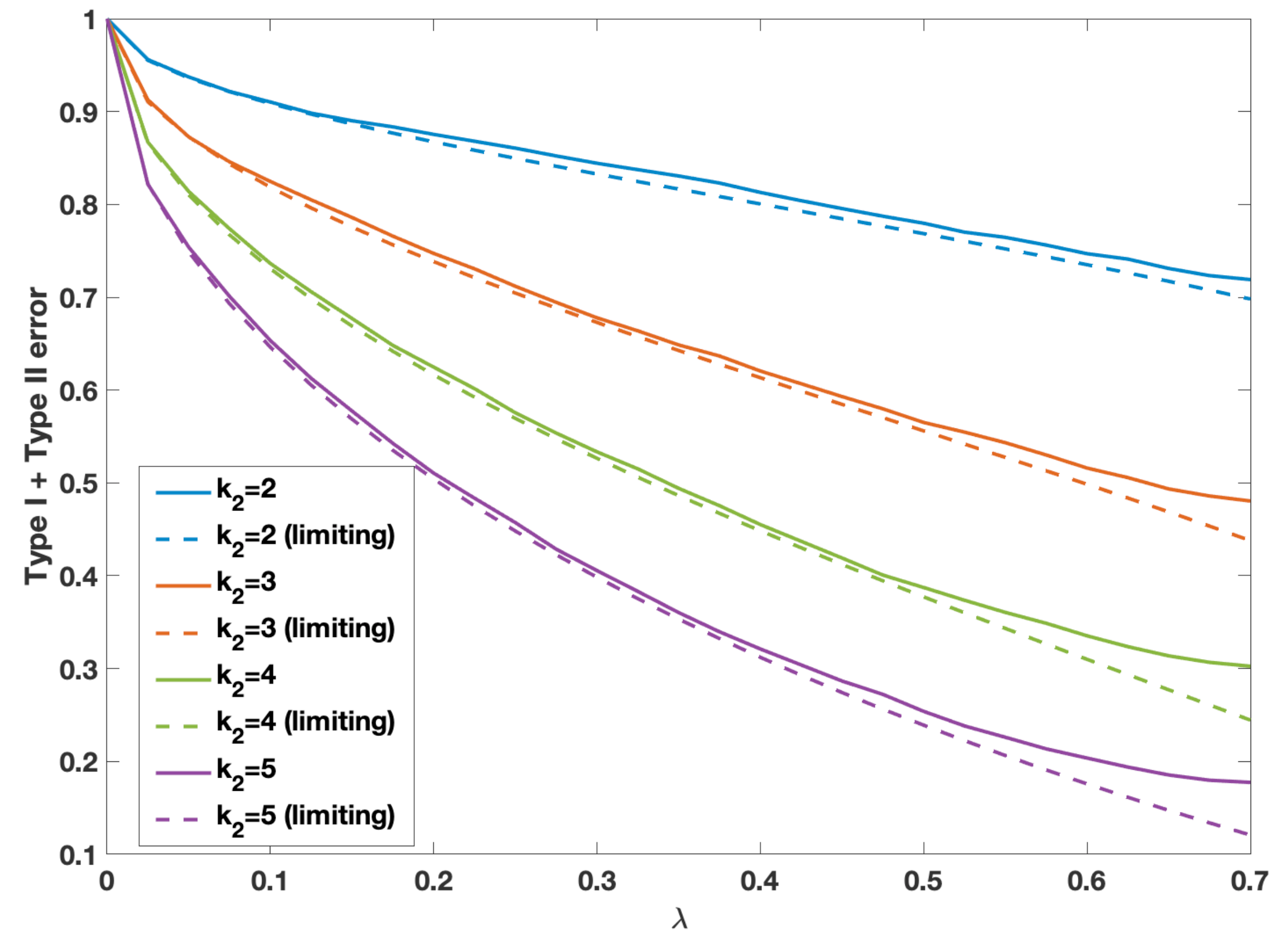}}
		\caption{The errors from the simulation with Algorithm \ref{alg:ht} (solid) versus the limiting errors \eqref{eq:limit_error_1a} (dashed) for the setting in Section \ref{subsec:GOE} with $k_2 = 2, 3, 4, 5$.}
		\label{fig:LSS_GOE}
	\end{center}
	\vskip -0.2in
\end{figure}

\subsubsection{Spiked Wigner matrix} \label{subsec:sech}

We next consider a spiked Wigner matrix with non-Gaussian noise, where the density function of the noise matrix is given by
\beq\label{eq:sech}
g(x) = g_d(x) = \frac{1}{2 \cosh (\pi x/2)} = \frac{1}{e^{\pi x/2} + e^{-\pi x/2}}.
\eeq
We sample $Z_{ij} = Z_{ji}$ from the density $g$ and let $W_{ij} = Z_{ij}/\sqrt{N}$. We again let the signal $\bsu(m)  = (u_1(m), u_2(m), \dots, u_N(m))$ where $\sqrt{N} u_i(m)$'s are i.i.d. Rademacher random variable. Note that the parameters $w_2 = 1$ and $w_4 = 5$. We again perform the numerical simulation 10,000 samples of the $256 \times 256$ data matrix $M$ with the SNR $\lambda$ varying from $0$ to $0.6$, where we fix $k_1=1$ (under $\bsH_1$) and $k_2=3$ (under $\bsH_2$).

In Algorithm \ref{alg:ht}, we compute
\beq \begin{split}
	L_{\lambda} = -\log \det \big( (1+\lambda)I - \sqrt{\lambda} M \big) + \frac{\lambda N}{2} 
	+ \sqrt{\lambda} \Tr M - \frac{\lambda}{4} (\Tr M^2 - N).
\end{split} 
\eeq
We accept $\bsH_1$ if 
\[
L_{\lambda} \leq \frac{m_{k_1}+m_{k_2}}{2} = -\frac{k_2 + 2}{2} \log(1-\lambda) + \frac{k_2 \lambda}{2} - \frac{(k_2 -3) \lambda^2}{8}
\]
and accept $\bsH_2$ otherwise. The (theoretical) limiting error of the test is
\beq \label{eq:limit_error_1}
\erfc \left( \frac{k_2 - 1}{4} \sqrt{-\log (1-\lambda)  + \lambda - \frac{\lambda^2}{4}} \right).
\eeq

We can further improve the test by introducing the entrywise transformation given by
\[
h(x) = -\frac{g'(x)}{g(x)} = \frac{\pi}{2} \tanh \frac{\pi x}{2}.
\]
The Fisher information $\fh = \frac{\pi^2}{8}$, which is larger than $1$. We thus construct a transformed matrix $\wt M$ by
\[
\wt M_{ij} = \frac{2\sqrt{2}}{\pi \sqrt{N}} h(\sqrt{N} M_{ij}) = \sqrt{\frac{2}{N}} \tanh \left( \frac{\pi \sqrt{N}}{2} M_{ij} \right).
\]
If $\lambda > \frac{1}{\fh} = \frac{8}{\pi^2}\approx0.8106$, we can apply PCA for strong detection of the signal. If $\lambda < \frac{8}{\pi^2}$, applying Algorithm \ref{alg:htet}, we compute
\[ \begin{split}
\wt L_{\lambda} = -\log \det \left( \left(1+\frac{\pi^2 \lambda}{8}\right)I - \sqrt{\frac{\pi^2 \lambda}{8}} \tM \right) + \frac{\pi^2 \lambda N}{16} 
+ \frac{\pi \sqrt{\lambda}}{2 \sqrt{2}} \Tr \tM + \frac{\pi^2 \lambda}{16} (\Tr \tM^2 - N).
\end{split} 
\]
(Here, $\fh = \fhd = \frac{\pi^2}{8}$, $\gh = \frac{\pi^2}{16}$, and $\wt w_4 = \frac{3}{2}$.) We accept $\bsH_1$ if 
\[
\wt L_{\lambda} \leq -\frac{k_2 +2}{2} \log \left(1-\frac{\pi^2 \lambda}{8} \right) + \frac{k_2 \pi^2 \lambda}{16} - \frac{3 \pi^4 \lambda^2}{512}
\]
and accept $\bsH_2$ otherwise. The limiting error with entrywise transformation is
\beq \label{eq:limit_error_2}
\erfc \left( \frac{k_2 - 1}{4} \sqrt{ -\log \left(1-\frac{\pi^2 \lambda}{8}\right) + \frac{\pi^2 \lambda}{8}} \right).
\eeq
Since $\erfc(\cdot)$ is a decreasing function and $\frac{\pi^2}{8} > 1$, it is immediate to see that the limiting error in \eqref{eq:limit_error_2} is strictly smaller than the limiting error in \eqref{eq:limit_error_1}.

In Figure \ref{fig:alg12}, we plot the result of the simulation with $k_2=3$, which shows that the numerical error from Algorithm \ref{alg:htet} is smaller than that of Algorithm \ref{alg:ht}; both errors closely match theoretical errors in \eqref{eq:limit_error_2} and \eqref{eq:limit_error_1}.

\begin{figure}[ht!]
	\vskip 0.2in
	\begin{center}
		\centerline{\includegraphics[width=250pt]{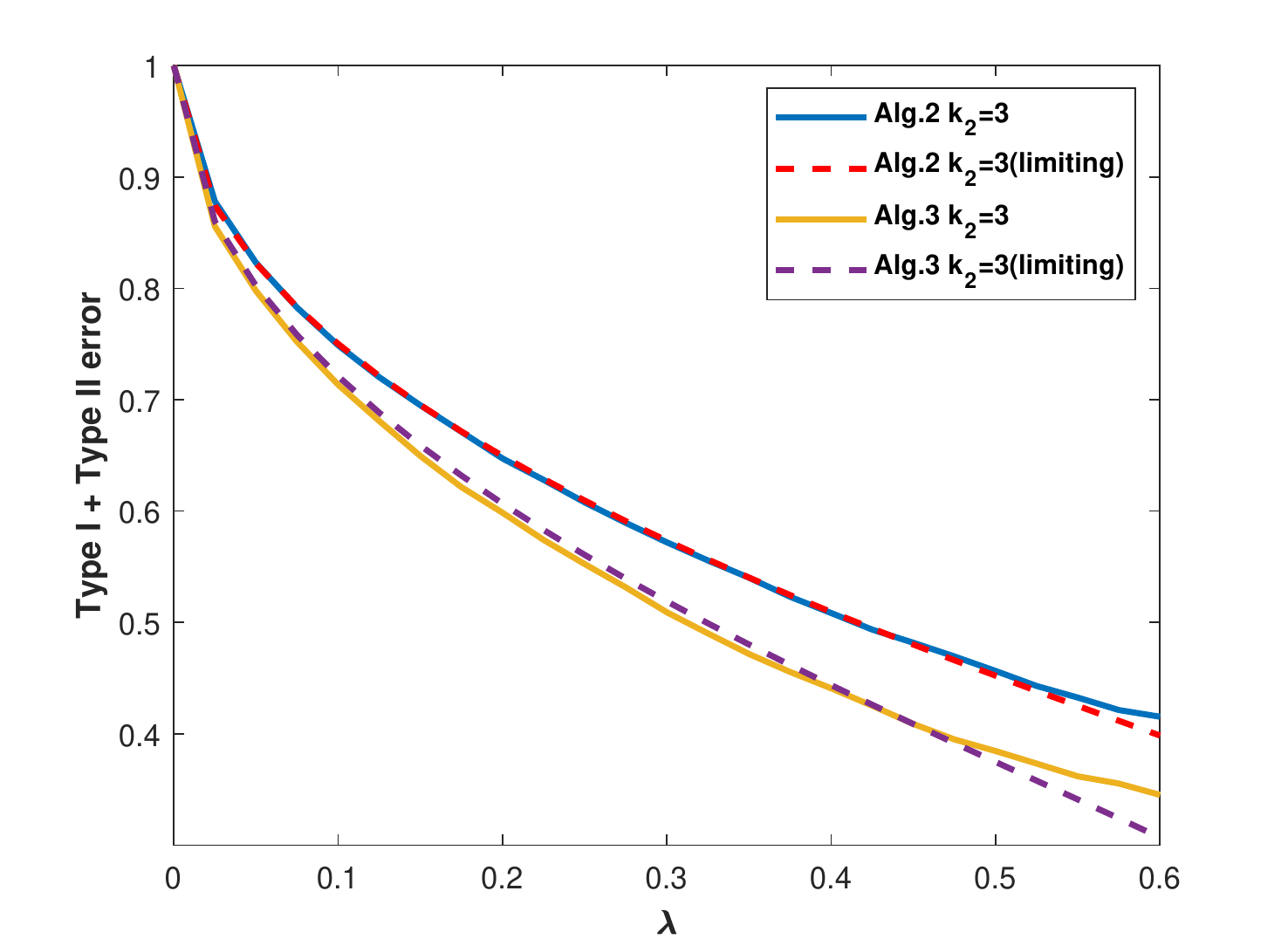}}
		\caption{The errors from the simulation with Algorithm \ref{alg:ht} (blue) and with Algorithm \ref{alg:htet} (yellow), respectively, versus the limiting errors \eqref{eq:limit_error_1} of Algorithm \ref{alg:ht} (red) and \eqref{eq:limit_error_2} of Algorithm \ref{alg:htet} (purple), respectively, for the setting in Section \ref{subsec:sech}.}
		\label{fig:alg12}
	\end{center}
	\vskip -0.2in
\end{figure}

\subsubsection{Rank Estimation} \label{subsec:rank}

We again consider the example in Section \ref{subsec:GOE} and apply Algorithm \ref{alg:at} to estimate the rank of the signal. We again perform the numerical simulation 20,000 samples of the $256 \times 256$ data matrix $M$ with the SNR $\lambda$ varying $0.025$ to $0.6$ and choose the rank of the signal $k$ uniformly from $0$ to $4$. Since we know that the range of the rank $k$ is $[0, 4]$, the (theoretical) limiting error in \eqref{eq:rank_k_error} changes to
\[ \begin{split}
& \p(k=0) \cdot \p \left( Z > \frac{\sqrt{V_0}}{4} \right) + \sum_{i=1}^{3} \p(k=i) \cdot \p \left( |Z| > \frac{\sqrt{V_0}}{4} \right) + \p(k=4) \cdot \p \left( Z > \frac{\sqrt{V_0}}{4} \right) \\
&= \left( 1 - \frac{\p(k=0)+\p(k=4)}{2} \right) \\
&\qquad \times \erfc \left( \frac{1}{4} \sqrt{-\log (1-\lambda) + \left( \frac{2}{w_2} - 1 \right) \lambda + \left( \frac{1}{w_4-1} - \frac{1}{2} \right) \lambda^2} \right).
\end{split} \]

We compute the same test statistic
\beq \begin{split}
	L_{\lambda} = -\log \det \big( (1+\lambda)I - \sqrt{\lambda} M \big) + \frac{\lambda N}{2}
\end{split} \eeq
and find the nearest nonnegative integer of the value
\beq
-\frac{L_{\lambda}}{\log (1-\lambda)} - \frac{1}{2},
\eeq
rounding half down. 
Since $\p(k=0)=\p(k=4) = 0.2$, the limiting error of the estimation is
\beq \label{eq:limit_rank}
\left(1-\frac{\p(k=0)+\p(k=4)}{2}\right) \cdot \erfc \left( \frac{1}{4} \sqrt{-\log (1-\lambda)}\right)=	0.8 \cdot \erfc \left( \frac{1}{4} \sqrt{-\log (1-\lambda)}\right).
\eeq

The result of the simulation can be found in Figure \ref{fig:alg_at}, where we compare the error from the estimation (Algorithm \ref{alg:at}) and the theoretical error in \eqref{eq:limit_rank}. We can see that the error from the numerical simulation matches closely the theoretical error.

\begin{figure}[ht]
	\vskip 0.2in
	\begin{center}
		\centerline{\includegraphics[width=250pt]{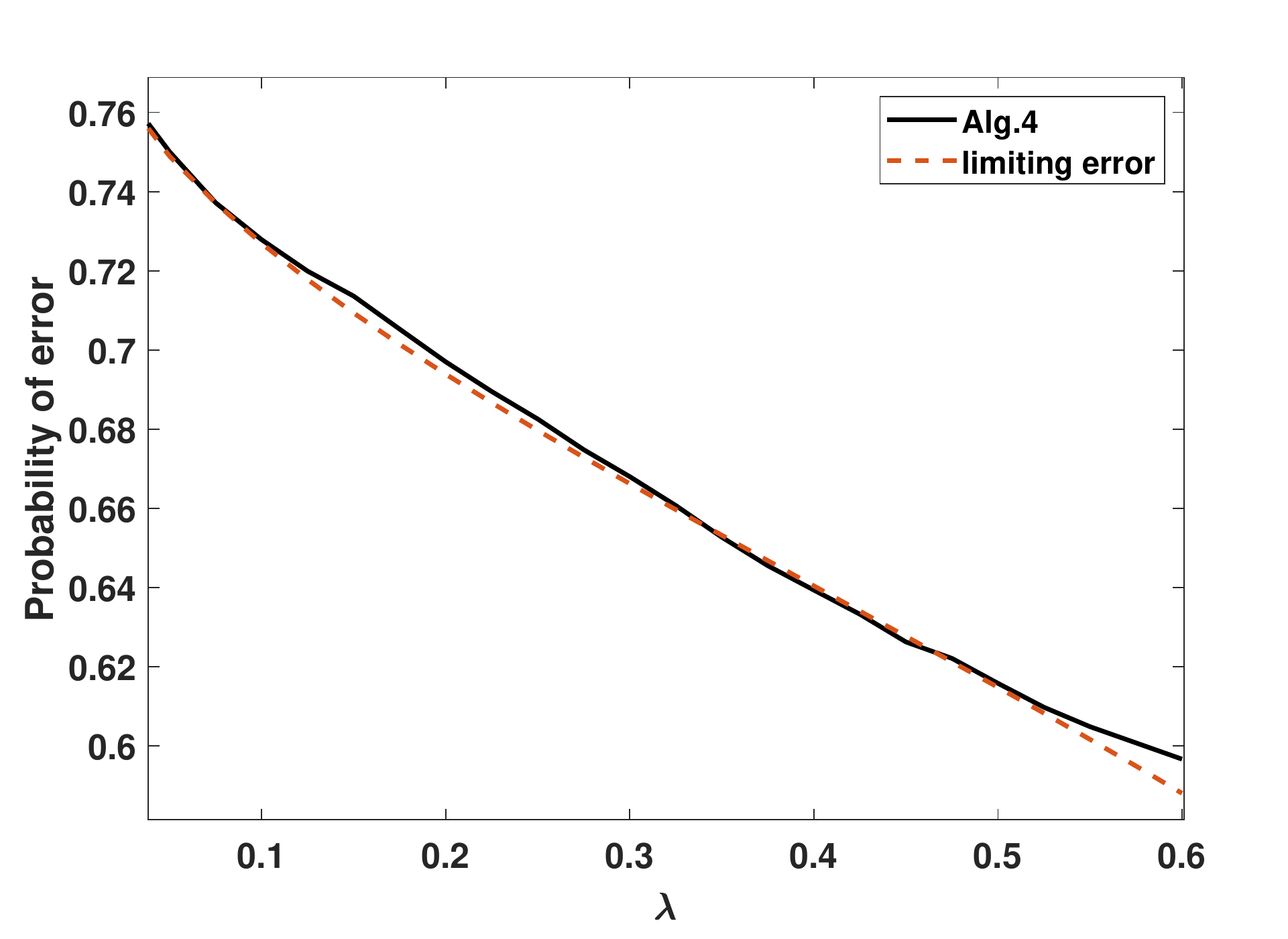}}
		\caption{The errors from the simulation with Algorithm \ref{alg:at} (solid) versus the limiting error \eqref{eq:limit_rank} (dashed) for the setting in Section \ref{subsec:rank}.}
		\label{fig:alg_at}
	\end{center}
	\vskip -0.2in
\end{figure}

\subsection{Spiked rectangular matrices}
In this section, we check the performance of the improved PCA and the pre-transformed LSS-based tests for spiked rectangular matrices.

\subsubsection{Improved PCA with Entrywise Transformation}
\subsubsection*{Additive model}

We consider the data with the non-Gaussian noise whose density function is given by the bimodal distribution in \eqref{eq:bimodal}. We sample $Z_{ij}$ independently from the density $g$ and let $X_{ij}=Z_{ij}/\sqrt{N}$.
We let $\bsu(\ell) = (u_1(\ell), u_2(\ell), \dots, u_M(\ell))^T$ and $\bsv(\ell) = (v_1(\ell), v_2(\ell), \dots, v_N(\ell))^T$, where $\sqrt{M} u_i(\ell)$'s and $\sqrt{N} v_j(\ell)$'s are i.i.d. Rademacher random variables for $i=1, 2, \dots, M,$ $j=1, 2, \dots, N$ and $\ell=1,2,3$. 
When we apply the entrywise transformation, defined in~\eqref{eq:transformed-cov}, with $\alpha=0$ to the rank-3 spiked mean data matrix, we get
\beq
\wt Y_{ij} = \frac{1}{\sqrt{\fh N}} h(\sqrt{N} Y_{ij})
\eeq
where 
\beq
h(x)=-\frac{g'(x)}{g(x)}=\frac{2\left(\sqrt{3}-e^{4\sqrt{3}x\left(\sqrt{3}-2x\right)}+2x\right)}{1+e^{4\sqrt{3}x}}
\eeq
and $\fh = \int_{-\infty}^{\infty} \frac{(g'(x))^2}{g(x)} \dd x\approx 2.50810$.
The size of the data matrix is set to be $M=2000$, $N=4000$, and the ratio $\rat=M/N=0.5$.

Theoretically, the threshold for the BBP-transition of the largest eigenvalue is $\sqrt{\rat}\approx 0.7071$ with the vanilla PCA, whereas the threshold is lowered to $\frac{\sqrt{\rat}}{\fh} \approx 0.2819$ with the improved PCA as predicted by Theorem~\ref{thm:trans-mean}.

For $\ell=1,2,3,$ we set the SNRs 
\beq\label{eq:lam_sim}
\lambda_\ell=\frac{\ell\sqrt{\rat}+\frac{\sqrt{\rat}}{\fh}}{\ell+1}
\eeq
to observe the transitions of the largest eigenvalue after the transformation. In Figure~\ref{fig:spectrum}, we compare the spectrum of the sample covariance matrices, $YY^T$ (left) and $\wt Y \wt Y^T$ (right). As in the spiked Wigner case in Section \ref{subsubsec:Wigner_PCA}, we again find three outlier eigenvalues only in the spectrum of $\wt Y \wt Y^T$ (right), which are absent in that of $YY^T$ (left).
\begin{figure}[h]
	\vskip 0.2in
	\begin{center}
		\centerline{\includegraphics[width=\columnwidth]{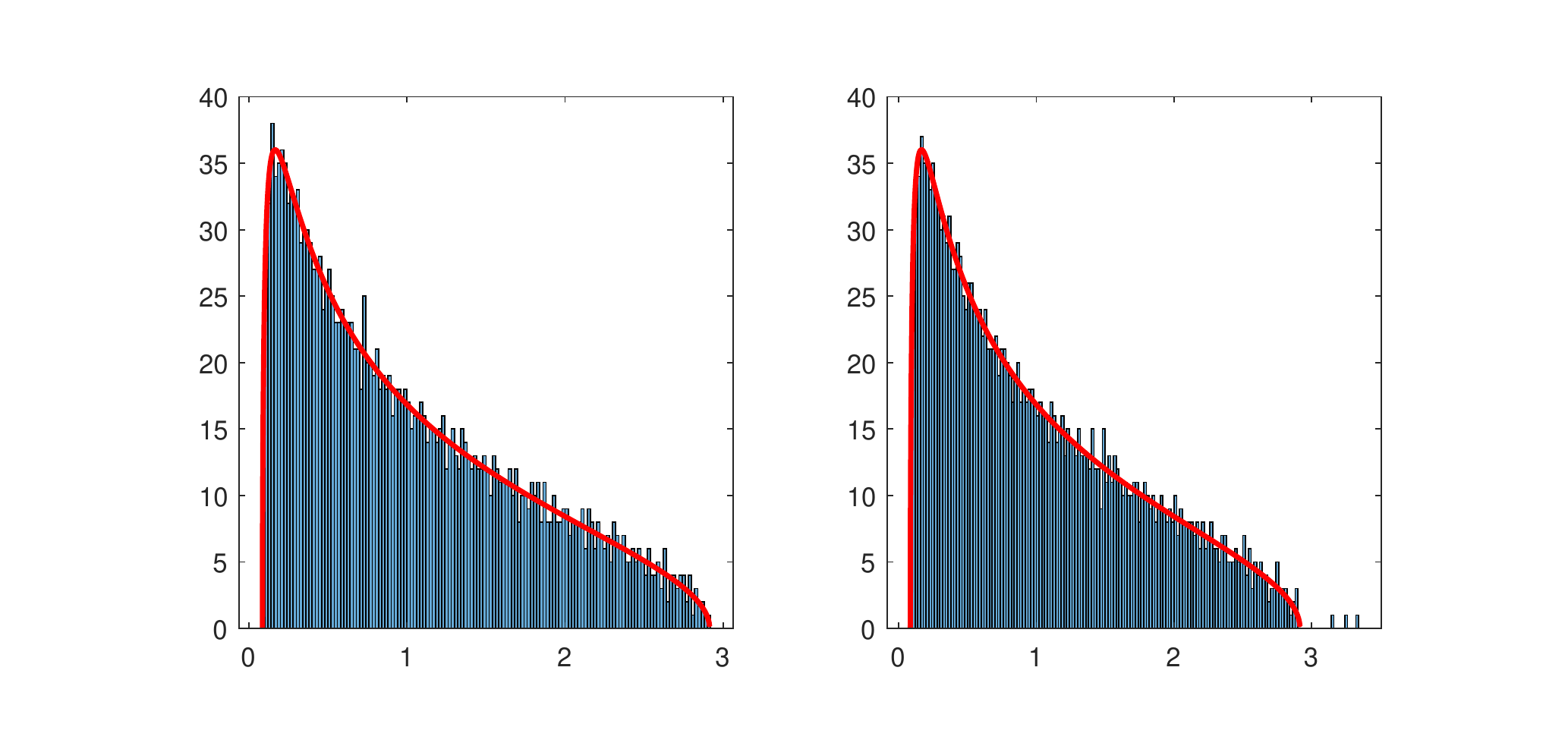}}
		\caption{The spectrum of the sample covariance matrix ($M=2000, N=4000$) with bimodal noise, before (left) and after (right) the entrywise transformation. Three eigenvalues pop up from the bulk of the spectrum after the entrywise transformation.}
		\label{fig:spectrum}
	\end{center}
	\vskip -0.2in
\end{figure}

\subsubsection*{Multiplicative model}
In the spiked covariance model, to clearly observe the outlier in our simulation setting, a distribution with a larger Fisher information value should be used. 
Thus, we let the density function $g_a$ of the noise be the generalized version of the bimodal distribution with unit variance in \eqref{eq:bimodal}, defined as
\[
g_a(x)=\frac{1}{2\sqrt{2(1-a^2)\pi}}\left(e^{-\frac{(x-a)^2}{2(1-a^2)}}+e^{-\frac{(x+a)^2}{2(1-a^2)}}\right),
\]
which $g_a$ is the density function of a random variable
\[
\sqrt{1-a^2} \caN + a\caR.
\]

We sample $Z_{ij}$ independently from the density $g_a$ and let $X_{ij}=Z_{ij}/\sqrt{N}$. We let $\bsu(\ell) = (u_1(\ell), u_2(\ell), \dots, u_M(\ell))^T$ and $\bsv(\ell) = (v_1(\ell), v_2(\ell), \dots, v_N(\ell))^T$, where $\sqrt{M} u_i(\ell)$'s and $\sqrt{N} v_j(\ell)$'s are i.i.d. Rademacher random variables for $i=1, 2, \dots, M,$ $j=1, 2, \dots, N$ and $\ell=1,2,3$. 
When we apply the entrywise transformation, defined in~\eqref{eq:transformed-cov} to the rank-3 spiked covariance data matrix, we get
\beq
\wt Y_{ij} = \frac{1}{\sqrt{(\alpha^2+2\alpha+\fh) N}} h_{a,\alpha}(\sqrt{N} Y_{ij})
\eeq
where 
\beq
h_{a,\alpha}(x)=-\frac{g_a'(x)}{g_a(x)}+\alpha x=\frac{\left((x-a)e^{\frac{2ax}{1-a^2}}+(x+a)\right)}{(1-a^2)(1+e^{\frac{2ax}{1-a^2}})}+\alpha x
\eeq
and $\fh = \int_{-\infty}^{\infty} \frac{(g_a'(x))^2}{g_a(x)} \dd x\approx 5.15583$, when $a=\sqrt{21}/5.$
The size of the data matrix is set to be $M=4000$, $N=8000$. We also use $\alpha=\sqrt{\fh}$, and the ratio $d_0=M/N=0.5$.
The threshold for the BBP-transition of the largest eigenvalue is $\sqrt{d_0}\approx 0.7071$ for the vanilla PCA, whereas the threshold changes to $\lambda_{g,\ell}=\frac{(1+\sqrt{\fh})}{2}\cdot(2\gamma_\ell+\sqrt{\fh}\gamma_\ell^2)=\sqrt{\rat}$ for the transformed PCA. (See Theorem~\ref{thm:trans-cov}.)
For $\ell=1,2,3$, we set the SNRs
\beq\label{eq:lam_sim_mult}
\lambda_\ell=\frac{\ell\sqrt{d_0}+\frac{2\sqrt{\rat}}{1+\sqrt{\fh}}}{\ell+1}
\eeq
to observe the transitions of the largest eigenvalue after the transformation.  

We obtain a result analogous to the additive model. See Figure~\ref{fig:spectrum_mult}.
\begin{figure}[h]
	\vskip 0.2in
	\begin{center}
		\centerline{\includegraphics[width=\columnwidth]{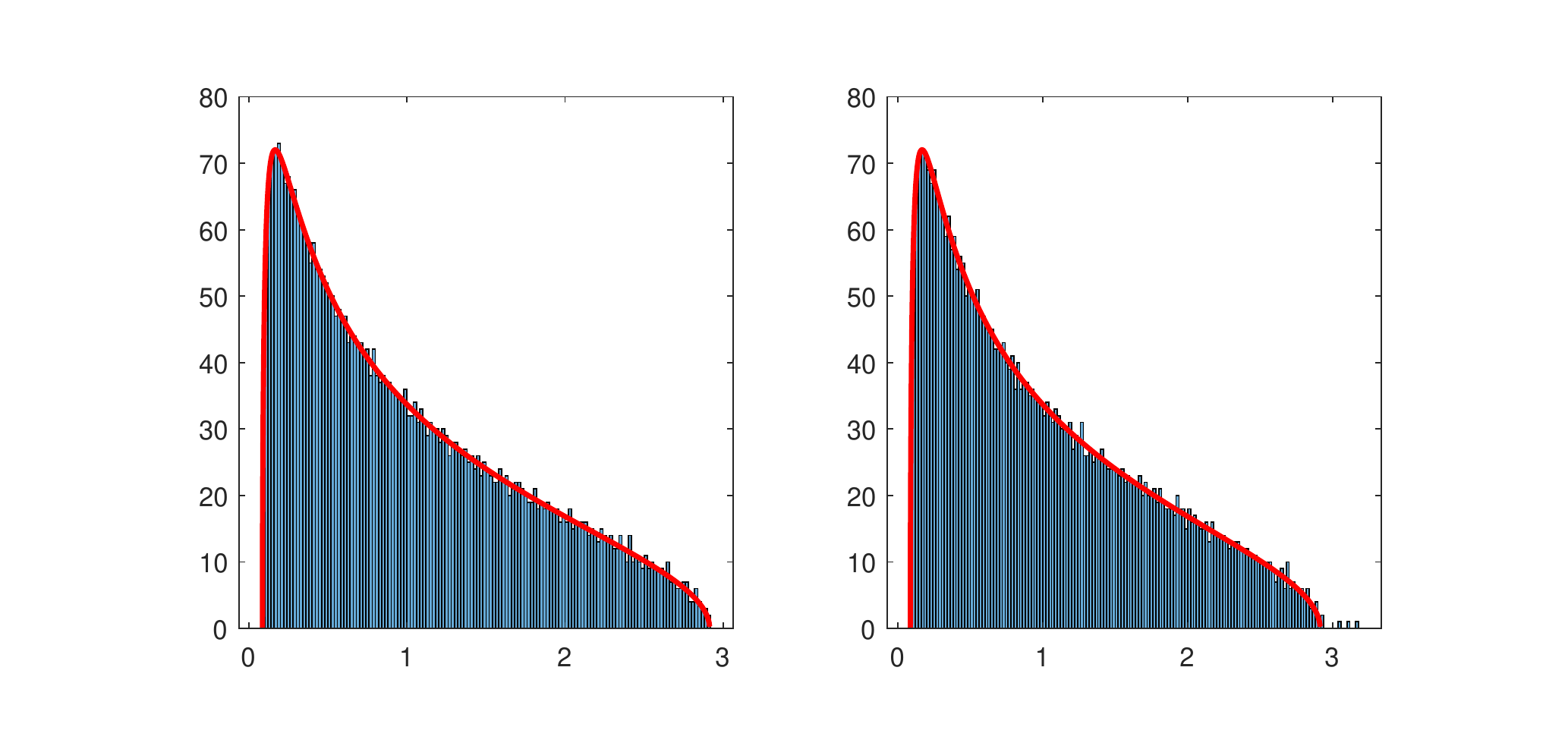}}
		\caption{The spectrum of the sample covariance matrix ($M=4000, N=8000$) with bimodal noise, before (left) and after (right) the entrywise transformation. Three eigenvalues pop up from the bulk of the spectrum after the entrywise transformation.}
		\label{fig:spectrum_mult}
	\end{center}
	\vskip -0.2in
\end{figure}

\subsubsection{Hypothesis Testing with pre-transformed LSS estimator}\label{sec:Improved Test}

We now consider an (additive) spiked rectangular matrix with the non-Gaussian noise whose density function is given by \eqref{eq:sech}. We let the signal $\bsu = (u_1, u_2, \dots, u_M)^T$ and $\bsv = (v_1, v_2, \dots, v_N)^T$, where $\sqrt{M} u_i$'s and $\sqrt{N} v_j$'s are i.i.d. Rademacher random variables for $i=1, 2, \dots, M$ and $j=1, 2, \dots, N$. Let the data matrix $Y = \sqrt{\lambda} \bsu \bsv^T + X$. 

Recall that $w_4=5,$ $\fh = \frac{\pi^2}{8}$, $\gh = \frac{\pi^2}{16}$, and $\wt w_4 = \frac{3}{2}$. 
The LSS estimators are given by 
\beq \begin{split}
	L_{\SNR} &= -\log \det \left( \left(1+\frac{\rat}{\SNR} \right)(1+\SNR)I - YY^T \right) -\frac{\SNR}{2\rat}(\Tr YY^T-M)\\
	&~~~+M \left[\frac{\SNR}{\rat} - \log\left(\frac{\SNR}{\rat}\right) -\frac{1-\rat}{\rat}\log(1+\SNR) \right],
\end{split} \eeq
and 
\beq \begin{split}
	\wt L_\SNR &= -\displaystyle\log \det \left( \left(1+\frac{8\rat}{\SNR\pi^2}\right)(1+\SNR \frac{\pi^2}{8})I - \wt{Y}\wt{Y}^T \right) + \frac{\pi^2\SNR}{8\rat} (\Tr \wt{Y}\wt{Y}^T-M)
	\\&~~~+M\left[\frac{\SNR\pi^2}{8\rat} - \log\left(\frac{\SNR\pi^2}{8\rat}\right) -\frac{1-\rat}{\rat}\log\left(1+\SNR \frac{\pi^2}{8}\right)\right].
\end{split} \eeq
With critical values $m_\SNR=-\log\left(1-\frac{\SNR^2}{\rat}\right)+\frac{3\SNR^2}{4\rat}$ and $\wt m_\SNR=-\log\left(1-\frac{\SNR^2\pi^4}{64\rat}\right)-\frac{3\pi^4\SNR^2}{256\rat,}$ the errors are
\[
\erfc\left(\frac1{4}\sqrt{-\log\left(1-\frac{\SNR^2}{\rat}\right)-\frac{\SNR^2}{2\rat}}\right)
\]
and 
\[
\erfc\left(\frac1{4}\sqrt{-\log\left(1-\frac{\pi^4\SNR^2}{64\rat}\right)}\right).
\]
In Figure \ref{fig:sech_rec}, we plot empirical average (after 1,000 Monte Carlo simulations) of the error of the proposed test and the theoretical (limiting) error, varying the SNR $\SNR$ from $0$ to $0.5$, with $M=256$ and $N=512$. It can be checked that the error of the proposed test closely matches the theoretical error.
\begin{figure}[h]
	\vskip 0.2in
	\begin{center}
		\centerline{\includegraphics[width=200pt]{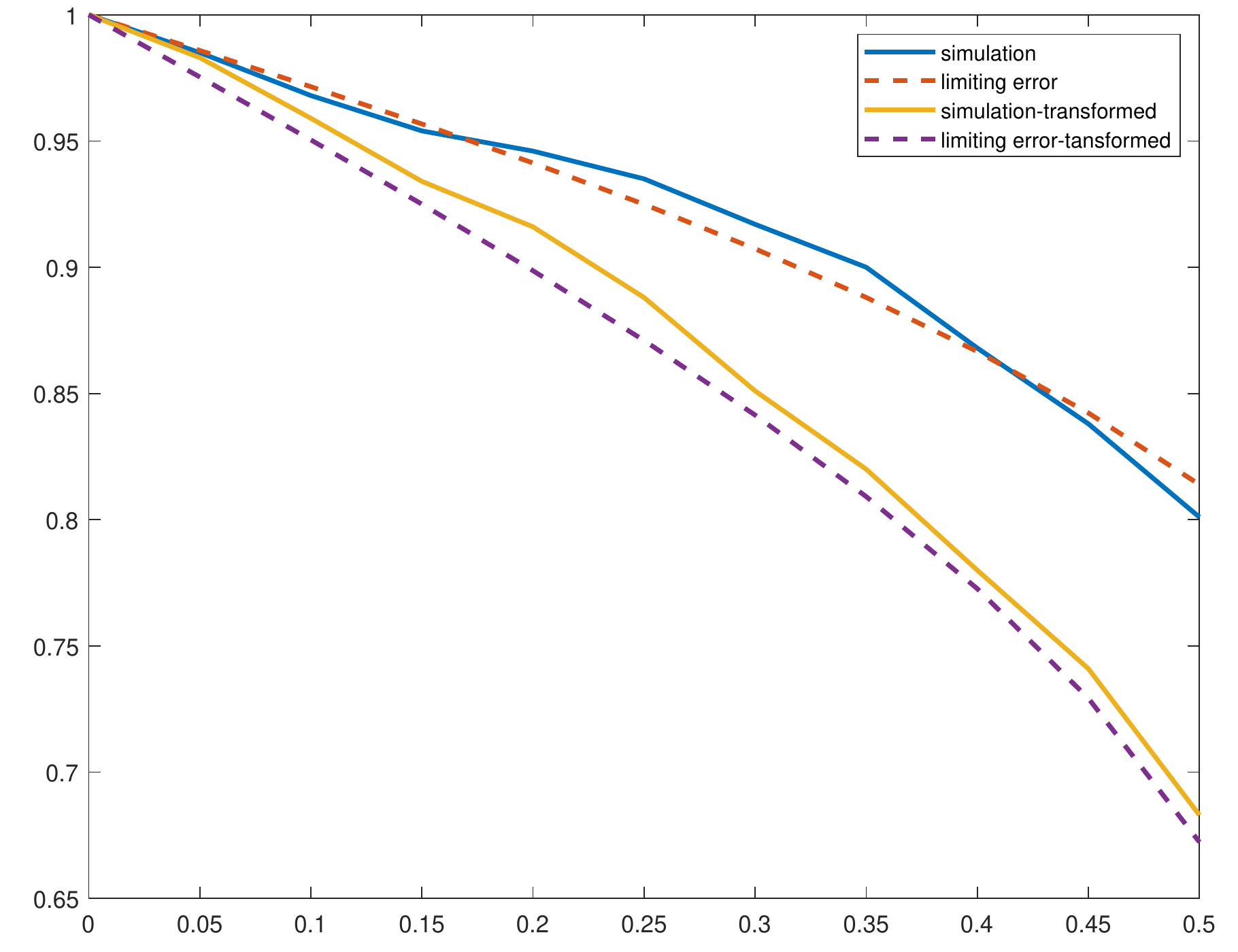}}
		\caption{The error from the simulation (solid) and the theoretical limiting error (dashed), respectively.}
		\label{fig:sech_rec}
	\end{center}
	\vskip -0.2in
\end{figure}

\section{Proof of Theorems for improved PCA} \label{sec:proof_trans}

In this section, we rigorously prove Theorems \ref{thm:trans-mean} and \ref{thm:trans-cov} in Section \ref{sec:main1}, which are about the detection threshold of the improved PCA.

\subsection{Preliminaries}
We first introduce the following notions, which provide a simple way of making precise statements regarding the bound up to small powers of $N$ that holds with probability higher than $1-N^{-D}$ for all $D>0$.

\begin{defn}[Overwhelming probability]\label{def:overwhelming}
	We say that an event (or family of events) $\Omega$ holds with overwhelming probability if for all (large) $D>0$ we have $\P(\Omega) \leq N^{-D}$ for any sufficiently large $N$.
\end{defn}

\begin{defn}[Stochastic domination]\label{def:stocdom}
	Let
	\begin{equation*}
	\xi = \pb{\xi^{(N)}(u) \;:\; N \in \N, u \in U^{(N)}} \,, \qquad
	\zeta = \pb{\zeta^{(N)}(u) \;:\; N \in \N, u \in U^{(N)}}
	\end{equation*}
	be two families of random variables, where $U^{(N)}$ is a possibly $N$-dependent parameter set. 
	We say that $\xi$ is \emph{stochastically dominated by $\zeta$, uniformly in $u$,} if for all (small) $\epsilon > 0$ and (large) $D > 0$
	\begin{equation*}
	\sup_{u \in U^{(N)}} \P \left({|\xi^{(N)}(u)| > N^\epsilon \zeta^{(N)}(u)}\right) \;\leq\; N^{-D}
	\end{equation*}
	for any sufficiently large $N\geq N_0(\varepsilon, D)$.
	Throughout this appendix, the stochastic domination will always be uniform in all parameters, including matrix indices and the spectral parameter $z$.
	
	We write $\xi \prec \zeta$ or $\xi = \caO_\prec(\zeta)$, if $\xi$ is stochastically dominated by $\zeta$, uniformly in $u$.
\end{defn}


For a Wigner matrix $W$, we will use the following result for the resolvents, which is called an isotropic local semicircle law.
\begin{lem}[Isotropic local semicircle law] \label{lem:local_SC}
	Suppose that $z \in \R$ outside an open interval containing $[-2, 2]$. Let $s_{sc}(z)$ be the Stieltjes transform of the Marchenko--Pastur law, which is also given by
	\beq
	s_{sc}(z)=\frac{-z + \sqrt{z^2 - 4}}{2}.
	\eeq
	Then,
	\[
	\langle \bsu(\ell_1), (W-zI)^{-1} \bsu(\ell_2) \rangle = s_{sc}(z) \langle \bsu(\ell_1),\bsu(\ell_2)\rangle + \caO_\prec(N^{-\frac{1}{2}})
	\]
\end{lem}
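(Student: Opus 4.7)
The plan is to invoke the isotropic local semicircle law for Wigner matrices, which is a standard tool from the recent random matrix theory literature (see, e.g., Knowles--Yin and Bloemendal--Erd\H{o}s--Knowles--Yau--Yin). Specifically, that result asserts that for any deterministic unit vectors $\bsx, \bsy \in \R^N$ and any $z$ at distance $\gtrsim 1$ from $[-2,2]$,
\begin{equation*}
\langle \bsx, (W-zI)^{-1} \bsy \rangle - s_{sc}(z) \langle \bsx, \bsy \rangle \prec \frac{1}{\sqrt{N}},
\end{equation*}
since for such $z$ one has $\mathrm{Im}\, s_{sc}(z) \lesssim 1$ and distance-to-spectrum $\eta \gtrsim 1$, collapsing the usual $\sqrt{\mathrm{Im}\, s_{sc}/(N\eta)} + 1/(N\eta)$ error into $O(N^{-1/2})$.

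The only thing to verify is that this deterministic-vector statement transfers to the (possibly random) spike vectors $\bsu(\ell_1), \bsu(\ell_2)$ under Assumption \ref{assump:entry1}. This is immediate: $\bsU$ is independent of $W$, so I would condition on $\bsU$ and apply the isotropic law pointwise. Under the conditioning, $\bsu(\ell_1)$ and $\bsu(\ell_2)$ are deterministic with $\|\bsu(\ell_j)\|_2 = 1 + \caO_\prec(N^{-\phi})$ by the $\phi$-orthonormality hypothesis, and rescaling by $\|\bsu(\ell_j)\|_2^{-1}$ introduces only a multiplicative error of order $N^{-\phi}$, which is absorbed into $\caO_\prec(N^{-1/2})$ since $\phi > 1/4 \geq 1/2$ is not required here (any $\phi > 0$ suffices for this absorption as long as we are content with the stated $N^{-1/2}$ bound, which is the slower of the two contributions).

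The only real work is the isotropic local law itself, but that is quoted rather than reproved. The standard route, should one wish to redo it, is: (i) establish the entrywise local law $G_{ij}(z) = s_{sc}(z)\delta_{ij} + \caO_\prec(N^{-1/2})$ for $z$ away from $[-2,2]$ via a self-consistent equation for the Stieltjes transform together with the resolvent identity; (ii) upgrade to the isotropic law by the polynomial / high-moment method of Knowles--Yin, which controls $\langle \bsx, (G - s_{sc}I)\bsy \rangle$ by estimating even moments and exploiting the independence of off-diagonal entries of $W$. The main technical point in (ii) is that cross-terms $u_i v_j G_{ij}$ with $i\neq j$ sum to $O(N^{-1/2})$ because the $G_{ij}$ for distinct $i,j$ are effectively independent after suitable minor removals, while the diagonal contribution yields $s_{sc}(z)\langle \bsx, \bsy\rangle$ up to a fluctuation of order $N^{-1/2}$.

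The hard part (if one were to prove the isotropic law from scratch) is controlling the off-diagonal resolvent entries in high moments; however, for the purposes of this paper, it is legitimate to cite the result and verify only that the hypotheses (distance to spectrum of order $1$, independence of spikes from noise, $\phi$-orthonormality) are satisfied, which is what I would do.
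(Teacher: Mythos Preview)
Your proposal is correct and matches the paper's approach exactly: the paper simply cites Theorem~2.3 of Knowles--Yin (and Lemma~7.7 of \cite{chung2019weak}) without reproving anything, so your plan to invoke the isotropic local law and verify that the hypotheses are met is precisely what is done. One small clarification: your discussion of the rescaling error is unnecessary, since the right-hand side of the lemma already carries $s_{sc}(z)\langle\bsu(\ell_1),\bsu(\ell_2)\rangle$ with the \emph{unnormalized} vectors, so after applying the isotropic law to $\bsu(\ell_j)/\|\bsu(\ell_j)\|$ and multiplying through by the (bounded) norms, the result follows with no $N^{-\phi}$ correction to worry about.
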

See Theorem 2.3 of \cite{Knowles-Yin2013} (also Lemma 7.7 of \cite{chung2019weak}) for the proof of Lemma \ref{lem:local_SC}.

Further, for a rectangular matrix $X$, we will use the following analogous result for the resolvents, which is called an isotropic Marchenko--Pastur law.
\begin{lem}[Isotropic local Marchenko--Pastur law] \label{lem:local_MP}
	Suppose that $z \in \R$ outside an open interval containing $[d_-, d_+]$. Let $s(z)$ be the Stieltjes transform of the Marchenko--Pastur law, which is also given by
	\beq
	s(z) = \frac{(1-\rat-z)+ \sqrt{(1-\rat-z)^2 -4\rat z}}{2\rat z}.
	\eeq
	Then,
	\[
	\langle \bsv(\ell_1), (X^T X-zI)^{-1} \bsv(\ell_2) \rangle = -\left( \frac{1}{zs(z)}+1 \right) \langle \bsv(\ell_1),\bsv(\ell_2)\rangle + \caO_\prec(N^{-\frac{1}{2}})
	\]
	and
	\[	
	\langle X^T \bsu(\ell_1), (X^T X-zI)^{-1} X^T\bsu(\ell_2)\rangle = (zs(z)+1) \langle \bsu(\ell_1),\bsu(\ell_2)\rangle +\caO_\prec(N^{-\frac{1}{2}}).
	\]
\end{lem}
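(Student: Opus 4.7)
The plan is to deduce both identities from the standard isotropic local Marchenko--Pastur law for the resolvent of $XX^T$ tested against deterministic vectors $\bsa,\bsa'\in\R^M$, namely
\[
\langle \bsa,(XX^T-zI)^{-1}\bsa'\rangle \;=\; s(z)\langle\bsa,\bsa'\rangle + \caO_\prec(N^{-1/2}),
\]
valid for $z\in\R$ bounded away from $[d_-,d_+]$. This is by now a standard result, obtained by the same general strategy that underlies Lemma~\ref{lem:local_SC}: one first establishes an entrywise local MP law through Schur-complement/resolvent identities, and then upgrades to the isotropic form via a fluctuation-averaging or polynomialization argument (Bloemendal--Knowles--Yau--Yin; Knowles--Yin). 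I would cite this directly rather than reproving it.

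The second identity follows by a purely algebraic manipulation. From the commutation $(XX^T-zI)X = X(X^TX-zI)$ one obtains
\[
X(X^TX-zI)^{-1}X^T \;=\; (XX^T)(XX^T-zI)^{-1} \;=\; I + z\,(XX^T-zI)^{-1}.
\]
Pairing $\bsu(\ell_1)$ with this operator acting on $\bsu(\ell_2)$ and invoking the isotropic local MP law for $XX^T$ immediately yields
\[
\langle X^T\bsu(\ell_1),(X^TX-zI)^{-1}X^T\bsu(\ell_2)\rangle \;=\; \bigl(1+ z s(z)\bigr)\langle \bsu(\ell_1),\bsu(\ell_2)\rangle + \caO_\prec(N^{-1/2}),
\]
which is exactly the claimed formula.

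For the first identity, I would invoke the analogous isotropic local law for $(X^TX-zI)^{-1}$ tested against deterministic vectors in $\R^N$. Since $X^TX$ shares its nonzero spectrum with $XX^T$ and has an additional $N-M$ eigenvalues at the origin, the limiting Stieltjes transform that appears on the deterministic side is
\[
s_N(z) \;:=\; \rat\, s(z) + \frac{\rat - 1}{z}.
\]
To match the form stated in the lemma, it suffices to verify the algebraic identity $s_N(z) = -1/(zs(z)) - 1$. This is immediate from the MP self-consistent equation $\rat z\,s(z)^2 + (z+\rat-1)s(z)+1=0$: dividing by $zs(z)$ and rearranging gives $-1/(zs(z)) = \rat\,s(z) + 1 + (\rat-1)/z$, from which the identity follows at once.

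The only nontrivial input is the isotropic local MP law itself. The main obstacle, if one wanted a self-contained proof, would be the anisotropic upgrade of the entrywise law: this requires a careful resolvent expansion in a basis adapted to the test vectors together with high-moment/fluctuation-averaging estimates, and is the heart of the Knowles--Yin and Bloemendal--Knowles--Yau--Yin machinery. Given that Lemma~\ref{lem:local_SC} is used in the same way, I would follow the same convention here and quote the MP analog as a black box.
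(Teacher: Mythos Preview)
Your proposal is correct and takes essentially the same approach as the paper: the paper simply cites Theorem~2.5 of Bloemendal--Erd\H{o}s--Knowles--Yau--Yin (and Lemma~3.7 of \cite{BKYY2016}) for the isotropic local MP law, which is exactly the black box you invoke. Your additional algebraic reductions---the identity $X(X^TX-zI)^{-1}X^T = I + z(XX^T-zI)^{-1}$ and the verification that $\rat s(z) + (\rat-1)/z = -1/(zs(z)) - 1$ via the MP self-consistent equation---are correct and make explicit what the paper leaves implicit in the citation.
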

See Theorem 2.5 of \cite{Bloemendal-Erdos-Knowles-Yau-Yin2014} (also Lemma 3.7 of \cite{BKYY2016}) for the proof of Lemma \ref{lem:local_MP}.

The following concentration inequality will be frequently used in the proof, which is sometimes called the large deviation estimate in random matrix theory.
\begin{lem}[Large deviation estimate] \label{lemma: LDE}
	Let $\pb{\xi_i^{(N)}}$ and $\pb{\zeta_i^{(N)}}$ be independent families of random variables and 
	$\pb{a_{ij}^{(N)}}$ and $\pb{b_i^{(N)}}$ be deterministic; here $N \in \N$ and $i,j = 1, \dots, N$. Suppose that complex-valued random variables $\xi_i^{(N)}$ and $\zeta_i^{(N)}$ are independent and satisfy for all $p\geq 2$ that
	\beq\label{cond on X}
	\E \xi \;=\; 0\,, \qquad \E \abs{\xi}^p \;\leq\; \frac{C_p}{NB^{p-2}}
	\eeq
	for some $B \leq N^{1/2}$ and some ($N$-independent) constant $C_p$. Then we have the bounds
	\begin{align} \label{LDE}
	\sum_i b_i \xi_i &\;\prec\; \pbb{\frac1{N}\sum_i \abs{b_i}^2}^{1/2}+\frac{\max_i|b_i|}{B}\,,
	\\ \label{two-set LDE}
	\sum_{i,j} a_{ij} \xi_i \zeta_j &\;\prec\; \pbb{\frac1{N^2}\sum_{i\neq j} \abs{a_{ij}}^2}^{1/2}+\frac{\max_{i\neq j}|a_{ij}|}{B}+\frac{\max_{i}|a_{ii}|}{B^2}\,,
	\\ \label{offdiag LDE}
	\sum_{i \neq j} a_{ij} \xi_i \xi_j &\;\prec\; \pbb{\frac1{N^2}\sum_{i \neq j} \abs{a_{ij}}^2}^{1/2}+\frac{\max_{i\neq j}|a_{ij}|}{B}\,.
	\end{align}
	If the coefficients $a_{ij}^{(N)}$ and $b_i^{(N)}$ depend on an additional parameter $u$, then all of these estimates are uniform in $u$, i.e. $N_0= N_0(\varepsilon, D)$ in the definition of $\prec$ depends not on $u$ but only on the constant $C$ from \eqref{cond on X}. 
	
	If $B = N^{1/2}$, the bounds can further be simplified to
	\beq
	\sum_i b_i \xi_i \prec \pbb{\frac1{N}\sum_i \abs{b_i}^2}^{1/2}, \quad \sum_{i,j} a_{ij} \xi_i \zeta_j \prec \pbb{\frac1{N^2}\sum_{i, j} \abs{a_{ij}}^2}^{1/2}, \quad
	\sum_{i \neq j} a_{ij} \xi_i \xi_j \prec\pbb{\frac1{N^2}\sum_{i \neq j} \abs{a_{ij}}^2}^{1/2}.
	\eeq
\end{lem}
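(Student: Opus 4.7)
The plan is to prove all three bounds by the classical high-moment-plus-Markov strategy: for each linear or bilinear form $S$ on the left-hand side and each bound $R$ proposed on the right-hand side, I will show that $\E |S|^{2p} \le (Cp)^{Cp} R^{2p}$ for every integer $p \ge 1$, and then apply Markov's inequality with $p = p(\epsilon, D) = \lceil D/\epsilon \rceil$ to obtain $\P(|S| > N^{\epsilon} R) \le N^{-D}$, which is precisely the stochastic domination $S \prec R$ in the sense of Definition \ref{def:stocdom}. The moment assumption $\E|\xi|^k \le C_k/(NB^{k-2})$ gives, equivalently, $\E|\xi|^k \le C_k N^{-1} B^{2-k}$ for $k \ge 2$, which is the only quantitative input I need.

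For the linear bound \eqref{LDE}, I expand
\[
\E\Big|\sum_i b_i \xi_i\Big|^{2p} = \sum_{i_1,\dots,i_{2p}} b_{i_1}\cdots \overline{b_{i_{2p}}}\, \E[\xi_{i_1}\cdots\overline{\xi_{i_{2p}}}]
\]
and use that $\E\xi_i = 0$ together with independence to conclude that only tuples partitioning $\{1,\dots,2p\}$ into blocks of size $\ge 2$ survive. On a block of size $k$ the index runs freely over $\{1,\dots,N\}$ and contributes a factor bounded by $(\sum_i |b_i|^k)\cdot C_k N^{-1} B^{2-k}$; for $k = 2$ this is $\tfrac{C_2}{N}\sum_i|b_i|^2$, and for $k \ge 3$ I control it by $(\max_i |b_i|)^{k-2} B^{2-k}\cdot \tfrac{C_k}{N}\sum_i |b_i|^2$, i.e.\ the dominant scales are exactly $N^{-1}\sum |b_i|^2$ and $(\max_i |b_i|/B)^2$. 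Summing over the (combinatorially bounded) number of partitions yields $\E|\sum b_i\xi_i|^{2p} \lesssim_p R^{2p}$ with $R$ as in \eqref{LDE}.

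For \eqref{two-set LDE} and \eqref{offdiag LDE} the expansion is the same, but now each moment factor must be split between the two (independent) families or between distinct pairs of indices in the off-diagonal version; the centering forces every $\xi_i$ \emph{and} every $\zeta_j$ index to be repeated at least twice within its own family. The diagonal ($i=j$) contribution in \eqref{two-set LDE} is separated and produces the $\max_i|a_{ii}|/B^2$ term, since the two factors $\xi_i\zeta_i$ with the same label appear and only the $\zeta$-centering costs one extra $B^{-1}$ compared to the off-diagonal contribution, while the diagonal indices are absent from \eqref{offdiag LDE} by hypothesis. The combinatorial accounting is identical in spirit to the linear case, giving $\E|S|^{2p} \lesssim_p R^{2p}$ with $R$ as stated.

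The main obstacle is the careful combinatorial bookkeeping for which powers of $N$, $\max|a_{ij}|$ and $B$ are produced by each partition shape; getting the three terms in \eqref{two-set LDE} with the correct exponents (and showing that no larger contribution appears) requires separating the diagonal part and tracking how the moment bound degrades as the block size grows. Once this accounting is done, the $B = N^{1/2}$ simplification is immediate: the bound $\E|\xi|^k \le C_k N^{-k/2}$ eliminates all the $\max$-terms, because $(\max |b_i|)/B \le (\sum|b_i|^2)^{1/2}$ up to a factor $N^{1/2}/B = 1$, so these are absorbed into the $\ell^2$-term; the same reasoning absorbs both $\max/B$ and $\max/B^2$ in \eqref{two-set LDE} and \eqref{offdiag LDE}. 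Finally, uniformity of $N_0 = N_0(\epsilon,D)$ in an auxiliary parameter $u$ is automatic because the constants $C_p$ appearing in the moment bound do not depend on $u$, and a union bound over the (polynomially many in $N$) relevant values of $u$ costs only an additional polynomial factor that can be absorbed into $D$.
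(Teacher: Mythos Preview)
The paper does not actually prove this lemma: its entire proof reads ``These estimates are an immediate consequence of Lemma 3.8 in \cite{Erdos-Knowles-Yau-Yin2013a}.'' Your high-moment-plus-Markov outline is exactly the method used in that reference, so in spirit your approach and the paper's (cited) approach coincide.

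Two small points are worth flagging. First, your justification of the $B = N^{1/2}$ simplification is not quite right for the bilinear forms. You claim that $\max_{i\neq j}|a_{ij}|/B$ is absorbed into $(N^{-2}\sum_{i\neq j}|a_{ij}|^2)^{1/2}$, but in general $\max_{i\neq j}|a_{ij}|/N^{1/2}$ can exceed $N^{-1}(\sum_{i\neq j}|a_{ij}|^2)^{1/2}$ by a factor of $N^{1/2}$ (take a single nonzero entry). The simplified bounds are correct, but they come from redoing the moment computation in the special case $\E|\xi|^k \le C_k N^{-k/2}$, where the $\max$-terms never arise in the first place; they are not a formal consequence of the general bounds. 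Second, your final sentence about a union bound over ``polynomially many'' values of $u$ is unnecessary and potentially misleading: the parameter set $U^{(N)}$ may be uncountable (e.g.\ a spectral parameter), and the definition of $\prec$ only asks for $\sup_u \P(\cdot)$, not $\P(\sup_u \cdot)$. Uniformity follows directly, as you already said, from the fact that the moment bound $\E|S(u)|^{2p}\le (Cp)^{Cp}R(u)^{2p}$ holds with constants independent of $u$; no union bound is needed.
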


\begin{proof}
	These estimates are an immediate consequence of Lemma 3.8 in \cite{Erdos-Knowles-Yau-Yin2013a}.
\end{proof}
Finally, we recall that, for our prior, $|\langle\bsu(\ell_1),\bsu(\ell_2)\rangle-\delta_{\ell_1\ell_2}|,\,|\langle\bsv(\ell_1),\bsv(\ell_2)\rangle-\delta_{\ell_1\ell_2}|\prec N^{-\phi}.$

\subsection{Proof of Theorem \ref{thm:trans-wigner}} \label{sec:proof_trans-wigner}
We first prove the behavior of the $k$ largest eigenvalues described in Section \ref{subsec:BBP}, which we will call the BBP result, in our setting, following the strategy of \cite{Raj2011,benaych2012singular}.
\beq \begin{split}
	M -zI &=W+  \bsU\Lambda^{1/2} \bsU^T - zI \\
	&= (W-zI)(I + (W-zI)^{-1} ( \bsU \Lambda^{1/2}\bsU^T)).
\end{split} \eeq
Thus, if $z$ is an eigenvalue of $M$ but not of $W$, then it satisfies
\[
\det (I + (W-zI)^{-1}  \bsU \Lambda^{1/2}\bsU^T) = 0,
\]
which also implies that $-1$ is an eigenvalue of 
\[
T \equiv T(z) := (W-zI)^{-1}  \bsU \Lambda^{1/2}\bsU^T.
\]
We then see that
\[\begin{split}
T(W-zI)^{-1}\bsu(\ell)&=(W-zI)^{-1}  \bsU\Lambda^{1/2} \bsU^T(W-zI)^{-1}\bsu(\ell)\\
&=\sqrt{\lambda_\ell}\langle\bsu(\ell),(W-zI)^{-1}\bsu(\ell)\rangle(W-zI)^{-1}\bsu(\ell)+O_\prec(N^{-\phi}).
\end{split}\]
i.e., $(W-zI)^{-1}\bsu(\ell)$ must be a eigenvector for $T$ with the corresponding eigenvalue $-1.$ Thus, by Lemma \ref{lem:local_SC},
\[
\sqrt{\lambda_\ell}s_{sc}(z)=-1+O_\prec(N^{-\phi}).
\]
It is elementary to check that the solution of the above equation is $z=\sqrt{\lambda_\ell}+\frac1{\sqrt{\lambda_\ell}}+O_\prec(N^{-\phi})$ if and only if $\lambda_\ell>1.$

We now turn to the proof of Theorem \ref{thm:trans-wigner}. For the spike $\|\bsU\|_\infty\prec N^{-\phi}$, suppose that a function $q$ and its all derivatives are polynomially bounded in the sense of Assumption \ref{assump:entry1}. Following the proof of Theorem 4.8 in \cite{Perry2018}, we have the following local linear estimation of $q(\sqrt{N} M_{ij})$ by
\[
q(\sqrt{N}M_{ij})=q(\sqrt{N}W_{ij})+\sqrt{\lambda N} \bsu_i \bsu_j^T \E[q'(\sqrt{N}W_{ij})]+\mathcal{R}_{ij},
\]
where the error $\caR_{ij}$ is negligible.
Set
\[
\mf := \E [q'(\sqrt{N}W_{ij})], \quad \vf := \E [q(\sqrt{N}W_{ij})^2], \quad \widehat\lambda := \lambda \mf^2/\vf,
\]
and
\[
Q_{ij} := \frac{1}{\sqrt{N\vf}}q(\sqrt{N}W_{ij}).
\]
Then the spectrum of the transformed matrix is determined by the matrix $Q+\bsU\hat\Lambda^{1/2}\bsU^T$. Since $Q$ is also Wigner matrix with $N\E[Q_{ij}^2]=1,$ by repeating the same process above, we get the result.

\subsection{Proof of Theorem \ref{thm:trans-mean}} \label{sec:proof_trans-mean}

We first prove the behavior of the $k$ largest eigenvalues described in Section \ref{subsec:BBP}, which we will again call the BBP result, in our setting, following the strategy of \cite{Raj2011,benaych2012singular}. Note that the $k$ largest eigenvalue of $YY^T$ is equal to the $k$ largest eigenvalues of $Y^T Y$. Consider the identity
\beq \begin{split}
	Y^T Y -zI &= (X+  \bsU \Lambda^{1/2}\bsV^T)^T (X+  \bsU \Lambda^{1/2}\bsV^T) - zI =(X^T X-zI)T(z)
\end{split} \eeq
where
\[
T \equiv T(z) := (X^T X-zI)^{-1} ( X^T \bsU\Lambda^{1/2} \bsV^T +  \bsV \Lambda^{1/2}\bsU^T X + \bsV\Lambda^{1/2}\bsU^T\bsU\Lambda^{1/2} \bsV^T).
\]
Thus, if $z$ is an eigenvalue of $YY^T$ but not of $XX^T$, then it satisfies
\[
\det (T(z)) = 0,
\]
which also implies that $-1$ is an eigenvalue of $T(z).$

Note that since $\| X \|, \| (X^T X-zI)^{-1} \| \prec 1$, from Lemma \ref{lemma: LDE}, 
\[ \begin{split}
&\langle \bsb, (X^T X-zI)^{-1} X^T \bsa \rangle = \sum_{i, j} \left( (X^T X-zI)^{-1} X^T \right)_{ij} b_i a_j \\
&\prec \left( \frac{1}{N^2} \sum_{i \neq j} \left| \left( (X^T X-zI)^{-1} X^T \right)_{ij} \right|^2 \right)^{1/2} + N^{-\phi} \max_{i, j} \left| \left( (X^T X-zI)^{-1} X^T \right)_{ij} \right| \\
&\prec \left( \frac{1}{N} \| (X^T X-zI)^{-1} X^T \|^2 \right)^{1/2} + N^{-\phi} \| (X^T X-zI)^{-1} X^T \| \prec N^{-\phi}.
\end{split} \]

Then the matrix $T$ satisfies
\[ \begin{split}
&T \cdot (X^T X-zI)^{-1} X^T \bsu (\ell)\\
&= (X^T X-zI)^{-1} X^T \bsU\Lambda^{1/2} (\bsV^T (X^T X-zI)^{-1} X^T \bsu(\ell)) \\
& \qquad + (X^T X-zI)^{-1} \bsV \Lambda^{1/2} (\bsU^T X(X^T X-zI)^{-1} X^T \bsu(\ell))  \\
& \qquad +   (X^T X-zI)^{-1} \bsV \Lambda^{1/2}(\bsU^T\bsU)\Lambda^{1/2} (\bsV^T (X^T X-zI)^{-1} X^T \bsu(\ell))
\\&=\sqrt{\lambda_\ell} \langle\bsu(\ell), X(X^T X-zI)^{-1} X^T \bsu(\ell)\rangle(X^T X-zI)^{-1}\bsv(\ell) + \bsth_1(\ell)
\end{split} \]
and
\[ \begin{split}
&T \cdot (X^T X-zI)^{-1} \bsv(\ell) \\
&=  (X^T X-zI)^{-1} X^T \bsU\Lambda^{1/2} (\bsV^T (X^T X-zI)^{-1} \bsv(\ell)) \\
& \qquad + (X^T X-zI)^{-1} \bsV \Lambda^{1/2} (\bsU^T X(X^T X-zI)^{-1} \bsv(\ell))  \\
& \qquad +    (X^T X-zI)^{-1} \bsV\Lambda^{1/2} (\bsU^T\bsU)\Lambda^{1/2}(\bsV^T (X^T X-zI)^{-1}\bsv(\ell)) 
\\&=\sqrt{\lambda_\ell} \langle\bsv(\ell), (X^T X-zI)^{-1} \bsv(\ell)\rangle(X^T X-zI)^{-1}X^T\bsu(\ell)
\\&\qquad+ \lambda_\ell  \langle\bsv(\ell), (X^T X-zI)^{-1} \bsv(\ell)\rangle (X^T X-zI)^{-1} \bsv(\ell) + \bsth_2(\ell)
\end{split} \]
where $\| \bsth_1(\ell) \|,\|\bsth_2(\ell)\| = \caO_{\prec}(N^{-\phi})$ since $\|\bsU^T\bsU- I\|_F\prec N^{-\phi}.$ 

In particular, $k$ extremal eigenvectors of $T$ are a linear combination of $(X^T X-zI)^{-1} X^T \bsu(\ell)$ and $(X^T X-zI)^{-1} \bsv(\ell)$.

Suppose that $a_\ell (X^T X-zI)^{-1} X^T \bsu(\ell) + b_\ell (X^T X-zI)^{-1} \bsv(\ell)$ is an eigenvector of $T$ with the corresponding eigenvalue $-1$. 
Thus, from Lemma \ref{lem:local_MP},
\beq \begin{split} \label{eq:ab_equation}
	&-\left( a_\ell (X^T X-zI)^{-1} X^T \bsu(\ell) + b_\ell (X^T X-zI)^{-1} \bsv(\ell) \right) 
	\\&= T \left( a_\ell (X^T X-zI)^{-1} X^T \bsu(\ell) + b_\ell (X^T X-zI)^{-1} \bsv(\ell) \right) \\
	&= -b_\ell\sqrt{\lambda_\ell} \left( \frac{1}{zs(z)}+1 \right) (X^T X-zI)^{-1} X^T \bsu(\ell) \\
	&\qquad + a_\ell\sqrt{\lambda_\ell} (zs(z)+1) (X^T X-zI)^{-1} \bsv(\ell) - b_\ell\lambda_\ell \left( \frac{1}{zs(z)}+1 \right) (X^T X-zI)^{-1} \bsv(\ell) + \wt\bsth(\ell)
\end{split} \eeq
for some $\wt\bsth(\ell)$, which is a linear combination of $(X^T X-zI)^{-1} X^T \bsu(\ell)$ and $(X^T X-zI)^{-1} \bsv(\ell)$ with $\| \wt\bsth(\ell) \| = \caO_{\prec}(N^{-\phi})$.

Since $\bsU$, $\bsV$, and $X$ are independent, $(X^T X-zI)^{-1} X^T \bsu(\ell)$ and $(X^T X-zI)^{-1} \bsv(\ell)$ are linearly independent with overwhelming probability. Thus, from \eqref{eq:ab_equation}, 
\[ \begin{split}
-a_\ell &= -b_\ell\sqrt{\lambda_\ell} \left( \frac{1}{zs(z)}+1 \right) + \caO_{\prec}(N^{-\phi}), \\
-b_\ell &= a_\ell\sqrt{\lambda_\ell} (zs(z)+1) - b_\ell\lambda_\ell \left( \frac{1}{zs(z)}+1 \right) + \caO_{\prec}(N^{-\phi}).
\end{split} \]
It is then elementary to check that
\[
\lambda_\ell (zs(z)+1) +1 = \caO_{\prec}(N^{-\phi}),
\]
which has the solution
\[
z = (1+\lambda_\ell)\left(1+ \frac{\rat}{\lambda_\ell}\right) + \caO_{\prec}(N^{-\phi})
\]
if and only if $\lambda_\ell > \sqrt{\rat}$. This proves the BBP result in our setting.

We now turn to the proof of Theorem \ref{thm:trans-mean}. To simplify the exposition, we focus on the case that SNRs are the same i.e., $\Lambda=\lambda I$. For the spike prior in Assumption \ref{assump:entry1}, suppose that a function $q$ and its all derivatives are polynomially bounded in the sense of Assumption \ref{assump:entry1}. Following the proof of Theorem 4.8 in \cite{Perry2018}, we define the error term from the local linear estimation of $q(\sqrt{N} Y_{ij})$ by
\[
q(\sqrt{N}Y_{ij})=q(\sqrt{N}X_{ij})+\sqrt{\lambda N} \bsu_i \bsv_j^T q'(\sqrt{N}X_{ij})+\mathcal{R}_{ij}
\]
where 
\[
\mathcal{R}_{ij}=\frac{1}{2} q''(\sqrt{N}X_{ij}+e_{ij}) \lambda N (\bsu_i \bsv_j^T)^2
\] 
for some $|e_{ij}|\leq|\sqrt{\lambda N}\bsu_i \bsv_j^T|$. The Frobenius norm of $\mathcal{R}$ is bounded as
\[ \begin{split}
\|\mathcal{R}\|^2_F &= \Tr \mathcal{R}^T \mathcal{R}=\frac{\lambda^2 N^2}{4}\sum_{i=1}^{M}\sum_{j=1}^{N}(\bsu_i\bsv_j^T)^4 q''(\sqrt{N}X_{ij}+e_{ij})^2 \\
&\leq\frac{\lambda^2 N^{2-4\phi}}{4}\sum_{i=1}^{M}\sum_{j=1}^{N}(\bsu_i\bsv_j^T)^2 q''(\sqrt{N}X_{ij}+e_{ij})^2.
\end{split} \]
Since $q''$ is polynomially bounded, $q''(\sqrt{N}X_{ij}+e_{ij})$ is uniformly bounded by an $N$-independent constant. Thus, with overwhelming probability,
\[
\|\mathcal{R}\|^2\leq\|\mathcal{R}\|^2_F\leq C\lambda^2N^{2-4\phi}.
\]

Next, we approximate $q(\sqrt{N}X_{ij})$ by its mean. Let 
\[ \label{eq:local_linear}
\mathcal{E}_{ij}=q'(\sqrt{N}X_{ij})-\E [q'(\sqrt{N}X_{ij})], \qquad \Delta_{ij}=\sqrt{\lambda N}\bsu_i \bsv_j^T \mathcal{E}_{ij}.
\]
Then, $\|\Delta\|\prec N^{\frac12-2\phi}\|\mathcal{E}\|$ and, since the entries of matrix $\mathcal{E}$ are i.i.d., centered and with finite moments, its norm $\|\mathcal{E}\|=O(\sqrt{N})$ with overwhelming probability. (See, e.g., \cite{BKYY2016}.) Thus, $\| \Delta \| = \caO_{\prec}(N^{1-2\phi})$.

Set
\[
\mf := \E [q'(\sqrt{N}X_{ij})], \quad \vf := \E [q(\sqrt{N}X_{ij})^2], \quad \widehat\lambda := \lambda \mf^2/\vf,
\]
and
\[
Q_{ij} := \frac{1}{\sqrt{N\vf}}q(\sqrt{N}X_{ij}).
\]
We have proved so far that the difference of the largest eigenvalue of $Q + \widehat\lambda^{\frac{1}{2}} \bsU \bsV^T$ and that of the matrix
\[
\left( \frac{1}{\sqrt{N\vf}}q(\sqrt{N}Y_{ij}) \right)
\]
is $\caO_{\prec}(N^{\frac12-2\phi})$, which is $o(1)$ with overwhelming probability for $\phi > \frac{1}{4}$. It is directly applicable to the case that $\Lambda$ in our model with $\widehat\Lambda_{\ell\ell} := \lambda_\ell \mf^2/\vf$ since the above process does not require any information of the SNRs. The BBP result holds the matrix $Q +  \bsU \widehat\Lambda^{\frac{1}{2}}\bsV^T$, which is another (additive) spiked rectangular matrix. This shows that the BBP result also holds for $\wt Y$ with SNR matrix $\widehat\Lambda := \frac{\mf^2}{\vf}\Lambda $. This proves Theorem \ref{thm:trans-mean}. 


\subsection{Proof of Theorem \ref{thm:trans-cov}}\label{sec:proof_trans-cov}

Recall that the spike prior satisfies the technical conditions in Assumption \ref{assump:entry1} with $\phi>1/4$. For the sake of brevity, we assume that $\Lambda=\lambda I.$ As in the additive case, we further assume that a function $q$ and its all derivatives are polynomially bounded and consider the local linear approximation of $q(\sqrt{N} Y_{ij})$,
\beq \label{eq:local_linear2}
q(\sqrt{N} Y_{ij} ) = q(\sqrt{N} X_{ij}) + \gamma \sqrt{N} \E[q'(\sqrt{N} X_{ij})] \sum_\ell \bsu_i \bsu_\ell^T X_{\ell j} + \mathcal{R}_{ij} + \gamma \Delta_{ij},
\eeq
where
\[
\mathcal{R}_{ij}=\frac{1}{2}q''\Big(\sqrt{N} X_{ij} + \theta\gamma\sum_\ell \bsu_i \bsu_\ell^T \sqrt{N} X_{\ell j} \Big)\left(\gamma  \sum_\ell \bsu_i \bsu_\ell^T \sqrt{N}X_{\ell j} \right)^2
\]
for some $\theta \in [-1, 1]$ and 
\[
\Delta_{ij}=\sqrt{N} \caE_{ij} \sum_\ell \bsu_i \bsu_\ell^T X_{\ell j}, \quad \caE_{ij}=q'(\sqrt{N} X_{ij})-\E[q'(\sqrt{N} X_{ij})].
\]

For any unit vectors $\bsa = (a_1, a_2, \dots, a_M)$ and $\bsb = (b_1, b_2, \dots, b_N)$,
\[\begin{split}
\bsa^T\Delta \bsb&=\sum_s\sum_{i,j}a_iu_i(s)\caE_{ij}b_j\left(\sum_\ell u_\ell(s)\sqrt{N}X_{\ell j}\right)\\ 
&=\sum_s\sum_{i,j}a_iu_i(s)^2b_j\caE_{ij}\sqrt{N}X_{ij}+\sum_s\sum_{i,j}a_iu_i(s)\caE_{ij}b_j\left(\sum_{\ell\neq i} u_\ell(s) \sqrt{N}X_{\ell j}\right)
\end{split}\]
From the concentration inequalities such as Lemma \ref{lemma: LDE},
\beq \label{eq:u_concentration}
\sum_\ell \bsu_i \bsu_\ell^T \sqrt{ N}X_{\ell j}=\sum_{s}\sum_\ell u_i(s) u_\ell(s)^T \sqrt{ N}X_{\ell j}\prec \sum_{s}|u_i(s)|\left(\sum_\ell u_\ell(s)^2\right)^{1/2}\prec N^{-\phi}.
\eeq
Recall that $\| \caE \| = O(\sqrt{N})$ with overwhelming probability. Note that, by Assumption \ref{assump:entry1}, the density function $q$ have to be
an odd function. Further, since $q$ is an odd function (hence $xq'(x)$ is an odd function of $x$), the norm of the matrix whose $(i, j)$-entry is $\caE_{ij} \sqrt{N} X_{ij}$ is also $O(\sqrt{N})$. 
Thus,
\[
\bsa^T\Delta \bsb \prec N^{-2\phi} + N^{\frac{1}{2}-\phi},
\]
which shows that $\| \Delta \| \prec N^{\frac{1}{2}-\phi}$. Moreover, since $q''$ is polynomially bounded, following the proof of Theorem \ref{thm:trans-mean} with \eqref{eq:u_concentration},
\[
\|\mathcal{R} \|^2 \leq \| \mathcal{R}\| _F^2 \leq CN^{2-4\phi}.
\]
Thus, as in the additive case, the error terms $\mathcal{R}_{ij}$ and $\Delta_{ij}$ in \eqref{eq:local_linear2} are negligible when finding the limit of the extreme eigenvalues of the transformed matrix.

Set
\[
\mf := \E [q'(\sqrt{N}X_{ij})], \quad \vf := \E [q(\sqrt{N}X_{ij})^2], \quad \ef = \E [\sqrt{N}X_{ij} q(\sqrt{N}X_{ij})], \quad \widehat\gamma :=\gamma \mf/\sqrt{\vf},
\]
and
\[
Q_{ij} := \frac{1}{\sqrt{N\vf}}q(\sqrt{N}X_{ij}).
\]
With the approximation \eqref{eq:local_linear2}, we now focus on the largest eigenvalue of
\[
(Q+\wh\gamma\bsU\bsU^TX)^T(Q+\wh\gamma\bsU\bsU^TX).
\]
Note that the assumption on the polynomial boundedness of $q$ implies that the matrix $Q$ is also a rectangular matrix satisfying the assumptions in Definition \ref{defn:rect}.

Let $G(z)$ and $\caG(z)$ be the resolvents
\[
G \equiv G(z) := (QQ^T -zI)^{-1}, \qquad \caG \equiv \caG(z) := (Q^T Q -zI)^{-1}
\]
for $z \in \R$ outside an open interval containing $[d_-, d_+]$. We note that the following identities hold for $G(z)$ and $\caG(z)$:
\beq\label{eq:relation between g}
G(z)Q = Q \caG(z), \qquad Q^T G(z) Q=I+z\caG(z).
\eeq

As in the proof of Theorem \ref{thm:trans-mean}, we consider
\beq\begin{split} \label{eq:determinant}
	&(Q+\wh\gamma\bsU\bsU^TX)^T(Q+\wh\gamma\bsU\bsU^TX)-zI 
	\\&=(Q^TQ-zI)(I+(Q^TQ-zI)^{-1}(\wh\gamma X^T\bsU\bsU^TQ+\wh\gamma Q^T \bsU\bsU^TX+  \wh\gamma^2X^T\bsU\bsU^T\bsU\bsU^TX)).
\end{split}\eeq
Let 
\[
L \equiv L(z)=\caG(z)(\wh\gamma X^T\bsU\bsU^TQ+\wh\gamma Q^T \bsU\bsU^TX+  \wh\gamma^2X^T\bsU\bsU^T\bsU\bsU^TX),
\]
Then, as in the proof of Theorem \ref{thm:trans-mean}, if $z$ is an eigenvalue of $(Q+\wh\gamma\bsU\bsU^T X)^T (Q+\wh\gamma\bsU\bsU^T X)$ (but not of $Q^T Q$), $-1$ is an eigenvalue of $L(z)$. Again, the rank of $L$ is at most $2k$, with
\beq \begin{split} \label{eq:eigenvector}
	L \cdot \caG Q^T\bsU &=\wh\gamma \caG X^T\bsU\bsU^TQ\caG Q^T\bsU+\wh\gamma \caG Q^T\bsU\bsU^TX\caG Q^T\bsU+\wh\gamma^2\caG X^T\bsU\bsU^T\bsU\bsU^TX\caG Q^T\bsU, \\
	L \cdot \caG X^T\bsU &=\wh\gamma \caG X^T\bsU\bsU^TQ\caG X^T\bsU+\wh\gamma \caG Q^T\bsU\bsU^TX\caG X^T\bsU+\wh\gamma^2\caG X^T\bsU\bsU^T\bsU\bsU^TX\caG X^T\bsU,
\end{split} \eeq
and an eigenvector of $L$ is a linear combination of $\caG Q^T\bsu(\ell)$ and $\caG X^T\bsu(\ell)$ for $1\leq \ell\leq k$.

In the simplest case where $Q$ is the identity mapping, $Q=X$, hence the rank of $L$ is $k$, and the eigenvalue equation \eqref{eq:eigenvector} is simplified to
\beq \label{eq:eigenvector_i}
L \cdot \caG Q^T\bsU =\wh\gamma \caG Q^T\bsU(\bsU^TQ\caG Q^T\bsU)+\wh\gamma \caG Q^T\bsU(\bsU^TQ\caG Q^T\bsU)+\wh\gamma^2\caG Q^T\bsU(\bsU^T\bsU\bsU^TQ\caG Q^T\bsU).
\eeq
In this case, $\caG Q^T\bsu(\ell)$ are eigenvectors of $L$ corresponding to the eigenvalue $-1$, i.e., $L \cdot \caG Q^T\bsu(\ell) = -\caG Q^T\bsu(\ell)$. The right side of \eqref{eq:eigenvector_i} can be approximated as follows, which is a direct consequence of the isotropic local Marchenko--Pastur law (e.g., Theorem 2.5 of \cite{Bloemendal-Erdos-Knowles-Yau-Yin2014}).

With the isotropic local Marchenko--Pastur law, \eqref{eq:eigenvector_i} can be approximated by a deterministic vector equation on $z$ (and $s(z)$), and the location of the $k$ largest eigenvalues can be proved by solving the equation. In a general case where $Q$ is not a multiple of $X$ and the vectors $\caG Q^T\bsu(\ell)$ and $\caG X^T\bsu(\ell)$ are linearly independent, however, the eigenvalue equation \eqref{eq:eigenvector} contains other matrices $\bsU^TQ\caG Q^T\bsU$, $\bsU^TQ\caG X^T\bsU$, and $\bsU^TX\caG Q^T\bsU$, which cannot be estimated by Lemma \ref{lem:local_MP}. For these matrices, we use the following lemma.
\begin{lem} \label{lem:local_MP2}
	Suppose that the assumptions in Lemma \ref{lem:local_MP} hold. Then,
	\[
	\langle\bsu(\ell_1),X\caG Q^T\bsu(\ell_2)\rangle = \langle\bsu(\ell_1),Q\caG X^T\bsu(\ell_2)\rangle = \left[ \frac{\ef}{\sqrt{\vf}} (zs(z)+1) \right]\delta_{\ell_1\ell_2}+\caO_\prec(N^{-\phi})
	\]
	and
	\[
	\langle\bsu(\ell_1),X\caG X^T\bsu(\ell_2)\rangle =\left[  \frac{\ef^{2}}{\vf} zs(z) \left( \rat s(z) + \frac{\rat-1}{z} \right)^2 + \rat s(z) + \frac{\rat-1}{z} \right] \delta_{\ell_1\ell_2} +\caO_\prec(N^{-\phi}).
	\]
\end{lem}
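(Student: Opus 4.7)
The central obstacle is the entrywise dependence between $Q$ and $X$, which prevents a direct application of Lemma \ref{lem:local_MP} to forms mixing $X$ with $\caG(z)=(Q^TQ-zI)^{-1}$. I would resolve this via an orthogonal decomposition at the entry level,
\[
X=\frac{\ef}{\sqrt{\vf}}\,Q+\tilde X,\qquad \tilde X_{ij}:=X_{ij}-\frac{\ef}{\sqrt{\vf}}Q_{ij},
\]
so that the pairs $\{(\tilde X_{ij},Q_{ij})\}_{i,j}$ are jointly independent and, entrywise, $\E[\tilde X_{ij}Q_{ij}]=0$ with $\E[\tilde X_{ij}^2]=(1-\ef^2/\vf)/N$. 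Since $Q$ alone still satisfies the conditions of Definition \ref{defn:rect}, Lemma \ref{lem:local_MP} applies unchanged to $Q$-only bilinear forms, and the problem reduces to estimating cross terms built from $\tilde X$ against $\caG$.

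For the first two identities, insert the decomposition to get
\[
\langle\bsu(\ell_1),X\caG Q^T\bsu(\ell_2)\rangle=\frac{\ef}{\sqrt{\vf}}\langle Q^T\bsu(\ell_1),\caG Q^T\bsu(\ell_2)\rangle+\langle\bsu(\ell_1),\tilde X\caG Q^T\bsu(\ell_2)\rangle.
\]
The first term equals $\frac{\ef}{\sqrt{\vf}}(zs(z)+1)\delta_{\ell_1\ell_2}+\caO_\prec(N^{-1/2})$ by Lemma \ref{lem:local_MP} applied to $Q$. For the remainder I would introduce the minor resolvent $\caG^{(ij)}$ obtained by zeroing out the $(i,j)$-entry of $Q$; then $\caG^{(ij)}$ is independent of $\tilde X_{ij}$, and the rank-two difference $\caG-\caG^{(ij)}$ is controlled by a standard resolvent expansion (its operator norm is $\caO_\prec(N^{-1/2})$). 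This replacement decouples $\tilde X_{ij}$ from the resolvent so that Lemma \ref{lemma: LDE} applies; combined with the delocalization $\|\bsU\|_\infty\prec N^{-\phi}$ and $\|\caG Q^T\|\prec 1$, the remainder is $\caO_\prec(N^{-\phi})$. The second identity follows by transposing this argument.

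For the third identity, expand
\[
X\caG X^T=\frac{\ef^2}{\vf}\,Q\caG Q^T+\frac{\ef}{\sqrt{\vf}}\bigl(Q\caG\tilde X^T+\tilde X\caG Q^T\bigr)+\tilde X\caG\tilde X^T.
\]
Using the identity $Q\caG Q^T=I+zG$ with $G=(QQ^T-zI)^{-1}$, the first term contributes $\tfrac{\ef^2}{\vf}(1+zs(z))\delta_{\ell_1\ell_2}$ via Lemma \ref{lem:local_MP}; the two cross terms are $\caO_\prec(N^{-\phi})$ by the minor-resolvent argument of the previous paragraph. The quadratic form $\tilde X\caG\tilde X^T$ is the delicate piece: isolating the diagonal indices ($i=j$, $k=\ell$), Lemma \ref{lemma: LDE} yields
\[
\langle\bsu(\ell_1),\tilde X\caG\tilde X^T\bsu(\ell_2)\rangle=\Bigl(1-\frac{\ef^2}{\vf}\Bigr)\cdot\frac{1}{N}\Tr\caG\cdot\delta_{\ell_1\ell_2}+\caO_\prec(N^{-\phi}),
\]
and $\frac{1}{N}\Tr\caG\to m(z):=\rat s(z)+\frac{\rat-1}{z}$ by the averaged local Marchenko--Pastur law for $Q$. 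Summing the three contributions, the claim reduces to the algebraic identity
\[
\frac{\ef^2}{\vf}\bigl(1+zs(z)\bigr)+\Bigl(1-\frac{\ef^2}{\vf}\Bigr)m(z)=\frac{\ef^2}{\vf}\,zs(z)\,m(z)^2+m(z),
\]
equivalently $1+zs(z)-m(z)=zs(z)\,m(z)^2$, which follows directly from the Marchenko--Pastur equation $\rat zs(z)^2+(z+\rat-1)s(z)+1=0$.

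The principal difficulty is the $\tilde X\caG\tilde X^T$ term, where the entrywise dependence of $\tilde X_{ij}$ and $\caG$ (both functions of $X$) invalidates any naive variance computation. The minor-resolvent expansion is the standard remedy, but its execution requires that the cross terms generated by successive rank-two updates remain subleading once summed against the spike vectors and $\caG$. The polynomial growth bounds on $h^{(\ell)}$ in Assumption \ref{assump:entry1}, the prior delocalization $\phi>1/4$, and the isotropic local Marchenko--Pastur law together keep every correction below $\caO_\prec(N^{-\phi})$, matching the stated error.
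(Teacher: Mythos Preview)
Your decomposition $X=\frac{\ef}{\sqrt{\vf}}Q+\tilde X$ is natural and your algebraic reduction to $1+zs(z)-\mathfrak{s}(z)=zs(z)\mathfrak{s}(z)^2$ is correct, but the concentration step for the remainder terms has a real gap. Orthogonality $\E[\tilde X_{ij}Q_{ij}]=0$ gives only \emph{uncorrelation}, not independence; since $Q_{ij}=q(\sqrt{N} X_{ij})/\sqrt{N\vf}$ is a deterministic function of $X_{ij}$, the pair $(\tilde X_{ij},Q_{ij})$ remains fully dependent and in particular $\E[\tilde X_{ij}\mid Q_{ij}]$ need not vanish. Your single-entry replacement $\caG\to\caG^{(ij)}$ does make the resolvent independent of $\tilde X_{ij}$, but in $\langle\bsu,\tilde X\caG^{(ij)}Q^T\bsu\rangle$ the factor $Q^T\bsu$ still contains $Q_{ij}$, so the coefficient of $\tilde X_{ij}$ is not independent of it and Lemma~\ref{lemma: LDE} is not applicable. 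The same obstruction hits $\tilde X\caG\tilde X^T$: after any such removal, $\tilde X_{ij}^2$ retains a nonconstant conditional mean given the $Q$-entries it is paired with, so the ``diagonal'' extraction of $(1-\ef^2/\vf)\cdot\frac{1}{N}\Tr\caG$ is not justified by a large-deviation estimate alone.

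The paper takes a different route. It treats $(X,Q)$ as a generic pair of entrywise-correlated rectangular matrices (Definition~\ref{defn:ecrm}) and proves an isotropic law for $\bsx^T\ttA(\ttB^T\ttB-zI)^{-1}\ttA^T\bsy$ directly, without any orthogonal splitting. The mechanism is the linearization $H(z)=\bigl(\begin{smallmatrix}-zI&Q\\Q^T&-I\end{smallmatrix}\bigr)$ with $R=H^{-1}$: whole-index removals $R\to R^{(a)}$ in Lemma~\ref{lem:res identity} genuinely decouple all randomness attached to index $a$, yielding the entrywise laws of Lemma~\ref{lem:entrywise_ecrm}; these are then upgraded to an isotropic $\caO_\prec(N^{-1/2})$ bound via the maximal-expansion moment method of \cite{Bloemendal-Erdos-Knowles-Yau-Yin2014}, adapted to track the four block types $\ttA\caG\ttB^T$, $\ttB\caG\ttA^T$, $\ttA\caG\ttA^T$, $\ttB\caG\ttB^T$. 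The mixed form $\langle\bsu,X\caG Q^T\bsu\rangle$ is finally extracted by polarization with $\ttA=X+Q$, $\ttB=Q$. A row-removal variant of your scheme could in principle be pushed through, but summing the rank-one corrections over $i$ with the required cancellation would essentially reproduce that moment computation.
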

We defer the proof to Appendix \ref{subsec:proof_lem}.

With Lemma \ref{lem:local_MP2}, we are ready to finish the proof. From the definition of $s(z)$ in Lemma \ref{lem:local_MP}, we notice that
\beq
s(z)=\frac{1}{1-\rat - \rat z s(z)-z},
\eeq
or
\beq
z\left( \rat s(z) + \frac{\rat-1}{z} \right) = -\frac{1}{s(z)} -z.
\eeq
Set $\sigma(z) := zs(z) + 1$.
By applying Lemmas \ref{lem:local_MP} and \ref{lem:local_MP2} to \eqref{eq:eigenvector_i}, for $1\leq \ell\leq k$
\beq \begin{split} \label{eq:eigen_F}
	L \cdot \caG Q^T\bsu(\ell) &=\wh\gamma \langle\bsu(\ell),Q\caG Q^T\bsu(\ell)\rangle \cdot \caG X^T\bsu(\ell)+\wh\gamma \langle\bsu(\ell),X\caG Q^T\bsu(\ell)\rangle \cdot \caG Q^T\bsu(\ell)\\
	&~~~+\|\bsu(\ell)\|^2\wh\gamma^2\langle\bsu(\ell),X\caG Q^T\bsu(\ell)\rangle \cdot \caG X^T\bsu(\ell) \\
	&= \wh\gamma \sigma(z) \caG X^T\bsu(\ell)+ \wh\gamma \sigma(z) \frac{\ef}{\sqrt{\vf}} \caG Q^T\bsu(\ell)+ \wh\gamma^2 \frac{\ef}{\sqrt{\vf}} \sigma(z) \caG X^T\bsu(\ell)+ \bsth_1(\ell) \,,
\end{split} \eeq
and
\beq \begin{split} \label{eq:eigen_X}
	L \cdot \caG X^T\bsu(\ell) &=\wh\gamma \langle\bsu(\ell),Q\caG X^T\bsu(\ell)\rangle \cdot \caG X^T\bsu(\ell)+ \wh\gamma \langle\bsu(\ell), X\caG X^T\bsu(\ell)\rangle \cdot \caG Q^T\bsu(\ell)\\
	&~~~+\|\bsu(\ell)\|^2\wh\gamma^2\langle\bsu(\ell),X\caG X^T\bsu(\ell)\rangle \cdot \caG X^T\bsu(\ell) \\
	&= \wh\gamma \sigma(z) \frac{\ef}{\sqrt{\vf}} \caG X^T\bsu(\ell)+ \wh\gamma \left( \left(\sigma(z) + \frac{\sigma(z)}{\sigma(z)-1} \right) \frac{\ef^2}{\vf} - \frac{\sigma(z)}{\sigma(z)-1} \right) \caG Q^T\bsu(\ell) \\
	&\qquad +\wh\gamma^2 \left( \left(\sigma(z) + \frac{\sigma(z)}{\sigma(z)-1} \right) \frac{\ef^2}{\vf} - \frac{\sigma(z)}{\sigma(z)-1} \right) \caG X^T\bsu(\ell) + \bsth_2(\ell) \,,
\end{split} \eeq
for some $\bsth_1(\ell), \bsth_2(\ell)$, which are linear combinations of $\caG Q^T\bsu(\ell)$ and $\caG X^T\bsu(\ell)$, with $\| \bsth_1 (\ell)\|, \| \bsth_2 (\ell)\| = \caO_\prec(N^{-\phi})$.

Suppose that $a_\ell\caG Q^T\bsu(\ell)+b_\ell\caG X^T\bsu(\ell)$ is an eigenvector of $L$ with the corresponding eigenvalue $-1$. From \eqref{eq:eigen_F}, \eqref{eq:eigen_X}, and the linear independence between $\caG Q^T\bsu(\ell)$ and $\caG X^T\bsu(\ell)$, we find the relation
\[ \begin{split}
-a_\ell &= a_\ell\wh\gamma \sigma(z) \frac{\ef}{\sqrt{\vf}} + \frac{b_\ell\wh\gamma \sigma(z)^2}{\sigma(z)-1} \cdot \frac{\ef^2}{\vf} - \frac{b_\ell\wh\gamma \sigma(z)}{\sigma(z)-1} + \caO(N^{-\phi}), \\
-b_\ell &= a_\ell\wh\gamma \sigma(z) + a_\ell \wh\gamma^2 \sigma(z) \frac{\ef}{\sqrt{\vf}} + b_\ell \wh\gamma \sigma(z) \frac{\ef}{\sqrt{\vf}} + \frac{b_\ell\wh\gamma^2 \sigma(z)^2}{\sigma(z)-1} \cdot \frac{\ef^2}{\vf} - \frac{b_\ell\wh\gamma^2 \sigma(z)}{\sigma(z)-1} + \caO(N^{-\phi}).
\end{split} \]
We then find that
\[
\frac{b_\ell}{a_\ell} \left( 1 +\wh\gamma \sigma(z) \frac{\ef}{\sqrt{\vf}} \right) + \wh\gamma \sigma(z) -\wh\gamma = \caO(N^{-\phi})
\]
and 
\[
a_\ell\left(1+\wh\gamma \sigma(z) \frac{\ef}{\sqrt{\vf}}\right)= b_\ell\left(\frac{\wh\gamma \sigma(z)}{\sigma(z)-1}\left(1- \sigma(z)\cdot \frac{\ef^2}{\vf}\right)\right)+\caO(N^{-\phi}),
\]
which implies that
\beq\label{res2}
1+2\wh\gamma \sigma(z) \frac{\ef}{\sqrt{\vf}}+ \wh\gamma^2 \sigma(z) =1+\left(\frac{2\gamma \mf\ef+\gamma^2\mf^2}{\vf}\right) \sigma(z) = \caO(N^{-\phi}).
\eeq 
From the explicit formula for $s$, it is not hard to check that \eqref{res2} holds if and only if
\[
\lambda_q := \frac{2\gamma \mf\ef+\gamma^2\mf^2}{\vf} > \sqrt{\rat}
\]
and
\beq \label{eigen2}
z = (1+\lambda_q) \left(1+\frac{\rat}{\lambda_q}\right) +\caO(N^{-\phi}).
\eeq
We see that it is valid for general $\Lambda$ in our model, since the above process also does not require any information of the SNRs as in the additive case.
Now, the desired theorem follows from the direct computation for the case $q = h_{\alpha_g}$; see also Appendix \ref{subsec:variation_cov}.

\subsection{Optimal entrywise transformation} \label{sec:optimize_transform}

\subsubsection{Additive model} \label{subsec:variation_mean}
Recall that 
\[
\E [q'(\sqrt{N}W_{ij})]=\E [q'(\sqrt{N}X_{ij})]=\mf, \qquad \E [q(\sqrt{N}W_{ij})^2]=\E [q(\sqrt{N}X_{ij})^2]=\vf.
\]
Following the proof of Theorem \ref{thm:trans-mean} in Appendix \ref{sec:proof_trans-mean}, it is not hard to see that the effective SNR is maximized by optimizing $\mf^2/\vf$. Such an optimization problem was already considered in \cite{Perry2018} for the spiked Wigner matrix. For the sake of completeness, we solve this problem by using the calculus of variations. Recall the density of random variables $\sqrt{N}W_{ij}$ and $\sqrt{N}X_{ij}$ is $g$. 

To optimize $q$, we need to maximize
\beq\label{target}
\left(\int_{-\infty}^{\infty}q'(x)g(x)\dd x\right)^2\slash\left(\int_{-\infty}^{\infty}q(x)^2g(x)\dd x\right)=\left(\int_{-\infty}^{\infty}q(x)g'(x)\dd x\right)^2\slash\left(\int_{-\infty}^{\infty}q(x)^2g(x)\dd x\right).
\eeq

Putting $(q+\varepsilon\eta)$ in place of $q$ in \eqref{target} and differentiating with respect to $\varepsilon$, we find that the optimal $q$ satisfies
\beq\label{target2}
\left(\int_{-\infty}^{\infty}\eta(x)g'(x)\dd x\right)\left(\int_{-\infty}^{\infty}q(x)^2g(x)\dd x\right)=\left(\int_{-\infty}^{\infty}q(x)\eta(x)g(x)\dd x\right)\left(\int_{-\infty}^{\infty}q(x)g'(x)\dd x\right)
\eeq
for any $\eta$. It is then easy to check that $q=-Cg'/g$ is the only solution of \eqref{target2}. Since the value in \eqref{target} does not change if we replace $q$ by $Cq$, and the effective SNR is increased with the entrywise transform $-g'/g$ is the optimal entrywise transformation for PCA.

\subsubsection{Multiplicative model} \label{subsec:variation_cov}
As we can see from the proof of Theorem \ref{thm:trans-cov} in Appendix \ref{sec:proof_trans-cov}, we need to maximize
\beq\begin{split}
	&\frac{2\left(\int_{-\infty}^{\infty}xq(x)g(x)\dd x\right)\left(\int_{-\infty}^{\infty}q'(x)g(x)\dd x\right)+\gamma\left(\int_{-\infty}^{\infty}q'(x)g(x)\dd x\right)^2}{\left(\int_{-\infty}^{\infty}q(x)^2g(x)\dd x\right)}
	\\&=\frac{-2\left(\int_{-\infty}^{\infty}xq(x)g(x)\dd x\right)\left(\int_{-\infty}^{\infty}q(x)g'(x)\dd x\right)+\gamma\left(\int_{-\infty}^{\infty}q(x)g'(x)\dd x\right)^2}{\left(\int_{-\infty}^{\infty}q(x)^2g(x)\dd x\right)}.
\end{split}\eeq
Putting $(q+\varepsilon\eta)$ in place of $q$ in \eqref{target} and differentiating with respect to $\varepsilon$, we find that the optimal $q$ satisfies
\beq\label{opt}\begin{split}
	&-2\left(\int x\eta g\right)\left(\int q g'\right)\left(\int q^2 g\right)-2\left(\int xq g\right)\left(\int \eta g'\right)\left(\int q^2 g\right)+2\gamma\left(\int \eta g'\right)\left(\int q g'\right)\left(\int q^2 g\right)
	\\&+4\left(\int q\eta g\right)\left(\int xq g\right)\left(\int q g'\right)-2\gamma\left(\int q g'\right)^2\left(\int q\eta g\right)=0
\end{split}\eeq
which is written with slight abuse of notation such as $\int x\eta g = \int_{-\infty}^{\infty}x \eta(x)g(x)\dd x$. Since the equation contains the terms
\[
\int x\eta g, \quad \int \eta g', \quad \int q\eta g,
\]
it is natural to consider an ansatz
\beq
q(x)=-\frac{g'(x)}{g(x)}+\alpha x
\eeq
for a constant $\alpha$. Collecting the terms involving $\int x\eta g$ and the terms involving $\int \eta g'$, we get
\[
2(\fh+\alpha)(\fh+2\alpha+\alpha^2)-4\alpha(1+\alpha)(\fh+\alpha)-2\alpha\gamma(\fh+\alpha)^2=0
\]
and
\[
-2(1+\alpha)(\fh+2\alpha+\alpha^2)-2\gamma(\fh+\alpha)(\fh+2\alpha+\alpha^2)+4(1+\alpha)(\fh+\alpha)+2\gamma(\fh+\alpha)^2=0.
\]
We can then check that 
\[
\alpha=\alpha_g=\frac{-\gamma \fh+\sqrt{4\fh+4\gamma \fh+\gamma^2\fh^2}}{2(1+\gamma)},
\]
and hence \eqref{opt} is satisfied with 
\[
q(x)=-\frac{g'(x)}{g(x)}+\frac{-\gamma \fh+\sqrt{4\fh+4\gamma \fh+\gamma^2\fh^2}}{2(1+\gamma)} x.
\]
The corresponding effective SNR
\[
\lambda_{h_{\alpha_g}} \equiv \lambda_g= \gamma + \frac{\gamma^2 \fh}{2} + \frac{\gamma \sqrt{4\fh + 4\gamma \fh + \gamma^2 \fh^2}}{2}.
\] 

For a general $\alpha$, when the entrywise transform $h_{\alpha}$ is applied, the effective SNR
\[
\lambda_{h_{\alpha}} = \frac{2\gamma (1+\alpha)(\fh+\alpha) + \gamma^2 (\alpha+\fh)^2}{\alpha^2 + 2\alpha + \fh},
\]
In particular, if $\alpha = \sqrt{\fh}$,
\[
\lambda_{h_{\sqrt{\fh}}} = \gamma(1+\sqrt{\fh}) + \frac{\gamma^2}{2} (\fh+\sqrt{\fh}) \geq 2\gamma + \gamma^2 = \lambda
\]
where the inequality is strict if $\fh > 1$.

\subsection{Proof of Lemma \ref{lem:local_MP2}} \label{subsec:proof_lem}
\subsubsection{Key ingredient: Entrywise local estimates}

Recall the definition of the random matrices $X$ and $Q.$ The couple of random matrices $(X,Q)$ is one example of the following concept for a coupled random matrices:
\begin{defn}[Entrywise correlated random matrices]\label{defn:ecrm}
	Suppose that $\ttA$ and $\ttB$ are $M\times N$ random rectangular matrices in Definition \ref{defn:rect} satisfying the following conditions:
	\begin{itemize}
		\item For all $1\leq a, b\leq M$ and $1\leq \alpha, \beta\leq N$, $\ttA_{a\alpha}$ and $\ttB_{b\beta}$ are dependent only when $a=b$ and $\alpha=\beta.$
		\item For all $a, \alpha$, $\E[\ttA_{a\alpha}] = \E[\ttB_{a\alpha}] = 0$, $N \E[\ttA_{a\alpha}^2]=w_\ttA$, $N \E[\ttB_{a\alpha}^2]=w_\ttB$, and $N\E[\ttA_{a\alpha}\ttB_{a\alpha}]=w_{\ttA\ttB}$. 
		\item For any positive integer $p$, there exists $C_p$, independent of $N$, such that 
		\[
		N^{\frac{p}{2}} \E[\ttA_{a\alpha}^p], N^{\frac{p}{2}} \E[\ttB_{a\alpha}^p] \leq C_p
		\] 
		for all $a, \alpha$. 
	\end{itemize}
	A couple of random matrices $(\ttA,\ttB)$ is called the entrywise correlated.
\end{defn}
The key estimates in the proof of Lemma \ref{lem:local_MP2} are the exact bounds on the entries of $K:=Q(QQ^T-zI)^{-1} X^T$ and $\caK:=X(QQ^T-zI)^{-1} X^T$. We prove the following lemma for the entrywise correlated random matrices $(\ttA,\ttB),$ which exactly contains the desired result.
\begin{lem}\label{lem:entrywise_ecrm}
	Let $(\ttA,\ttB)$ be the entrywise correlated random matrices with $w_B=1$. For $z \in \R$ outside an open interval containing $[d_-, d_+]$,
	\beq
	|(\ttA(\ttB^T\ttB-zI)^{-1}\ttA^T)_{ij}-(w_\ttA\mathfrak{s}(z)+w_{\ttA\ttB}^2zs(z)\mathfrak{s}(z)^2)\delta_{ij}|=\caO_\prec(N^{-1/2}),
	\eeq
	\beq
	|(\ttA(\ttB^T\ttB-zI)^{-1}\ttB^T)_{ij}-(w_{\ttA\ttB}\mathfrak{s}(z)+w_{\ttA\ttB}zs(z)\mathfrak{s}(z)^2)\delta_{ij}|=\caO_\prec(N^{-1/2})
	\eeq
	and
	\beq
	|(\ttB(\ttB^T\ttB-zI)^{-1}\ttA^T)_{ij}-(w_{\ttA\ttB}\mathfrak{s}(z)+w_{\ttA\ttB}zs(z)\mathfrak{s}(z)^2)\delta_{ij}|=\caO_\prec(N^{-1/2}).
	\eeq
\end{lem}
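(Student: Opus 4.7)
The plan is to prove all three estimates uniformly by decomposing $\ttA = w_{\ttA\ttB}\ttB + \ttC$ with $\ttC := \ttA - w_{\ttA\ttB}\ttB$. From Definition \ref{defn:ecrm} and $w_\ttB = 1$, the entries of $\ttC$ satisfy $\E[\ttC_{i\alpha}] = 0$, $\E[\ttC_{i\alpha}\ttB_{i\alpha}] = 0$, $N\E[\ttC_{i\alpha}^2] = w_\ttA - w_{\ttA\ttB}^2$, with bounded higher moments, and the pairs $(\ttC_{i\alpha}, \ttB_{i\alpha})$ are mutually independent across distinct $(i,\alpha)$. Expanding $\ttA\caG\ttA^T = w_{\ttA\ttB}^2\,\ttB\caG\ttB^T + w_{\ttA\ttB}(\ttB\caG\ttC^T + \ttC\caG\ttB^T) + \ttC\caG\ttC^T$ and similarly for $\ttA\caG\ttB^T$, I would first dispatch the $\ttB\caG\ttB^T$ term using the resolvent identity $\ttB\caG\ttB^T = I_M + zG$ with $G := (\ttB\ttB^T - z)^{-1}$. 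Since the limiting Stieltjes transform of $\ttB\ttB^T$ equals $s(z)$ (from $\tfrac{1}{N}\Tr\caG = \rat\cdot\tfrac{1}{M}\Tr G - (1-\rat)/z$), the isotropic local Marchenko--Pastur law of Lemma \ref{lem:local_MP} applied to standard basis vectors in $\mathbb{R}^M$ yields $G_{ij} = s(z)\delta_{ij} + \caO_\prec(N^{-1/2})$, hence $(\ttB\caG\ttB^T)_{ij} = (1+zs(z))\delta_{ij} + \caO_\prec(N^{-1/2})$. The algebraic identity $\mathfrak{s}(z) + zs(z)\mathfrak{s}(z)^2 = 1+zs(z)$, which follows directly from $\mathfrak{s}(z) = -(1/(zs(z))+1)$, will convert this into the normalized form that appears in the statement.

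The main technical obstacle is controlling $(\ttC\caG\ttC^T)_{ij} = \sum_{\alpha,\beta}\ttC_{i\alpha}\caG_{\alpha\beta}\ttC_{j\beta}$ despite the fact that $\ttC$ and $\caG$ are correlated through $\ttB$. The plan is to decouple one entry at a time: for each index $(i,\alpha)$, let $\ttB^{(i\alpha)}$ be $\ttB$ with its $(i,\alpha)$ entry set to $0$ and define $\caG^{(i\alpha)} := ((\ttB^{(i\alpha)})^T\ttB^{(i\alpha)} - z)^{-1}$. The rank-two resolvent perturbation formula, together with the pointwise bound $\caG_{\alpha\beta} = \mathfrak{s}(z)\delta_{\alpha\beta} + \caO_\prec(N^{-1/2})$ (obtained by polarizing Lemma \ref{lem:local_MP} on standard basis vectors), gives $\caG_{\alpha\gamma} - \caG^{(i\alpha)}_{\alpha\gamma} \prec N^{-1/2}$ in the entries we need. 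Since $\ttC_{i\alpha}$ is independent of $\caG^{(i\alpha)}$, Lemma \ref{lemma: LDE} then yields the diagonal contribution $\sum_\alpha \ttC_{i\alpha}^2\caG_{\alpha\alpha}^{(i\alpha)} = (w_\ttA - w_{\ttA\ttB}^2)\mathfrak{s}(z) + \caO_\prec(N^{-1/2})$, while the off-diagonal sums $\sum_{\alpha\neq\beta}\ttC_{i\alpha}\caG_{\alpha\beta}\ttC_{j\beta}$ (whether $i=j$ or not) and the cross terms $(\ttB\caG\ttC^T)_{ij}$, $(\ttC\caG\ttB^T)_{ij}$ all reduce, after the analogous decoupling, to centered sums with independent coefficients, which Lemma \ref{lemma: LDE} bounds by $\caO_\prec(N^{-1/2})$.

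Assembling the pieces,
\[ (\ttA\caG\ttA^T)_{ii} = w_{\ttA\ttB}^2(1+zs(z)) + (w_\ttA - w_{\ttA\ttB}^2)\mathfrak{s}(z) + \caO_\prec(N^{-1/2}) = w_\ttA\mathfrak{s}(z) + w_{\ttA\ttB}^2 zs(z)\mathfrak{s}(z)^2 + \caO_\prec(N^{-1/2}), \]
by the identity highlighted above, and analogously $(\ttA\caG\ttB^T)_{ii} = (\ttB\caG\ttA^T)_{ii} = w_{\ttA\ttB}(1+zs(z)) + \caO_\prec(N^{-1/2}) = w_{\ttA\ttB}\mathfrak{s}(z) + w_{\ttA\ttB}zs(z)\mathfrak{s}(z)^2 + \caO_\prec(N^{-1/2})$, while the off-diagonal entries ($i\neq j$) are $\caO_\prec(N^{-1/2})$ because no Kronecker delta is produced in any of the constituent terms. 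The delicate point is to make the entry-by-entry decoupling quantitatively sharp enough for the $N$ individual errors to be summed without exceeding the claimed $N^{-1/2}$ rate; this is exactly where the isotropic form of the local Marchenko--Pastur law (rather than the averaged version) and careful bookkeeping of the rank-two resolvent perturbations become essential.
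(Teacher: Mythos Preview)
Your decomposition $\ttA = w_{\ttA\ttB}\ttB + \ttC$ is a genuinely different and elegant route compared to the paper. The paper works directly with the linearization $H_\ttB$ and its minor $R^{(i)}$, and extracts the correction term $w_{\ttA\ttB}\,zs(z)\mathfrak{s}(z)^2$ by a two-step resolvent expansion of $\sum_{\alpha\neq\beta}H_{i\alpha}R_{\alpha\beta}\ttA_{i\beta}$; you instead absorb that correction into the exact identity $\ttB\caG\ttB^T = I + zG$ together with $\mathfrak{s}+zs\mathfrak{s}^2 = 1+zs$, leaving only $\ttC$-terms that should be $\caO_\prec(N^{-1/2})$. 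This is cleaner algebra.

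There is, however, a real gap in your decoupling step. Removing a \emph{single} entry $(i,\alpha)$ makes $\caG^{(i\alpha)}$ independent of $\ttC_{i\alpha}$, but $\caG^{(i\alpha)}$ still depends on $\ttB_{i\beta}$ for every $\beta\neq\alpha$, and hence (through the pair $(\ttA_{i\beta},\ttB_{i\beta})$) on $\ttC_{i\beta}$. So for the off-diagonal sum $\sum_{\alpha\neq\beta}\ttC_{i\alpha}\caG_{\alpha\beta}\ttC_{i\beta}$, the coefficients after your decoupling are \emph{not} independent of the summands, and Lemma~\ref{lemma: LDE} does not apply. The same obstruction hits the cross terms $(\ttC\caG\ttB^T)_{ii}$. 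The fix is to remove the entire row $i$ at once: set $\caG^{(i)}:=((\ttB^{(i)})^T\ttB^{(i)}-z)^{-1}$ with row $i$ of $\ttB$ zeroed. This is a rank-one (not rank-two) perturbation of $\ttB^T\ttB$, gives $\caG^{(i)}$ independent of the whole $i$-th row of $(\ttA,\ttB,\ttC)$, and is exactly what the paper achieves through the linearization minor $R^{(i)}$ and the resolvent identities of Lemma~\ref{lem:res identity}. With row removal in place your $\ttC$-argument goes through; note that the needed large-deviation bound then involves the pair-independent families $\{\ttC_{i\alpha}\}_\alpha$ and $\{\ttB_{i\alpha}\}_\alpha$, which are uncorrelated within each index but not jointly independent --- the same mild extension of Lemma~\ref{lemma: LDE} that the paper itself uses implicitly when bounding $\sum_{\alpha\neq\beta}H_{i\alpha}R^{(i)}_{\alpha\beta}\ttA_{i\beta}$.
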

\begin{rem}
	Recall that $\sigma(z)=zs(z)+1$. For $(X,Q)$, since $w_{X}=w_Q=1$ and $w_{XQ}=\ef/\sqrt{\vf},$ we have the following:
	
	For $z \in \R$ outside an open interval containing $[d_-, d_+]$,
	\beq
	|K_{ij}-\widetilde{s}(z)\delta_{ij}|=\caO_\prec(N^{-1/2}), \qquad |\caK_{ij}-\check{s}(z)\delta_{ij}|=\caO_\prec(N^{-1/2}),
	\eeq
	where 
	\beq
	\widetilde{s}(z):=\sigma(z)\frac{\ef}{\sqrt{\vf}}, \qquad \check{s}(z):=zs(z) \left(\rat s(z)+\frac{\rat-1}{z}\right)^2 \frac{\ef^2}{\vf}+ \left(\rat s(z)+\frac{\rat-1}{z}\right).
	\eeq
	
\end{rem}

\subsubsection{Linearization} \label{subsec:linearization}

We consider
\[
G \equiv G_\ttB(z) = (\ttB\ttB^T -zI)^{-1}, \qquad \caG \equiv \caG_\ttB(z) = (\ttB^T \ttB -zI)^{-1}.
\]
In the proof of Lemma \ref{lem:entrywise_ecrm}, we use the formalism known as the linearization to simplify the computation. 
We define an $(M+N) \times (M+N)$ symmetric matrix $H_\ttB$ by
\beq \label{eq:linearization}
H_\ttB \equiv H_\ttB(z) =
\begin{pmatrix}
	-zI_M & \ttB \\
	\ttB^T & -I_N
\end{pmatrix},
\eeq
where $I_M$ and $I_N$ are the identity matrices with size $M$ and $N$, respectively. 

Let $R_\ttB(z) = H_\ttB(z)^{-1}$. (For the invertibility of $H_\ttB(z)$, we refer to Section 5.1 in \cite{Lee-Schnelli_Sample}.) By Schur's complement formula,
\beq \label{eq:resolvent_linearization}
R_Q(z) =
\begin{pmatrix}
	G_\ttB(z) & G_\ttB(z)\ttB \\
	\ttB^TG_\ttB(z) & z\caG_\ttB(z)
\end{pmatrix}.
\eeq
Therefore, 
\beq \label{eq:schur}
R_{ab}(z) = (\ttB\ttB^T -zI)^{-1}_{ab} = G_{ab}(z), \qquad R_{\alpha\beta}(z) = z(\ttB^T \ttB -zI)^{-1}_{\alpha-M, \beta-M} = z\caG_{\alpha-M, \beta-M}(z),
\eeq
and
\beq 
R_{\alpha a}(z)=R_{a\alpha}(z) = (G\ttB)_{a,\alpha-M}(z),
\eeq
where we use lowercase Latin letters $a, b, c, \dots$ for indices from $1$ to $M$ and Greek letters $\alpha, \beta, \gamma, \dots$ for indices from $(M+1)$ to $(M+N)$. We also use uppercase Latin letters $A, B, C, \dots$ for indices from $1$ to $(M+N)$. In the rest of Appendix \ref{sec:proof_trans}, we omit the subscript $Q$ for brevity.

For $\T \subset \{1, 2, \dots, M+N \}$, we define the matrix minor $H^{(\T)}$ by
\beq
(H^{(\T)})_{AB} := \mathbf{1}_{\{A, B \notin \T \}} H_{AB}\,.
\eeq
Moreover, for $A, B \notin \T$ we define
\beq
R^{(\T)}_{AB}(z) := (H^{(\T)})^{-1}_{AB},
\eeq
In the definitions above, we abbreviate $(\{A \})$ by $(A)$; similarly, we write $(AB)$ instead of $(\{A,B\})$.

We have the following resolvent (decoupling) identities for the matrix entries of $R$ and $R^{(\T)}$, which are elementary consequences of Schur's complement formula; see e.g.\ Lemma 5.1 of \cite{Lee-Schnelli_Sample}. 

\begin{lem}[Resolvent identities for $R$] \label{lem:res identity}
	Suppose that $z \in \R$ is outside an open interval containing $[d_-, d_+]$.
	
	- For $a \neq b$,
	\[
	R_{ab} = -R_{aa} \sum_{\alpha} H_{a \alpha} R^{(a)}_{\alpha b} = -R_{bb} \sum_{\beta} R^{(b)}_{a\beta} H_{\beta b}.
	\]
	
	- For $\alpha \neq \beta$,
	\[
	R_{\alpha\beta} = -R_{\alpha\alpha} \sum_{a} H_{\alpha a} R^{(\alpha)}_{a \beta} = -R_{\beta\beta} \sum_{b} R^{(\beta)}_{\alpha b} H_{b \beta}.
	\]
	
	- For any $a$ and $\alpha$,
	\[
	R_{a \alpha} = -R_{aa} \sum_{\beta} H_{a\beta} R^{(a)}_{\beta \alpha} = -R_{\alpha\alpha} \sum_{b} R^{(\alpha)}_{ab} H_{b \alpha}.
	\]
	
	- For $A, B \neq C$,
	\[
	R_{AB} = R_{AB}^{(C)} + \frac{R_{AC} R_{CB}}{R_{CC}}.
	\]
\end{lem}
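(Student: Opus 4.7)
The plan is to derive all four identities from a single source: Schur's complement formula applied to the block matrix $H$ defined in \eqref{eq:linearization}, exploiting the fact that the diagonal blocks $-zI_M$ and $-I_N$ force many entries of $H$ to vanish, which collapses the generic Schur sums to the restricted forms stated in the lemma.

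First, I would recall the one-index Schur complement identity in its most general form: for any invertible $H$ with $R = H^{-1}$ and any index $A$, partitioning $H$ by separating the $A$-th row/column gives
\[
R_{AA} = \pbb{H_{AA} - \sum_{C,D \neq A} H_{AC} R^{(A)}_{CD} H_{DA}}^{-1},
\qquad
R_{AB} = -R_{AA} \sum_{C \neq A} H_{AC} R^{(A)}_{CB} \quad (B \neq A),
\]
together with the symmetric version $R_{AB} = -R_{BB} \sum_{D \neq B} R^{(B)}_{AD} H_{DB}$. Both are immediate from writing out $HR = I = RH$ in block form and inverting the resulting $2 \times 2$ block system.

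Next, I would specialize $A$ to each of the cases in the lemma. When $A = a$ (a Latin index), the upper-left block $-zI_M$ of $H$ in \eqref{eq:linearization} makes $H_{ac} = 0$ for every Latin $c \neq a$, so the sum $\sum_{C \neq a} H_{aC} R^{(a)}_{Cb}$ collapses to $\sum_\alpha H_{a\alpha} R^{(a)}_{\alpha b}$, which is the first displayed identity. Exactly the same cancellation, applied at index $\alpha$ using the lower-right block $-I_N$, gives the second identity; and applying the identity once at a Latin index and once at a Greek index gives the two forms of the mixed identity for $R_{a\alpha}$. Thus the first three bullet points all follow from one symbolic manipulation plus a bookkeeping of which off-diagonal entries of $H$ are zero.

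Finally, for the two-resolvent identity $R_{AB} = R^{(C)}_{AB} + R_{AC} R_{CB}/R_{CC}$, I would argue as follows. Writing $H^{(C)}$ as $H$ with the $C$-th row and column zeroed out and the diagonal kept invertible on that index, one checks by direct block inversion that removing the rank-one contribution from the $C$-row/column of $R$ produces $R^{(C)}$ on the complement of $\{C\}$; equivalently, this is the standard Woodbury-type update showing that the inverse of $H^{(C)}$ (viewed on indices $\neq C$) equals the Schur complement of $R_{CC}$ in the restriction of $R$. Rearranging gives the claimed identity. I expect no genuine obstacle here: the only care needed is the indexing convention for $R^{(\T)}$, since $R^{(\T)}$ is the inverse of the minor of $H$, not the minor of $R$, so one must verify that the Schur complement at $C$ is computed with respect to the correct block decomposition; once indices are tracked, all four identities follow from the single block-inversion computation above.
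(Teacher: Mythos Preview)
Your proposal is correct and follows exactly the approach the paper indicates: the paper does not give a detailed proof but simply states that these identities ``are elementary consequences of Schur's complement formula'' and refers to Lemma~5.1 of \cite{Lee-Schnelli_Sample}. Your write-up supplies precisely the details behind that citation, using the block structure of $H$ in \eqref{eq:linearization} to collapse the general Schur sums to the restricted Latin/Greek index forms.
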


Throughout this section, we will frequently use the estimate that all entries of $X$ and $Q$ (and hence all off-diagonal entries of $W$) are $\caO_{\prec}(N^{-1/2})$, which holds since all moments of the entries of $\sqrt{N} Q$ and $\sqrt{N} X$ are bounded. For the entries of $R$, we have the following estimates:

\begin{lem}\label{lem:entrywise_MPlaw}
	Let
	\beq
	\mathfrak{s}(z)=\left(\rat s(z)+\frac{\rat-1}{z}\right).
	\eeq
	For $z \in \R$ outside an open interval containing $[d_-, d_+]$,
	\beq
	\left|R_{ij}(z)-s(z)\delta_{ij}\right|, \left|R_{\mu\nu}(z)- z\mathfrak{s}(z)\delta_{\mu\nu}\right|, \left|R_{i \mu}(z)\right| \prec N^{-1/2}.
	\eeq
\end{lem}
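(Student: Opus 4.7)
The lemma is the entrywise local Marchenko--Pastur law for the $(M+N)\times(M+N)$ linearized resolvent $R=H^{-1}$. Since $z$ is real and lies outside an open neighborhood of $[d_-,d_+]$, the norm bounds $\|\ttB\|,\|(\ttB\ttB^T-zI)^{-1}\|\prec 1$ give $|R_{AB}|\prec 1$ uniformly. The Schur block formula \eqref{eq:schur} reduces everything to entrywise estimates on $G=(\ttB\ttB^T-zI)^{-1}$ and $\caG=(\ttB^T\ttB-zI)^{-1}$, for which I will combine three standard ingredients: (i) the \emph{averaged} local MP law, (ii) the Schur complement identity for diagonal entries, and (iii) the resolvent identity for off-diagonal entries.

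\textbf{Step 1 (Averaged law).} Recall that $\ttB^T\ttB$ has the same nonzero spectrum as $\ttB\ttB^T$, with $N-M$ extra zero eigenvalues, so
\[
\tfrac{1}{N}\tr\caG(z) \;=\; \rat\cdot\tfrac{1}{M}\tr G(z) + \tfrac{\rat-1}{z}.
\]
The standard averaged MP law (see e.g.\ Theorem 2.3 of \cite{Bloemendal-Erdos-Knowles-Yau-Yin2014}) gives $|\tfrac{1}{M}\tr G(z)-s(z)|\prec N^{-1}$ for $z$ outside the bulk, hence $|\tfrac{1}{N}\tr\caG(z)-\mathfrak{s}(z)|\prec N^{-1}$ as well.

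\textbf{Step 2 (Diagonal bounds via Schur).} Fix $i\le M$. By Schur's complement applied to $H$, using that the block structure forces $H_{iA}=\ttB_{i,A-M}$ for $A>M$ and $H_{iA}=-z\delta_{iA}$ for $A\le M$,
\[
R_{ii}(z)^{-1} \;=\; -z \;-\; z\sum_{\alpha,\beta=1}^{N}\ttB_{i\alpha}\,\caG^{(i)}_{\alpha\beta}\,\ttB_{i\beta},
\]
where $\caG^{(i)}$ is the analogue of $\caG$ with the $i$-th row of $\ttB$ deleted; crucially, $\caG^{(i)}$ is independent of the row $(\ttB_{i\alpha})_\alpha$. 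The large deviation estimate \eqref{two-set LDE} in Lemma \ref{lemma: LDE} (conditioning on $\caG^{(i)}$) yields
\[
\sum_{\alpha,\beta}\ttB_{i\alpha}\,\caG^{(i)}_{\alpha\beta}\,\ttB_{i\beta} \;=\; \tfrac{1}{N}\tr\caG^{(i)} + \caO_\prec(N^{-1/2}),
\]
and a rank-one interlacing estimate gives $|\tfrac{1}{N}\tr\caG^{(i)}-\tfrac{1}{N}\tr\caG|\prec N^{-1}$, so by Step 1 the right-hand side equals $\mathfrak{s}(z)+\caO_\prec(N^{-1/2})$. Substituting and simplifying via the MP self-consistent equation $1-\rat-z-\rat z s(z) = 1/s(z)$,
\[
R_{ii}(z)^{-1} \;=\; -z - z\mathfrak{s}(z) + \caO_\prec(N^{-1/2}) \;=\; 1/s(z) + \caO_\prec(N^{-1/2}),
\]
and inverting (using $|s(z)|\gtrsim 1$ off the spectrum) gives $R_{ii}=s(z)+\caO_\prec(N^{-1/2})$. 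The parallel Schur computation for $\mu>M$ — now expanding against a column of $\ttB$ and the reduced resolvent $G^{(\mu)}$ — yields $R_{\mu\mu}^{-1}=-1-\rat s(z)+\caO_\prec(N^{-1/2})$, which equals $(z\mathfrak{s}(z))^{-1}$ up to the same error, proving the second diagonal estimate.

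\textbf{Step 3 (Off-diagonal bounds).} For $A\neq B$, the resolvent identity from Lemma \ref{lem:res identity} gives
\[
R_{AB}(z) \;=\; -R_{AA}(z)\sum_{C\neq A} H_{AC}\,R^{(A)}_{CB}(z).
\]
By the block structure of $H$, the sum is a linear form in the \emph{centered} independent variables $\ttB_{i\alpha}$ with coefficients $R^{(A)}_{CB}$ that are independent of those variables and are $\caO_\prec(1)$ (applying Step 2 and norm bounds to the reduced matrix). The linear large deviation estimate \eqref{LDE} then gives $R_{AB}=\caO_\prec(N^{-1/2})$, which covers the three off-diagonal statements (including the mixed case $R_{i\mu}$).

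\textbf{Main obstacle.} The substantive work is bookkeeping between Latin and Greek indices in the linearization and verifying that the reduced resolvents $G^{(\mu)}$ and $\caG^{(i)}$ used in the Schur expansions are genuinely independent of the random vector against which they are contracted; once this decoupling is in place, the off-spectrum setting makes all error estimates routine. The sole analytic input beyond independence is the stability of the MP self-consistent equation, which is trivial here because $s(z)$ is uniformly bounded away from $0$ and $\infty$ for $z$ separated from $[d_-,d_+]$.
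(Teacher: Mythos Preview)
Your proof is correct. For the mixed off-diagonal entries $R_{i\mu}$, your Step~3 is essentially identical to the paper's argument: expand via the resolvent identity $R_{i\mu}=-R_{ii}\sum_\alpha H_{i\alpha}R^{(i)}_{\alpha\mu}$ and apply the large deviation estimate.

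The approaches diverge for the Latin--Latin and Greek--Greek blocks. The paper does not rederive these at all: it simply invokes the isotropic local Marchenko--Pastur law (Theorem~2.5 and Remark~2.7 of \cite{Bloemendal-Erdos-Knowles-Yau-Yin2014}) with standard basis vectors $\bse_i,\bse_j$, which immediately yields $|R_{ij}-s(z)\delta_{ij}|,|R_{\mu\nu}-z\mathfrak{s}(z)\delta_{\mu\nu}|\prec N^{-1/2}$. Your Steps~1--2 instead reconstruct the entrywise law from scratch via the Schur complement identity, the averaged local law, and the quadratic large deviation estimate. This is the classical route (as in the original proofs of the entrywise local law) and is entirely self-contained; the paper's route is shorter but treats the cited isotropic law as a black box. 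Both arrive at the same place, and your derivation has the advantage of making explicit how the self-consistent equation $1/s(z)=1-\rat-z-\rat z s(z)$ enters.

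One small tightening in Step~3: for the linear LDE \eqref{LDE} to yield $\caO_\prec(N^{-1/2})$, you need $\sum_C |R^{(A)}_{CB}|^2\prec 1$, not merely $R^{(A)}_{CB}=\caO_\prec(1)$ entrywise. This is immediate from the operator norm bound $\|R^{(A)}\|\prec 1$ (since $z$ is separated from the spectrum), so the argument goes through; just be explicit about it.
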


\begin{proof}[Proof of Lemma \ref{lem:entrywise_MPlaw}]
	The first two estimates can be checked from Theorem 2.5 (and Remark 2.7) in \cite{Bloemendal-Erdos-Knowles-Yau-Yin2014} with the deterministic unit vectors $\mathbf{v}=\bse_i$ and $\mathbf{w}=\bse_j$ where $\bse_i\in\R^N$ or $\R^M$ is a standard basis vector whose $i$-th coordinate is 1 and all other coordinates are zero. For the last estimate, we apply Lemma \ref{lem:res identity} to find that
	\[
	R_{i \mu}(z) = -R_{ii} \sum_{\alpha} H_{i\alpha} R^{(i)}_{\alpha \mu}.
	\]
	Since $H_{i\alpha}$ and $R^{(i)}_{\alpha \mu}$ are independent, $R^{(i)}_{\alpha \mu} \prec N^{-1/2}$ for $\alpha \neq \mu$, and $R^{(i)}_{\mu\mu} = \Theta(1)$ with overwhelming probability, we find from Lemma \ref{lemma: LDE} that
	\[
	\sum_{\alpha} H_{i\alpha} R^{(i)}_{\alpha \mu} \prec \left( \frac{1}{N} \sum_{\alpha} | R^{(i)}_{\alpha \mu}|^2 \right)^{1/2} \prec N^{-1/2}.
	\]
\end{proof}

\begin{proof}[Proof of Lemma \ref{lem:entrywise_ecrm}] \label{subsec:proof_entrywise}
	
	Throughout this section, for the sake of brevity, we will use the notation
	\[
	\ttB_{a\alpha} := \ttB_{a, (\alpha-M)} = H_{a\alpha}, \qquad \ttA_{a\alpha} := \ttA_{a, (\alpha-M)}.
	\]
	
	We begin by estimating the diagonal entry $(\ttB\caG \ttA^T)_{ii}$.
	From Schur's complement formula, \eqref{eq:schur}, we can decompose it into
	\beq \label{eq:decomp_K}
	(\ttB\caG \ttA^T)_{ii} = \frac{1}{z} \sum_{\alpha} H_{i\alpha} R_{\alpha\alpha} \ttA_{i\alpha} + \frac{1}{z} \sum_{\alpha\neq\beta} H_{i\alpha} R_{\alpha\beta} \ttA_{i\beta}.
	\eeq
	From concentration inequalities it is not hard to see that
	\[
	\sum_{\alpha} \ttB_{i\alpha} \ttA_{i\alpha} = \E[ \ttB_{i\alpha} \ttA_{i\alpha} ] +\caO_{\prec}(N^{-1/2}) = w_{\ttA\ttB}+\caO_{\prec}(N^{-1/2}).
	\]
	Applying Lemma \ref{lem:entrywise_MPlaw}, we find for the first term in the right side of \eqref{eq:decomp_K} that
	\beq \label{eq:decomp_K1}
	\frac{1}{z} \sum_{\alpha} H_{i\alpha} R_{\alpha\alpha} \ttA_{i\alpha} =w_{\ttA\ttB}\mathfrak{s}(z)+\caO_{\prec}(N^{-1/2}).
	\eeq
	
	We next estimate the second term in the right side of \eqref{eq:decomp_K}. We expand it with the resolvent identities in Lemma \ref{lem:res identity} as follows:
	\beq \begin{split} \label{eq:decomp1}
		\sum_{\alpha\neq\beta} H_{i\alpha} R_{\alpha\beta} \ttA_{i\beta} &= \sum_{\alpha\neq\beta} H_{i\alpha} R^{(i)}_{\alpha\beta} \ttA_{i\beta} + \sum_{\alpha\neq\beta} H_{i\alpha} \frac{R_{\alpha i} R_{i\beta}}{R_{ii}} \ttA_{i\beta} \\
		&= \sum_{\alpha\neq\beta} H_{i\alpha} R^{(i)}_{\alpha\beta} \ttA_{i\beta} + \sum_{\alpha\neq\beta} H_{i\alpha} \frac{R_{\alpha i} R_{i\beta}}{s(z)} \ttA_{i\beta} + \caO_{\prec}(N^{-1/2}).
	\end{split} \eeq
	Here, in the estimate for the second term, we simply counted the power (of $N$) as it involves two indices for the sum (hence $O(N^2)$ terms) of $H_{i\alpha}, R_{\alpha i}, R_{i\beta}, \ttA_{i\beta} \prec N^{-1/2}$, hence $\sum_{\alpha\neq\beta} H_{i\alpha} R_{\alpha i} R_{i\beta} \ttA_{i\beta} = \caO_{\prec}(1)$.
	Applying Lemma \ref{lemma: LDE} to the first term in the right side of \eqref{eq:decomp1},
	\[
	\sum_{\alpha\neq\beta} H_{i\alpha} R^{(i)}_{\alpha\beta} \ttA_{i\beta} \prec \left( \frac{1}{N^2} \sum_{\alpha, \beta} |R^{(i)}_{\alpha\beta}|^2 \right)^{1/2} \prec N^{-1/2}.
	\]
	For the second term in the right side of \eqref{eq:decomp1}, we further expand it to find
	\[ \begin{split}
	\sum_{\alpha\neq\beta} H_{i\alpha} R_{\alpha i} R_{i\beta} \ttA_{i\beta} = \sum_{\alpha\neq\beta} H_{i\alpha} R_{\alpha i} R_{i\beta} \ttA_{i\beta} = -\sum_{\alpha\neq\beta} H_{i\alpha} \left( R_{ii} \sum_{\mu} R^{(i)}_{\alpha\mu} H_{\mu i} R_{i\beta} \ttA_{i\beta} \right)
	\end{split} \]
	Note that
	\[
	\sum_{\mu} R^{(i)}_{\alpha\mu} H_{\mu i} \prec N^{-1/2},
	\]
	as in the proof of Lemma \ref{lem:entrywise_MPlaw}. Since 
	\[
	|R_{ij} - s(z)| \prec N^{-1/2}, \quad R_{i\beta} = R^{(\alpha)}_{i\beta} + \frac{R_{i\alpha} R_{\alpha\beta}}{R_{\alpha\alpha}} = R^{(\alpha)}_{i\beta} + N^{-1},
	\]
	we have
	\beq \label{eq:expand_oneside} \begin{split}
		&-\sum_{\alpha\neq\beta} H_{i\alpha} \left( R_{ii} \sum_{\mu} R^{(i)}_{\alpha\mu} H_{\mu i} R_{i\beta} \ttA_{i\beta} \right) = -s(z) \sum_{\alpha\neq\beta} H_{i\alpha} \left( \sum_{\mu} R^{(i)}_{\alpha\mu} H_{\mu i} R^{(\alpha)}_{i\beta} \ttA_{i\beta} \right) + \caO_{\prec} (N^{-1/2}) \\
		&= -s(z) \sum_{\alpha\neq\beta} H_{i\alpha} \left( \sum_{\mu:\mu \neq \alpha} R^{(i)}_{\alpha\mu} H_{\mu i} R^{(\alpha)}_{i\beta} \ttA_{i\beta} \right) -s(z) \sum_{\alpha\neq\beta} (H_{i\alpha})^2 R^{(i)}_{\alpha\alpha} R^{(\alpha)}_{i\beta} \ttA_{i\beta} + \caO_{\prec} (N^{-1/2}).
	\end{split} \eeq
	Applying Lemma \ref{lemma: LDE} again to the first term in the right side of \eqref{eq:expand_oneside},
	\[ \begin{split}
	\sum_{\alpha\neq\beta} H_{i\alpha} \left( \sum_{\mu:\mu \neq \alpha} R^{(i)}_{\alpha\mu} H_{\mu i} R^{(\alpha)}_{i\beta} \ttA_{i\beta} \right) &\prec \left( \frac{1}{N} \sum_{\alpha} \left| \sum_{\beta: \beta\neq \alpha} \left[ \sum_{\mu:\mu \neq \alpha} R^{(i)}_{\alpha\mu} H_{\mu i} \right] R^{(\alpha)}_{i\beta} \ttA_{i\beta} \right|^2 \right)^{1/2} \\
	& \prec \left( \frac{1}{N} \sum_{\alpha} \left[ \sum_{\beta: \beta\neq \alpha} N^{-1/2} \left| R^{(\alpha)}_{i\beta} \ttA_{i\beta} \right| \right]^2 \right)^{1/2} \prec N^{-1/2}.
	\end{split} \]
	Similarly, by expanding $R^{(\alpha)}_{i\beta}$, we find for the second term in the right side of \eqref{eq:expand_oneside} that
	\[ \begin{split}
	-&s(z) \sum_{\alpha\neq\beta} (H_{i\alpha})^2 R^{(i)}_{\alpha\alpha} R^{(\alpha)}_{i\beta} \ttA_{i\beta} = zs(z) \mathfrak{s}(z) \sum_{\alpha\neq\beta} (H_{i\alpha})^2 R^{(\alpha)}_{ii} \sum_{\nu:\nu \neq \alpha}H^{(\alpha)}_{i\nu} R^{(i\alpha)}_{\nu\beta} \ttA_{i\beta} + \caO_{\prec} (N^{-1/2}) \\
	&= zs(z)^2 \mathfrak{s}(z) \sum_{\alpha\neq\beta} (H_{i\alpha})^2 \sum_{\nu:\nu \neq \alpha, \beta} H_{i\nu} R^{(i\alpha)}_{\nu\beta} \ttA_{i\beta} + zs(z)^2 \mathfrak{s}(z) \sum_{\alpha\neq\beta} (H_{i\alpha})^2 H_{i\beta} R^{(i\alpha)}_{\beta\beta} \ttA_{i\beta} + \caO_{\prec} (N^{-1/2}) \\
	&= z^2 s(z)^2 \mathfrak{s}(z)^2 \sum_{\alpha\neq\beta} (H_{i\alpha})^2 H_{i\beta} \ttA_{i\beta} + \caO_{\prec} (N^{-1/2}),
	\end{split} \]
	where we used Lemma \ref{lemma: LDE} to find
	\[
	\sum_{\nu \neq \beta:\nu, \beta \neq \alpha} H_{i\nu} R^{(i\alpha)}_{\nu\beta} \ttA_{i\beta} \prec \left( \frac{1}{N^2} \sum_{\nu \neq \beta:\nu, \beta \neq \alpha} \left| R^{(i\alpha)}_{\nu\beta} \right|^2 \right)^{1/2} \prec N^{-1/2}.
	\]
	Thus, since $w_\ttB=1,$
	\[ \begin{split}
	\sum_{\alpha\neq\beta} H_{i\alpha} R_{\alpha i} R_{i\beta} \ttA_{i\beta} &= z^2 s(z)^2 \mathfrak{s}(z)^2 \sum_{\alpha\neq\beta} (H_{i\alpha})^2 H_{i\beta} \ttA_{i\beta} + \caO_{\prec} (N^{-1/2}) \\
	&= z^2 s(z)^2 \mathfrak{s}(z)^2 w_{\ttA\ttB} + \caO_{\prec} (N^{-1/2}),
	\end{split} \]
	and putting it back to \eqref{eq:decomp1} and \eqref{eq:decomp_K}, together with \eqref{eq:decomp_K1}, we conclude that
	\beq \label{eq:estimate_K}
	(\ttA\caG\ttB)_{ii} = w_{\ttA\ttB}\mathfrak{s}(z)+w_{\ttA\ttB}zs(z)\mathfrak{s}(z)^2 + \caO_{\prec} (N^{-1/2}) = w_{\ttA\ttB}\sigma(z) \frac{\ef}{\sqrt{\vf}} + \caO_{\prec} (N^{-1/2}),
	\eeq
	where we used the identity $zs(z)\mathfrak{s}(z) = -\sigma(z)$.
	In the same manner, we also find that
	\beq \begin{split} \label{eq:estimate_caK}
		(\ttA\caG\ttA)_{ii} &= \frac{1}{z} \sum_{\alpha} \ttA_{i\alpha} R_{\alpha\alpha} \ttA_{i\alpha} + zs(z)\mathfrak{s}(z)^2 \sum_{\alpha\neq\beta} \ttA_{i\alpha} H_{i\alpha} H_{i\beta} \ttA_{i\beta} + \caO_{\prec} (N^{-1/2}) \\
		&= w_\ttA\mathfrak{s}(z) + w_{\ttA\ttB}^2zs(z)\mathfrak{s}(z)^2 + \caO_{\prec} (N^{-1/2}).
	\end{split} \eeq
	
	We next estimate the off-diagonal entry $(\ttA\caG\ttB)_{ij}$. We expand it as
	\beq \begin{split} \label{eq:decomp_off}
		(\ttA\caG\ttB)_{ij} &= \frac{1}{z} \sum_{\alpha, \beta} H_{i\alpha} R_{\alpha\beta} \ttA_{j\beta} = \frac{1}{z} \sum_{\alpha, \beta} H_{i\alpha} R^{(i)}_{\alpha\beta} \ttA_{j\beta} + \frac{1}{z} \sum_{\alpha, \beta} H_{i\alpha} \frac{R_{\alpha i} R_{i\beta}}{R_{ii}} \ttA_{j\beta} \\
		&= \frac{1}{z} \sum_{\alpha, \beta} H_{i\alpha} R^{(ij)}_{\alpha\beta} \ttA_{j\beta} + \frac{1}{z} \sum_{\alpha, \beta} H_{i\alpha} \frac{R^{(i)}_{\alpha j} R^{(i)}_{j\beta}}{R^{(i)}_{jj}} \ttA_{j\beta} + \frac{1}{z} \sum_{\alpha, \beta} H_{i\alpha} \frac{R^{(j)}_{\alpha i} R^{(j)}_{i\beta}}{R^{(i)}_{jj}} \ttA_{j\beta} + \caO_{\prec}(N^{-1/2})
	\end{split} \eeq
	From Lemma \ref{lemma: LDE},
	\[
	\sum_{\alpha, \beta} H_{i\alpha} R^{(ij)}_{\alpha\beta} \ttA_{j\beta} \prec N^{-1/2}.
	\]
	We also have
	\[ \begin{split}
	\sum_{\alpha, \beta} H_{i\alpha} \frac{R^{(i)}_{\alpha j} R^{(i)}_{j\beta}}{R^{(i)}_{jj}} \ttA_{j\beta} &\prec \left( \frac{1}{N} \sum_{\alpha} \left| \sum_{\beta} \frac{R^{(i)}_{\alpha j} R^{(i)}_{j\beta}}{R^{(i)}_{jj}} \ttA_{j\beta} \right|^2 \right)^{1/2} \prec \left( \frac{1}{N} \sum_{\alpha} \left| \sum_{\beta} N^{-3/2} \right|^2 \right)^{1/2} \\
	&\prec N^{-1/2}
	\end{split} \]
	and a similar estimate holds for the third term in the right side of \eqref{eq:decomp_off}. Thus,
	\[
	(\ttA\caG\ttB)_{ij} \prec N^{-1/2}
	\]
	In the same manner, we also find that $(\ttA\caG\ttA)_{ij} \prec N^{-1/2}$. Together with \eqref{eq:estimate_K} and \eqref{eq:estimate_caK}, this proves Lemma \ref{lem:entrywise_ecrm}.
\end{proof}
%
%
%

\subsubsection{Isotropic local law}

We also assume that $w_\ttB=1$ and use the same notation in previous section. Then our goal is to prove the following statement:
\begin{lem}\label{lem:isotropic coupled}
	Let $(\ttA,\ttB)$ be the entrywise correlated random matrices where $w_\ttB=1$ and $\bsx$, $\bsy$ are deterministic and $\ell^2$ - normalized vectors in $\R^M.$ Then, for $z \in \R$ outside an open interval containing $[d_-, d_+]$,
	\[\langle\bsx, \ttA(\ttB^T\ttB-zI)^{-1}\ttA^T\bsy\rangle=(w_\ttA\mathfrak{s}(z)+w_{\ttA\ttB}^2zs(z)\mathfrak{s}(z)^2)\langle\bsx,\bsy\rangle+\caO_\prec(N^{-1/2}).\]
\end{lem}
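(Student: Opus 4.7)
The plan is to decompose $\ttA$ into a part exactly proportional to $\ttB$ and an uncorrelated residual, then reduce the claim to (i) the isotropic local Marchenko--Pastur law for the noise matrix $\ttB$ alone, and (ii) a standard quadratic large-deviation estimate in the spirit of Lemma \ref{lemma: LDE}. Throughout I will write $c := w_{\ttA\ttB}$, $\caG := (\ttB^T\ttB-zI)^{-1}$, $G := (\ttB\ttB^T-zI)^{-1}$, and $\sigma(z) := 1+zs(z)$.

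\textbf{Step 1 (Polarization).} Since $\ttA\caG\ttA^T$ is symmetric, the identity $4\langle\bsx,M\bsy\rangle = \langle\bsx+\bsy,M(\bsx+\bsy)\rangle - \langle\bsx-\bsy,M(\bsx-\bsy)\rangle$ reduces the claim to the diagonal case $\bsx=\bsy$.

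\textbf{Step 2 (Decomposition of $\ttA$).} Set $\Delta := \ttA - c\,\ttB$. The moment assumptions in Definition \ref{defn:ecrm} yield $\E[\Delta_{i\alpha}\ttB_{i\alpha}]=0$ and $N\,\E[\Delta_{i\alpha}^2]=w_\ttA-c^2$; moreover, $\Delta_{i\alpha}$ is independent of $\ttB_{j\beta}$ whenever $(j,\beta)\neq(i,\alpha)$. Putting $\bsb := \ttB^T\bsx \in \R^N$ and $\bst' := \Delta^T\bsx \in \R^N$, I split
\[
\bsx^T\ttA\caG\ttA^T\bsx = c^2\,\bsb^T\caG\bsb + 2c\,\bsb^T\caG\bst' + \bst'^T\caG\bst'.
\]

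\textbf{Step 3 (Leading $\ttB$-contribution).} The elementary identity $\ttB\caG\ttB^T = I + zG$ (from the SVD of $\ttB$) gives $\bsb^T\caG\bsb = 1 + z\,\bsx^T G\bsx$. Applying the isotropic local Marchenko--Pastur law for $G$ (a consequence of Lemma \ref{lem:entrywise_MPlaw} together with polarization, or Theorem 2.5 of \cite{Bloemendal-Erdos-Knowles-Yau-Yin2014}) to the unit vector $\bsx$, I get $\bsx^T G \bsx = s(z) + \caO_\prec(N^{-1/2})$, so $c^2\,\bsb^T\caG\bsb = c^2\,\sigma(z) + \caO_\prec(N^{-1/2})$.

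\textbf{Step 4 (Leading $\Delta$-contribution).} For the quadratic form $\bst'^T\caG\bst' = \sum_{\alpha,\beta}\tau'_\alpha \caG_{\alpha\beta}\tau'_\beta$ with $\tau'_\alpha = \sum_i x_i \Delta_{i\alpha}$, I apply the resolvent-minor decoupling from the linearization (Lemma \ref{lem:res identity}), replacing $\caG$ by minors $\caG^{(\alpha)}$ and $\caG^{(\alpha\beta)}$ that are independent of the $\alpha$-th (and $\beta$-th) column of $\ttB$, and hence of $\tau'_\alpha$ (resp.\ $\tau'_\beta$) thanks to the entrywise independence structure of Step 2. Conditional on these minors, the $\tau'_\alpha$ are centered and independent across $\alpha$ with variance $(w_\ttA-c^2)/N$, so Lemma \ref{lemma: LDE} yields
\[
\bst'^T\caG\bst' \;=\; \frac{w_\ttA-c^2}{N}\Tr\caG + \caO_\prec(N^{-1/2}) \;=\; (w_\ttA-c^2)\,\mathfrak{s}(z) + \caO_\prec(N^{-1/2}),
\]
the last equality being the averaged local law $N^{-1}\Tr\caG = \mathfrak{s}(z) + \caO_\prec(N^{-1})$.

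\textbf{Step 5 (Cross term and algebra).} The same decoupling makes $\caG\bsb$ approximately independent of $\bst'$, and the linear large-deviation bound of Lemma \ref{lemma: LDE} gives $\bsb^T\caG\bst' = \caO_\prec\!\bigl(N^{-1/2}\|\caG\bsb\|_2\bigr) = \caO_\prec(N^{-1/2})$, using $\|\caG\|,\|\bsb\|=\caO_\prec(1)$ off the Marchenko--Pastur support. Combining everything,
\[
\bsx^T\ttA\caG\ttA^T\bsx \;=\; c^2\sigma(z) + (w_\ttA-c^2)\,\mathfrak{s}(z) + \caO_\prec(N^{-1/2}) \;=\; w_\ttA\,\mathfrak{s}(z) + c^2\bigl(\sigma(z)-\mathfrak{s}(z)\bigr) + \caO_\prec(N^{-1/2}).
\]
The identity $\mathfrak{s}(z) = -\sigma(z)/(zs(z))$ noted in the text after Lemma \ref{lem:entrywise_MPlaw} gives $\sigma-\mathfrak{s} = -\mathfrak{s}\sigma = zs\,\mathfrak{s}^2$, which matches the claimed main term. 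The bilinear statement for general $\bsx,\bsy$ then follows by polarization and scaling by $\langle\bsx,\bsy\rangle$.

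\textbf{Main obstacle.} The crux is Step 4: because the ecrm definition only provides \emph{uncorrelation} (not independence) between $\Delta_{i\alpha}$ and $\ttB_{i\alpha}$, one cannot simply condition on $\ttB$ to decouple $\tau'_\alpha$ from $\caG$. Resolving this requires the resolvent-minor machinery, an iterated use of the identities of Lemma \ref{lem:res identity} on the linearization $H_\ttB$, precisely paralleling (and extending to a bilinear weighting by $x_i x_j$) the off-diagonal expansion \eqref{eq:decomp_off} in the proof of Lemma \ref{lem:entrywise_ecrm}. The bookkeeping is technical but follows the standard scheme for isotropic local laws in the sample covariance setting.
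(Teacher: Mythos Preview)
Your decomposition $\ttA = c\ttB + \Delta$ and the final algebra are correct, and Step~3 is fine. The gap is in Steps~4--5, and it is not just bookkeeping: the minor decoupling you propose there cannot be carried out as stated. You write that you will ``replace $\caG$ by minors $\caG^{(\alpha)}$ and $\caG^{(\alpha\beta)}$ that are independent of the $\alpha$-th (and $\beta$-th) column of $\ttB$,'' but in the sum $\sum_{\alpha,\beta}\tau'_\alpha\caG_{\alpha\beta}\tau'_\beta$ the indices $\alpha,\beta$ are precisely the ones appearing in the resolvent entry; removing them from the linearization deletes that entry altogether, so there is no $\caG^{(\alpha)}_{\alpha\beta}$ to substitute. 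More fundamentally, uncorrelatedness of $\Delta_{i\alpha}$ and $\ttB_{i\alpha}$ does not let you condition on $\ttB$: in the canonical case $\ttA_{i\alpha}=f(\sqrt{N}\ttB_{i\alpha})/\sqrt{N}$ (which is exactly the application in the paper), $\Delta$ is a deterministic function of $\ttB$, so $\E[\bst'\mid\ttB]=\bst'$ and the conditional LDE gives nothing.

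The fix you gesture at in the ``Main obstacle'' paragraph---expanding in the \emph{row} indices $i,j$ with weights $x_ix_j$, using minors $\caG^{(i)},\caG^{(ij)}$ that are independent of the $i$-th and $j$-th rows of both $\ttA$ and $\ttB$---is exactly the paper's route. The paper splits $\langle\bsx,\ttA\caG\ttA^T\bsx\rangle=\sum_i x_i^2(\ttA\caG\ttA^T)_{ii}+\sum_{i\neq j}x_ix_j(\ttA\caG\ttA^T)_{ij}$, handles the diagonal part by the entrywise law (Lemma~\ref{lem:entrywise_ecrm}), and bounds the off-diagonal part $\caZ_{\ttA\ttB}$ via high-moment estimates $\E|\caZ_{\ttA\ttB}|^p\prec N^{-p/2}$ obtained through the maximal-expansion machinery of \cite{Bloemendal-Erdos-Knowles-Yau-Yin2014}, with Operations (a)--(c) adapted to track the four types $(\ttA\caG\ttA^T)$, $(\ttA\caG\ttB^T)$, $(\ttB\caG\ttA^T)$, $(\ttB\caG\ttB^T)$. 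Once you take that route, your decomposition buys nothing: the pair $(\Delta,\ttB)$ is itself an entrywise-correlated pair with $w_{\Delta\ttB}=0$, and proving the isotropic law for $\langle\bsx,\Delta\caG\Delta^T\bsx\rangle$ and $\langle\bsx,\ttB\caG\Delta^T\bsx\rangle$ requires exactly the same maximal expansion as for $\ttA$ directly. So the proposal, once repaired, collapses into the paper's proof.
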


\begin{proof}[Proof of Lemma \ref{lem:isotropic coupled}] \label{subsec:proof_isotropictype}
	Note that, due to polarization identity, we suffice to prove for $\langle\bsx, \ttA(\ttB^T\ttB-zI)^{-1}\ttA^T\bsx\rangle.$ Recall that we have
	\[
	(\ttA(\ttB^T\ttB-zI)^{-1}\ttA^T)_{ij}=(w_\ttA\mathfrak{s}(z)+w_{\ttA\ttB}^2zs(z)\mathfrak{s}(z)^2)\delta_{ij}+\caO_\prec(N^{-1/2})
	\]
	and
	\[
	(\ttA(\ttB^T\ttB-zI)^{-1}\ttB^T)_{ij}=(\ttB(\ttB^T\ttB-zI)^{-1}\ttA^T)_{ij}=(w_{\ttA\ttB}\mathfrak{s}(z)+w_{\ttA\ttB}zs(z)\mathfrak{s}(z)^2)\delta_{ij}+\caO_\prec(N^{-1/2}).
	\] 
	Once the entrywise local law is given, the proof of the isotropic (or anisotropic) type law follows exactly as in \cite{Bloemendal-Erdos-Knowles-Yau-Yin2014}. To be more precisely, we can write 
	\[
	\langle\bsx, \ttA(\ttB^T\ttB-zI)^{-1}\ttA^T\bsx\rangle=\sum_{i}x_i(\ttA\caG \ttA^T)_{ii}x_i+\sum_{i\neq j}x_i(\ttA\caG \ttA^T)_{ij}x_j.
	\]
	Then the entrywise local law implies
	\[\begin{split}
	&\sum_{i}x_i(\ttA\caG \ttA^T)_{ii}x_i-(w_\ttA\mathfrak{s}(z)+w_{\ttA\ttB}^2zs(z)\mathfrak{s}(z)^2)\langle\bsx,\bsx\rangle\\
	&~~~=\sum_{i}x_i^2\left[(\ttA\caG \ttA^T)_{ii}-(w_\ttA\mathfrak{s}(z)+w_{\ttA\ttB}^2zs(z)\mathfrak{s}(z)^2)\right]\prec N^{-1/2},
	\end{split}\]
	and so the main difficulty is to control the off-diagonal part
	\[\mathcal{Z}_{\ttA\ttB}:=\sum_{\alpha,\beta}\sum_{i\neq j}x_i\ttA_{i\alpha}\caG_{\alpha\beta}\ttA_{j\beta}x_j=\caO_\prec(N^{-1/2}).\]
	For instance, for the sample covariance matrix case
	\[\begin{split}
	\langle\bsx, \ttB\caG \ttB^T\bsx\rangle&=(zs(z)+1)\langle\bsx,\bsx\rangle+\caO_\prec(N^{-1/2})\\
	&=(\mathfrak{s}(z)+zs(z)\mathfrak{s}(z)^2)\langle\bsx,\bsx\rangle+\caO_\prec(N^{-1/2})
	\end{split}\]
	was proved in \cite{Bloemendal-Erdos-Knowles-Yau-Yin2014} by proving the following bound for higher moments
	\beq\label{eq:pth moment bound iso}
	\E|\mathcal{Z}_\ttB|^p\prec N^{-p/2}
	\eeq 
	for any large and even $p$, where
	\[
	\mathcal{Z}_\ttB:=\sum_{i\neq j}x_i(\ttB\caG \ttB^T)_{ij}x_j=z\sum_{i\neq j}x_iG_{ij}x_j.
	\] 
	In particular, the \eqref{eq:pth moment bound iso} have proved by using the standard \emph{maximal expansion method} in \cite{Bloemendal-Erdos-Knowles-Yau-Yin2014} and \cite{ajanki2017wignertype}, which only requires the independence between each element, the boundedness of the moment of each entries, and the entrywise local law. Thus, from the definition of the entrywise correlated random matrices $(\ttA,\ttB)$, it can be expected that 
	\[
	\E|\mathcal{Z}_{\ttA\ttB}|^p\prec N^{-p/2}
	\]
	also holds for any large and even $p$, by expanding maximally $\caG_{\alpha\beta}$ instead of $G_{ab}$ as in \eqref{eq:decomp_off}. Then, we can conclude the proof by using Markov inequality. 
	
	To prove such an argument , we only need to check what is changing. First, we express the $p$-th moment of $\mathcal{Z}_{\ttA\ttB}$ by
	\beq\label{eq:p-th moment}
	\E|\mathcal{Z}_{\ttA\ttB}|^p=\E\sum_{b_{11}\neq b_{12}}\cdots\sum_{b_{p1}\neq b_{p2}}\left(\prod_{k=1}^{p/2} x_{b_{k1}}(\ttA\caG\ttA^T)_{b_{k1}b_{k2}}x_{b_{k2}}\right)\left(\prod_{k=p/2+1}^{p} x_{b_{k1}}(\ttA\caG\ttA^T)_{b_{k1}b_{k2}}x_{b_{k2}}\right).
	\eeq
	Let $\mathbb{T}=\{b_{k1}\}\cup\{b_{k2}\}$ be the set of indices of $\bsx$ appearing in the fixed summand of the representation of the $p$-th moment of $\mathcal{Z}_{\ttA\ttB}.$ Then our goal is to decompose the off-diagonal entry of the matrix $(\ttA\caG \ttA^T)$ into the two parts by using Lemma \ref{lem:res identity}, where one consists of the finite number of \emph{the maximally expanded term} and the other consists of the terms containing a sufficiently large number of off-diagonal entries. We note that the latter case is small enough due to the entrywise local laws of off-diagonal entries, and so the leading order term contained in the formal.

	
	\subsubsection*{Step 1 : The maximal expansion for the off-diagonal entries of $(A\caG_BA).$}
	
	In our case, \emph{the maximally expanded terms} (cf. Definition 5.4 of \cite{Bloemendal-Erdos-Knowles-Yau-Yin2014}) refer to terms that have one of the following forms: $(\ttA\caG^{(\mathbb{T}\backslash a,b)}\ttA^T)_{ab}$, $(\ttA\caG^{(\mathbb{T}\backslash a,b)}\ttB^T)_{ab}$, $(\ttB\caG^{(\mathbb{T}\backslash a,b)}\ttA^T)_{ab}$ or  $(\ttB\caG^{(\mathbb{T}\backslash a,b)}\ttB^T)_{ab}=z(G^{(\mathbb{T}\backslash a,b)})_{ab},$ for some $a\neq b\in\mathbb{T}.$  
	To proceed, we use the following operation successively : 
	
	\textbf{Operation (a)} 
	
	Let $\caT\subset\{1,\ldots M\}$ be a set of indices.
	\begin{itemize}
		\item For $a\neq b$ and $c\notin\caT,$	
		\[\begin{split}
		(\ttA\caG^{(\caT)}\ttA^T)_{ab}&=(\ttA\caG^{(\caT c)}\ttA^T)_{ab}+\frac{(\ttA\caG^{(\caT)}\ttB^T)_{ac}(\ttB\caG^{(\caT)}\ttA^T)_{cb}}{z(G^{(\caT)})_{cc}}
		\end{split}
		\]
		\item For $a\neq b$ and $c\notin\caT,$
		\[
		\begin{split}
		(\ttA\caG^{(\caT)}B^T)_{ab}&=(\ttA\caG^{(\caT c)}\ttB^T)_{ab}+\frac{(\ttA\caG^{(\caT)}\ttB^T)_{ac}(\ttB\caG^{(\caT)}\ttB^T)_{cb}}{z(G^{(\caT)})_{cc}}
		\\&=(\ttA\caG^{(\caT c)}\ttB^T)_{ab}+\frac{(\ttA\caG^{(\caT)}\ttB^T)_{ac}(G^{(\caT)})_{cb}}{(G^{(\caT)})_{cc}}
		\end{split}
		\]
		\item For $a\neq b\notin\caT$ and $a,b\neq c$
		\[
		\frac1{z}(\ttB\caG^{(\caT)}\ttB^T)_{ab}=(G^{(\caT)})_{ab}=(G^{(\caT  c)})_{ab}+\frac{(G^{(\caT)})_{ac}(G^{(\caT)})_{cb}}{(G^{(\caT)})_{cc}}
		\]
		\item For $a\neq b\notin\caT$
		\[
		\frac{1}{(G^{(\caT)})_{aa}}=\frac{1}{(G^{(\caT  b)})_{aa}}-\frac{(G^{(\caT)})_{ab}(G^{(\caT)})_{ba}}{(G^{(\caT)})_{aa}(G^{(\caT b)})_{aa}(G^{(\caT)})_{bb}}
		\]
	\end{itemize}
	We then observe that the expanded terms contains at most two crossed terms $(\ttA\caG^{(\mathbb{T}\backslash a,b)}\ttB^T)_{ab}$, $(\ttB\caG^{(\mathbb{T}\backslash a,b)}\ttA^T)_{ab}$ and each expansions produce two types of terms, the first one has one more additional upper index, and the second one at least one more additional off-diagonal entry of $\ttB\caG \ttA^T$, $\ttA\caG \ttB^T$ or $\ttB\caG \ttB^T$. Moreover, we also remark that the denominators are always the diagonal entries of the resolvent $G^{(\caT)}.$
	
	It can be seen that the above expansion formulas eventually play the same role as operation (a) in \cite{Bloemendal-Erdos-Knowles-Yau-Yin2014}.  Therefore, to obtain the desired decomposition, we only need to iterate the operation (a) until it can no longer be expanded or contains sufficiently many off-diagonal entries. 
	
	\subsubsection*{Step 2 : The further expansions for the maximally expanded off-diagonal entries}	
	
	We further expand the maximally expanded term by using the following operations : 
	
	\textbf{Operations (b) (and (c))}
	\begin{itemize}
		\item For $a\neq b\in\mathbb{T}$
		\[
		(G^{(\mathbb{T}\backslash a,b)})_{ab}=z(G^{(\mathbb{T}\backslash a,b)})_{aa}(G^{(\mathbb{T}\backslash b)})_{bb}(\ttB\caG^{(\mathbb{T})}\ttB^T)_{ab}.
		\] 
		\item 	Furthermore, we use the following type expansion, which is from the above formula, to the terms $(G^{(\mathbb{T}\backslash a,b)})_{aa}$ and $(G^{(\mathbb{T}\backslash a,b)})_{bb}$ 
		\[\begin{split}
		(G^{(\mathbb{T}\backslash a,b)})_{aa}&=(G^{(\mathbb{T}\backslash a)})_{aa}+\frac{(G^{(\mathbb{T}\backslash a,b)})_{ab}(G^{(\mathbb{T}\backslash a,b)})_{ba}}{(G^{(\mathbb{T}\backslash a,b)})_{bb}}
		\\&=(G^{(\mathbb{T}\backslash a)})_{aa}+z^2(G^{(\mathbb{T}\backslash a,b)})_{aa}(G^{(\mathbb{T}\backslash a)})_{aa}(G^{(\mathbb{T}\backslash b)})_{bb}(\ttB\caG^{(\mathbb{T})}\ttB^T)_{ab}^2.
		\end{split}
		\]
		Then, this expansion splits such not-maximally expanded term into two parts, one is maximally expanded and the other is a monomial expressed as the product of itself, the diagonal entry, and the maximally expanded terms. In particular, it can be seen that the number of the off-diagonal entries included in the latter monomial increases by exactly two. 
		
		\item For $a\neq b\in\mathbb{T}$
		\[
		\begin{split}
		(\ttA\caG^{(\mathbb{T}\backslash a,b)}\ttA^T)_{ab}&=(\ttA\caG^{(\mathbb{T})}\ttA^T)_{ab}+z(G^{(\mathbb{T}\backslash b)})_{bb}(\ttA\caG^{(\mathbb{T})}\ttB^T)_{ab}(\ttB\caG^{(\mathbb{T})}\ttA^T)_{bb}\\&~~~+\frac{(\ttA\caG^{(\mathbb{T}\backslash a,b)}\ttB^T)_{aa}(\ttB\caG^{(\mathbb{T}\backslash a,b)}\ttA^T)_{ab}}{z(G^{(\mathbb{T}\backslash a,b)})_{aa}}
		\\&=(\ttA\caG^{(\mathbb{T})}\ttA^T)_{ab}+z(G^{(\mathbb{T}\backslash b)})_{bb}(\ttA\caG^{(\mathbb{T})}\ttB^T)_{ab}(\ttB\caG^{(\mathbb{T})}\ttA^T)_{bb}
		\\&~~~+z(G^{(\mathbb{T}\backslash a,b)})_{aa}(\ttA\caG^{(\mathbb{T})}\ttB^T)_{aa}\left[(\ttB\caG^{(\mathbb{T})}\ttA^T)_{ab}+z(G^{(\mathbb{T}\backslash b)})_{bb}(\ttB\caG^{(\mathbb{T})}\ttB^T)_{ab}\right]
		\\&~~~+z^2(G^{(\mathbb{T}\backslash a,b)})_{aa}(G^{(\mathbb{T}\backslash b)})_{bb}(\ttA\caG^{(\mathbb{T})}\ttB^T)_{ab}(\ttB\caG^{(\mathbb{T})}\ttB^T)_{ab}\times
		\\&~~~~~~\left[(\ttB\caG^{(\mathbb{T})}\ttA^T)_{ab}+z(G^{(\mathbb{T}\backslash b)})_{bb}(\ttB\caG^{(\mathbb{T})}\ttB^T)_{ab}\right].
		\end{split}
		\] 
		since 
		\[\begin{split}
		(\ttA\caG^{(\mathbb{T}\backslash a,b)}\ttB^T)_{aa}&=-z(G^{(\mathbb{T}\backslash a,b)})_{aa}(\ttA\caG^{(\mathbb{T}\backslash b)}\ttB^T)_{aa}\\&=-z(G^{(\mathbb{T}\backslash a,b)})_{aa}\left[(\ttA\caG^{(\mathbb{T})}\ttB^T)_{aa}+z(G^{(\mathbb{T}\backslash b)})_{bb}(\ttA\caG^{(\mathbb{T})}\ttB^T)_{ab}(\ttB\caG^{(\mathbb{T})}\ttB^T)_{ab}\right]
		\end{split}
		\]
		and
		\[
		(\ttB\caG^{(\mathbb{T}\backslash a,b)}\ttA^T)_{ab}=-z(G^{(\mathbb{T}\backslash a,b)})_{aa}\left[(\ttB\caG^{(\mathbb{T})}\ttA^T)_{ab}+z(G^{(\mathbb{T}\backslash b)})_{bb}(\ttB\caG^{(\mathbb{T})}\ttB^T)_{ab}\right].
		\] 
		The expansion of the first two monomials terminated since every term were maximally expanded. After this, for any fixed positive integer $\ell$, we expand the term which contains the term $(G^{(\mathbb{T}\backslash a,b)})_{aa}$ until the last term is a monomial containing $\ell$ or more off-diagonal entries by applying the first formula recursively to the not-maximally expanded diagonal entry $(G^{(\mathbb{T}\backslash a,b)})_{aa}.$
		\item For $a\neq b\in\mathbb{T}$
		\[
		\begin{split}
		(\ttA\caG^{(\mathbb{T}\backslash a,b)}\ttB^T)_{ab}&=-z(G^{(\mathbb{T}\backslash b)})_{bb}(\ttA\caG^{(\mathbb{T})}\ttB^T)_{ab}+\frac{(\ttA\caG^{(\mathbb{T}\backslash a,b)}\ttB^T)_{aa}(G^{(\mathbb{T}\backslash a,b)})_{ab}}{(G^{(\mathbb{T}\backslash a,b)})_{aa}}
		\\&=-z(G^{(\mathbb{T}\backslash b)})_{bb}(A\caG^{(\mathbb{T})}\ttB^T)_{ab}
		\\&~~~-z^2(G^{(\mathbb{T}\backslash a,b)})_{aa}(\ttA\caG^{(\mathbb{T})}\ttB^T)_{aa}(G^{(\mathbb{T}\backslash b)})_{bb}(\ttB\caG^{(\mathbb{T})}\ttB^T)_{ab}
		\\&~~~-z^3(G^{(\mathbb{T}\backslash a,b)})_{aa}(G^{(\mathbb{T}\backslash b)})_{bb}^2(\ttA\caG^{(\mathbb{T})}\ttB^T)_{ab}(\ttB\caG^{(\mathbb{T})}\ttB^T)_{ab}(\ttB\caG^{(\mathbb{T})}\ttB^T)_{ab}
		\end{split}
		\]
		Even in this case, we also expand the second and third monomials recursively by applying the first formula to not-maximally expanded diagonal entry $(G^{(\mathbb{T}\backslash a,b)})_{aa}.$
	\end{itemize} 
	In particular, we have two observations from the above operations. 
	\begin{itemize}
		\item The expansions of the maximally expanded off-diagonal entry consist of the monomials containing only an odd number of off-diagonal entries: $(\ttB\caG^{(\mathbb{T})}\ttA^T)_{ab}$, $(\ttA\caG^{(\mathbb{T})}\ttB^T)_{ab}$ and $(\ttB\caG^{(\mathbb{T})}\ttB^T)_{ab}.$
		\item The diagonal entries $(\ttA\caG^{(\mathbb{T})}\ttB^T)_{aa}=(\ttB\caG^{(\mathbb{T})}\ttA^T)_{aa}$ for $a\in\mathbb{T}$, appear in the expanded term by implementing operation (b) and (c). These terms can be interpreted as a loop of the vertex $a$ in the structure of the graph considered in \cite{Bloemendal-Erdos-Knowles-Yau-Yin2014}, since like the maximally expended diagonal entry, these terms are comparable to $ w_{\ttA\ttB}\mathfrak{s}(z)$ by the entrywise local law. Therefore, similar to the maximally expanded diagonal entry, terms of such types have no effect on the partial expectation techniques in subsection 5.13 of \cite{Bloemendal-Erdos-Knowles-Yau-Yin2014}. This part will be explained in more detail in the next step.
	\end{itemize}  
	
	As with the previous step, from the explanations depicted in each expansion formula, we can see that the above expansions eventually play the same role as operations (b) and (c) in \cite{Bloemendal-Erdos-Knowles-Yau-Yin2014}. 
	
	\subsubsection*{Step 3 : The further expansions for the maximally expanded diagonal entries}
	
	Finally, unless we end up with an expression that includes a sufficiently large numbers of off-diagonal resolvent entries (such \emph{trivial leaves} are dealt with separately in Subsection 5.11 of \cite{Bloemendal-Erdos-Knowles-Yau-Yin2014}), we need to expand the maximally expanded diagonal elements $(\ttA\caG^{(\mathbb{T})}\ttB^T)_{aa}=(\ttB\caG^{(\mathbb{T})}\ttA^T)_{aa}$ and $(G^{(\mathbb{T}\backslash a)})_{aa}$ for $a\in\mathbb{T}$ appearing in the \emph{non-trivial leaves} (cf. Subsection 5.12 $\sim$ 14 of \cite{Bloemendal-Erdos-Knowles-Yau-Yin2014}), where we need to slightly adjust the proof to the setting. These terms corresponds to the maximally expanded diagonal $G$-edge in \cite{Bloemendal-Erdos-Knowles-Yau-Yin2014}.
	
	First, for $c\in\mathbb{T}$,
	\beq\label{eq:shur comple}
	\frac{1}{(G^{(\mathbb{T}\backslash c)}_\ttB)_{cc}}=-z-z(\ttB\caG^{(\mathbb{T})}_\ttB\ttB^T)_{cc}.
	\eeq  
	We note that $|(\caG^{(\mathbb{T})})_{\mu\mu}-\mathfrak{s}(z)|\prec N^{-1/2}$ by following the proof of the entrywise local law.
	Using \eqref{eq:shur comple} and the facts $zs(z)\mathfrak{s}(z)=-(zs(z)+1)$ and $|\mathfrak{s}(z)|\asymp1,$ we see that
	\[
	\frac{1}{(G^{(\mathbb{T}\backslash c)})_{cc}}=\frac{1}{s(z)}-z\left((\ttB\caG^{(\mathbb{T})}\ttB^T)_{cc}-\mathfrak{s}(z)\right)
	\]
	and this implies that
	\[
	(G^{(\mathbb{T}\backslash c)})_{cc}=\sum_{k=0}^{\ell-1}(s(z))^{k+1}z^k\left((\ttB\caG^{(\mathbb{T})}\ttB^T)_{cc}-\mathfrak{s}(z)\right)^k+\caO_\prec(N^{-\ell/2})
	\]
	for any integer $\ell\geq1$ since $(\ttB\caG^{(\mathbb{T})}\ttB^T)_{cc}-\mathfrak{s}(z)$ is $\caO_\prec(N^{-1/2})$, by using Lemma \ref{lemma: LDE}.
	
	Similarly, 
	for $a\in\mathbb{T},$ we see that
	\beq\label{eq:diag cross}\begin{split}
		\frac{1}{(\ttA\caG^{(\mathbb{T})}\ttB^T)_{aa}}&=\frac{1}{w_{\ttA\ttB}\mathfrak{s}(z)}-\frac{(\ttA\caG^{(\mathbb{T})}_\ttB\ttB^T)_{aa}-w_{\ttA\ttB}\mathfrak{s}(z)}{w_{\ttA\ttB}\mathfrak{s}(z)(\ttA\caG^{(\mathbb{T})}_\ttB\ttB^T)_{aa}}
	\end{split}
	\eeq
	and so
	\[
	(\ttA\caG^{(\mathbb{T})}\ttB^T)_{aa}=w_{\ttA\ttB}\mathfrak{s}(z)-(\ttA\caG^{(\mathbb{T})}\ttB^T)_{aa}\frac{\frac{w_{\ttA\ttB}\mathfrak{s}(z)-(\ttA\caG^{(\mathbb{T})}\ttB^T)_{aa}}{w_{\ttA\ttB}\mathfrak{s}(z)}}{1-\frac{w_{\ttA\ttB}\mathfrak{s}(z)-(\ttA\caG^{(\mathbb{T})}\ttB^T)_{aa}}{w_{\ttA\ttB}\mathfrak{s}(z)}}.
	\]
	

	By using the estimate \[\begin{split}
	(\ttA\caG^{(\mathbb{T})}\ttB^T)_{aa}-w_{\ttA\ttB}\mathfrak{s}(z)&=\sum_{\mu\neq\nu}\ttA_{a\mu}(\caG^{(\mathbb{T})})_{\mu\nu}\ttB_{a\nu}+\sum_{\mu}\ttA_{a\mu}\ttB_{a\mu}\left((\caG^{(\mathbb{T})})_{\mu\mu}-\mathfrak{s}(z)\right)\\&~~~+\mathfrak{s}(z)\left(\frac1{N}\sum_{\mu}(N\ttA_{a\mu}\ttB_{a\mu}-w_{\ttA\ttB})\right)\prec N^{-1/2},
	\end{split}
	\] we have the following series expansion for any integers $\ell\geq1,$
	\[\begin{split}
	(\ttA\caG^{(\mathbb{T})}\ttB^T)_{aa}&=w_{\ttA\ttB}\mathfrak{s}(z)-(\ttA\caG^{(\mathbb{T})}\ttB^T)_{aa}\textbf{1}(\ell\geq2)\sum_{k=1}^{\ell-1}(w_{\ttA\ttB}\mathfrak{s}(z))^{-k}\left(w_{\ttA\ttB}\mathfrak{s}(z)-(\ttA\caG^{(\mathbb{T})}\ttB^T)_{aa}\right)^{k}
	\\&+\caO_\prec(N^{-\ell/2})
	\end{split}\]
	which corresponds to the term (5.42) in \cite{Bloemendal-Erdos-Knowles-Yau-Yin2014}.
	
	This way we end up with an expression where only contains the resolvent terms of the type $(\ttA\caG^{(\mathbb{T})}\ttA^T)_{ab}$, $(\ttA\caG^{(\mathbb{T})}\ttB^T)_{ab}$, $(\ttB\caG^{(\mathbb{T})}\ttA^T)_{ab}$ or  $(\ttB\caG^{(\mathbb{T})}\ttB^T)_{ab}=(G^{(\mathbb{T})})_{ab},$ for some $a\neq b\in\mathbb{T}.$ In other words, the $\bsx$ indices
	and the indices of the resolvent entries are completely decoupled; only explicit products of entries of $(\ttA,\ttB)$ represent the connections between them.
	
	\subsubsection*{Step 4 : Sketch of the rest of the proof.}	
	
	Through previous steps, for our case $(\ttA\caG \ttA^T)$, we observed the modified version of the operations, which are done for the resolvents $G$ and $\caG$ in \cite{Bloemendal-Erdos-Knowles-Yau-Yin2014}. 
	
	After with these modifications, it can be seen that the rest procedures (Step 6 $\sim$ 8 in \cite{Bloemendal-Erdos-Knowles-Yau-Yin2014}) of the proof for the non-trivial leaves with the stopping rule, which relies on the number of off-diagonal terms (cf. Definition 5.7 of \cite{Bloemendal-Erdos-Knowles-Yau-Yin2014}), are also valid for the $\mathcal{Z}_{\ttA\ttB}$. 
	
	More precisely, by using the entrywise laws and H\"{o}lder's inequality, the same estimation also holds for the trivial leave as in Subsection 5.11. Furthermore, the most of the finitely generated non-trivial leaves have a decay $N^{-p/2}$ also by applying the same argument in the case of the trivial leaves (Subsection 5.12 in \cite{Bloemendal-Erdos-Knowles-Yau-Yin2014}), and the remaining leading order non-trivial leaves have the same decay by applying the partial expectation method (Subsection 5.13 in \cite{Bloemendal-Erdos-Knowles-Yau-Yin2014}).  

	We conclude the proof.
\end{proof}
\begin{proof}[Proof of Lemma \ref{lem:local_MP2}]
	From the above version of an isotropic law, we also arrive at the isotropic version of the entrywise law in Lemma \ref{lem:entrywise_ecrm} by taking $\ttA=Q+X$ and $\ttB=Q.$ Then, it is easy to check that 
	\begin{align*}
	&w_\ttA=2\left(1+\frac{\ef}{\sqrt{\vf}}\right),&&w_{\ttA\ttB}=1+\frac{\ef}{\sqrt{\vf}},&&w_\ttB=1.
	\end{align*}
	Precisely, applying Lemma \ref{lem:isotropic coupled} directly, we see that 
	\[\begin{split}
	&2\langle\bsu, X(Q^TQ-zI)^{-1}Q^T\bsu\rangle
	\\&=\langle\bsu, X(Q^TQ-zI)^{-1}Q^T\bsu\rangle+\langle\bsu, Q(Q^TQ-zI)^{-1}X^T\bsu\rangle
	\\&=\langle\bsu, A(B^TB-zI)^{-1}A^T\bsu\rangle
	-\langle\bsu, B(B^TB-zI)^{-1}B^T\bsu\rangle-\langle\bsu, X(B^TB-zI)^{-1}X^T\bsu\rangle
	\\&=2\mathfrak{s}(z)\left(1+\frac{\ef}{\sqrt{\vf}}\right)+zs(z)\mathfrak{s}(z)^2\left(1+\frac{\ef}{\sqrt{\vf}}\right)^2-\mathfrak{s}(z)-zs(z)\mathfrak{s}(z)^2-\mathfrak{s}(z)-zs(z)\mathfrak{s}(z)^2\frac{\ef^2}{\vf}
	\\&=2\frac{\ef}{\sqrt{\vf}}(\mathfrak{s}(z)+zs(z)\mathfrak{s}(z)^2)=2\frac{\ef}{\sqrt{\vf}}(zs(z)+1)
	\end{split}
	\]
	with $\caO_\prec(N^{-\phi})$ error terms, and it exactly matches the entrywise law since correlation $w_{XQ}=\frac{\ef}{\sqrt{\vf}}$. Thus, we conclude that the improved PCA via the entrywise transform holds for the spike $\bsU$ s.t. $\|\bsU^T\bsU- I_k\|_F$, $\|\bsU\|_\infty\prec N^{-\phi}$, where $\phi>1/4.$
	
\end{proof}

\section{Proof of CLTs}\label{app:CLT}
%
%
%

In Appendix \ref{app:CLT}, we prove the CLT for the LSS of spiked random matrices. The proof of the CLT for the LSS is based on the strategy of \cite{Bai-Yao2005} in which the LSS is first written as a contour integral of the resolvent of a spiked Wigner matrix. Then, the averaged trace of the resolvent converges to a Gaussian process, which also implies that the limiting distribution of the LSS is Gaussian. 

It is the biggest obstacle in adapting the proof in \cite{Bai-Yao2005} for spiked matrices that the martingale CLT and covariance computation are hard to be reproduced with spikes; even with the special choice of rank-$1$ spike the proof for the CLT is very tedious as in \cite{Baik-Lee2017}. In \cite{chung2019weak}, the interpolation between a general rank-$1$ spike and the special rank-$1$ spiked was introduced to compare the LSS, based on an ansatz that the mean and the variance of the LSS do not depend on the choice of the spike. In this paper, since we do not have a reference matrix to be compared with as in the rank-$1$ case, we introduce a direct interpolation between a spiked random matrices of general rank and a matrix without any spikes. With the interpolation, we find the change of the mean in the limiting Gaussian distribution and also prove that its variance is invariant.
\subsection{Proof of CLTs for spiked random matrices}
\begin{proof}[Proof of Theorem \ref{thm:CLT}]
	We adapt the proof of Theorem 5 in \cite{chung2019weak} with the following change. Instead of interpolating the spiked Wigner matrices $M$ with the original signal and with the signal with all $1$'s considered in \cite{Baik-Lee2017}, we directly interpolate $M$ and $W$ and track the change of the mean. Consider the following interpolating matrix
	\[
	M(\theta) = \theta \sqrt{\lambda} \bsU \bsU^T + W
	\]
	and the corresponding eigenvalues $\{\mu_i(\theta)\}_{i=1}^{N}$ of $M(\theta)$ for $\theta \in [0, 1]$. 
	Let $\Gamma$ be a rectangular contour in the proof of Theorem 5 in \cite{chung2019weak}. Applying Cauchy's integral formula, we have 
	\beq \label{eq:Cauchy_int}
	\sum_{i=1}^N f(\mu_i(1)) - N \int_{-2}^2 \frac{\sqrt{4-x^2}}{2\pi} f(x) \, \dd x = -\frac{N}{2\pi \ii} \oint_{\Gamma} f(z) \big( s_N(1,z) - s_{sc}(z) \big) \dd z
	\eeq
	where $s_{sc}(z) = \frac{-z + \sqrt{z^2 - 4}}{2}$ is the Stieltjes transform of the Wigner semicircle law and $s_N(\theta,z)$ is the Stieltjes transform of the empirical spectral distribution (ESD) of $M(\theta)$ for $\theta\in[0,1]$.
	Note that the normalized trace of the resolvent satisfies
	\beq
	\frac{1}{N} \Tr R(\theta,z) = \frac{1}{N} \sum_{i=1}^N \frac{1}{\mu_i(\theta) -z} = s_N(\theta,z)
	\eeq
	where $R(\theta,z)$ is the resolvent corresponding to $M(\theta)$, defined as
	\beq\label{resolvent}
	R(\theta, z) := (M(\theta) - zI)^{-1}
	\eeq
	for $z \in \C^+$ and $\theta\in[0,1]$. 
	
	The change of the mean in the CLT for $W$ and the CLT for $M$ can be computed by tracking the change of the corresponding resolvent in \eqref{resolvent}, since \eqref{eq:Cauchy_int} can be decomposed by
	\begin{align}
	\sum_{i=1}^N f(\mu_i(1)) - N \int_{-2}^2 \frac{\sqrt{4-x^2}}{2\pi} f(x) \, \dd x 
	&= -\frac{1}{2\pi \ii} \oint_{\Gamma} f(z) \big( \Tr R(1,z) - \Tr R(0,z) \big) \dd z\label{eq:Trace difference}\\
	&~~~~-\frac{1}{2\pi \ii} \oint_{\Gamma} f(z) \big( \Tr R(0,z) - Ns_{sc}(z) \big) \dd z\label{null_case}
	\end{align} 
	and the fluctuation result of \eqref{null_case} is already given in \cite{Bai-Yao2005}.
	
	Set $\Gamma^{\varepsilon}=\{z\in\bbC:\min_{w\in\Gamma}|z-w|\leq\varepsilon\}.$ Choose $\varepsilon$ so that 
	\[
	\min_{w\in\Gamma^\varepsilon,x\in[-2,2]}|x-w|>2\varepsilon.
	\]
	Following the proof of Theorem 5 in \cite{chung2019weak}, on $z\in\Gamma^{\varepsilon}_{1/2}:=\Gamma^{\varepsilon}\cap \{z\in\bbC:\,|\text{Im} z|>N^{-1/2}\}$, we first find that
	\begin{align}
	\frac{\partial}{\partial \theta} \Tr R(\theta, z) &= -\sum_{m=1}^k \sqrt{\lambda} \frac{\partial}{\partial z} \left( \bsx(m)^T R(\theta, z) \bsu (m) \right) = -k \frac{\partial}{\partial z} \left( \frac{\sqrt{\lambda} s_{sc}(z)}{1+\theta \sqrt{\lambda} s_{sc}(z)} \right) + O(N^{-\frac{1}{2}}) \nonumber\\
	&= -\frac{k \sqrt{\lambda} s_{sc}'(z)}{(1+\theta \sqrt{\lambda} s_{sc}(z))^2} + O(N^{-\frac{1}{2}})\label{eq:derivative_of_trace}
	\end{align}
	with high probability. More precisely, since the elementary resolvent expansion implies
	\beq
	\begin{split}
		R(0,z)-R(\theta,z)=\theta\sqrt{\lambda}R(\theta,z)\left(\sum_{\ell=1}^{k}\bsu(\ell)\bsu(\ell)^T\right)R(0,z),
	\end{split}\eeq
	we then find that
	\begin{align}\label{res_exp}
	\left( \bsu(m)^T R(0, z) \bsu (m) \right)=\left( \bsu(m)^T R(\theta, z) \bsu (m) \right)+\theta\sqrt{\lambda}\sum_{\ell=1}^k\left( \bsu(m)^T R(\theta, z) \bsu (\ell) \right)\left( \bsu(\ell)^T R(0, z) \bsu (m) \right)\nonumber.
	\end{align} 
	From the rigidity of the eigenvalues, we have a deterministic bound for resolvent
	\beq
	|\left( \bsu(m)^T R(\theta, z) \bsu (\ell) \right)|\leq \lVert R(\theta, z)\rVert\leq C.
	\eeq
	Since columns of spike $\{\bsu(\ell)\}_{\ell=1}^{k}$ are orthonormal, the isotropic local law for $R(0,z)$ implies that 
	\beq
	\left( \bsu(m)^T R(0, z) \bsu (\ell) \right)=s(z)\delta_{m\ell}+\caO(N^{-1/2}).
	\eeq
	uniformly on $z\in\Gamma^\varepsilon.$
	We then obtain that 
	\begin{align*}
	\left( \bsu(m)^T R(0, z) \bsu (m) \right)=\left( \bsu(m)^T R(\theta, z) \bsu (m) \right)\left[1+\theta\sqrt{\lambda}\left( \bsu(m)^T R(0, z) \bsu (m) \right)\right]+O(N^{-\frac{1}{2}})\nonumber
	\end{align*}
	and so
	\begin{align}
	\left( \bsu(m)^T R(\theta, z) \bsu (m) \right)=\frac{s_{sc}(z)}{1+\theta\sqrt{\lambda}s_{sc}(z)}+O(N^{-\frac{1}{2}}).\nonumber
	\end{align}
	This proves \eqref{eq:derivative_of_trace}.
	
	Moreover, on $\Gamma^{\varepsilon},$ we easily check that the exactly same argument holds for a finite rank perturbation of Wigner matrix (e.g. interlacing and rigidity properties). Thus, we conclude that \eqref{eq:Trace difference} is 
	\[\frac{k}{2\pi\ii}\int_\Gamma \frac{\sqrt{\lambda}s_{sc}'(z)}{1+\sqrt{\lambda}s_{sc}(z)}f(z)d z+o(1)\]
	with high probability.
	
	Finally, following the computation in the proof of Lemma 4.4 in \cite{Baik-Lee2017}, we then find that the difference between the LSS of $M$ and the LSS of $W$ is
	\beq
	k \sum_{\ell=1}^{\infty} \sqrt{\lambda^{\ell}} \tau_{\ell}(f).
	\eeq
	This proves the desired theorem.
\end{proof}
\begin{proof}[Proof of Theorem \ref{thm:CLT_rec}]
	The proof of the CLT for the spiked rectangular matrices is quite similar to the case of spiked Wigner matrix. We first consider the interpolating matrix for the additive model, defined as 
	\beq 
	Y(\theta)=\theta\sqrt{\lambda}\bsU\bsV^T+X
	\eeq
	for $\theta \in [0, 1]$. Note that $Y(0) = X$ and $Y(1) = Y$. Denote by $\mu_1(\theta) \geq \mu_2(\theta) \geq \dots \geq \mu_M(\theta)$ the eigenvalues of $Y(\theta) Y(\theta)^T$. We also define the resolvent
	\beq
	G(\theta, z) = (Y(\theta)Y(\theta)^T -zI)^{-1}, \qquad \caG(\theta, z) = (Y(\theta)^T Y(\theta) -zI)^{-1}
	\eeq
	for $z \in \C$.
	
	We choose ($N$-independent) constants $a_- <d_-$, $a_+ > d_+$, and $v_0 \in (0, 1)$ so that the function $f$ is analytic on the rectangular contour $\Gamma$ whose vertices are $(a_- \pm \ii v_0)$ and $(a_+ \pm \ii v_0)$. With overwhelming probability, all eigenvalues of $Y(\theta)Y(\theta)^T$ are contained in $\Gamma$. Applying Cauchy's integral formula, we find that
	\beq
	\sum_{i=1}^M f(\mu_i(1)) - \sum_{i=1}^M f(\mu_i(0)) = -\left(\frac{1}{2\pi \ii} \oint_{\Gamma}f(z) \left( \Tr G(1,z) - \Tr G(0,z) \right) \dd z \right)
	\eeq
	To estimate the difference $\Tr G(1,z) - \Tr G(0,z)$, we consider its derivative $\frac{\partial}{\partial \theta}\Tr G(\theta,z)$. Note that
	\beq
	\frac{\partial G_{ab}(\theta)}{\partial Y_{ij}(\theta)}=-G_{ai}(\theta)(Y(\theta)^T G(\theta))_{jb}-(G(\theta)Y(\theta))_{aj}G_{ib}(\theta), \qquad 
	\frac{\dd Y_{ij}(\theta)}{\dd\theta}=\sqrt{\lambda}\bsu_i \bsv_j^T.
	\eeq
	Thus, by chain rule
	\beq \begin{split} \label{eq:der_Tr_mean}
		\frac{\partial}{\partial \theta}\Tr G(\theta,z)&=\sum_{a=1}^{M}\sum_{i=1}^{M}\sum_{j=1}^{N}\frac{\partial Y_{ij}(\theta)}{\partial\theta}\frac{\partial G_{aa}(\theta)}{\partial Y_{ij}(\theta)} \\
		&=-\sum_{a=1}^{M}\sum_{i=1}^{M}\sum_{j=1}^{N}\sqrt{\lambda}\bsu_i \bsv_k^T[G_{ai}(\theta)(Y(\theta)^T G(\theta))_{ja}+(G(\theta)Y(\theta))_{aj}G_{ia}(\theta)] \\
		&=-2\sum_{a=1}^{M}\sum_{i=1}^{M}\sum_{j=1}^{N}\sum_{b=1}^{M}\sqrt{\lambda}\bsu_i \bsv_j^T[Y_{b j}(\theta)G_{b a}(\theta)G_{ai}(\theta)]
	\end{split} \eeq
	From the fact 
	\[
	\left(\frac{\partial}{\partial z}G(\theta)\right)_{b i}=(G(\theta)^2)_{b i}=\sum_a G_{b a}(\theta)G_{ai}(\theta),
	\]
	we then find that
	\beq \begin{split} \label{eq:Tr_derivative}
		\frac{\partial}{\partial \theta}\Tr G(\theta,z) =-2\sqrt{\lambda}\frac{\partial}{\partial z}\sum_{i=1}^{M}\sum_{j=1}^{N}\bsu_i \bsv_j^T(G(\theta)Y(\theta))_{ij}=-2\sqrt{\lambda}\frac{\partial}{\partial z}\sum_{\ell=1}^k\langle \bsu(\ell),G(\theta)Y(\theta)\bsv(\ell)\rangle.
	\end{split} \eeq
	
	It remains to estimate $\frac{\partial}{\partial z}\langle \bsu(\ell),G(\theta)Y(\theta)\bsv(\ell)\rangle$ for $1\leq \ell\leq k$. We suffices to estimate the desired term for fixed $\ell.$ From now, we omit $\ell$-dependency. Note that
	\[
	\langle \bsu,G(\theta)Y(\theta)\bsv\rangle=\theta\sqrt{\lambda}\langle \bsu,G(\theta)\bsu\rangle+\langle \bsu,G(\theta)X\bsv\rangle.
	\]
	We consider the resolvent expansion 
	\beq\label{eq:resolvent_mean}\begin{split}
		G(0,z)-G(\theta,z)&=G(\theta,z)\,(H(\theta)-H(0))\,G(0,z) \\
		&=G(\theta,z)\,(\theta^2\lambda \bsu\bsu^T+\theta\sqrt{\lambda}X\bsv\bsu^T+\theta\sqrt{\lambda}\bsu\bsv^TX^T)\,G(0,z).
	\end{split}\eeq
	Taking inner products with $\bsu$ and $\bsv$, we obtain
	\beq\begin{split}\label{eq:ex1}
		\langle \bsu,G(0)\bsu\rangle&=\langle \bsu,G(\theta)\bsu\rangle+\theta^2\lambda\langle \bsu,G(\theta)\bsu\rangle\langle \bsu,G(0)\bsu\rangle\\&~~~+\theta\sqrt{\lambda}\langle \bsu,G(\theta)X\bsv\rangle\langle \bsu,G(0)\bsu\rangle+\theta\sqrt{\lambda}\langle\bsu,G(0)X\bsv\rangle\langle \bsu,G(\theta)\bsu\rangle
	\end{split}\eeq
	and
	\beq\begin{split}\label{eq:ex2}
		\langle \bsu,G(0)X\bsv\rangle&=\langle \bsu,G(\theta)X\bsv\rangle+\theta^2\lambda\langle \bsu,G(\theta)X\bsv\rangle\langle \bsu,G(0)X\bsv\rangle\\&~~~+\theta\sqrt{\lambda}\langle \bsu,G(\theta)X\bsv\rangle\langle \bsu,G(0)X\bsv\rangle+\theta\sqrt{\lambda}\langle \bsv,X^T G(0)X\bsv\rangle\langle \bsu,G(\theta)\bsu\rangle,
	\end{split}\eeq
	where we omitted $z$-dependence for brevity. We then use the following result to control the terms in \eqref{eq:ex1} and \eqref{eq:ex2}. Recall the definition of $s(z)$ and $\mathfrak{s}(z)$ in Lemmas \ref{lem:local_MP} and \ref{lem:entrywise_MPlaw}. Moreover, we consider the same linearization $H_X(z)$ of the matrix $X$ and its inverse $R_{X}(z)=H_{X}(z)^{-1}$ as in \eqref{eq:linearization} and \eqref{eq:resolvent_linearization}.
	
	\begin{lem}[Isotropic local law] \label{lem:local_law}
		For an $N$-independent constant $\varepsilon > 0$, let $\Gamma^{\varepsilon}$ be the $\varepsilon$-neighborhood of $\Gamma$, i.e.,
		\[
		\Gamma^{\varepsilon} = \{ z \in \mathbb{C} : \min_{w \in \Gamma} |z-w| \leq \varepsilon \}.
		\]
		Choose $\varepsilon$ small so that the distance between $\Gamma^{\varepsilon}$ and $[d_-, d_+]$ is larger than $2\varepsilon$, i.e., 
		\beq
		\min_{w \in \Gamma^{\varepsilon}, x \in [d_-, d_+]} |x-w| > 2\varepsilon.
		\eeq
		Then, for any unit vectors $\bsx,\bsy\in\mathbb{C}^{M+N}$ independent of $X$,
		\beq \begin{split} \label{eq:iso_spike}
			\left|\left\langle \bsx,(R_X(z)-\Pi(z)) \bsy\right\rangle\right|\prec N^{-1/2}, 
		\end{split} \eeq
		uniformly on $z\in \Gamma^\varepsilon$, where
		\beq 
		\Pi(z) =
		\begin{pmatrix}
			s(z)\cdot I_M & 0 \\
			0 & z\mathfrak{s}(z)\cdot I_N
		\end{pmatrix}.
		\eeq
	\end{lem}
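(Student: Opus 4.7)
The statement is an anisotropic local Marchenko--Pastur law for the linearized resolvent $R_X(z)$ defined in \eqref{eq:linearization}. My plan is to reduce it to the standard isotropic local MP laws for $G_X(z)$ and $\caG_X(z)$ via the block decomposition in \eqref{eq:resolvent_linearization}, and then separately control the off-diagonal cross blocks.

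First, I would split $\bsx = (\bsx^{(1)}, \bsx^{(2)})^T$ and $\bsy = (\bsy^{(1)}, \bsy^{(2)})^T$ with $\bsx^{(1)}, \bsy^{(1)} \in \mathbb{C}^M$ and $\bsx^{(2)}, \bsy^{(2)} \in \mathbb{C}^N$. Using \eqref{eq:resolvent_linearization} together with the definition of $\Pi(z)$, the target bilinear form decomposes as
\[
\langle \bsx, (R_X - \Pi)\bsy\rangle = \langle \bsx^{(1)}, (G_X - sI_M)\bsy^{(1)}\rangle + \langle \bsx^{(2)}, (z\caG_X - z\mathfrak{s} I_N)\bsy^{(2)}\rangle + \langle \bsx^{(1)}, G_X X\bsy^{(2)}\rangle + \langle \bsx^{(2)}, X^T G_X\bsy^{(1)}\rangle.
\]
The two diagonal-block terms are controlled directly: for $\caG_X$ the isotropic estimate is exactly Lemma \ref{lem:local_MP}, and for $G_X$ the analogous isotropic local MP law is Theorem 2.5 of Bloemendal--Erdős--Knowles--Yau--Yin \cite{Bloemendal-Erdos-Knowles-Yau-Yin2014}. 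Both yield the desired $\caO_\prec(N^{-1/2})$ bound uniformly on $\Gamma^{\varepsilon}$, since the choice of $\varepsilon$ keeps the spectral parameter an $N$-independent distance away from $[d_-, d_+]$.

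It then remains to show $\langle \bsx^{(1)}, G_X X \bsy^{(2)}\rangle \prec N^{-1/2}$ (the other cross block follows by taking conjugate transpose). Using the identity $G_X X = X \caG_X$ from \eqref{eq:relation between g}, this becomes $\langle \bsx^{(1)}, X \caG_X \bsy^{(2)}\rangle$. I would estimate it by a polynomial moment method analogous to the proof of Lemma \ref{lem:isotropic coupled}: expand the bilinear form as $\sum_{a,\alpha} \overline{x^{(1)}_a} X_{a\alpha} (\caG_X \bsy^{(2)})_\alpha$, decouple the row of $X$ indexed by $a$ from the corresponding row/column of $\caG_X$ using the minor expansions of Lemma \ref{lem:res identity}, and apply the large deviation estimates of Lemma \ref{lemma: LDE} together with the entrywise bound $(\caG_X)_{\alpha\beta} \prec N^{-1/2}$ for $\alpha \neq \beta$ guaranteed by Lemma \ref{lem:entrywise_MPlaw}.

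The main obstacle is precisely the dependence between $X$ and $\caG_X$ in the cross term, which prevents a direct application of concentration to a single random vector. This is exactly the difficulty handled by the entrywise/isotropic bootstrap in the proof of Lemma \ref{lem:isotropic coupled}, applied here to the trivial couple $\ttA = \ttB = X$; repeating that argument verbatim yields the cross-block estimate. Alternatively, since the whole statement is an instance of the anisotropic local Marchenko--Pastur law for rectangular matrices already established in \cite{Bloemendal-Erdos-Knowles-Yau-Yin2014}, the lemma can simply be invoked as a special case, and the role of the proof here is essentially to unpack the block structure of $R_X(z) = H_X(z)^{-1}$ and identify $\Pi(z)$ with the known deterministic approximation.
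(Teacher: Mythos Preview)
Your proposal is correct, and in its final paragraph you arrive at exactly what the paper does: the lemma is simply invoked as a known result. The paper's entire proof is a one-line citation to Theorems 3.6, 3.7, Corollary 3.9, and Remark 3.10 of Knowles--Yin \cite{Knowles-Yin2016}, together with the observation that $\im s(z), \im \mathfrak{s}(z) = \Theta(\eta)$ on the vertical parts of $\Gamma^\varepsilon$. No block decomposition or separate cross-block argument is carried out.

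Your longer route---splitting into the four blocks via \eqref{eq:resolvent_linearization}, invoking the isotropic law separately for $G_X$ and $\caG_X$, and then controlling $\langle \bsx^{(1)}, G_X X \bsy^{(2)}\rangle$ by the machinery of Lemma \ref{lem:isotropic coupled}---would also work, but it is redundant here: the anisotropic local law in \cite{Knowles-Yin2016} (and already in \cite{Bloemendal-Erdos-Knowles-Yau-Yin2014}) is stated directly for the full $(M+N)\times(M+N)$ linearized resolvent $R_X(z)$, so all four blocks are handled simultaneously. Your approach has the pedagogical merit of showing how the linearized statement decomposes into the more familiar pieces, but the paper treats the lemma as a black-box citation.
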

	\begin{proof}
		See Theorems 3.6, 3.7, Corollary 3.9, and Remark 3.10 in \cite{Knowles-Yin2016}. Note that $\im \mathfrak{s}(z), \im s(z) = \Theta(\eta)$ on the vertical part of $\Gamma_{\varepsilon}$, i.e., the neighborhood of the line segment joining $(a_+ + \ii v_0)$ and $(a_+ - \ii v_0)$ (respectively $(a_- + \ii v_0)$ and $(a_- - \ii v_0)$).
	\end{proof}
	
	Set
	\[\begin{split}
	A:=\langle \bsu,G(0, z)\bsu\rangle,\qquad
	B:=\langle \bsu,G(0, z)X\bsv\rangle,\qquad
	C:=\langle \bsv,X^T G(0, z)X\bsv\rangle.
	\end{split}\]
	Recall that 
	\beq 
	R_X(z) =
	\begin{pmatrix}
		G(0,z) & G(0,z)X \\
		X^TG(0,z) & z\caG(0,z)
	\end{pmatrix}.
	\eeq
	Then, as consequences of Lemma \ref{lem:local_law} with appropriate choices of the deterministic vectors,
	\beq
	A= s(z) +\caO_\prec(N^{-1/2}), \qquad C =\langle \bsv, z\caG(0, z)\bsv\rangle+1+\caO(N^{-1/2})=\rat(zs(z)+1)+\caO_\prec(N^{-1/2}),
	\eeq
	and
	\[
	B=\caO_\prec(N^{-1/2}).
	\]
	
	We thus have from \eqref{eq:ex1} and \eqref{eq:ex2} that
	\beq \begin{split} \label{eq:iso_perturb}
		\langle \bsu,G(\theta)X\bsv\rangle &= -\frac{\theta\rat \sqrt{\lambda}s(z)(zs(z)+1)}{\theta^2\lambda zs(z)+\theta^2\lambda+1}+\caO_\prec(N^{-1/2}) \\
		\langle \bsu,G(\theta)\bsu\rangle &=\frac{s(z)}{\theta^2\lambda zs(z)+\theta^2\lambda+1}+\caO_\prec(N^{-1/2})
	\end{split} \eeq
	and hence
	\beq
	\langle \bsu,G(\theta)Y(\theta)\bsv\rangle =\theta\sqrt{\lambda}\langle \bsu,G(\theta)\bsu\rangle+\langle \bsu,G(\theta)X\bsv\rangle =\frac{\theta\sqrt{\lambda} zs(z)+\theta\sqrt{\lambda}}{\theta^2\lambda zs(z)+\theta^2\lambda+1}+\caO_\prec(N^{-1/2}).
	\eeq
	Note that this estimate is uniform on $\theta$.
	Differentiating it with respect to $z$ and plugging it back to \eqref{eq:Tr_derivative}, we get
	\[
	\frac{\partial}{\partial \theta}\Tr G(\theta,z)=-k\frac{2\theta\lambda\frac{\dd}{\dd z}(zs(z)+1)}{(\theta^2\lambda zs(z) +\theta^2\lambda+1)^2}+\caO_\prec(N^{-1/2})
	\] 
	and, integrating over $\theta$, we obtain
	\beq \label{eq:difference_Tr}
	\Tr G(1,z)-\Tr G(0,z)=\int_{0}^{1}\frac{\partial}{\partial \theta}\Tr G(\theta,z) \dd\theta=-k\frac{\frac{\dd}{\dd z}\lambda(zs(z)+1)}{\lambda zs(z)+\lambda+1}+\caO_\prec(N^{-1/2}).
	\eeq
	
	We now invoke the following relation between the Stieltjes transforms for Marchenko--Pastur law and the Wigner semicircle law. Let 
	\[
	s_{sc}(z) = \frac{-z+\sqrt{z^2 -4}}{2}
	\]
	be the Stieltjes transform of the Wigner semicircle law and 
	\[
	\varphi(z)=\frac1{\sqrt{\rat}}(z-(1+\rat)).
	\]
	Then
	\beq\label{eq:relation_MP_SC}
	\sqrt{\rat} (zs(z)+1)=s_{sc}(\varphi(z)).
	\eeq
	We thus have
	\beq \begin{split}
		\frac1{2\pi \ii}\oint_\Gamma f(z)\frac{\lambda\frac{\dd}{\dd z}(zs(z)+1)}{\lambda zs(z)+\lambda+1} \dd z &=\frac1{2\pi \ii}\oint_\Gamma \wt{f}(\varphi(z))\frac{\lambda s_{sc}'(\varphi(z))\varphi'(z)}{\lambda s_{sc}(\varphi(z))+\sqrt{\rat}} \dd z \\
		&=\frac1{2\pi \ii}\oint_{\wt\Gamma} \wt{f}(\varphi)\frac{\lambda s_{sc}'(\varphi)}{\lambda s_{sc}(\varphi)+\sqrt{\rat}} \dd\varphi
	\end{split} \eeq
	where we let $f(\sqrt{\rat}z+1+\rat)=\wt{f}(z)$ and $\wt\Gamma=\varphi(\Gamma)$. (Note that $\wt \Gamma$ contains the interval $[-2,2]$.)
	
	So far, we have proved that
	\beq \label{eq:mean_mean}
	\sum_{i=1}^M f(\mu_i(1)) - \sum_{i=1}^M f(\mu_i(0)) = \frac{k}{2\pi \ii}\oint_{\wt\Gamma} \wt{f}(\varphi)\frac{\lambda s_{sc}'(\varphi)}{\lambda s_{sc}(\varphi)+\sqrt{\rat}} \dd\varphi+\caO_\prec(N^{-1/2}). 
	\eeq
	Since the difference in \eqref{eq:mean_mean} is the sum of a deterministic term and a random term stochastically dominated by $N^{-1/2}$, we can see that the CLT holds for the LSS with the non-null model $Y(1)$. Moreover, the variance is the same as that of the null model, which is
	\beq
	V_Y(f) =2\sum_{\ell=1}^\infty \ell\tau_\ell(\wt{f})^2+(w_4-3)\tau_1(\wt{f})^2.
	\eeq
	(See, e.g., \cite{Baik-Lee2018}.)

	The change of the mean is the first term in the right side of \eqref{eq:mean_mean}, which can be computed by following the proof of Lemma 4.4 in \cite{Baik-Lee2017}. We obtain
	\beq
	m_Y(f) = \frac{\wt{f}(2) + \wt{f}(-2)}{4}  -\frac{1}{2} \tau_0(\wt{f})+(w_4-3)\tau_2(\wt{f})+k\sum_{\ell=1}^{\infty}\left(\frac{\lambda}{\sqrt{\rat}}\right)^{\ell}\tau_\ell(\wt{f}).
	\eeq
	This proves the first part of Theorem \ref{thm:CLT} for the additive model.

	For the multiplicative model, we will follow the same strategy as in the additive model. Let
	\beq 
	Y(\theta)= X+ \theta\gamma \bsU\bsU^T X
	\eeq
	for $\theta \in [0, 1]$. Note that $Y(0) = X$ and $Y(1) = Y$. We denote by $\mu_1(\theta) \geq \mu_2(\theta) \geq \dots \geq \mu_M(\theta)$ the eigenvalues of $Y(\theta) Y(\theta)^T$, and also let
	\beq
	G(\theta, z) = (Y(\theta)Y(\theta)^T -zI)^{-1}, \qquad \caG(\theta, z) = (Y(\theta)^T Y(\theta) -zI)^{-1}
	\eeq
	for $z \in \C$. We have the relations
	\beq \begin{split}
		\frac{\partial G_{ab}(\theta)}{\partial Y_{ij}(\theta)}=-G_{ai}(\theta)(Y(\theta)^T G(\theta))_{jb}-(G(\theta)Y(\theta))_{aj}G_{ib}(\theta), \qquad 
		\frac{\partial Y_{ij}(\theta)}{\partial\theta}=\gamma\sum_{c=1}^{M}\bsu_i\bsu_b^T X_{b j}.
	\end{split} \eeq
	
	Following \eqref{eq:der_Tr_mean}-\eqref{eq:Tr_derivative}, we get
	\beq \begin{split}
		\frac{\partial}{\partial \theta}\Tr G(\theta,z) &= -\gamma\sum_{a=1}^{M}\sum_{i=1}^{M}\sum_{j=1}^{N}\sum_{b=1}^{M}\bsu_i\bsu_b^T X_{b j}[G_{ai}(\theta)(Y(\theta)^T G(\theta))_{ja}+(G(\theta)Y(\theta))_{aj}G_{ia}(\theta)] \\
		&=-2\gamma\sum_{a=1}^{M}\sum_{i=1}^{M}\sum_{j=1}^{N}\sum_{b=1}^{M}\bsu_i\bsu_b^T X_{b j}[(Y(\theta)^TG(\theta))_{ja}G_{ai}(\theta)] \\
		&=-2\gamma\frac{\partial}{\partial z}\sum_{i=1}^{M}\sum_{j=1}^{N}\sum_{b=1}^{M}\bsu_i\bsu_b^T X_{b j}(G(\theta)Y(\theta))_{ij} \\
		&=-2\gamma\frac{\partial}{\partial z}
		\sum_{\ell=1}^k\langle \bsu(\ell),G(\theta)Y(\theta)X^T \bsu(\ell)\rangle=-2\gamma\frac{\partial}{\partial z}\sum_{\ell=1}^k\langle \bsu(\ell),G(\theta)Y(\theta)Y(0)^T \bsu(\ell)\rangle.
	\end{split} \eeq
	Moreover, since 
	\beq
	Y(0) = X = (I + \theta\gamma \bsU\bsU^T)^{-1} Y(\theta) = \left( I-\frac{\theta\gamma}{1+\theta\gamma}\bsU\bsU^T \right) Y(\theta),
	\eeq 
	we have
	\beq \begin{split}
		\langle \bsu(\ell),G(\theta)Y(\theta)Y(0)^T \bsu(\ell)\rangle &=\langle \bsu(\ell),G(\theta)Y(\theta)Y(\theta)^T (I + \theta\gamma \bsU\bsU^T)^{-1} \bsu(\ell)\rangle\\ 
		&=\langle \bsu(\ell),(I+zG(\theta))(I + \theta\gamma \bsU\bsU^T)^{-1} \bsu(\ell)\rangle \\
		&=\frac{1}{1+\theta\gamma}+\frac{z}{1+\theta\gamma}\langle \bsu(\ell),G(\theta)\bsu(\ell)\rangle.\label{eq:term_cov}
	\end{split} \eeq
	To estimate the term $\langle \bsu(\ell),G(\theta)\bsu(\ell)\rangle$, we use the following Anisotropic local law in \cite{Knowles-Yin2016}.
	
	\begin{lem}[Anisotropic local law] \label{lem:anisotropic}
		Let $\Gamma^{\varepsilon}$ be the $\varepsilon$-neighborhood of $\Gamma$ as in Lemma \ref{lem:local_law}. Then, for any unit vectors  $\bsx,\, \bsy \in \C^M$ independent of $X$, the following estimate holds uniformly on $z \in \Gamma^{\varepsilon}$ :
		\beq \label{eq:aniso_spike}
		\left|\left\langle \bsx, \left( G(\theta, z)+ \left( zI + z\mathfrak{s}(z) (I + \theta \gamma \bsU \bsU^T)^2 \right)^{-1} \right) \bsy\right\rangle \right| \prec N^{-\frac{1}{2}}.
		\eeq
	\end{lem}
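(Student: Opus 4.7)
The key structural observation is that $Y(\theta) = TX$ with $T := I + \theta\gamma\bsU\bsU^T$ a deterministic, rank-$k$ multiplicative perturbation of the identity, so $Y(\theta)Y(\theta)^T = TXX^T T = XX^T + B$ where
\[
B = \theta\gamma\bsU\bsU^T XX^T + \theta\gamma XX^T\bsU\bsU^T + \theta^2\gamma^2\bsU\bsU^T XX^T\bsU\bsU^T
\]
has rank at most $2k$, with range contained in $\mathrm{span}\{\bsu(\ell)\}_{\ell=1}^k \cup \mathrm{span}\{XX^T\bsu(\ell)\}_{\ell=1}^k$. The plan is to exploit this finite-rank structure via the Sherman--Morrison--Woodbury identity and reduce the problem to the already-established isotropic local law for $XX^T$ (Lemma~\ref{lem:local_law}), rather than proving a fresh local law from scratch.

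Concretely, I would factor $B = V\Omega V^T$ with $V \in \mathbb{R}^{M\times 2k}$ having normalized columns from the above subspace and $\Omega$ a $2k\times 2k$ coefficient matrix, so that $G(\theta,z)^{-1} = G(0,z)^{-1} + B$ and Woodbury yields
\[
G(\theta,z) = G(0,z) - G(0,z)\,V\bigl(\Omega^{-1} + V^T G(0,z) V\bigr)^{-1} V^T G(0,z).
\]
Testing against the unit vectors $\bsx,\bsy$, I would then control the induced $O(k^2)$ bilinear forms. Lemma~\ref{lem:local_law} immediately gives $\langle \bsu(i), G(0,z)\bsu(j)\rangle = s(z)\delta_{ij} + \caO_\prec(N^{-1/2})$ and $\langle \bsx, G(0,z)\bsu(j)\rangle = s(z)\langle \bsx,\bsu(j)\rangle + \caO_\prec(N^{-1/2})$. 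The apparently more delicate forms $\langle \bsu(i), G(0,z)XX^T\bsu(j)\rangle$ and $\langle \bsu(i), XX^T G(0,z) XX^T\bsu(j)\rangle$ collapse to the same type through the algebraic identity $G(0,z)XX^T = I + zG(0,z)$, together with the quadratic concentration $\langle \bsu(i), XX^T\bsu(j)\rangle = \delta_{ij} + \caO_\prec(N^{-1/2})$ (a direct large-deviation estimate on $X$, cf.\ Lemma~\ref{lemma: LDE}).

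Substituting these approximations into the Woodbury expression, the random $2k\times 2k$ bracket $\Omega^{-1} + V^T G(0,z)V$ simplifies to a deterministic matrix modulo $\caO_\prec(N^{-1/2})$, and a direct algebraic computation should identify the resulting deterministic equivalent as $-(zI + z\mathfrak{s}(z)T^2)^{-1}$. A useful sanity check: at $\theta = 0$ one has $T = I$, and the formula collapses to $-[z(1+\mathfrak{s}(z))]^{-1}I$, which equals $s(z)I$ by the Marchenko--Pastur self-consistent relation $s(z)(\rat z s(z) + z + \rat - 1) = -1$, recovering Lemma~\ref{lem:local_law} exactly. The main technical obstacle I expect is the algebraic bookkeeping: the Woodbury inverse couples all $O(k^2)$ bilinear forms, and extracting the clean operator $(zI + z\mathfrak{s}(z)T^2)^{-1}$ from the resulting $2k \times 2k$ block inversion requires repeated use of the MP self-consistent equation and careful tracking of cross-terms. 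An alternative route that sidesteps this computation is to simply invoke the general anisotropic local law for sample covariance matrices with deterministic population covariance $\Sigma = T^2$ of Knowles--Yin, whose Dyson equation produces $-(zI + z\mathfrak{s}(z)T^2)^{-1}$ directly; this appears to be the route taken in the statement.
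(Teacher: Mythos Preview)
Your approach is correct and would work, but it differs from the paper's. The paper's proof is a one-line citation: it says the proof is identical to that of Lemma~\ref{lem:local_law}, which in turn cites the general anisotropic local law of Knowles--Yin for sample covariance matrices with deterministic population covariance. This is exactly the ``alternative route'' you flag at the end of your proposal: since $Y(\theta)Y(\theta)^T = T XX^T T$ with $T = I + \theta\gamma\bsU\bsU^T$ deterministic, the Knowles--Yin result applies directly with population covariance $\Sigma = T^2$, and its Dyson equation produces the deterministic equivalent $-(zI + z\mathfrak{s}(z)T^2)^{-1}$ with no further computation.

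Your primary route---Woodbury on the rank-$2k$ perturbation $B$, reducing everything to the null-case isotropic law---is more self-contained, using only the simpler Lemma~\ref{lem:local_law} rather than the full anisotropic machinery. The price is the $2k\times 2k$ algebraic bookkeeping you anticipate, plus one technical point you do not mention: to propagate the $\caO_\prec(N^{-1/2})$ errors through the Woodbury inversion you need the bracket $\Omega^{-1} + V^T G(0,z)V$ to have uniformly bounded inverse on $\Gamma^{\varepsilon}$. In the subcritical regime (the only place this lemma is invoked, in the proof of Theorem~\ref{thm:CLT_rec}) this follows from the uniform boundedness of $G(\theta,z)$ itself on $\Gamma^{\varepsilon}$. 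Either route is valid; the paper simply quotes the general theorem.
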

	\begin{proof}
		The proof of Lemma \ref{lem:anisotropic} is the same as that of Lemma \ref{lem:local_law}.
	\end{proof}
	Now, as in the additive case, we drop the $\ell$-dependency. From Lemma \ref{lem:anisotropic}, we find that
	\beq\begin{split}
		\langle \bsu,G(\theta)\bsu\rangle &= -\left\langle \bsu, \left( zI + z\mathfrak{s}(z) (I + \theta \gamma \bsU \bsU^T)^2 \right)^{-1} \bsu \right\rangle  +\caO(N^{-1/2}) 
		\\&= -\frac{1}{(1+\theta\gamma)^2 z(1+\mathfrak{s}(z) )}+\caO(N^{-1/2}),
	\end{split}\eeq
	and plugging it into \eqref{eq:term_cov}, we obtain
	\beq\begin{split}
		\langle \bsu,G(\theta)Y(\theta)Y(0)^T \bsu\rangle=\frac{1}{1+\theta\gamma}-\frac{1}{(1+\theta\gamma)(1+ (1+\theta\gamma)^2 \mathfrak{s}(z) )}+\caO(N^{-1/2}).
	\end{split}
	\eeq
	We thus get
	\beq
	\frac{\partial}{\partial \theta}\Tr G(\theta,z)=-2k\gamma\frac{(1+\theta\gamma) \mathfrak{s}'(z)}{(1+ (1+\theta\gamma)^2 \mathfrak{s}(z) )^2}+\caO(N^{-1/2}),
	\eeq
	and integrating it yields 
	\beq \label{eq:difference_Tr_2}
	\begin{split}
		\Tr G(1,z)-\Tr G(0,z)&=-k\frac{\lambda\mathfrak{s}'(z)}{(1+\mathfrak{s}(z) )(1+(1+\lambda)\mathfrak{s}(z))}+\caO(N^{-1/2})
		\\&=-\frac{\lambda k \frac{\dd}{\dd z}(zs(z)+1)}{\lambda zs(z)+\lambda+1}+\caO(N^{-1/2}).
	\end{split}
	\eeq
	
	Since \eqref{eq:difference_Tr_2} coincides with \eqref{eq:difference_Tr}, the rest of the proof is exactly the same as in the additive case. This finishes the proof of the first part of Theorem \ref{thm:CLT}.
\end{proof}
\subsection{Proof of CLTs for entrywise transformed matrices}
\begin{proof}[Proof of Theorem \ref{thm:trans_CLT}]
	We adapt the proof of Theorem 7 in \cite{chung2019weak} with the following changes. Let $S$ be the variance matrix of the transformed matrix $\wt M.$ We then find that
	\[S_{ij}=\E[\wt M_{ij}^2]-(\E[\wt M_{ij}])^2=\frac1{N}+\lambda(\gh-\fh) (\bsu_i\bsu_j^T)^2+\caO(N^{1-8\phi})\]
	and
	\[S_{ii}=\E[\wt M_{ii}^2]-(\E[\wt M_{ii}])^2=\frac{w_2}{N}+\lambda(\ghd-\fhd) (\bsu_i\bsu_i^T)^2+\caO(N^{1-8\phi}).\]
	Normalizing and centering each entry of the matrix $\wt M$, we arrive at another Wigner matrix $\wt W$ where
	\begin{align*}
	&\wt W_{ij}=\frac{1}{\sqrt{NS_{ij}}}(\wt M_{ij}-\E\wt M_{ij}),&&\wt W_{ii}=\sqrt{\frac{w_2}{NS_{ii}}}(\wt M_{ii}-\E\wt M_{ii}).\end{align*}
	Interpolating $\wt W$ and $\wt M-\E[\wt M]$ by $\wt W(\theta)=(1-\theta)\wt W+\theta (\wt M-\E[\wt M])$, $\wt W(\theta)$ is a general Wigner-type matrix with the corresponding quadratic vector equation
	\[-\frac{1}{m_{i}(\theta,z)}=z+\sum_{j=1}^{N}\E[\wt W_{ij}(\theta)^2]\cdot m_j(\theta,z)\]
	where $m_i(\theta,z)\delta_{ij}$ is the limiting distribution of the $(i,j)$-element of the resolvent 
	\[R^{\wt W}(\theta,z)=(\wt W(\theta)-zI)^{-1}\]
	for $0\leq\theta\leq1.$
	Recall the $s_{sc}(z)$ is the Stieltjes transform of the Wigner semicircle law. We also directly check that $m_{i}(\theta,z)=s_{sc}(z)+C_1(\bsu_i\bsu_i^T)+C_2N^{-1}=s_{sc}(z)+\caO(N^{-2\phi}).$
	Moreover, the anisotropic local law for the general Wigner-type matrix in \cite{ajanki2017wignertype} implies that uniformly on $z\in\Gamma^{\varepsilon}_{1/2}$
	\[(\bsu(m)^TR^{\wt W}(\theta,z)\bsu(\ell))=s_{sc}(z)\delta_{m\ell}+\caO(N^{-1/2}).\]
	Following the proof of Lemmas B.2 and B.3 in \cite{chung2019weak}, we check that 
	\begin{itemize}
		\item Uniformly on $z\in\Gamma^{\varepsilon}_{1/2},$
		\beq\label{eq:Trace_diff_wigner}
		\Tr R^{\wt W}(1,z)-\Tr R^{\wt W}(0,z)=k\lambda(\gh-\fh)s_{sc}'(z)s_{sc}(z)+\caO(N^1N^{-4\phi})
		\eeq
		\item Uniformly on $z\in\Gamma^{\varepsilon}\backslash\Gamma^{\varepsilon}_{1/2},$
		\beq\label{eq:Rigidity_bound_wigner}
		|\Tr R^{\wt W}(1,z)-\Tr R^{\wt W}(0,z)|=\caO(N^{1}N^{-2/3}).
		\eeq
	\end{itemize} 
	Compared with the bound shown in \cite{chung2019weak}, we give the following remark:
	\begin{itemize}
		\item The error bound in \eqref{eq:Trace_diff_wigner} is better. This sharper bound can be obtained by using the fact $\sum_au_a(\ell)^2=1$ instead of $\left|\sum_au_a(\ell)^2\right|\leq N\|\bsu(\ell)\|_\infty^2.$
	\end{itemize} 
	
	Our next step is to consider $\wt M=\wt W(1)+\E[\wt M].$ Since 
	\[
	\wt M=\wt W(1)+\sqrt{\lambda \fh}\bsU\bsU^T+\diag(d_1,\cdots,d_N)+E
	\]
	where $d_i=\E[\wt M_{ii}]-\sqrt{\lambda\fh}(\bsU\bsU^T)_{ii},$ we then find that
	\[ \begin{split}
	&\Tr(\wt M-zI)^{-1}-\Tr R^{\wt W}(0,z)\\
	&=k\lambda(\gh-\fh)s_{sc}'(z)s_{sc}(z)-\frac{k\sqrt{\lambda\fh}s_{sc}'(z)}{1+\sqrt{\lambda\fh}s_{sc}(z)}-k\sqrt{\lambda}(\sqrt{\fhd}-\sqrt{\fh})s_{sc}'(z)+O(N^{-1/2})
	\end{split} \]
	uniformly on $z\in\Gamma^{\varepsilon}_{1/2}.$
	Thus, we obtain the desired CLT by applying Cauchy's integral formula as in the proof of Theorem \ref{thm:CLT}.
\end{proof}
\begin{proof}[Proof of Theorem \ref{thm:trans_CLT_rec}]
	Since the proof of the transformed CLT for the spiked Wigner matrix follows the proof in \cite{chung2019weak}, we only describe the process briefly. On the other hand, there is no technical reference for the spiked rectangular matrices. As we mentioned before, our consideration is only the additive case. 
	
	We consider the optimal entrywise transformation defined by a function
	\beq
	h(w) := -\frac{g'(w)}{g(w)}.
	\eeq
	If $\lambda = 0$, it is immediate to see that for all $i,j$
	\[
	\E[h(\sqrt{N} Y_{ij})] = \int_{-\infty}^{\infty} h(w) g(w) \dd w = -\int_{-\infty}^{\infty} g'(w) \dd w = 0.
	\]
	Further, with $\lambda = 0$, as shown in Proposition 4.2 of \cite{Perry2018},
	\beq
	\fh := \E[h(\sqrt{N} Y_{ij})^2] = \int_{-\infty}^{\infty} h(w)^2 g(w) \dd w = \int_{-\infty}^{\infty} \frac{g'(w)^2}{g(w)} \dd w \geq 1,
	\eeq
	where the equality holds if and only if $\sqrt{N} X_{ij}$ is a standard Gaussian (hence $h(w) = w$).
	
	We define a transformed matrix $\tY$ as follows: the terms of $\tY$ are defined by
	\beq
	\tY_{ij} = \frac{1}{\sqrt{\fh N}} h(\sqrt{N} Y_{ij}).
	\eeq
	Note that the entries of $\tY$ are independent up to symmetry. Since $g$ is smooth, $h$ is also smooth and all moments of $\sqrt{N} \tY_{ij}$ are $O(1)$. Thus, applying a high-order Markov inequality, it is immediate to find that $\tY_{ij} = \caO(N^{-\frac{1}{2}})$.

	\subsubsection{Decomposition of the transformed matrix}
	
	We first estimate the mean and the variance of entry by using the comparison method with the pre-transformed entries. For all $i,j$, we find that
	\beq \begin{split}\label{eq:Taylor_approx_tY}
		\E[\tY_{ij}] &= \frac{1}{\sqrt{\fh N}} \int_{-\infty}^{\infty} h(w) g\left(w - \sqrt{N\lambda}\bsu_i \bsv_j^T\right) \dd w \\
		&= -\frac{1}{\sqrt{\fh N}} \int_{-\infty}^{\infty} \frac{g'(w)}{g(w)} \left[ g\left(w - \sqrt{N\lambda}\bsu_i \bsv_j^T\right) - g(w) \right] \dd w.
	\end{split} \eeq
	In the Taylor expansion
	\beq \begin{split}
		&g\left(w - \sqrt{N\lambda}\bsu_i \bsv_j^T\right) - g(w) \\
		&= \sum_{\ell=1}^4 \frac{g^{(\ell)}(w)}{\ell!} \left( -\sqrt{N\lambda}\bsu_i \bsv_j^T \right)^{\ell}+ \frac{g^{(5)}\left(w - \theta \sqrt{N\lambda}\bsu_i \bsv_j^T\right)}{5!} \left( - \sqrt{N\lambda}\bsu_i \bsv_j^T\right)^5 
	\end{split} \eeq
	for some $\theta \in (0, 1)$. Note that the second term and the fourth term in the summation are even functions. Since $g'/g$ is an odd function, we find that
	\beq \begin{split} \label{eq:tY_mean}
		\E[\tY_{ij}] &= \frac1{\sqrt{\fh}}\sqrt{\lambda}\bsu_i \bsv_j^T  \int_{-\infty}^{\infty} \frac{g'(w)^2}{g(w)} \dd w + C_3 N \left(\sqrt{\lambda}\bsu_i \bsv_j^T\right)^3 + O(N^2 (\bsu_i \bsv_j^T)^5) \\
		&= \sqrt{\lambda\fh}\bsu_i \bsv_j^T + C_3 N \left(\sqrt{\lambda}\bsu_i \bsv_j^T\right)^3 + O(N^2 (\bsu_i \bsv_j^T)^5)
	\end{split} \eeq
	for some ($N$-independent) constant $C_3$.
	Similarly, since $\left(\frac{g'}{g}\right)^2$ is even,
	\beq \begin{split} \label{eq:tY_second}
		\E[\tY_{ij}^2] &= \frac{1}{\fh N} \int_{-\infty}^{\infty} \left( \frac{g'(w)}{g(w)} \right)^2 g\left(w - \sqrt{N\lambda}\bsu_i \bsv_j^T\right) \dd w \\
		&= \frac{1}{N} + \frac{1}{\fh N} \int_{-\infty}^{\infty} \left( \frac{g'(w)}{g(w)} \right)^2 \left( g\left(w - \sqrt{N\lambda}\bsu_i \bsv_j^T\right) - g(w) \right) \dd w \\
		&= \frac{1}{N} + \frac1{2\fh}\left(\sqrt{\lambda}\bsu_i \bsv_j^T\right)^2 \int_{-\infty}^{\infty} \frac{g'(w)^2 g''(w)}{g(w)^2} \dd w + O(N (\bsu_i \bsv_j^T)^4) \\
		&= \frac{1}{N} + \lambda\gh (\bsu_i \bsv_j^T)^2  + O(N (\bsu_i \bsv_j^T)^4).
	\end{split} \eeq
	where\[
	\gh = \frac{1}{2\fh} \int_{-\infty}^{\infty} \frac{g'(w)^2 g''(w)}{g(w)^2} \dd w.
	\]
	
	The evaluation of the mean and the variance shows that the transformed matrix $\tY$ is not a spiked rectangular matrix when $\lambda > 0$, since the variances of the entries are not identical. Our strategy is to approximate $\tY$ as a spiked generalized rectangular Gram matrix for which the variances of the each entries is $1/N$ in high-dimensional regime. 
	Let $S$ be the variance matrix of $\tY$ defined as
	\beq
	S_{ij} = \E[\tY_{ij}^2] - (\E[\tY_{ij}])^2.
	\eeq
	From \eqref{eq:tY_mean} and \eqref{eq:tY_second},
	\beq
	S_{ij} = \frac{1}{N} + (\gh-\fh)\left(\sqrt{\lambda}\bsu_i \bsv_j^T\right)^2+ O(N \| \bsU \|_{\infty}^{4}\| \bsV \|_{\infty}^{4}), 
	\eeq
	which shows that $\tY$ is indeed approximately a spiked generalized Gram matrix.
	\subsubsection{CLT for a random Gram matrix}
	
	We use the local law for general rectangular Gram matrices in \cite{alt2017gram}. Consider an another $M\times N$ rectangular matrix $A = (A_{ij})$ defined by
	\beq
	A_{ij} = \frac{1}{\sqrt{N S_{ij}}} (\tY_{ij} - \E[\tY_{ij}]).
	\eeq
	Note that $\E[A_{ij}] = 0$, $\E[A_{ij}^2] = \frac{1}{N}.$
	Then the matrix $A$ is a usual rectangular matrix. We set 
	\beq
	G^A(z) = (AA^T-zI)^{-1} \quad (z \in \bbC^+).
	\eeq
	Next, we introduce an interpolation for $A$. For $0 \leq \theta \leq 1$, we define a matrix $A(\theta)$ by
	\beq \begin{split} \label{eq:def_W_theta}
		A_{ij}(\theta) &= (1-\theta) A_{ij} + \theta (\tY_{ij} - \E[\tY_{ij}]) = \left( 1-\theta + \theta \sqrt{N S_{ij}} \right) A_{ij} \\
		&= \left( 1+ \frac{\theta N\lambda(\gh-\fh)(\bsu_i \bsv_j^T)^2}{2}+O(N^2 (\bsu_i\bsv_j^T)^4)\right)A_{ij}
	\end{split} \eeq
	
	Note that $A(0) = A$ and $A(1) = \tY - \E[\tY]$. For $0 \leq \theta \leq 1$, $A(\theta)$ is a random Gram matrix considered in \cite{alt2017gram} satisfying the conditions (A)--(D) therein. Moreover, if we let
	\beq \label{eq:def_G^A}
	G^A(\theta, z) = (A(\theta)A(\theta)^T-zI)^{-1} \quad (z \in \bbC^+)
	\eeq
	and $S_{ij}(\theta)=\E[A_{ij}(\theta)^2],$ then Theorem 1.7 of \cite{alt2017gram} asserts that the limiting distribution of $G^A_{ij}(z)$ is $s_i(z) \delta_{ij}$, where $s_i(\theta, z)$ is the unique solution to the system of quadratic vector equations
	\beq\label{eq:QVE1}\begin{split}
		-\frac{1}{s_i(\theta, z)} &= z + \sum_{j=1}^N S_{ij}(\theta)\; z\mathfrak{s}_j (\theta, z)
	\end{split}
	\eeq 
	and
	\beq\label{eq:QVE2}\begin{split}
		-\frac{1}{\mathfrak{s}_j(\theta, z)} &= z + \sum_{i=1}^M S_{ij}(\theta)\; zs_i (\theta, z)
	\end{split}
	\eeq
	\begin{rem}\label{rem:Gram_approx_sol}
		Recall that $s(z)$ is the Stieltjes transform of the Marchenko-Pastur measure. We can then find that $s_i(\theta, z) = s(z) + C_1(\bsu_i\bsu_i^T)+C_2N^{-1}=s(z)+\caO(N^{-1/2})$ and $\mathfrak{s}_j(\theta, z) = \mathfrak{s}(z) + C_1(\bsv_j\bsv_j^T)+C_2N^{-1}=\mathfrak{s}(z)+\caO(N^{-1/2})$; see also Lemma 3.9 of \cite{alt2017gram}.		
	\end{rem}

	For the resolvent $G^A(\theta, z)$, we will use the following lemma for the random Gram matrix:
	
	\begin{lem}[Anisotropic local law for random Gram matrix] \label{lem:anisotropic_Gram}
		Let $\Gamma^{\varepsilon}$ be the $\varepsilon$-neighborhood of $\Gamma$ as in Lemma \ref{lem:local_law}. Then, for any deterministic $\bsx = (x_1, \dots, x_M), \bsy = (y_1, \dots, y_M) \in \bbC^M$ with $\| \bsx \| = \| \bsy \| = 1$, the following estimate holds uniformly on $z \in \Gamma^{\varepsilon} \cap \{ z \in \bbC^+ : \im z > N^{-\frac{1}{2}} \}$:
		\beq \label{eq:aniso_spike1}
		\left|\sum_{i=1}^M\sum_{j=1}^M \overline{x_i} G^A_{ij}(\theta, z) y_j - \sum_{i=1}^M s_i(\theta, z) \overline{x_i} y_i \right| = \caO(N^{-\frac{1}{2}}).
		\eeq
		and, for any deterministic $\bsx = (x_1, \dots, x_N), \bsy = (y_1, \dots, y_N) \in \bbC^N$ with $\| \bsx \| = \| \bsy \| = 1$,
		\beq \label{eq:aniso_spike2}
		\left|\sum_{i=1}^N\sum_{j=1}^N \overline{x_i} \mathcal{G}^A_{ij}(\theta, z) y_j - \sum_{i=1}^N \mathfrak{s}_i(\theta, z) \overline{x_i} y_i \right| = \caO(N^{-\frac{1}{2}}).
		\eeq
	\end{lem}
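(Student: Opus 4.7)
The plan is to upgrade the entrywise local law for generalized Gram matrices with variance profile proved in \cite{alt2017gram} (Theorem 1.7 therein) to an anisotropic estimate, following the moment-expansion philosophy of \cite{Bloemendal-Erdos-Knowles-Yau-Yin2014} and \cite{Knowles-Yin2016}. First, I would introduce the Hermitian linearization
\[
H^A(\theta,z) = \begin{pmatrix} -zI_M & A(\theta) \\ A(\theta)^T & -I_N \end{pmatrix},
\qquad
R^A(\theta,z) = H^A(\theta,z)^{-1}
\]
as in Subsection~\ref{subsec:linearization}, so that by Schur's complement both $G^A$ and $\mathcal{G}^A$ are recovered as blocks of $R^A$; it therefore suffices to prove an anisotropic bound for $R^A$. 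The entrywise estimate
\[
\bigl|R^A_{AB}(\theta,z) - \Pi^A_{AB}(\theta,z)\,\delta_{AB}\bigr| \prec N^{-1/2},
\]
with $\Pi^A$ the block-diagonal deterministic matrix whose diagonal carries the entries $s_i(\theta,z)$ and $z\mathfrak{s}_j(\theta,z)$, follows from \cite{alt2017gram} once one verifies that the variance profile $S_{ij}(\theta)$ meets conditions (A)-(D) therein. The latter holds because $S_{ij}(\theta) = N^{-1}$ up to a rank-at-most-$k$ perturbation supported on the spike directions and of size $O(N^{-4\phi+1})$.

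Next, the diagonal contribution is handled directly by the entrywise bound together with Cauchy--Schwarz:
\[
\bigg|\sum_i \overline{x_i}\, G^A_{ii}\, y_i - \sum_i s_i(\theta,z)\,\overline{x_i}\, y_i\bigg| \leq \max_i \bigl|G^A_{ii} - s_i(\theta,z)\bigr|\cdot \sum_i |x_i y_i| = \caO_\prec(N^{-1/2}).
\]
The core task is thus to control the off-diagonal sum $\caZ_A := \sum_{i \neq j} \overline{x_i}\, G^A_{ij}\, y_j$, for which I would reuse the maximal-expansion machinery already developed in Steps 1--4 of the proof of Lemma~\ref{lem:isotropic coupled}. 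Concretely, one bounds $\E|\caZ_A|^{2p}$ for every large even $2p$ by iteratively applying the resolvent identity
\[
R^A_{AB} = R^{A,(C)}_{AB} + \frac{R^A_{AC}R^A_{CB}}{R^A_{CC}}
\]
to decompose each off-diagonal entry into a finite linear combination of maximally expanded monomials plus terms containing sufficiently many off-diagonal factors, and then applying the large deviation bounds of Lemma~\ref{lemma: LDE}. Markov's inequality upgrades the moment bound to the stochastic-domination statement $\caZ_A \prec N^{-1/2}$, which combined with the diagonal estimate above yields \eqref{eq:aniso_spike1}. The analogous argument on the lower-right block of $R^A$ gives \eqref{eq:aniso_spike2}.

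The main technical obstacle is the non-constant variance profile $S_{ij}(\theta)$: it makes the deterministic limits $s_i(\theta,z)$ and $\mathfrak{s}_j(\theta,z)$ depend on the index, and so it forces a mild adaptation of the diagonal-entry expansions (operations (b) and (c) in the proof of Lemma~\ref{lem:isotropic coupled}), where the constants $s(z)$, $\mathfrak{s}(z)$ must be replaced by the index-dependent solutions of~\eqref{eq:QVE1}--\eqref{eq:QVE2}. Fortunately, because $S_{ij}(\theta) - N^{-1}$ is of size $O(N^{-4\phi+1})$ and is supported on the low-rank spike structure $\bsU\bsV^T$, the solutions differ from $s(z), \mathfrak{s}(z)$ by at most $\caO(N^{-1/2})$, with deviations localized on $\supp(\bsU)$ and $\supp(\bsV)$ (cf.\ Remark~\ref{rem:Gram_approx_sol}). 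These perturbations preserve both the $\Theta(1)$ lower bound on the diagonal denominators and the combinatorial structure of the maximal expansion, so the moment estimate and hence the anisotropic bound go through with only cosmetic changes.
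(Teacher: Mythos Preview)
Your proposal is correct and follows essentially the same route as the paper: linearize, invoke the entrywise local law of \cite{alt2017gram} (noting the control parameter $\Psi(z)=\sqrt{1/(M\im z)}\prec N^{-1/2}$ on the restricted domain), split into diagonal and off-diagonal parts, and control the off-diagonal moment $\E|\caZ_A|^{2p}$ via the maximal-expansion scheme of \cite{Bloemendal-Erdos-Knowles-Yau-Yin2014,ajanki2017wignertype} already laid out in Lemma~\ref{lem:isotropic coupled}. The paper likewise singles out the index-dependent diagonal limits as the only genuine modification, and handles them exactly as you do---by rewriting $1/R_{bb}^{(\bbB\setminus b)}$ via Schur's complement and the quadratic vector equations~\eqref{eq:QVE1}--\eqref{eq:QVE2}, then invoking Remark~\ref{rem:Gram_approx_sol}.
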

	\begin{proof}[Proof of Lemma \ref{lem:anisotropic_Gram}]
		Let $\Psi(z)=\sqrt{\frac{1}{M\im z}}$ be the control parameter for the random gram matrix model. We then note that the bound for the entrywise local law is $N^{-1/2}$ since $\Psi(z)\prec N^{-1/2}$ on $\Gamma^{\varepsilon} \cap \{ z \in \bbC^+ : \im z > N^{-\frac{1}{2}} \}.$ With the entrywise local law in \cite{alt2017gram}, the proof of the anisotropic law exactly follows the \emph{maximal expansion} argument used in \cite{ajanki2017wignertype,Bloemendal-Erdos-Knowles-Yau-Yin2014} and Lemma \ref{lem:isotropic coupled}. We consider the following decomposition of \eqref{eq:aniso_spike1}:
		\beq\label{eq:aniso1_decomp}
		\sum_{i\neq j}^M \overline{x_i} G^A_{ij}(\theta, z) x_j + \sum_{i=1}^M (G^A_{ii}-s_i(\theta, z)) \overline{x_i} x_i.
		\eeq
		From now, we drop $A$, $\theta$ and $z$-dependencies for brevity and use the linearization matrix $H_{A(\theta)}(z)\equiv H$ and its inverse $R.$ Then, in usual, we suffices to prove that 
		\[
		\mathcal{Z}\equiv\sum_{a\neq b} \overline{x_a} R_{ab} x_b\prec N^{-1/2}.
		\] 
		To prove the above high probability bound, we will bound the $2p$-moments
		$\E[|\mathcal{Z}|^{2p}]\prec N^{-p/2}$ by deriving the \emph{maximally expanded} form via the resolvent identity in Lemma \ref{lem:res identity}. 
		
		Now, we will check the representation of the maximally expanded diagonal resolvent elements. e.g. $R_{bb}^{(\bbB\backslash b)}$, $b\in \bbB.$ Together with Remark \ref{rem:Gram_approx_sol}, we then conclude that the standard argument in \cite{ajanki2017wignertype} is valid for our model. By applying Shur's complement lemma and \eqref{eq:QVE1}, for $b\in \bbB,$ 
		\beq\label{eq:maximally_expanded_diag1}\begin{split}
			\frac1{R_{bb}^{(\bbB\backslash b)}}&=-z-\sum^{(\bbB)}_{\alpha,\beta}H_{b\alpha}R_{\alpha\beta}^{(\bbB)}H_{\beta b}
			\\&=\frac{1}{s_b(\theta, z)}+\sum_{\beta} S_{b\beta}\; (z\mathfrak{s}_\beta (\theta, z))-\sum^{(\bbB)}_{\alpha,\beta}H_{b\alpha}R_{\alpha\beta}^{(\bbB)}H_{\beta b}
			\\&=\frac{1}{s_b(\theta, z)}-\sum^{(\bbB)}_{\beta}(H_{b\beta}R_{\beta\beta}^{(\bbB)}H_{\beta b}-S_{b\beta}\; (z\mathfrak{s}_\beta (\theta, z)))-\sum^{(\bbB)}_{\alpha\neq\beta}H_{b\alpha}R_{\alpha\beta}^{(\bbB)}H_{\beta b}.
		\end{split}\eeq
		Then \eqref{eq:maximally_expanded_diag1} and the analogue representation of $R_{\beta\beta}^{(\bbB\backslash \beta)}$ for $\beta\in \bbB$ replace the (6.2) in \cite{ajanki2017wignertype}.
		
		With linearization $H$ and its inverse $R$, one useful by-product of the above argument is
		\beq\label{eq:off-diagonal}
		\langle\bsx,G^A(\theta)A(\theta) \bsy\rangle=\sum_{a}\sum_{\alpha}\overline{x_a} R_{a\alpha} y_\alpha\prec N^{-1/2}.
		\eeq
	\end{proof}
	Note that our model satisfies the closeness condition \textbf{(A3)} of Assumption 2.2 in \cite{alt2017singularities} (See also Remark 2.4 therein). On $\Gamma \backslash \Gamma^{\varepsilon}_{1/2}$, we use the following results on the rigidity of eigenvalues.
	\begin{lem}[Rigidity of eigenvalues for the random Gram matrix] \label{lem:rigidity}
		Denote by $\mu^A_1 (\theta) \geq \mu^A_2 (\theta) \geq \dots \geq \mu^A_M (\theta)$ the eigenvalues of $A(\theta)A(\theta)^T$. Let $\gamma_i$ be the classical location of the eigenvalues with respect to the Marchenko-Pastur measure defined by
		\beq
		\int_{\gamma_i}^{\infty} \rho_{MP,d_0}(\dd x) = \frac{1}{M} \left( i - \frac{1}{2} \right)
		\eeq
		for $i=1, 2, \dots, M$. Then,
		\beq
		|\mu^A_i (\theta) - \gamma_i| =\caO(M^{-2/3}).
		\eeq
	\end{lem}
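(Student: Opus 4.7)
The plan is to reduce the rigidity statement for $A(\theta)A(\theta)^T$ to the general rigidity theory for generalized sample covariance (Gram) matrices developed in \cite{alt2017gram,alt2017singularities}, and then to quantify the discrepancy between the classical locations $\gamma_i$ coming from the Marchenko--Pastur measure $\rho_{MP,d_0}$ and the classical locations $\widetilde\gamma_i(\theta)$ determined by the solution of the QVE system \eqref{eq:QVE1}--\eqref{eq:QVE2}.

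First I would check that the family $\{A(\theta)\}_{\theta\in[0,1]}$ satisfies, uniformly in $\theta$, the hypotheses required to invoke the rigidity statement in \cite{alt2017gram,alt2017singularities}. From the explicit form \eqref{eq:def_W_theta} together with Assumption \ref{assump:entry1} (with $\phi>3/8$) and the polynomial growth of $h$, all moments of $\sqrt{N}A_{ij}(\theta)$ are bounded uniformly in $i,j,\theta$, the entries are independent, and the variance matrix $S(\theta)$ satisfies $S_{ij}(\theta)=N^{-1}(1+O(N^{1-4\phi}))$, so it is uniformly flat (bounded above and below by $c/N$ and $C/N$). This verifies conditions (A)--(D) of \cite{alt2017gram} with constants independent of $\theta$.

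Second, using Remark \ref{rem:Gram_approx_sol}, the solutions of the QVE satisfy $s_i(\theta,z)=s(z)+O(N^{-2\phi})$ and $\mathfrak{s}_j(\theta,z)=\mathfrak{s}(z)+O(N^{-2\phi})$, uniformly on $\Gamma^\varepsilon$. Averaging and taking imaginary parts, the self-consistent density $\widetilde\rho_\theta$ associated with $A(\theta)A(\theta)^T$ differs from $\rho_{MP,d_0}$ by $O(N^{-2\phi})$ in Kolmogorov distance. In particular, the square-root edge of $\widetilde\rho_\theta$ is located within $O(N^{-2\phi})$ of $d_+$, so the classical locations $\widetilde\gamma_i(\theta)$ defined via $\widetilde\rho_\theta$ satisfy $|\widetilde\gamma_i(\theta)-\gamma_i|=O(N^{-2\phi})$. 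Since $\phi>3/8$ gives $N^{-2\phi}\ll M^{-2/3}$, this discrepancy is negligible compared to the claimed error.

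Third, I would apply the edge rigidity theorem of \cite{alt2017singularities} to $A(\theta)A(\theta)^T$. Combined with the anisotropic local law of Lemma \ref{lem:anisotropic_Gram} and the flatness of $S(\theta)$, this yields $|\mu_i^A(\theta)-\widetilde\gamma_i(\theta)|\prec M^{-2/3}$, uniformly in $\theta$. Combining with the estimate from the previous step gives the claim $|\mu_i^A(\theta)-\gamma_i|=\caO(M^{-2/3})$.

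The main obstacle I anticipate is the second step, namely controlling the perturbation of the QVE solution well enough to ensure that the edge of $\widetilde\rho_\theta$ tracks $d_+$ within $O(M^{-2/3})$. The QVE is nonlinear and edge stability is delicate; in practice one linearizes the QVE around the Marchenko--Pastur solution, uses the fact that the linearized operator is invertible away from the spectrum (with a controlled norm dictated by the square-root edge), and propagates the $O(N^{-2\phi})$ perturbation of the variance matrix to the same order in the solution and hence in the density's edge. The rest of the argument is essentially a verification of hypotheses and an invocation of the existing theory for generalized Gram matrices.
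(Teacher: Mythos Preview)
Your proposal is correct and follows essentially the same two-step route as the paper: invoke rigidity for generalized Gram matrices at the classical locations determined by the QVE measure, then use the estimate $|s_i(\theta,z)-s(z)|=O(N^{-2\phi})$ from Remark~\ref{rem:Gram_approx_sol} to transfer to the Marchenko--Pastur classical locations. The only notable difference is bibliographic: the paper invokes Lemma~4 of \cite{ding2022tracy} for the Gram-matrix rigidity and explicitly remarks afterward that a rigorous rigidity statement is \emph{not} contained in \cite{alt2017gram,alt2017singularities}, so your reference to an ``edge rigidity theorem of \cite{alt2017singularities}'' should be replaced accordingly.
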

	
	\begin{proof}
		Note that the rigidity of the eigenvalues with an error of at most $\caO(M^{-2/3})$ holds for random gram matrices at the classical location of the eigenvalues with respect to the probability measure $\rho$ from the Stieltjes transform $\texttt{s}_\rho(z):=\frac{1}{M}\sum_{i}s_i(z)$, see Lemma 4 in \cite{ding2022tracy}. Moreover, since $|s_i(\theta,z)-s(z)|,|\mathfrak{s}_j(\theta,z)-\mathfrak{s}(z)|=O(M^{-2\phi})$ for all $i$ and $j,$ we also have the desired rigidity near the classical location of Marchenko-Pastur law $\rho_{MP,\rat}$.
	\end{proof}
	\begin{rem}
		In fact, rigorous proofs of the rigidity and anisotropic law are not given in \cite{alt2017gram,alt2017singularities}. However, as in the proof of anisotropic local law for general Wigner-type matrix in \cite{ajanki2017wignertype}, the above lemmas may be proved by using the local laws in \cite{alt2017gram} and standard methods in \cite{ajanki2017wignertype} (Remark 2.10 in \cite{alt2017gram} and Remark 2.7 in \cite{alt2017singularities}.)
	\end{rem}
	On $\Gamma^{\varepsilon}_{1/2}$, as a simple corollary to Lemma \ref{lem:anisotropic_Gram}, we obtain
	\beq \label{eq:iso_general}
	\left| \langle \bsx, G^A(\theta, z) \bsy \rangle - s(z) \langle \bsx, \bsy \rangle \right| = \caO(N^{-\frac{1}{2}}),
	\eeq
	and 
	\beq \label{eq:iso_general2}
	\left| \langle \bsx, \mathcal{G}^A(\theta, z) \bsy \rangle - \mathfrak{s}(z) \langle \bsx, \bsy \rangle \right| = \caO(N^{-\frac{1}{2}}).
	\eeq
	
	We have the following lemma for the difference between $\Tr G^A(0, z)$ and $\Tr G^A(1, z)$ on $\Gamma^{\varepsilon}_{1/2}$.
	
	\begin{lem} \label{lem:Tr_G^A}
		Let $G^A(\theta, z)$ be defined as in Equations \eqref{eq:def_W_theta} and \eqref{eq:def_G^A}. Then, the following holds uniformly for $z \in \Gamma^{\varepsilon}_{1/2}$:
		\beq \label{eq:Y_inter_claim}
		\Tr G^A(1, z) - \Tr G^A(0, z) = -\lambda(\gh - \fh)k \frac{\partial}{\partial z}(zs(z)+ 1) + \caO(N \| \bsU \|_{\infty}^{2}\|\bsV \|_{\infty}^{2}).
		\eeq
	\end{lem}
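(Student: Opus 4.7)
My plan is to mimic the derivation that appeared in the proof of Theorem~\ref{thm:CLT_rec} (formulas \eqref{eq:der_Tr_mean}--\eqref{eq:Tr_derivative}) but with the much weaker interpolation $A(\theta)$ in place of $Y(\theta)$. I would first write
\[
\partial_\theta \Tr G^A(\theta,z)
= \sum_{a,i,j}\partial_\theta A_{ij}(\theta)\cdot\frac{\partial G^A_{aa}(\theta,z)}{\partial A_{ij}(\theta)}
= -2\,\partial_z \sum_{i,j}\partial_\theta A_{ij}(\theta)\,\big(G^A(\theta,z)A(\theta)\big)_{ij},
\]
using the resolvent derivative identity $\partial G^A_{ab}/\partial A_{ij}=-G^A_{ai}(A^TG^A)_{jb}-(G^AA)_{aj}G^A_{ib}$ together with $\partial_zG^A=(G^A)^2$ and the symmetry of $G^A$.

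Next, I would substitute the explicit form of $\partial_\theta A_{ij}(\theta)$. From the definition $A_{ij}(\theta)=(1+\theta(\sqrt{NS_{ij}}-1))A_{ij}$ in \eqref{eq:def_W_theta} and the expansion $\sqrt{NS_{ij}}-1=\tfrac{N\lambda(\gh-\fh)}{2}(\bsu_i\bsv_j^T)^2+O(N^2\|\bsU\|_\infty^2\|\bsV\|_\infty^2\cdot(\bsu_i\bsv_j^T)^2)$, one has $\partial_\theta A_{ij}(\theta)=\tfrac{N\lambda(\gh-\fh)}{2}(\bsu_i\bsv_j^T)^2 A_{ij}(\theta)+O(N\|\bsU\|_\infty^2\|\bsV\|_\infty^2\cdot|A_{ij}(\theta)|)\cdot(\bsu_i\bsv_j^T)^2$ (after absorbing $A_{ij}=A_{ij}(\theta)/c_{ij}(\theta)$ with $c_{ij}(\theta)=1+O(N\|\bsU\|_\infty^2\|\bsV\|_\infty^2)$). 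Expanding $(\bsu_i\bsv_j^T)^2=\sum_{s,s'=1}^k u_i(s)u_i(s')v_j(s)v_j(s')$, the leading piece of the derivative reads
\[
\partial_\theta \Tr G^A(\theta,z)
= -N\lambda(\gh-\fh)\,\partial_z\sum_{s,s'=1}^{k}\sum_{i,j}u_i(s)u_i(s')v_j(s)v_j(s')\,A_{ij}(\theta)(G^A(\theta,z)A(\theta))_{ij}+\mathrm{error}.
\]

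The key step is to evaluate the inner double sum via the anisotropic local law. Linearizing as in Subsection~\ref{subsec:linearization}, the quantity $A_{ij}(\theta)(G^A(\theta,z)A(\theta))_{ij}$ is an entry of the off-diagonal block of $R_{A(\theta)}$, and its weighted sum against $\bsu(s)\otimes\bsv(s)$-type test vectors can be evaluated using \eqref{eq:aniso_spike1}, \eqref{eq:aniso_spike2}, and the off-diagonal analogue \eqref{eq:off-diagonal}, which together yield
\[
\sum_{i,j}u_i(s)v_j(s)u_i(s')v_j(s')\,A_{ij}(\theta)(G^AA(\theta))_{ij}=\frac{1}{N}(zs(z)+1)\,\delta_{ss'}+\caO(N^{-1/2}),
\]
after using the orthonormality $\langle\bsu(s),\bsu(s')\rangle,\langle\bsv(s),\bsv(s')\rangle=\delta_{ss'}+O(\|\bsU\|_\infty\|\bsV\|_\infty)$ from Assumption~\ref{assump:entry1}, the quadratic vector equations \eqref{eq:QVE1}--\eqref{eq:QVE2}, and the identity $\sqrt{\rat}(zs(z)+1)=s_{sc}(\varphi(z))$ only as a bookkeeping aid. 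Summing over $s=s'$ gives $k$ equal contributions and produces
\[
\partial_\theta\Tr G^A(\theta,z)
= -\lambda(\gh-\fh)\,k\,\partial_z(zs(z)+1)+\caO(N\|\bsU\|_\infty^2\|\bsV\|_\infty^2),
\]
uniformly on $\Gamma^{\varepsilon}_{1/2}$. Since the leading term is independent of $\theta$, integrating over $\theta\in[0,1]$ yields \eqref{eq:Y_inter_claim}.

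The hard part will be the third step: extracting the clean coefficient $-\lambda(\gh-\fh)k\partial_z(zs(z)+1)$ from the weighted sum. The anisotropic local law of Lemma~\ref{lem:anisotropic_Gram} controls bilinear forms in $G^A$ with deterministic test vectors, but our expression is trilinear in $(A(\theta),G^A,A(\theta))$ with a non-diagonal weight coming from $\bsU\bsV^T$; handling this requires working in the linearization, where such objects become bilinear forms in $R_{A(\theta)}$ against vectors built from $\bsu(s)\oplus\bsv(s')$, and then carefully checking that the $s_i(\theta,z)$-dependent correction in Remark~\ref{rem:Gram_approx_sol} only affects the error term of order $\caO(N\|\bsU\|_\infty^2\|\bsV\|_\infty^2)$ and not the leading term.
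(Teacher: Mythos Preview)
Your derivation up to
\[
\partial_\theta \Tr G^A(\theta,z)=-N\lambda(\gh-\fh)\,\partial_z\sum_{a,\alpha}(\bsu_a\bsv_\alpha^T)^2H_{a\alpha}R_{a\alpha}+\text{error}
\]
matches the paper exactly. The gap is in your ``key step''. In the linearization, the object you must control is
\[
\sum_{a,\alpha}x_a\,y_\alpha\,H_{a\alpha}\,R_{a\alpha},\qquad x_a=u_a(s)u_a(s'),\ y_\alpha=v_\alpha(s)v_\alpha(s'),
\]
which is \emph{not} a bilinear form $\langle\bsx,R\bsy\rangle$ in the resolvent: each summand carries an extra random factor $H_{a\alpha}$. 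Thus \eqref{eq:aniso_spike1}, \eqref{eq:aniso_spike2}, \eqref{eq:off-diagonal} do not apply, and your claim that ``$A_{ij}(G^AA)_{ij}$ is an entry of the off-diagonal block of $R_{A(\theta)}$'' is simply false --- only $(G^AA)_{ij}$ is. Moreover, even granting your displayed formula with an $\caO(N^{-1/2})$ remainder, multiplying by the prefactor $N\lambda(\gh-\fh)$ would give an $\caO(N^{1/2})$ error, far larger than the required $\caO(N\|\bsU\|_\infty^2\|\bsV\|_\infty^2)$.

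The paper proceeds differently. It first uses the \emph{algebraic} identities $\sum_\alpha H_{a\alpha}R_{a\alpha}=1+zR_{aa}$ and $\sum_a H_{a\alpha}R_{a\alpha}=1+R_{\alpha\alpha}$ to extract the deterministic contribution $k(zs(z)+1)$ from the pieces of the weight $(\bsu_a\bsv_\alpha^T)^2$ that are constant in one index (i.e.\ $\tfrac{1}{N}u_a^2$ and $\tfrac{1}{M}v_\alpha^2$). The remaining ``fluctuation'' sums, with centered weights such as $(u_a^2-\tfrac{1}{M})v_\alpha^2$, $u_a^2(v_\alpha^2-\tfrac{1}{N})$, and $u_a(\ell_1)u_a(\ell_2)v_\alpha(\ell_1)v_\alpha(\ell_2)$, are denoted $\ttX_1,\ttX_2,\ttX_3$ and shown to be $\caO(N^{-1/2}\|\bsu\|_\infty^2)$ via a dedicated recursive high-moment estimate (Lemma~\ref{lem:recursive}), proved with a Stein-type expansion in $H_{a\alpha}$. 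This moment bound is the essential missing ingredient in your plan; it is not a consequence of any isotropic/anisotropic local law and requires its own argument.
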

	
	We will prove Lemma \ref{lem:Tr_G^A} later.

	From Lemma \ref{lem:rigidity}, we find that
	\beq \begin{split} \label{eq:Y_edge}
		|\Tr G^A(1, z) - \Tr G^A(0, z)| &= \left| \sum_{i=1}^N \left( \frac{1}{\mu^A_i(1) - z} - \frac{1}{\mu^A_i(0) - z} \right) \right| = \left| \sum_{i=1}^N \frac{\mu^A_i(0) - \mu^A_i(1)}{(\mu^A_i(1) - z)(\mu^A_i(0) - z)} \right| \\
		&\leq \left| \sum_{i=1}^N \frac{|\mu^A_i(0) - \gamma_i| + |\gamma_i - \mu^A_i(1)|}{(\mu^A_i(1) - z)(\mu^A_i(0) - z)} \right| = \caO(N^{1/3})
	\end{split} \eeq
	uniformly for $z\in\Gamma.$
	Thus, from \eqref{eq:Y_inter_claim} and \eqref{eq:Y_edge},
	\beq \begin{split} \label{eq:tY_chain_1}
		&\frac{1}{2\pi \ii} \oint_{\Gamma} f(z) \Tr G^A(1, z) \dd z - \frac{1}{2\pi \ii} \oint_{\Gamma} f(z) \Tr G^A(0, z) \dd z \\
		&= \frac{1}{2\pi \ii} \oint_{\Gamma^{\varepsilon}_{1/2}} f(z) \left( \Tr G^A(1, z) - \Tr G^A(0, z) \right) \dd z + \frac{1}{2\pi \ii} \oint_{\Gamma \backslash \Gamma^{\varepsilon}_{1/2}} f(z) \left( \Tr G^A(1, z) - \Tr G^A(0, z) \right) \dd z \\
		&=- \frac{\lambda(\gh - \fh)k}{2\pi \ii} \oint_{\Gamma^{\varepsilon}_{1/2}} f(z) \frac{\partial}{\partial z}(zs(z)+ 1) \dd z + \caO(N \| \bsU \|_{\infty}^{2}\|\bsV \|_{\infty}^{2}) + \caO(N^{-1/6}) \\
		&= -\frac{\lambda(\gh - \fh)k}{2\pi \ii} \oint_{\Gamma} f(z) \frac{\partial}{\partial z}(zs(z)+ 1) \dd z  + o(1).
	\end{split} \eeq
	Furthermore, using the relation \eqref{eq:relation_MP_SC}, we have
	\[\begin{split}
	\frac1{2\pi \ii} \oint_{\Gamma} f(z) \frac{\partial}{\partial z}(zs(z)+ 1) \dd z&=\frac{1}{2\pi \ii} \oint_{\Gamma} f(z) \frac{1}{\sqrt{\rat}}s_{sc}'(\varphi(z))\varphi'(z) \dd z
	\\&=\frac{1}{2\sqrt{\rat}\pi \ii} \oint_{\wt\Gamma} \wt f(\varphi)s_{sc}'(\varphi)\dd\varphi
	=\frac{1}{\sqrt{\rat}}\tau_1(\wt{f}).
	\end{split}\]

	\subsubsection{CLT for a random Gram matrix with a spike and small perturbation}
	
	Recall that $A(1) = \tY - \E[\tY]$. 
	Our next step in the approximation is to consider $\tY = A(1) + \E[\tY]$. 
	%
	Since $\E[\tY]$ is not a matrix of rank $k$, we instead consider 	
	\beq
	B(\theta) = A(1) + \theta \sqrt{\lambda\fh}\bsU\bsV^T, \qquad G^B(\theta, z) = (B(\theta)B(\theta)^T-zI)^{-1}
	\eeq
	%
	To prove this part of CLT, we will adapt the strategy for the proof of Theorem \ref{thm:CLT_rec} 
	with Lemmas \ref{lem:anisotropic_Gram} and \ref{lem:rigidity}. We then find that, uniformly for $z \in \Gamma^{\varepsilon}_{1/2},$
	\beq
	\Tr G^B(1,z)-\Tr G^B(0,z)=-k\frac{\frac{\dd}{\dd z}\lambda\fh(zs(z)+1)}{\lambda\fh zs(z)+\lambda\fh+1}+\caO_\prec(N^{-\phi}),
	\eeq
	since $\|\bsU^T\bsU-I_k\|_F,\|\bsV^T\bsV-I_k\|_F\prec N^{-\phi}.$
	Using the rigidity (Lemma \ref{lem:rigidity}) and the eigenvalue interlacing property, we have 
	\[
	\Tr G^{B'}(1,z)-\Tr G^{B'}(0,z)=\caO(1)\qquad\text{on $\Gamma\backslash \Gamma_{1/2}$}
	\]
	and so
	\beq \begin{split} \label{eq:tY_chain_2}
		&\frac{1}{2\pi \ii} \oint_{\Gamma} f(z) \Tr G^B(1, z) \dd z - \frac{1}{2\pi \ii} \oint_{\Gamma} f(z) \Tr G^B(0, z) \dd z \\
		&= -\frac{k}{2\pi \ii} \oint_{\Gamma} f(z) \frac{\lambda\fh\frac{d}{dz}(zs+1)}{\lambda\fh (zs+1)+1} \dd z + o(1).
	\end{split} \eeq
	The remaining part is to control an effect of small perturbation $(\bbE[\tY]- \sqrt{\lambda\fh}\bsU\bsV^T)_{ij}=CN(\bsu_i\bsv_j^T)^3+\caO(N^2(\bsu_i\bsv_j^T)^5)$. First, we let 
	\beq
	B' = B(1) +  CN(\bsu_i\bsv_j^T)^3, \qquad G^{B'}( z) = (B'(B')^T-zI)^{-1}
	\eeq
	For $1\leq\ell_1,\ell_2,\ell_3\leq k,$ we consider vectors $\bsu^3$ and $\bsv^3$ such that 
	\[
	(\bsu^3(\ell_1,\ell_2,\ell_3))_i=u^3_i(\ell_1,\ell_2,\ell_3):=\sqrt{N}u_i(\ell_1)u_i(\ell_2)u_i(\ell_3)
	\]
	and 
	\[
	(\bsv^3(\ell_1,\ell_2,\ell_3))_j=v^3_j(\ell_1,\ell_2,\ell_3):=\sqrt{N}v_j(\ell_1)v_j(\ell_2)v_j(\ell_3).
	\]
	We then observe that $B'$ contains $k^3$ additional small spikes: 
	\[
	B'=B(1)+C\sum_{\ell_1,\ell_2,\ell_3}\bsu^3(\ell_1,\ell_2,\ell_3)\bsv^3(\ell_1,\ell_2,\ell_3)^T
	\]
	where $\|\bsu^3(\ell_1,\ell_2,\ell_3)\|_\infty,\|\bsv^3(\ell_1,\ell_2,\ell_3)\|_\infty\prec N^{1/2-3\phi}.$
	
	In the above point of view, we are able to consider $B'$ as another spiked Gram matrix model with two types of spikes $\bsu(\ell)\bsv(\ell)^T$ and $\bsu^3(\ell_1,\ell_2,\ell_3)(\bsv^3(\ell_1,\ell_2,\ell_3))^T.$ As before, for $0\leq\theta\leq1,$ let 
	\[
	B'(\theta) = A(1) + \theta \sqrt{\lambda\fh}\sum_\ell\bsu(\ell)\bsv(\ell)^T+\theta C\sum_{\ell_1,\ell_2,\ell_3}\bsu^3(\ell_1,\ell_2,\ell_3)(\bsv^3(\ell_1,\ell_2,\ell_3))^T\]
	and
	\[G^{B'}(\theta, z) = (B'(\theta)B'(\theta)^T-zI)^{-1}.
	\]
	Following the proof of Theorem \ref{thm:CLT_rec}, we have 
	\[\begin{split}
	\frac{\partial}{\partial\theta}\Tr G^{B'}(\theta, z)&=-2\sqrt{\lambda\fh}\frac{\partial}{\partial z}\sum_\ell\langle\bsu(\ell),G^{B'}(\theta, z)B'(\theta)\bsv(\ell)\rangle
	\\&~~~-2C\frac{\partial}{\partial z}\sum_{\ell_1,\ell_2,\ell_3}\langle\bsu^3(\ell_1,\ell_2,\ell_3),G^{B'}(\theta, z)B'(\theta)\bsv^3(\ell_1,\ell_2,\ell_3)\rangle,
	\end{split}\]
	and it can be observed that the first term of the right-hand side is the leading order term, since $\|\bsu^3(\ell_1,\ell_2,\ell_3)\|_\infty,\|\bsv^3(\ell_1,\ell_2,\ell_3)\|_\infty\prec N^{1/2-3\phi}<N^{-\phi}$.
	Moreover, from the definition of $B'(\theta),$ the leading order term of $\langle\bsu(\ell),G^{B'}(\theta, z)B'(\theta)\bsv(\ell)\rangle$ 
	is $\langle\bsu(\ell),G^{B'}(\theta, z)B'(0)\bsv(\ell)\rangle+\theta\sqrt{\lambda\fh}\langle\bsu(\ell),G^{B'}(\theta, z)\bsu(\ell)\rangle,$
	since $\langle\bsv(\ell_1),\bsv(\ell_2)\rangle=\delta_{\ell_1\ell_2}+\caO(N^{-\phi})$, $\langle\bsv(\ell),\bsv^3(\ell_1,\ell_2,\ell_3)\rangle=\caO(N^{1/2-2\phi})$ and $\langle\bsv^3(\ell_1,\ell_2,\ell_3),\bsv^3(\ell_4,\ell_5,\ell_6)\rangle=\caO(N^{1-4\phi}).$ 
	Carrying out the remaining procedures presented in the proof of Theorem \ref{thm:CLT_rec} and collecting the leading order terms, we eventually obtain
	
	\beq\label{eq:Trace_approx_Gram1}
	\Tr G^{B'}(1,z)-\Tr G^{B'}(0,z)=-k\frac{\frac{\dd}{\dd z}\lambda\fh(zs(z)+1)}{\lambda\fh zs(z)+\lambda\fh+1}+\caO_\prec(N^{1/2-2\phi})
	\eeq
	uniformly for $z \in \Gamma^{\varepsilon}_{1/2}.$ Here, we last apply Lemma \ref{lem:anisotropic_Gram} and \eqref{eq:off-diagonal} for $B'(0)=A(1)$. Further, on $\Gamma\backslash \Gamma_{1/2},$ from the rigidity and the interlacing property of the eigenvalues, 
	\beq\label{eq:Trace_approx_Gram2}
	\Tr G^{B'}(1,z)-\Tr G^{B'}(0,z)=\caO(1).
	\eeq
	Thus, we conclude that
	\[\begin{split}
	&\frac{1}{2\pi \ii} \oint_{\Gamma} f(z) \Tr G^{B'}(z) \dd z - \frac{1}{2\pi \ii} \oint_{\Gamma} f(z) \Tr G^{B'}(0, z) \dd z
	\\&~~= -\frac{k}{2\pi \ii} \oint_{\Gamma} f(z) \frac{\lambda\fh\frac{d}{dz}(zs+1)}{\lambda\fh (zs+1)+1} \dd z + o(1).
	\end{split}\]
	Furthermore, we set $E_{ij}=(\tY-B')_{ij}=\caO(N^2(\bsu_i\bsv_j^T)^5).$ Then 
	\beq\label{eq:Error_bound_Gram}
	\|E\|\leq\|E\|_F=O(N^2\|\bsU\|_\infty^4\|\bsV\|_\infty^4)=o(N^{-1})
	\eeq for some $\phi>3/8.$ This implies that
	\[
	\frac{1}{2\pi \ii} \oint_{\Gamma} f(z) \Tr G^{\tY}(z) \dd z - \frac{1}{2\pi \ii} \oint_{\Gamma} f(z) \Tr G^{B'}(1,z) \dd z=o(1).
	\]
	\begin{rem}
		Under the assumption that $\phi>3/8$, we suffices to consider $\E[\tY_{ij}]$ up to $\caO(N^2(\bsu_i\bsv_j^T)^5)$ error. However, \eqref{eq:Trace_approx_Gram1} and \eqref{eq:Trace_approx_Gram2} are valid for any finite approximation of $\E[\tY]$ as presented in \eqref{eq:tY_mean}, even for any $\phi>1/4.$ This means that the condition $\phi>3/8$ can be improved by considering a higher order expansion of $\E[\tY].$ For example, if we consider 
		\[
		\E[\tY]=\sqrt{\lambda\fh}\bsu_i\bsv_j^T+C_1N(\bsu_i\bsv_j^T)^3+C_2N^2(\bsu_i\bsv_j^T)^5+\caO(N^3(\bsu_i\bsv_j^T)^7),
		\]
		then it can be checked that the contributions of the second and third terms are negligible, and the error $E_{ij}=\caO(N^3(\bsu_i\bsv_j^T)^7)$ is also negligible if $\phi>1/3$, since 
		\[
		\|E\|\leq\|E\|_F=\caO(N^3\|\bsU\|_\infty^6\|\bsV\|_\infty^6)\prec N^{3-12\phi}=o(N^{-1}).
		\]
	\end{rem}

	\subsubsection{Conclusion for the proof of pre-transformed CLT}
	
	We are now ready to prove pre-transformed CLT. Denote by $\wt\mu_1 \geq \wt\mu_2 \geq \dots \geq \wt\mu_N$ the eigenvalues of $\wt Y\wt Y^T$. Recall that we denoted by $\mu^A_1 (0) \geq \mu^A_2 (0) \geq \dots \geq \mu^A_N (0)$ the eigenvalues of $A(0)A(0)^T$. From Cauchy's integral formula, we have
	\beq \begin{split} \label{eq:tY_chain_final}
		&\sum_{i=1}^M f(\wt\mu_i) - M \int_{d_-}^{d_+}  f(x) \, \rho_{MP,d_0}(\dd x) \\
		&=\left( \sum_{i=1}^M f(\mu^A_i (0)) - \int_{d_-}^{d_+}  f(x) \, \rho_{MP,d_0}(\dd x) \right) + \left( \sum_{i=1}^M f(\wt\mu_i) - \sum_{i=1}^M f(\mu^A_i (0))\right) \\
		&=\left( \sum_{i=1}^M f(\mu^A_i (0)) - M\int_{d_-}^{d_+}  f(x) \, \rho_{MP,d_0}(\dd x) \right) - \left( \frac{1}{2\pi \ii} \oint_{\Gamma} f(z) \Tr G^{\tY}(z) \dd z - \frac{1}{2\pi \ii} \oint_{\Gamma} f(z) \Tr G^A(0, z) \dd z \right).
	\end{split} \eeq
	Since $AA^\ast$ is a usual sample covariance matrix, the first term in the right-hand side converges to a Gaussian random variable. Further, as computed in \eqref{eq:tY_second},
	\[\E[\wt Y_{ij}^4]=:\frac{\wt{w_4}}{N^2} + \frac{1}{(N\fh)^2} \int_{-\infty}^{\infty} \left(\frac{g'(w)}{g(w)}\right)^4\left( g\left(w - \sqrt{N\lambda}\bsu_i \bsv_j^T\right) - g(w) \right) \dd w,\]
	where the first term is the leading term of $\E[\tY_{ij}^4]$ and hence the leading term of $\E[A_{ij}^4]$ as well. This means that the difference between $\wt w_4$ and $\E[A_{ij}^4]$ is negligible in the sense that it has no contribution in the limiting behavior of the resolvent, which can be checked from standard Green function comparison theorems. (Refer to  \cite{ding2022tracy}.)	
	
	Thus, the mean and the variance of the limiting Gaussian distribution are given by
	\beq
	m_A(f) = \frac{\wt{f}(2) + \wt{f}(-2)}{4}  -\frac{1}{2} \tau_0(\wt{f})+(\wt{w_4}-3)\tau_2(\wt{f})
	\eeq
	and
	\beq
	V_A(f) =2\sum_{\ell=1}^\infty \ell\tau_\ell(\wt{f})^2+(\wt{w_4}-3)\tau_1(\wt{f})^2,
	\eeq
	respectively.
	
	For the second term in the right-hand side of \eqref{eq:tY_chain_final}, by \eqref{eq:tY_chain_2}, we obtain that
	\beq \begin{split}
		&\frac{1}{2\pi \ii} \oint_{\Gamma} f(z) \Tr G^{\tY}(z) \dd z - \frac{1}{2\pi \ii} \oint_{\Gamma} f(z) \Tr G^A(0, z) \dd z \\
		&=
		-\frac{k}{2\pi \ii} \oint_{\Gamma} f(z) \frac{\lambda\fh\frac{d}{dz}(sz+1)}{\lambda\fh (sz+1)+1} \dd z + o(1)
	\end{split} \eeq
	with high probability.
	From \eqref{eq:tY_chain_final}, we thus find that the CLT for the LSS holds, i.e.,
	\beq \label{eq:CLT_entry}
	\left( \sum_{i=1}^M f(\wt\mu_i) - M \int_{d_-}^{d_+}  f(x) \, \rho_{MP,d_0}(\dd x) \right) \to \caN(m_{\tY}(f), V_{\tY}(f)),
	\eeq
	and the variance $V_{\tY}(f) = V_A(f)$ since the second term in \eqref{eq:tY_chain_final} converges to a deterministic value as $N \to \infty$, which corresponds to the change of the mean. In particular,
	\beq \begin{split} \label{eq:mean_change}
		m_{\tY}(f) - m_A(f) = \frac{(\gh - \fh)\lambda k}{2\pi \ii} \oint_{\Gamma} f(z) \frac{\partial}{\partial z}(zs(z)+ 1) \dd z +\frac{k}{2\pi \ii} \oint_{\Gamma} f(z) \frac{\lambda\fh\frac{d}{dz}(sz+1)}{\lambda\fh (sz+1)+1} \dd z.
	\end{split} \eeq
	Following the computation in the proof of Lemma 4.4 in \cite{Baik-Lee2017} with the relation \eqref{eq:relation_MP_SC}, we find that the right-hand side of \eqref{eq:mean_change} is given by
	\beq \begin{split}
		&\frac{k}{2\pi \ii} \oint_{\Gamma} f(z) (zs(z)+1)' \left[ \lambda (\gh - \fh) + \frac{\lambda\fh}{\lambda\fh(zs(z)+1)+1}   \right] \dd z \\
		&=\frac{\lambda k}{\sqrt{\rat}}(\gh - \fh) \tau_1 (\wt{f})  +k\sum_{\ell=1}^{\infty}\left(\frac{\lambda\fh}{\sqrt{\rat}}\right)^{\ell}\tau_\ell(\wt{f}).
	\end{split} \eeq
	(See also Remark 1.7 of \cite{Baik-Lee2017}.) Thus, 
	\beq
	m_{\wt{Y}}(f) = \frac{\wt{f}(2) + \wt{f}(-2)}{4}  -\frac{1}{2} \tau_0(\wt{f})+\frac{\lambda k}{\sqrt{\rat}}(\gh - \fh) \tau_1 (\wt{f})+(\wt{w_4}-3)\tau_2(\wt{f})+k\sum_{\ell=1}^{\infty}\left(\frac{\lambda\fh}{\sqrt{\rat}}\right)^{\ell}\tau_\ell(\wt{f})
	\eeq
	and
	\beq
	V_{\wt{Y}}(f) =2\sum_{\ell=1}^\infty \ell\tau_\ell(\wt{f})^2+(\wt{w_4}-3)\tau_1(\wt{f})^2.
	\eeq
\end{proof}
\subsection{Proof of Lemma \ref{lem:Tr_G^A}}\label{apx:recursive_lemma}
\subsubsection*{Notational remarks}

In the rest of the section, we use $C$ order to denote a constant that is independent of $N$. Even if the constant is different from one place to another, we may use the same notation $C$ as long as it does not depend on $N$ for the convenience of the presentation. Now, we recall the linearization $H_{A(\theta)}(z)$ and its inverse $R_{A}(\theta, z)=H_{A(\theta)}(z)^{-1}.$ For simplicity, we drop the subscript $A$ and index $z$ of the linearization entries.

\begin{proof}[Proof of Lemma \ref{lem:Tr_G^A}]
	To prove the lemma, we consider
	\beq \begin{split} \label{eq:derivative_G^A}
		\frac{\partial}{\partial \theta} \Tr G^A(\theta, z) &=\sum_{b}\sum_{a}\sum_{\alpha}\frac{\partial A_{a\alpha}(\theta)}{\partial\theta}\frac{\partial G_{bb}(\theta)}{\partial A_{a\alpha}(\theta)}
		\\&=\sum_{b}\sum_{a}\sum_{\alpha}\frac{\partial H_{a\alpha}(\theta)}{\partial\theta}\frac{\partial R_{bb}(\theta)}{\partial H_{a\alpha}(\theta)}
		\\&=-\sum_{b}\sum_{a}\sum_{\alpha}\frac{\partial H_{a\alpha}(\theta)}{\partial\theta}[R_{ba}(\theta)R_{\alpha b}(\theta)+R_{b\alpha}(\theta)R_{ab}(\theta)]
		\\&=-2\sum_{a}\sum_{\alpha}\frac{\partial H_{a\alpha}(\theta)}{\partial\theta}(R(\theta)^2)_{a\alpha}
		\\&=-2\sum_{a}\sum_{\alpha}\frac{\partial H_{a\alpha}(\theta)}{\partial\theta} \frac{\partial}{\partial z} R_{a\alpha}(\theta),
	\end{split} \eeq
	where we again used that $\frac{\partial}{\partial z} G^A(\theta, z) = G^A(\theta, z)^2$. We expand the right-hand side by using the definition of $A(\theta)$,
	\beq
	H_{a\alpha}(\theta)=A_{a\alpha}(\theta) = \left( 1-\theta + \theta \sqrt{N S_{a\alpha}} \right) A_{a\alpha}(0)=\left( 1-\theta + \theta \sqrt{N S_{a\alpha}} \right) H_{a\alpha}(0),
	\eeq
	and so
	\beq \begin{split} \label{eq:A_inter_1}
		\sum_{a}\sum_{\alpha}\frac{\partial H_{a\alpha}(\theta)}{\partial\theta}R_{a\alpha}(\theta) &= \sum_{a}\sum_{\alpha} \left( -1 + \sqrt{N S_{a\alpha}} \right) H_{a\alpha}(0) R_{a\alpha}(\theta)
		\\&=  \sum_{a}\sum_{\alpha} \frac{-1+\sqrt{N S_{a\alpha}}}{1-\theta + \theta \sqrt{N S_{a\alpha}}}H_{a\alpha}(\theta) R_{a\alpha}(\theta)
		\\&=  \frac{N\lambda(\gh-\fh) }{2} \sum_{a}\sum_{\alpha} (\bsu_a\bsv_\alpha^T)^2H_{a\alpha}(\theta) R_{a\alpha}(\theta)+ \caO( N \| \bsU \|_{\infty}^{2}\| \bsV \|_{\infty}^{2}).
	\end{split} \eeq
	From now, we further drop the $\theta$-dependency for the brevity.
	
	Then 
	\[
	\frac{\partial}{\partial \theta} \Tr G^A(\theta, z)=-N\lambda(\gh-\fh) \frac{\partial}{\partial z} \sum_{a}\sum_\alpha (\bsu_a\bsv_\alpha^T)^2 H_{a\alpha} R_{a\alpha}+ \caO( N \| \bsU \|_{\infty}^{2}\| \bsV \|_{\infty}^{2}).
	\]
	Here, we used the properties that $H_{a\alpha}=A_{ab}(\theta) = \caO(N^{-\frac{1}{2}})$, $R_{ab}=G^A_{ba}(\theta)=\caO(N^{-\frac{1}{2}})$ for $b \neq a$, $R_{aa}=G^A_{aa}(\theta) = \caO(1)$, and $\sum_a u_a(\ell_1)u_{a}(\ell_2) =\delta_{\ell_1\ell_2}= \sum_\alpha v_\alpha(\ell_1)v_\alpha(\ell_2)$, which imply
	\beq\begin{split}
		\left| N^2 \sum_{a}\sum_\alpha (\bsu_a\bsv_\alpha^T)^4 H_{a\alpha} R_{a\alpha} \right| \leq N^2 \| \bsU \|_{\infty}^{2}  \| \bsV \|_{\infty}^{2}\sum_{a}\sum_{\alpha}(\bsu_a\bsv_\alpha^T)^2 |H_{a\alpha} R_{a\alpha}| =\caO(N \| \bsU \|_{\infty}^{2}  \| \bsV\|_{\infty}^{2}).
	\end{split}\eeq
	Together with Remark \ref{rem:Gram_approx_sol}, from the elementary equality for $R$ and $H$, we have
	\beq \begin{split}
		\sum_a\sum_{\alpha}u_a(\ell)^2 H_{a\alpha} R_{a\alpha} &= \sum_{a}u_a(\ell)^2 \left(\sum_{\alpha} H_{a\alpha} R_{a\alpha}\right)\\&= \sum_{a} u_a(\ell)^2 (1+zR_{aa})\\&=1 + zs(z) + \caO(N^{-\frac{1}{2}}),
	\end{split} \eeq
	and
	\beq \begin{split}
		\sum_{a}\sum_{\alpha}v_\alpha(\ell)^2  H_{a\alpha} R_{a\alpha} &= \sum_{\alpha}v_\alpha(\ell)^2  \left(\sum_{a}H_{a\alpha} R_{a\alpha}\right) \\
		&=\sum_{\alpha} v_\alpha(\ell)^2 (1+R_{\alpha\alpha})\\
		&= \rat(1 + zs(z)) + \caO(N^{-\frac{1}{2}}).
	\end{split} \eeq
	Plugging them into \eqref{eq:A_inter_1}, we get
	\beq \begin{split} \label{eq:A_inter_2}
		&\frac{4}{\lambda(\gh-\fh)}\times\eqref{eq:A_inter_1}\\&= N\sum_\ell\sum_a\sum_{\alpha}\Big\{\frac1{N}u_a(\ell)^2 H_{a\alpha} R_{a\alpha}+\frac1{M}v_\alpha(\ell)^2 H_{a\alpha} R_{a\alpha}\\
		&~~~+\left(u_a(\ell)^2-\frac1{M}\right)v_{\alpha}(\ell)^2 H_{a\alpha} R_{a\alpha}+u_a(\ell)^2\left(v_\alpha(\ell)^2-\frac1{N}\right) H_{a\alpha} R_{a\alpha}\Big\}\\
		&~~~+2N\sum_{\ell_1\neq\ell_2}\sum_a\sum_{\alpha}u_a(\ell_1)u_a(\ell_2)v_\alpha(\ell_1)v_\alpha(\ell_2) H_{a\alpha} R_{a\alpha}+ \caO(N \| \bsU \|_{\infty}^{2}\| \bsV \|_{\infty}^{2})\\&
		=N\sum_\ell\sum_a\sum_{\alpha}\Big\{\left(u_a(\ell)^2-\frac1{M}\right)v_{\alpha}(\ell)^2 H_{a\alpha} R_{a\alpha}+u_a(\ell)^2\left(v_\alpha(\ell)^2-\frac1{N}\right) H_{a\alpha} R_{a\alpha}\Big\}\\
		&~~~+2N\sum_{\ell_1\neq\ell_2}\sum_a\sum_{\alpha}u_a(\ell_1)u_a(\ell_2)v_\alpha(\ell_1)v_\alpha(\ell_2) H_{a\alpha} R_{a\alpha}
		\\&~~~+2k(zs(z)+1)+ \caO(N \| \bsU \|_{\infty}^{2}\| \bsV \|_{\infty}^{2}).
	\end{split} \eeq
	
	It remains to estimate the first three terms in \eqref{eq:A_inter_2}. Set
	\beq \label{eq:X_def}
	\ttX_1 \equiv \ttX_1(\theta, z, \ell) := \sum_a\sum_{\alpha}\left(u_a(\ell)^2-\frac1{M}\right)v_{\alpha}(\ell)^2 H_{a\alpha} R_{a\alpha},
	\eeq
	\beq \label{eq:X_2_def}
	\ttX_2 \equiv \ttX_2(\theta, z, \ell) := \sum_a\sum_{\alpha}u_a(\ell)^2\left(v_\alpha(\ell)^2-\frac1{N}\right) H_{a\alpha} R_{a\alpha}
	\eeq
	and
	\beq \label{eq:X_3_def}
	\ttX_3 \equiv \ttX_3(\theta, z, \ell_1, \ell_2) := \sum_a\sum_{\alpha}u_a(\ell_1)u_a(\ell_2)v_\alpha(\ell_1)v_\alpha(\ell_2) H_{a\alpha} R_{a\alpha}\qquad(\ell_1\neq\ell_2).
	\eeq
	We notice that $|\ttX_1|,|\ttX_2|,|\ttX_3| = \caO(N^{-1})$ on $\Gamma_{1/2}$ by a naive power counting as in \eqref{eq:A_inter_1} after applying H\"older inequality once.
	To obtain a better bound, we use a method based on a recursive moment estimate, introduced in \cite{LeeSchnelli2018}. We need the following lemma:
	
	\begin{lem} \label{lem:recursive}
		Let $\ttX_1,\;\ttX_2$ and $\ttX_3$ be as in \eqref{eq:X_def}, \eqref{eq:X_2_def} and \eqref{eq:X_3_def}. Define an event $\Omega_{\varepsilon}$ by
		\[
		\Omega_{\varepsilon}=\bigcap_{a,b,\alpha,\beta}  \{ |H_{a\alpha}|,|R_{a\alpha}| \leq N^{-\frac{1}{2}+\varepsilon} \}\cap\{ |R_{ab} -  s(z)\delta_{ab}| \leq N^{-\frac{1}{2}+\varepsilon}\}\cap\{ |R_{\alpha\beta} -  z\mathfrak{s}(z)\delta_{\alpha\beta}| \leq
		N^{-\frac{1}{2}+\varepsilon}\}
		\]
		Then, for any fixed (large) $D$ and (small) $\varepsilon$, which may depend on $D$, 
		\beq \begin{split} \label{eq:recursive}
			\E[|\ttX|^{2D} | \Omega_{\varepsilon}] &\leq C N^{-\frac{1}{2}+\varepsilon} \| \bsu \|_{\infty}^{2} \E[|\ttX|^{2D-1} | \Omega_{\varepsilon}] + C N^{-1+4\varepsilon} \| \bsu \|_{\infty}^{4} \E[|\ttX|^{2D-2} | \Omega_{\varepsilon}] \\
			&\quad + C N^{-2+10\varepsilon} \| \bsu \|_{\infty}^{6} \E[|\ttX|^{2D-3} | \Omega_{\varepsilon}] + C N^{-3+14\varepsilon} \| \bsu \|_{\infty}^{8} \E[|\ttX|^{2D-4} | \Omega_{\varepsilon}],
		\end{split} \eeq
		where $\ttX$ is $\ttX_1$, $\ttX_2$ and $\ttX_3$.
	\end{lem}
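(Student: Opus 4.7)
The plan is to prove the recursive bound \eqref{eq:recursive} by applying a cumulant expansion (integration-by-parts / Stein-type identity) to the single entry $H_{a\alpha}$ that appears explicitly inside each of $\ttX_1, \ttX_2, \ttX_3$, and then bounding the resulting derivatives on the event $\Omega_\varepsilon$. This is the same strategy used in the recursive-moment method of \cite{LeeSchnelli2018}. For concreteness I describe the argument for $\ttX_1$; the cases of $\ttX_2$ and $\ttX_3$ are entirely analogous, with the cancellation factor $u_a(\ell)^2 - 1/M$ replaced by $v_\alpha(\ell)^2 - 1/N$ and $u_a(\ell_1)u_a(\ell_2)$, respectively.

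First I would write
\[
\E[|\ttX_1|^{2D} \mid \Omega_\varepsilon] = \sum_{a,\alpha} \pb{u_a(\ell)^2 - \tfrac{1}{M}} v_\alpha(\ell)^2 \, \E[ H_{a\alpha} R_{a\alpha} \, \ttX_1^{D-1} \bar\ttX_1^{D} \mid \Omega_\varepsilon]
\]
and for each $(a,\alpha)$ apply the cumulant expansion
\[
\E[H_{a\alpha} F(H)] = \sum_{k=1}^{K} \frac{\mathcal{C}_{k+1}(H_{a\alpha})}{k!} \, \E\Big[\frac{\partial^k F(H)}{\partial H_{a\alpha}^{k}}\Big] + \text{(small truncation error)},
\]
with $F(H) = R_{a\alpha} \ttX_1^{D-1} \bar\ttX_1^{D}$. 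The cumulants satisfy $\mathcal{C}_2(H_{a\alpha}) = S_{a\alpha} = N^{-1} + O(\lambda(\bsu_a \bsv_\alpha^T)^2)$ and $|\mathcal{C}_{k+1}(H_{a\alpha})| \leq C_k N^{-(k+1)/2}$, so higher cumulants decay rapidly and a truncation at $K$ large (depending on $D$) gives an error negligible compared with the target bound.

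The $k=1$ term is the dominant one. Here $\partial R_{a\alpha}/\partial H_{a\alpha} = -R_{aa}R_{\alpha\alpha} - R_{a\alpha}^2$. The first piece contributes, after summing over $(a,\alpha)$,
\[
-\frac{1}{N}\sum_{a,\alpha} \pb{u_a(\ell)^2 - \tfrac{1}{M}} v_\alpha(\ell)^2 R_{aa} R_{\alpha\alpha} \, \E[\ttX_1^{D-1}\bar\ttX_1^D \mid \Omega_\varepsilon],
\]
and the crucial point is that on $\Omega_\varepsilon$ the diagonal entries $R_{aa}$, $R_{\alpha\alpha}$ are equal to the deterministic scalars $s(z)$, $z\mathfrak{s}(z)$ up to $O(N^{-1/2+\varepsilon})$, so this sum is $\sum_a (u_a(\ell)^2 - 1/M) \cdot \sum_\alpha v_\alpha(\ell)^2 \cdot s(z)z\mathfrak{s}(z)$ plus an error. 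The leading deterministic factor vanishes by orthonormality, and what remains is bounded by $N^{-1/2+\varepsilon}\|\bsu\|_\infty^2$ times $\E[|\ttX_1|^{2D-1}\mid\Omega_\varepsilon]$, which yields the first term on the right of \eqref{eq:recursive}. The second piece $-R_{a\alpha}^2$ produces a genuine factor $R_{a\alpha}$ of size $N^{-1/2+\varepsilon}$ and an additional $H_{a\alpha}$-derivative structure that I would bound directly in the $\Omega_\varepsilon$-norms.

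For the $k \geq 2$ contributions, each additional derivative $\partial_{H_{a\alpha}}$ acting on either $R_{a\alpha}$, $\ttX_1^{D-1}$, or $\bar\ttX_1^{D}$ produces a product of resolvent entries of the schematic form $R_{\cdot a}R_{\alpha\cdot}$ or $R_{\cdot \alpha}R_{a\cdot}$, each off-diagonal factor contributing $N^{-1/2+\varepsilon}$ on $\Omega_\varepsilon$. The derivative acting on $\ttX_1^{D-1}$ (respectively $\bar\ttX_1^D$) pulls out a factor $(D-1)\ttX_1^{D-2}$ (respectively $D\bar\ttX_1^{D-1}$) times a derivative of a single summand of $\ttX_1$, which after summing in the hidden indices produces factors of $u_a(\ell)^2 v_\alpha(\ell)^2$ that one converts into $\|\bsu\|_\infty^2$ by extracting the maximum norm and using $\sum_b u_b(\ell)^2 = 1$. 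A careful power count then gives
\[
|k=2 \text{ contribution}| \leq C N^{-1+4\varepsilon}\|\bsu\|_\infty^4 \, \E[|\ttX_1|^{2D-2}\mid\Omega_\varepsilon],
\]
and analogously the $k=3$ and $k=4$ terms produce $N^{-2+10\varepsilon}\|\bsu\|_\infty^6$ and $N^{-3+14\varepsilon}\|\bsu\|_\infty^8$ prefactors. Truncating the cumulant expansion at $K = K(D)$ sufficiently large makes the remainder smaller than any of these.

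The main obstacle I anticipate is bookkeeping rather than a new idea: each $\partial_{H_{a\alpha}}$ on $\ttX_1^{D-1}\bar\ttX_1^{D}$ produces a proliferating tree of terms, and one must verify that every branch either (i) inherits the vanishing from $\sum_a(u_a(\ell)^2-1/M)=0$ when the resolvent factors reduce to their deterministic limits, or (ii) acquires enough off-diagonal resolvent factors (each of size $N^{-1/2+\varepsilon}$) and $\|\bsu\|_\infty$ factors to match the right-hand side of \eqref{eq:recursive}. Once the $k=1$ leading term is handled by the orthonormality cancellation and every subsequent $k$ by power counting against $\Omega_\varepsilon$, the claimed recursive bound follows.
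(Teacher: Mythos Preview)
Your approach is essentially the paper's: a recursive moment estimate via integration by parts in $H_{a\alpha}$, with the leading contribution killed by the cancellation $\sum_a(u_a(\ell)^2-1/M)=0$. Two differences are worth noting. First, the paper does not use a full cumulant expansion but only a second-order Stein identity,
\[
\E[H_{a\alpha}\Phi\mid\Omega_\varepsilon]=\big(\E[H_{a\alpha}^2\mid\Omega_\varepsilon]-\E[H_{a\alpha}\mid\Omega_\varepsilon]^2\big)\,\E[\Phi'\mid\Omega_\varepsilon]+\varepsilon_1,\qquad |\varepsilon_1|\leq C\,\E\Big[|H_{a\alpha}|^3\sup_{|t|\leq1}|\Phi''(tH_{a\alpha})|\ \Big|\ \Omega_\varepsilon\Big],
\]
taken from \cite{BaikLeeWu}; this is stated directly for the \emph{conditional} expectation, whereas your unconditional cumulant identity would need a separate argument to pass to $\E[\,\cdot\mid\Omega_\varepsilon]$. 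Second, your assignment of the four terms to cumulant orders is off: the $|\ttX|^{2D-2}$ contribution with prefactor $N^{-1+4\varepsilon}\|\bsu\|_\infty^4$ already appears at $k=1$, from the product-rule piece $R_{a\alpha}\cdot\partial_{H_{a\alpha}}(\ttX^{D-1}\overline\ttX^{D})$ that you did not list in your $k=1$ discussion, not from $k=2$. In the paper's organization both the $|\ttX|^{2D-1}$ and $|\ttX|^{2D-2}$ bounds come from $\Phi'$, while the $|\ttX|^{2D-3}$ and $|\ttX|^{2D-4}$ bounds come from the remainder $\varepsilon_1$, the last one via a resolvent-perturbation comparison of $\Phi''(H_{a\alpha})$ with $\Phi''(tH_{a\alpha})$. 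These are bookkeeping corrections; once made, your method goes through.
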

	Since the rank of the signal $k$ is fixed, we suffices to prove the above lemma for fixed $\ell,$ $\ell_1$ and $\ell_2.$
	We will prove Lemma \ref{lem:recursive} for $\ttX_1$ at the end of this section (the calculation for the $\ttX_2$ and $\ttX_3$ is almost the same). With Lemma \ref{lem:recursive}, we are ready to obtain an improved bound for $\ttX$. First, note that the contribution from the exceptional event $\Omega_{\varepsilon}^c$ is negligible i.e., $\p(\Omega_{\varepsilon}^c) < N^{-D^2}$, which can be checked by applying a high-order Markov inequality with the moment condition on $\tY$ (See Assumption \ref{assump:entry1}). We decompose $\E[|\ttX|^{2D}]$ by
	\beq \begin{split} \label{eq:X_decompose}
		\E[|\ttX|^{2D}] = \E[|\ttX|^{2D} \cdot \mathbf{1}(\Omega_{\varepsilon})] + \E[|\ttX|^{2D} \cdot \mathbf{1}(\Omega_{\varepsilon}^c)] = \E[|\ttX|^{2D} | \Omega_{\varepsilon}] \cdot \p(\Omega_{\varepsilon}) + \E[|\ttX|^{2D} \cdot \mathbf{1}(\Omega_{\varepsilon}^c)].
	\end{split} \eeq
	Then the second term in the right-hand side of \eqref{eq:X_decompose},
	\beq \label{eq:Omega^c_1}
	\E[|\ttX|^{2D} \cdot \mathbf{1}(\Omega_{\varepsilon}^c)] \leq \left( \E[|\ttX|^{4D} ] \right)^{\frac{1}{2}} \left( \p(\Omega_{\varepsilon}^c) \right)^{\frac{1}{2}} \leq N^{-\frac{D^2}{2}} \left( \E[|\ttX|^{4D} ] \right)^{\frac{1}{2}} 
	\eeq
	and by using a trivial bound for the resolvent $|R_{ab}(z)| \leq \| G^A(z) \| \leq \frac{1}{\im z}$
	\beq \label{eq:Omega^c_2}
	\begin{split}
		\E[|\ttX|^{4D}] \leq \E\left( \sum_a\sum_{\alpha}| H_{a\alpha} R_{a\alpha}| \right)^{4D}\leq \frac{(M^2N)^{4D}}{(\im z)^{4D}}\max_{a,b,\alpha}\E|H_{a\alpha}H_{b\alpha}|^{4D}\leq CN^{14D}.
	\end{split}
	\eeq
	
	To bound the right-hand side of \eqref{eq:recursive}, we use Young's inequality: For any $a, b > 0$ and $p, q > 0$ with $\frac{1}{p} + \frac{1}{q} = 1,$
	\[
	ab \leq \frac{a^p}{p} + \frac{b^q}{q}.
	\]
	We then find that the first term has the following upper bound
	\beq \begin{split}
		N^{-\frac{1}{2}+\varepsilon} \| \bsu \|_{\infty}^{2} |\ttX|^{2D-1} &= N^{\frac{(2D-1)\varepsilon}{2D}} N^{-\frac{1}{2}+\varepsilon} \| \bsu \|_{\infty}^{2} \cdot N^{-\frac{(2D-1)\varepsilon}{2D}} |\ttX|^{2D-1} \\
		&\leq \frac{1}{2D} N^{(2D-1)\varepsilon} (N^{-\frac{1}{2}+\varepsilon} \| \bsu \|_{\infty}^{2})^{2D} + \frac{2D-1}{2D} N^{-\varepsilon} |\ttX|^{2D}.
	\end{split} \eeq
	Applying Young's inequality for other terms in \eqref{eq:recursive}, we get
	\beq \begin{split}
		\E[|\ttX|^{2D} | \Omega_{\varepsilon}] &\leq C N^{(2D-1)\varepsilon} (N^{-\frac{1}{2}+\varepsilon} \| \bsu \|_{\infty}^{2})^{2D} + C N^{(D-1)\varepsilon} (N^{-1+4\varepsilon} \| \bsu \|_{\infty}^{4})^D \\
		& \quad + C N^{(\frac{2D}{3}-1)\varepsilon} (N^{-2+10\varepsilon} \| \bsu \|_{\infty}^{6})^{\frac{2D}{3}} + C N^{(\frac{D}{2}-1)\varepsilon} (N^{-3+14\varepsilon} \| \bsu \|_{\infty}^{8})^{\frac{D}{2}} \\&\quad+ C N^{-\varepsilon} \E[|\ttX|^{2D} | \Omega_{\varepsilon}].
	\end{split} \eeq
	Absorbing the last term in the right-hand side to the left-hand side and plugging the estimates \eqref{eq:Omega^c_1} and \eqref{eq:Omega^c_2} into \eqref{eq:X_decompose}, we now get
	\beq \begin{split}
		\E[|\ttX|^{2D}] &\leq C N^{(2D-1)\varepsilon} (N^{-\frac{1}{2}+\varepsilon} \| \bsu \|_{\infty}^{2})^{2D} + C N^{(D-1)\varepsilon} (N^{-1+4\varepsilon} \| \bsu \|_{\infty}^{4})^D \\
		& \quad + C N^{(\frac{2D}{3}-1)\varepsilon} (N^{-2+10\varepsilon} \| \bsu \|_{\infty}^{6})^{\frac{2D}{3}} + C N^{(\frac{D}{2}-1)\varepsilon} (N^{-3+14\varepsilon} \| \bsu \|_{\infty}^{8})^{\frac{D}{2}} +  CN^{-\frac{D^2}{2} + 7D}.
	\end{split} \eeq
	From the $(2D)$-th order Markov inequality, for any fixed $\varepsilon' > 0$ independent of $D$, 
	\beq
	\p \big(|\ttX| \geq N^{\varepsilon'} N^{-\frac1{2}} \| \bsu \|_{\infty}^{2} \big) \leq N^{-2D\varepsilon'} \frac{\E[|\ttX|^{2D}]}{( N^{-\frac1{2}}\| \bsu \|_{\infty}^{2})^{2D}} \leq N^{-2D\varepsilon'} N^{8D\varepsilon}.
	\eeq
	By choosing $\varepsilon = 1/D$, for sufficiently large $D$, we find that
	\beq\label{eq:bound for X}
	|\ttX| = \caO(N^{-\frac1{2}} \| \bsu \|_{\infty}^{2}).
	\eeq
	
	We now return to \eqref{eq:derivative_G^A} and use \eqref{eq:A_inter_2} with the bound \eqref{eq:bound for X}, 
	\beq
	\sum_{j=1}^M\sum_{k=1}^N \frac{\partial A_{jk}(\theta)}{\partial \theta}(G^A(\theta)A(\theta))_{jk} = \frac{(\gh - \fh)\lambda k}{2}(1+zs(z)) + \caO(N \| \bsu \|_{\infty}^{2}\|\bsv \|_{\infty}^{2}).
	\eeq
	To handle the derivative of the right-hand side, we use Cauchy's integral formula with a rectangular contour, contained in $\Gamma^{\varepsilon}_{1/2}$, whose perimeter is larger than $\varepsilon$. Then, we get from \eqref{eq:derivative_G^A} that
	\beq
	\frac{\partial}{\partial \theta} \Tr G^A(\theta, z) = -\lambda(\gh - \fh)\cdot k\frac{\partial}{\partial z}(1+zs(z)) + \caO(N \| \bsu \|_{\infty}^{2}\|\bsv \|_{\infty}^{2}).
	\eeq
	After integrating over $\theta$ from $0$ to $1$, we conclude that \eqref{eq:Y_inter_claim} holds for a fixed $z \in \Gamma^{\varepsilon}_{1/2}$.
\end{proof}

At last, we prove Lemma \ref{lem:recursive}.

\begin{proof}[Proof of Lemma \ref{lem:recursive}]
	As we mentioned above, we consider $\ttX=\ttX_1$ and drop the $\ell$-dependency. i.e.
	\[
	\E[|\ttX|^{2D}] = \E\left[ \sum_a\sum_{\alpha}\left(u_a^2-\frac1{M}\right)v_{\alpha}^2 H_{a\alpha}R_{a\alpha} \ttX^{D-1} \overline{\ttX}^{D} \right]
	\]
	%
	We use the following inequality that generalizes Stein's lemma (see Proposition 5.2 of \cite{BaikLeeWu}):
	Let $\Phi$ be a $C^2$ function. Fix a (small) $\varepsilon > 0$, which may depend on $D$. Recall that $\Omega_{\varepsilon}$ is the complement of the exceptional event on which $|H_{a\alpha}|$ or $|R_{a\alpha}|$ is exceptionally large for some $a, \alpha$, defined by
	$\Omega_{\varepsilon}$ by 
	\[
	\bigcap_{a,b,\alpha,\beta}  \{ |H_{a\alpha}|,|R_{a\alpha}| \leq N^{-\frac{1}{2}+\varepsilon} \}\cap\{ |R_{ab} -  s(z)\delta_{ab}| \leq N^{-\frac{1}{2}+\varepsilon}\}\cap\{ |R_{\alpha\beta} -  z\mathfrak{s}(z)\delta_{\alpha\beta}| \leq
	N^{-\frac{1}{2}+\varepsilon}\}
	\]
	Then,
	\beq \label{eq:Stein}
	\E[H_{a\alpha} \Phi(H_{a\alpha}) | \Omega_{\varepsilon}] = (\E[H_{a\alpha}^2|\Omega_{\varepsilon}]-\E[H_{a\alpha}|\Omega_{\varepsilon}]^2) \E[\Phi'(H_{a\alpha}) | \Omega_{\varepsilon}] + \varepsilon_1,
	\eeq
	where the error term $\varepsilon_1$ admits the bound
	\beq \label{eq:Stein_error}
	|\varepsilon_1| \leq C_1 \E \Big[ |H_{a\alpha}|^3 \sup_{|t| \leq 1} \Phi''(t H_{a\alpha}) \Big| \Omega_{\varepsilon} \Big]
	\eeq
	for some constant $C_1$. Note that by applying a decomposition \eqref{eq:X_decompose} to $\E[H_{a\alpha}| \Omega_{\varepsilon}]$ and $\E[H_{a\alpha}^2| \Omega_{\varepsilon}]$, we see that 
	\beq\label{eq:mean_estimate}
	\E[H_{a\alpha}| \Omega_{\varepsilon}]-\E(\E[H_{a\alpha}| \Omega_{\varepsilon}])=\E[H_{a\alpha}| \Omega_{\varepsilon}]=\caO(N^{-D_0})
	\eeq
	and
	\beq\label{eq:variance_estimate}
	\E[H_{a\alpha}^2| \Omega_{\varepsilon}]=\E(\E[H_{a\alpha}^2| \Omega_{\varepsilon}])+\caO(N^{-D_0})=\frac1{N}+\caO(\| \bsu \|_{\infty}^{2}\| \bsv \|_{\infty}^{2})+\caO(N^{-D_0})
	\eeq
	for $D_0=\frac{D^2+1}{2}>1.$ The estimate \eqref{eq:Stein} follows from the proof of Proposition 5.2 of \cite{BaikLeeWu} with $p=1$, where we use the inequality (5.38) therein only up to second to the last line.
	
	In the estimate \eqref{eq:Stein}, we choose
	\beq
	\Phi(H_{a\alpha}) = R_{a\alpha} \ttX^{D-1} \overline{\ttX}^{D}
	\eeq
	so that
	\beq \begin{split} \label{eq:X_expansion}
		\E[|\ttX|^{2D} | \Omega_{\varepsilon}] = \sum_{a}\sum_{\alpha}  \left(u_a^2-\frac1{M}\right)v_\alpha^2 \E \left[ H_{a\alpha} \Phi(H_{a\alpha}) | \Omega_{\varepsilon} \right].
	\end{split} \eeq

	Applying \eqref{eq:mean_estimate} and \eqref{eq:variance_estimate} to the equation \eqref{eq:Stein},
	\beq \begin{split} \label{eq:Phi_expansion}
		&\E \left[ H_{a\alpha} \Phi(H_{a\alpha}) | \Omega_{\varepsilon} \right] = \E \left[ H_{a\alpha}^2 \right] \E[\Phi'(H_{a\alpha}) | \Omega_{\varepsilon}] + \varepsilon_1 \\
		&= \E[H_{a\alpha}^2] \left( -\E \left[R_{aa} R_{\alpha\alpha} \ttX^{D-1} \overline{\ttX}^{D} | \Omega_{\varepsilon} \right] - \E \left[R^2_{a\alpha} \ttX^{D-1} \overline{\ttX}^{D} | \Omega_{\varepsilon} \right] \right. \\
		&\qquad \left. +(D-1) \E \left[ R_{a\alpha} \frac{\partial \ttX}{\partial H_{a\alpha}} \ttX^{D-2} \overline{\ttX}^{D} \big| \Omega_{\varepsilon} \right] + D \E \left[ R_{a\alpha} \frac{\partial \overline{\ttX}}{\partial H_{a\alpha}} \ttX^{D-1} \overline{\ttX}^{D-1} \big| \Omega_{\varepsilon} \right] \right) + \varepsilon_1,
	\end{split} \eeq
	for sufficiently large $D$.
	We plug it into \eqref{eq:X_expansion} and estimate each term. Then the term originated from the first term in \eqref{eq:Phi_expansion} can be separated by
	\beq \begin{split}
		&\sum_a\sum_{\alpha}  \left(u_a^2-\frac1{M}\right)v_\alpha^2\E[H_{a\alpha}^2] \E \left[R_{aa} R_{\alpha\alpha} \ttX^{D-1} \overline{\ttX}^{D} | \Omega_{\varepsilon} \right] \\
		&= \sum_a\sum_{\alpha}  \left(u_a^2-\frac1{M}\right)v_\alpha^2\E[H_{a\alpha}^2] \E \left[(R_{aa}-s) R_{\alpha\alpha} \ttX^{D-1} \overline{\ttX}^{D} | \Omega_{\varepsilon} \right]\\
		&~~~ + s \sum_a\sum_{\alpha}  \left(u_a^2-\frac1{M}\right)v_\alpha^2\E[H_{a\alpha}^2] \E \left[R_{\alpha\alpha} \ttX^{D-1} \overline{\ttX}^{D} | \Omega_{\varepsilon} \right].
	\end{split} \eeq
	
	The first term satisfies that
	\beq \begin{split}
		&\left|\sum_a\sum_{\alpha}  \left(u_a^2-\frac1{M}\right)v_\alpha^2\E[H_{a\alpha}^2] \E \left[(R_{aa}-s) R_{\alpha\alpha} \ttX^{D-1} \overline{\ttX}^{D} | \Omega_{\varepsilon} \right] \right| \\
		&\leq C M \| \bsu \|_{\infty}^{2} N^{-1} N^{-\frac{1}{2}+\varepsilon} \E[|\ttX|^{2D-1} | \Omega_{\varepsilon}]\sum_\alpha v_\alpha^2 = C N^{-\frac{1}{2}+\varepsilon} \| \bsu \|_{\infty}^{2} \E[|\ttX|^{2D-1} | \Omega_{\varepsilon}]
	\end{split} \eeq
	for some constant $C$ since $\sum_\alpha v_\alpha^2=1.$ Using \eqref{eq:variance_estimate} and $\sum_{a}  \left(u_a^2-\frac1{M}\right)=0$, we also have
	\beq
	\begin{split}
		&\left|s \sum_a\sum_{\alpha}  \left(u_a^2-\frac1{M}\right)v_\alpha^2\E[H_{a\alpha}^2| \Omega_{\varepsilon}] \E \left[R_{\alpha\alpha} \ttX^{D-1} \overline{\ttX}^{D} | \Omega_{\varepsilon} \right]\right|
		\\&~~~\leq C\| \bsu \|_{\infty}^{2}\| \bsv \|_{\infty}^{2}|s| \sum_a\sum_{\alpha}  \left(u_a^2+\frac1{M}\right)v_\alpha^2 \E \left[|R_{\alpha\alpha} \ttX^{D-1} \overline{\ttX}^{D}| | \Omega_{\varepsilon} \right]
		\\&~~~\leq C\| \bsu \|_{\infty}^{2}\| \bsv \|_{\infty}^{2}\E[|\ttX|^{2D-1} | \Omega_{\varepsilon}]
	\end{split}
	\eeq
	for some constant $C$ and large $D>1$.
	For the second term in \eqref{eq:Phi_expansion}, we also have
	\beq \label{eq:X_ex_2}
	\begin{split}
		&\left|\sum_a\sum_{\alpha}  \left(u_a^2-\frac1{M}\right)v_\alpha^2\E[H_{a\alpha}^2| \Omega_{\varepsilon}]\E \left[R^2_{a\alpha} \ttX^{D-1} \overline{\ttX}^{D} | \Omega_{\varepsilon} \right] \right| 
		\\&~~~\leq	CN^{-1}\| \bsu \|_{\infty}^{2}\left|\sum_a\sum_{\alpha}v_\alpha^2\E \left[R^2_{a\alpha} \ttX^{D-1} \overline{\ttX}^{D} | \Omega_{\varepsilon} \right] \right|
		\\&~~~\leq C N^{-1+2\varepsilon} \| \bsu \|_{\infty}^{2} \E[|\ttX|^{2D-1} | \Omega_{\varepsilon}]\sum_\alpha v_\alpha^2
		\\&~~~\leq C N^{-1+2\varepsilon} \| \bsu \|_{\infty}^{2} \E[|\ttX|^{2D-1} | \Omega_{\varepsilon}].
	\end{split}
	\eeq
	To estimate the third term and the fourth term in \eqref{eq:Phi_expansion}, we notice that on $\Omega_{\varepsilon}$
	\beq\begin{split}
		\left| \frac{\partial \ttX}{\partial H_{a\alpha}} \right| &=\left| -\sum_b\sum_{\beta}  \left(u_b^2-\frac1{M}\right)v_\beta^2H_{b\beta}[R_{ab}R_{\alpha\beta}+R_{b\alpha}R_{a\beta}]+\left(u_a^2-\frac1{M}\right)v_\alpha^2R_{a\alpha}\right|\\
		&\leq CN^{-\frac1{2}+3\varepsilon}\| \bsu \|_{\infty}^{2}\sum_\alpha v_\alpha^2+CN^{-\frac1{2}+\varepsilon}\| \bsu \|_{\infty}^{2}\| \bsv \|_{\infty}^{2}\leq CN^{-\frac1{2}+3\varepsilon}\| \bsu \|_{\infty}^{2}.
	\end{split}
	\eeq
	for some constant $C$. Similarly, we can observe that
	\beq\begin{split}
		\left| \frac{\partial^2 \ttX}{\partial H_{a\alpha}^2} \right| \leq CN^{-\frac1{2}+3\varepsilon}\| \bsu \|_{\infty}^{2}.
	\end{split}
	\eeq
	Thus, we also obtain that
	\beq \label{eq:X_ex_3}\begin{split}
		&\left|\sum_a\sum_{\alpha}  \left(u_a^2-\frac1{M}\right)v_\alpha^2\E[H_{a\alpha}^2| \Omega_{\varepsilon}]\E \left[ R_{a\alpha} \frac{\partial \ttX}{\partial H_{a\alpha}} \ttX^{D-2} \overline{\ttX}^{D} \big| \Omega_{\varepsilon} \right] \right|
		\\&~~~\leq C N^{-1+4\varepsilon} \| \bsu \|_{\infty}^{4} \E[|\ttX|^{2D-2} | \Omega_{\varepsilon}]
	\end{split}\eeq
	and
	\beq \label{eq:X_ex_4}\begin{split}
		&\left|\sum_a\sum_{\alpha}  \left(u_a^2-\frac1{M}\right)v_\alpha^2\E[H_{a\alpha}^2| \Omega_{\varepsilon}]\E \left[ R_{a\alpha} \frac{\partial \overline{\ttX}}{\partial H_{a\alpha}} \ttX^{D-1} \overline{\ttX}^{D-1} \big| \Omega_{\varepsilon} \right]\right| 
		\\&~~~\leq C N^{-1+4\varepsilon} \| \bsu \|_{\infty}^{4} \E[|\ttX|^{2D-2} | \Omega_{\varepsilon}].
	\end{split}\eeq
	Hence, from \eqref{eq:Phi_expansion}, \eqref{eq:X_ex_2}, \eqref{eq:X_ex_3}, and \eqref{eq:X_ex_4}, 
	\beq \begin{split} \label{eq:X_main}
		&\left| \sum_a\sum_{\alpha}  \left(u_a^2-\frac1{M}\right)v_\alpha^2 \E[H_{a\alpha}^2|\Omega_{\varepsilon}] \E[\Phi'(H_{a\alpha}) | \Omega_{\varepsilon}] \right| 
		\\&~~~\leq C N^{-\frac{1}{2}+\varepsilon} \| \bsu \|_{\infty}^{2} \E[|\ttX|^{2D-1} | \Omega_{\varepsilon}] + C N^{-1+4\varepsilon} \| \bsu \|_{\infty}^{4} \E[|\ttX|^{2D-2} | \Omega_{\varepsilon}] + \varepsilon_1.
	\end{split} \eeq
	It remains to estimate $|\varepsilon_1|$ in \eqref{eq:Stein_error}. Proceeding as before,
	\beq \begin{split} \label{eq:epsilon_1_1}
		&\sum_a\sum_{\alpha}  \left(u_a^2-\frac1{M}\right)v_\alpha^2\E \Big[ |H_{a\alpha}|^3\Phi''(H_{a\alpha}) \Big| \Omega_{\varepsilon} \Big] \\
		&\leq C N^{-1+4\varepsilon} \| \bsu \|_{\infty}^{2} \E[|\ttX|^{2D-1} | \Omega_{\varepsilon}] + C N^{-2+7\varepsilon} \| \bsu \|_{\infty}^{4} \E[|\ttX|^{2D-2} | \Omega_{\varepsilon}] 
		\\&~~~+ C N^{-2+10\varepsilon} \| \bsu \|_{\infty}^{6} \E[|\ttX|^{2D-3} | \Omega_{\varepsilon}].
	\end{split} \eeq
	
	Our last goal is to find the bound for the error term $\varepsilon_1.$ To handle $\Phi''(tH_{a\alpha})$, we want to compare $\Phi''(H_{a\alpha})$ and $\Phi''(tH_{a\alpha})$ for some $|t|<1$. Let $G^{A, t}$ be the resolvent of $A$ where $A_{a\alpha}$ is replaced by $tA_{a\alpha}$, and let $\ttX^t$ be defined as $\ttX$ in \eqref{eq:X_def} with the same replacement for $A_{a\alpha}$ and also $G^A$ is replaced by $G^{A, t}$. Correspondingly, we also consider the replacement $R^{t}$ of the linearization $R$ by substituting $tH_{a\alpha}$ into $H_{a\alpha}$ (also for $H_{\alpha a}$). Then,
	\beq \label{eq:G^A,t-G^A}
	R_{AB}^t-R_{AB}=(R^t(H-H^t)R)_{AB}=(1-t)R^t_{Aa}H_{a\alpha}R_{\alpha B}+(1-t)R^t_{A\alpha}H_{\alpha a}R_{aB}.
	\eeq
	and
	\beq
	\begin{split}
		\ttX^t - \ttX &= \sum_{b}\sum_{\beta}  \left(u_b^2-\frac1{M}\right)v_\beta^2 (H_{b\beta}^tR_{b\beta}^t- H_{b\beta}R_{b\beta})
		\\&=\sum_{b}\sum_{\beta}  \left(u_b^2-\frac1{M}\right)v_\beta^2 H_{b\beta}(R_{b\beta}^t- R_{b\beta})+(t-1)\left(u_a^2-\frac1{M}\right)v_\alpha^2 (H_{a\alpha}R_{a\alpha}^t)
		\\&=(1-t)\sum_{b}\sum_{\beta}  \left(u_b^2-\frac1{M}\right)v_\beta^2 H_{b\beta}R^t_{ba}H_{a\alpha}R_{\alpha\beta}
		\\&~~~+(1-t)\sum_{b}\sum_{\beta}  \left(u_b^2-\frac1{M}\right)v_\beta^2 H_{b\beta}R^t_{b\alpha}H_{\alpha a}R_{a\beta}+(t-1)\left(u_a^2-\frac1{M}\right)v_\alpha^2 H_{a\alpha}R_{a\alpha}^t.
	\end{split}
	\eeq
	
	Thus, on $\Omega_{\varepsilon}$,
	\beq \label{eq:X_t-X}
	|\ttX^t - \ttX| \leq C N^{-1+4\varepsilon} \| \bsu \|_{\infty}^{2}.
	\eeq
	Using the estimates \eqref{eq:G^A,t-G^A} and \eqref{eq:X_t-X}, on $\Omega_{\varepsilon}$, we obtain that
	\beq \label{eq:epsilon_1_2}
	|\Phi''(H_{a\alpha}) - \Phi''(tH_{a\alpha})| \leq C |\Phi''(H_{a\alpha})| + N^{-\frac{5}{2}+11\varepsilon} \| \bsu \|_{\infty}^{6} |\ttX|^{2D-4}
	\eeq
	uniformly on $t \in (-1, 1)$.
	
	Combining \eqref{eq:X_expansion} and \eqref{eq:X_main} with \eqref{eq:epsilon_1_1}, \eqref{eq:epsilon_1_2}, and \eqref{eq:Stein_error}, we finally get
	\beq \begin{split}
		\E[|\ttX|^{2D} | \Omega_{\varepsilon}] &\leq C N^{-\frac{1}{2}+\varepsilon} \| \bsu \|_{\infty}^{2} \E[|\ttX|^{2D-1} | \Omega_{\varepsilon}] + C N^{-1+4\varepsilon} \| \bsu \|_{\infty}^{4} \E[|\ttX|^{2D-2} | \Omega_{\varepsilon}] \\
		&\quad + C N^{-2+10\varepsilon} \| \bsu \|_{\infty}^{6} \E[|\ttX|^{2D-3} | \Omega_{\varepsilon}] + C N^{-3+14\varepsilon} \| \bsu \|_{\infty}^{8} \E[|\ttX|^{2D-4} | \Omega_{\varepsilon}].
	\end{split} \eeq
	This proves the desired lemma for $\ttX=\ttX_1$.
	
	For the cases $\ttX=\ttX_2$ or $\ttX=\ttX_3$, the proofs are almost the same with the following changes:
	
	\begin{itemize}
		\item For $\ttX=\ttX_2$, we change the role of $\bsU$ and $\bsV.$ In other words, we will use $\sum_a u_a(\ell)^2=1$ and $\sum_\alpha\left(v_\alpha(\ell)^2-\frac1{N}\right)=0.$ Then the recursive bound for $\E[|\ttX_2|^{2D}|\Omega_\varepsilon]$ obtained by putting $\bsv$ instead of $\bsu$ in the upper bound in \eqref{eq:recursive}.
		\item In the same way, we use $\sum_{a}u_a(\ell_1)u_a(\ell_2)=\delta_{\ell_1\ell_2}$ and $\sum_\alpha|v_\alpha(\ell_1)||v_\alpha(\ell_2)|\leq1$ instead of $\sum_a\left(u_a^2-\frac1{M}\right)=0$ and $\sum_a u_a(\ell)^2=1,$ respectively. We then obtain the exactly same recursive bound in \eqref{eq:recursive} for $\ttX_3.$
	\end{itemize}
\end{proof}

\subsection{Computation of the test statistic} \label{sec:compute}
In this section, we prove the second part of Theorem \ref{thm:CLT_rec} and also provide the details on the computation of the test statistic in Theorem \ref{thm:test}. By performing the same calculations as we will do in this section, we can obtain optimal functions for the other models, so we omit the details. (Refer to \cite{chung2019weak,jung2020weak,jung2021}.) Recall that
\beq
m_Y(f)|_{\bsH_1} - m_Y(f)|_{\bsH_0} =\sum_{s=1}^{k} \sum_{\ell=1}^{\infty}\left(\frac{\SNR_s}{\sqrt{\rat}}\right)^{\ell}\tau_\ell(\wt{f})
\eeq
and
\beq \begin{split}
	V_{Y}(f) = 2\sum_{\ell=2}^\infty \ell\tau_\ell(\wt{f})^2+(w_4-1)\tau_1(\wt{f})^2.
\end{split} \eeq
Assuming $w_2 > 0$ and $w_4 > 1$, from Cauchy's inequality and the identity $\log(1-\lambda) = -\sum_{\ell=1}^{\infty} \lambda^{\ell}/\ell$,
\beq\begin{split} \label{eq:Cauchy_ineq}
	\left|\frac{m_Y(f)|_{\bsH_1} - m_Y(f)|_{\bsH_0}}{\sqrt{V_Y(f)}}\right|^2 
	&\leq \sum_{p,q=1}^k\frac{\SNR_p\SNR_q}{\rat}\left( \frac{1}{w_4-1} - \frac{1}{2} \right)  - \frac{1}{2}\displaystyle \log\left(1-\frac{\SNR_p\SNR_q}{\rat}\right)
	\\& = \left|\frac{m(\Omega)-m(0)}{\sqrt{V_0}}\right|^2,
\end{split}\eeq
which proves the first part of the theorem. The equality in \eqref{eq:Cauchy_ineq} holds if and only if
\beq \label{eq:Cauchy_equal}
\frac{\sqrt{\rat}(w_4-1)\tau_1(\wt{f})}{\sum_s\SNR_s} = \frac{2\ell (\sqrt{\rat})^\ell\tau_\ell(\wt{f})}{\sum_{s} \SNR_s^{\ell}} \qquad (\ell = 2, 3, 4, \dots).
\eeq

We now find all functions $f$ that satisfy \eqref{eq:Cauchy_equal}. Letting $2C$ be the common value in \eqref{eq:Cauchy_equal},
\beq \label{eq:tau_ell}
\tau_1(\wt{f}) = \frac{2C }{\sqrt{\rat}(w_4-1)}\sum_s\SNR_s, \quad \tau_{\ell}(\wt{f}) = \frac{C}{\ell(\sqrt{\rat})^\ell}\sum_{s} \SNR_s^{\ell} \qquad (\ell = 2, 3, 4, \dots).
\eeq
We can expand $\wt{f}$ in terms of the Chebyshev polynomials as
\beq
\wt f(x) = \sum_{\ell=0}^{\infty} C_{\ell} T_{\ell} \left( \frac{x}{2} \right).
\eeq
The orthogonality relation of the Chebyshev polynomials implies that for $\ell \geq 1$
\beq
\tau_{\ell}(\wt f) = \frac{C_{\ell} }{\pi} \int_{-2}^2 T_{\ell} \left( \frac{x}{2} \right) T_{\ell} \left( \frac{x}{2} \right) \frac{\dd x}{\sqrt{4-x^2}} = \frac{C_{\ell} }{\pi} \int_{-1}^1 T_{\ell} \left( y \right) T_{\ell} \left( y \right) \frac{\dd y}{\sqrt{1-y^2}} = \frac{C_{\ell}}{2}.
\eeq
Thus, \eqref{eq:tau_ell} holds if and only if
\beq \begin{split} \label{eq:pre_optimized}
	\wt{f}(x) &= c_0 + 2C \sum_s\left( \frac{2\SNR_s}{\sqrt{\rat}(w_4-1)} T_1 \left( \frac{x}{2} \right) + \sum_{\ell=2}^{\infty} \frac{1}{\ell}\left(\frac{\SNR_s}{\sqrt{\rat}}\right)^\ell T_{\ell} \left( \frac{x}{2} \right) \right) \\
	&=  c_0 + 2C \sum_s\left( \frac{\SNR_s}{\sqrt{\rat}}\left(\frac{2}{w_4-1}-1\right) T_1 \left( \frac{x}{2} \right) + \sum_{\ell=1}^{\infty} \frac{1}{\ell}\left(\frac{\SNR_s}{\sqrt{\rat}}\right)^\ell T_{\ell} \left( \frac{x}{2} \right) \right)
\end{split} \eeq
for some constant $c_0$. We notice that the following identity holds for the Chebyshev polynomials:
\beq
\sum_{\ell=1}^{\infty} \frac{t^{\ell}}{\ell} T_{\ell} \left( x \right) = \log \left( \frac{1}{\sqrt{1-2tx+t^2}} \right).
\eeq
(See, e.g., (18.12.9) of \cite{Handbook}.) Since $T_1(x) = x$, we find that \eqref{eq:pre_optimized} is equivalent to
\beq \begin{split}\label{eq:optimized_phi}
	\wt{f}(x) &= c_0 + C \sum_s\left[\frac{\SNR_s}{\sqrt{\rat}}\left(\frac{2}{w_4-1}-1\right)x  - \log\left(\frac{\rat-\SNR_s\sqrt{\rat}x+\SNR_s^2}{\rat}\right)\right],
\end{split}\eeq
or
\beq\begin{split}
	f(x) &=c_0 + C\sum_s\left[\frac{\SNR_s}{\rat}\left(\frac{2}{w_4-1}-1\right)x  -\frac{ \SNR_s(1+\rat)}{\rat}\left(\frac{2}{w_4-1}-1\right)\right]
	\\&~~~- C\sum_s\log\left[\frac{\SNR_s}{\rat}\left(\left(1+\frac{\rat}{\SNR_s}\right)(1+\SNR_s)-x\right)\right].
\end{split}\eeq
This concludes the proof of Theorem \ref{thm:CLT} with an optimal function
\beq\label{eq:phi}
\phi_{\Omega}(x)=\wt \phi_\Omega(\varphi(x))
\eeq 
where
\beq
\wt \phi_\Omega(x)=c_0 +  \sum_s\left[\frac{\SNR_s}{\sqrt{\rat}}\left(\frac{2}{w_4-1}-1\right)x  - \log\left(\frac{\rat-\SNR_s\sqrt{\rat}x+\SNR_s^2}{\rat}\right)\right].
\eeq
Choosing
\[
c_0 = \sum_s\left[\frac{(1+\rat)}{\rat} \left(\frac{2}{w_4-1}-1\right)\SNR_s + \log (\SNR_s/\rat)\right],
\]
we get \eqref{eq:optimal_f1}. Further, we can see that 
\beq\label{eq:optimal_general}
\phi_{\Omega}(x)=\sum_s\phi_{\SNR_s}(x).
\eeq

From this, we directly obtain that $L_\Omega=\sum_sL_{\SNR_s}$, 
\beq
m_Y(\phi_\SNR)|_{\bsH_0} = -\frac1{2} \sum_{s}\log\left(1-\frac{\SNR_s^2}{\rat}\right)  +\frac{1}{2\rat}(w_4-3)\sum_s\SNR_s^2,
\eeq
\beq
m_Y(\phi_\SNR)|_{\bsH_1} = m_Y(\phi_\SNR)|_{\bsH_0}  +\sum_{p,q}\left[-\log\left(1-\frac{\SNR_p\SNR_q}{\rat}\right) +\frac{\SNR_p\SNR_q}{\rat}\left(\frac2{w_4-1}-1\right)\right]
\eeq
and
\beq
V_{Y}(\phi_\SNR)|_{\bsH_1} = V_Y(\phi_\SNR)|_{\bsH_0} = 2\sum_{p,q}\left[-\log\left(1-\frac{\SNR_p\SNR_q}{\rat}\right) +\frac{\SNR_p\SNR_q}{\rat}\left(\frac2{w_4-1}-1\right)\right].
\eeq

\end{document}